\definecolor{DarkRed}{HTML}{cc0000} 
\definecolor{DarkBlue}{HTML}{0000cc}
\renewcommand{\S}{{\mathcal S}}
\newcommand{\Zz}{{\mathcal Z}}
\newcommand{\E}{{\mathcal E}}
\newcommand{\X}{{\mathfrak X}}
\newcommand{\Y}{{\mathfrak Y}}
\newcommand{\x}{{\mathbf x}}
\newcommand{\y}{{\mathbf y}}
\newcommand{\RR}{{\mathbf R}}
\newcommand{\Z}{{\mathbb Z}}
\newcommand{\Q}{{\mathbb Q}}
\newcommand{\Co}{{\mathbb C}}
\newcommand{\1}{\{1\}}
\newcommand{\bigast}{\textrm{\footnotesize\ding{91}}}
\DeclareSymbolFont{cmsymbols}{OMS}{cmsy}{m}{n}
\DeclareSymbolFontAlphabet{\mathcal}{cmsymbols}
\theoremstyle{plain}
\newtheorem{Theorem}{Theorem}[section]
\newtheorem{Lemma}[Theorem]{Lemma}
\newtheorem{Corollary}[Theorem]{Corollary}
\theoremstyle{definition}
\newtheorem{Algorithm}[Theorem]{Algorithm}
\theoremstyle{remark}
\newtheorem{Remark}[Theorem]{Remark} 
\newtheorem{Example}[Theorem]{Example} 
\numberwithin{equation}{section}
\theoremstyle{plain}
\providecommand{\customgenericname}{}
\newcommand{\newcustomtheorem}[2]{%
\newenvironment{#1}[1]
{%
\renewcommand\customgenericname{#2}%
\renewcommand\theinnercustomgeneric{##1}%
\innercustomgeneric
}
{\endinnercustomgeneric}
}
\DeclareSymbolFont{cmsymbols}{OMS}{cmsy}{m}{n}
\DeclareSymbolFontAlphabet{\mathcal}{cmsymbols}
\setlist[itemize]{leftmargin=*}
\setlist[enumerate]{labelsep=*, leftmargin=1.5pc}
\setlist[enumerate,1]{label=\bf \arabic*., ref=\arabic*}
\setlist[enumerate,2]{label=\rm \emph{\alph*}),ref=\theenumi.\emph{\alph*}}
\begin{document}


\subjclass{Prim.: 20F05, 20E06; 
sec.: 20E07, 20F10, 20F65, 03D10, 03D40.}
\keywords{Recursive group, finitely presented group, embedding of a group, benign subgroup, free product of groups with amalgamated subgroup, HNN-extension of a group}

\title[An explicit algorithm for the Higman Embedding Theorem]{An explicit algorithm for the Higman Embedding Theorem}

\author{V.\,H. Mikaelian}
 
\begin{abstract}
We propose an algorithm which, for any  recursive group $G$, given by its effectively enumerable generators and recursively enumerable relations, outputs an explicit embedding of $G$ into a finitely presented group directly written by its generators and defining relations.
This is the explicit analogue of the celebrated Higman Embedding Theorem stating that a fi\-nitely generated group $G$ is embeddable into a finitely presented group if and only if $G$ is recursive.
The constructed finitely presented group can even be chosen to be $2$-generator. 
This algorithm has already been applied, for example, to the additive group of rational numbers $\Q$, which is clearly recursive. 
The question of explicitly embedding $\mathbb{Q}$ into a finitely presented group has been highlighted in the literature by Johnson, de la Harpe, Bridson, and others.
The suggested method can also be used to solve the embedding problem for various other recursive groups.
The embedding algorithm is developed using conventional free constructions, including free products with amalgamation, HNN-extensions, and also their modifications,  such as the auxiliary $\bigast$-constructions. 
We also analyze the steps of the original Higman embedding to precisely indicate which of its components are non-explicit. 
\end{abstract}

%

\date{\today}

\maketitle

\begin{flushright}
\vskip-4mm
\small
\textit{``We do fundamental research, not only to acquire\\  results solely, but because the process is an ennobling one''}\\
{\footnotesize 
Graham Higman, 1987}
\end{flushright}

\makeatletter
\renewcommand\tocchapter[3]{%
\indentlabel{\@ifnotempty{#2}{\ignorespaces#2.\quad}}#3\dotfill%
}
\renewcommand{\tocsection}[3]{%
\indentlabel{\@ifnotempty{#2}{\ignorespaces#1 #2.\quad}}#3\dotfill%
}
\renewcommand{\tocsubsection}[3]{%
\indentlabel{\@ifnotempty{#2}{\ignorespaces#1 #2.\quad}}#3\dotfill%
}
\renewcommand{\tocsubsubsection}[3]{%
\indentlabel{\@ifnotempty{#2}{\ignorespaces#1 #2.\quad}}#3\dotfill%
}
\makeatother

\let\oldtocsection=\tocsection
\let\oldtocsubsection=\tocsubsection
\let\oldtocsubsubsection=\tocsubsubsection
\renewcommand{\tocsection}[2]{\small \hspace{-12pt}\oldtocsection{#1}{#2}}
\renewcommand{\tocsubsection}[2]{\footnotesize \hspace{4.5pt} 
\oldtocsubsection{#1}{#2}}
\renewcommand{\tocsubsubsection}[2]{\footnotesize \hspace{34.0pt}\oldtocsubsubsection{#1}{#2}}

\setcounter{tocdepth}{2}

\tableofcontents

\section{Introduction}
\label{SE Introduction}

\noindent
Our objective is to propose an explicit analogue of the Higman Embedding Theorem proven in the fundamental article \cite{Higman Subgroups of fP groups}, i.e., to propose an algorithm that for a given recursive group $G = \langle\, X \mathrel{|} R \,\rangle$ outputs its \textit{explicit} embedding into a finitely presented group $\mathcal G$ recorded by its generators and defining relations. 
In \cite{Higman Subgroups of fP groups} the set $X$ can be either finite, or effec\-tively enumerable countably infinite, see Theorem 1 and Corollary on page 456 in \cite{Higman Subgroups of fP groups}, while $R$ is the range of a partial recursive function, see the relevant definitions in Section~\ref{SU Recursive enumeration and recursive groups}. 

Higman proved the \textit{existence} of such an embedding without providing an explicit \textit{method} to construct the overgroup $\mathcal{G}$ via a finite list of generators and defining relations (see Sections~\ref{SU On explicitness of the embedding in Higmans work} and \ref{SU Is Higman's construction explicit}).

Since any subgroup generated by effectively enumerable generators in a finitely presented group is evidently recursive, the main challenge in \cite{Higman Subgroups of fP groups} is proving sufficiency in Theorem~1; this is the component we plan to make explicit.

Our Algorithm~\ref{AL Algorithm for explicit embedding of a recursive group} is outlined in Section~\ref{SU The explicit embedding algorithm} below, and it is based on some auxiliary facts and constructions from \cite{Embeddings using universal words,
The Higman operations and  embeddings,
On explicit embeddings of Q, 
Higman's reversing operation}.

\subsection{On explicitness of the embedding in \cite{Higman Subgroups of fP groups}}
\label{SU On explicitness of the embedding in Higmans work}  

Higman has never alluded  the embedding of \cite{Higman Subgroups of fP groups} is explicit. 
Moreover, for some groups with very uncomplicated recursive presentations it is (or was until recently) an open problem to explicitly embed them into finitely pre\-sen\-ted groups, see sections~\ref{SU An application of the algorithm for Q}, \ref{SU The problem of explicit  embedding for GL(n Q)} below. 
Also, in \cite{Valiev example} Valiev remarks:
\textit{``G. Higman's proof is constructive in the sense that from  constructive description of the group $G$ one can, in principle, extract a description for [the finitely presented group]\,, but this construction is so vast that it is practically impossible to do this.''}
%
%
Since this remark of Valiev has become a subject for discussions, we found it appropriate to first cover the point about explicitness of  \cite{Higman Subgroups of fP groups} before construction of its explicit analogue. 

We outline Higman's construction in Section~\ref{SU The main steps of construction in H}, and then  state in Section~\ref{SU Is Higman's construction explicit} which parts are \textit{not} explicit in it, see points~\ref{SU Construction of X from R}--\ref{SU Obtaining KX and LX for the benign subgroup AX}, examples therein, and \textit{Conclusion} in \ref{SU Conclusion}.


\subsection{The explicit embedding algorithm}
\label{SU The explicit embedding algorithm} 
The detailed description of the explicit embedding method 
with examples and references to actual proofs in the later sections, is given in Chapter~\ref{SU The modified construction of the current work}.
The embedding process can be outlined via the following pseudocode:

\begin{Algorithm}[Explicit embedding of a recursive group into a finitely presented group]
\label{AL Algorithm for explicit embedding of a recursive group}
We are given a recursive group $G = \langle\, X \mathrel{|} R \,\rangle$ defined on an effectively enumerable alphabet $X=\{a_1, a_2,\ldots \}$ by a set of recursively enumerable relations $R$.

\vskip1mm
\noindent
Output an explicit embedding of $G$ into a finitely presented group $\mathcal G$ given by its generators and defining relations explicitly.
\begin{enumerate}
\item
\label{Step 1 AL Algorithm for explicit embedding of a recursive group}
Mapping each generator $a_i$ to the ``universal word'' $a_i(x,y)$ from \eqref{EQ definition of a_i(x,y)} 
in Section~\ref{SU The embedding alpha into a 2-generator group}, 
write for each relation $w\in R$  a new relation $w'(x,y)$ in $F_2=\langle x,y \rangle$, and denote by $R'$ the set of all such new $w' (x,y)$.
Set $T_{\!G} = F_2 / \langle\, R' \,\rangle^{\! F_2}$ to be the $2$-generator group explicitly given via $\langle\, x,y \mathrel{|} R' \,\rangle$. 
The group $T_{\!G}$ is recursive by 
Corollary~\ref{CO if G is recursive then TG is also recirsive}. 

\item  
\label{Step 2 AL Algorithm for explicit embedding of a recursive group}
Construct the injective embedding $\alpha: G \to  T_G$ by mapping each $a_i$ to its image $a_i(x,y)$ in $T_{\!G}$, 
see Theorem~\ref{TH universal embedding}.

\item
\label{Step 3 AL Algorithm for explicit embedding of a recursive group}
For each relation \eqref{EQ random relation on x and y} on two variables $x,y$ in $R'$, output the sequence of integers 
\eqref{EQ f_i occurs first}, 
see Section~\ref{SU Using the 2-generator group to get the set X}. Denote $\mathcal X$ to be the set of such sequences ``coding'' the set of words $R'$. 

\item
\label{Step 4 AL Algorithm for explicit embedding of a recursive group}
Construct $\mathcal X$ 
from $\Zz$ and $\S$ via the Higman operations \eqref{EQ Higman operations} in one of \textit{two} ways. Either by the \textit{long} process following Higman's original method involving the  functions $f(n,r)$, $a(r)$, $b(r)$ in \cite{Higman Subgroups of fP groups}, see sections \ref{SU Is Higman's construction explicit} and \ref{SU Writing X via Higman operations} below. Or, alternatively, when $\mathcal X$ satisfies certain conditions, use the much \textit{shorter} process from \cite{The Higman operations and  embeddings}, see Section~\ref{SU Writing X via Higman operations} below.

\item 
\label{Step 5 AL Algorithm for explicit embedding of a recursive group}
The subgroup $A_{\!\mathcal X}$ corresponding to $\mathcal X$, see Section~\ref{SU Defining subgroups by integer sequences}, is benign in $F_3=\langle a, b, c\rangle$, see Section~\ref{SU Benign subgroups and Higman operations}: there is a finitely presented overgroup  $K_{\!\mathcal X}$ of $F_3$ and a finitely generated subgroup $L_{\!\mathcal X} \le K_{\!\mathcal X}$ such that $F_3 \cap L_{\!\mathcal X} = A_{\!\mathcal X}$. 
Using the proof steps for Theorem~\ref{TH Theorem A} in Chapter~\ref{SE Theorem A and its proof steps}, explicitly output $K_{\!\mathcal X}$ by its generators and defining relations, and indicate $L_{\!\mathcal X}$ by its generators. Namely, write the groups  
$K_{\!\mathcal X}$, $L_{\!\mathcal X}$ for two sets 
$\mathcal X = \Zz, \S$, see Section~\ref{SU The proof for the case of Z and S}, and then for each of operations \eqref{EQ Higman operations} (used to construct $\mathcal X$ from from $\Zz$ and $\S$ in Step~\ref{Step 4 AL Algorithm for explicit embedding of a recursive group}) apply sections~\ref{SU The proof for iota and upsilon}, \ref{SU The proof for rho}\,--\,\ref{SU The proof for omega_m} outputting a new pair $K_{\!\mathcal X}$, $L_{\!\mathcal X}$ after each operation. 

\item
\label{Step 6 AL Algorithm for explicit embedding of a recursive group}
Load the final groups $K_{\!\mathcal X}$, $L_{\!\mathcal X}$ outputted in previous Step~\ref{Step 5 AL Algorithm for explicit embedding of a recursive group} into Theorem~\ref{TH Theorem B} to build the explicit embedding $\beta: T_{\!\mathcal X} \to \mathcal G$ of the group $T_{\!\mathcal X}$ from 
\eqref{EQ introducing T_X} into a finitely presented group $\mathcal G$, see sections~\ref{SU If AX is benign then}\,--\,\ref{SU The Higman Rope Trick} including the ``The Higman Rope Trick'' and the explicit presentation of $\mathcal G$ in \eqref{EQ relations of G FULL}. 

\item
\label{Step 7 AL Algorithm for explicit embedding of a recursive group}
Since our process was via the $2$-generator group $T_{\!G}$, the above $T_{\!\mathcal X}$ from 
Step~\ref{Step 6 AL Algorithm for explicit embedding of a recursive group} coincides with $T_{\! G}$ from Step~\ref{Step 1 AL Algorithm for explicit embedding of a recursive group}.
I.e., as an explicit embedding $\varphi$ of the initial group $G$ into the finitely presented group $\mathcal G$ we can output the composition $\varphi: G \to \mathcal G$ of $\alpha: G \to  T_G$ from \eqref{EQ embedding alpha} with $\beta: T_{\!\mathcal X} \to  \mathcal G$ from \eqref{EQ define beta}, see Section~\ref{SU Equality and the final embedding}.

\item
\label{Step 8 AL Algorithm for explicit embedding of a recursive group}
[\textit{Optional}] 
$G$ can be embedded into a $2$-generator finitely presented group $T_{\!\mathcal G}$ by the composition $\psi$ of $\varphi: G \to \mathcal G$ from Step~\ref{Step 7 AL Algorithm for explicit embedding of a recursive group} with the embedding $\gamma : \mathcal G \to T_{\!\mathcal G}$ in Section~\ref{SU Embedding G into the 2-generator group TG}.
\end{enumerate}
\end{Algorithm}

\subsection{An application of the algorithm for $\Q$}
\label{SU An application of the algorithm for Q}

As non-trivial applications of Algorithm~\ref{AL Algorithm for explicit embedding of a recursive group} we in \cite{On explicit embeddings of Q} suggested explicit embeddings of the additive group of rational numbers $\Q$ into a finitely presented group $\mathcal{Q}$, and into a $2$-generator finitely presented group $T_{\!\mathcal Q}$.

Being countable, the group $\Q$ certainly has an embedding into a finitely generated  group by the
remarkable theorem of Higman, Neumann and Neumann \cite{HigmanNeumannNeumann}. Since $\Q$ apparently has a recursive presentation, such as \eqref{EQ Q genetic code}, the above finitely generated group can even be finitely presented by \cite{Higman Subgroups of fP groups}.
The question of whether this (natural) embedding can be \textit{explicit}, is addressed
in Johnson's work \cite{Johnson on Higman's interest},\,
on page 53 in the monograph of De la Harpe  \cite{De La Harpe 2000},\,
in Problem 14.10\;(a) of Kourovka Notebook \cite{kourovka} by Bridson and De la Harpe,  etc. Discussing the Higman Embedding
Theorem in \cite{Johnson on Higman's interest}, Johnson for certain recursive groups builds their explicit embeddings into finitely presented ones. Then he expresses his gratitude to Higman for raising that problem, and mentions: \textit{``Our main aim, of embedding in a finitely presented group the additive group of rational numbers continues to elude us''}, see page 416 in \cite{Johnson on Higman's interest}.
It is interesing to compare Johnson's citation with the remark of Bridson and Nyberg-Brodda on page 12 in 
\cite{Bridson Nyberg-Brodda 2025}, where  this problem has been called \textit{``This particular challenge, which was a personal favorite of Higman''}.

\smallskip
The steps of Algorithm~\ref{AL Algorithm for explicit embedding of a recursive group} have been applied for the group $\Q$ in \cite{On explicit embeddings of Q}: a finitely presented group $\mathcal{Q}$, and a finitely presented $2$-generator group $T_{\!\mathcal Q}$ with the embeddings 
$\varphi: \Q \to \mathcal Q$
and 
$\psi: \Q \to T_{\!\mathcal Q}$
are explicitly given in sections 9.1 and 9.2 in \cite{On explicit embeddings of Q}. 
Both of these embeddings were reported earlier in \cite{Explicit embeddings Moscow 2018}, without any proofs yet.

The first explicit examples with proofs for finitely presented groups holding $\Q$, were presented by 
Belk, Hyde and Matucci in \cite{Belk Hyde Matucci}.  
Their first embedding of $\Q$ in \cite{Belk Hyde Matucci} is into the group $\overline T$ from \cite{Ghys Sergiescu}, namely, $\overline T$ is the group of all piecewise-linear homeomorphisms $f:\mathbb R \to \mathbb R$ satisfying certain specific requirements, see Theorem 1 in \cite{Belk Hyde Matucci}. 
The second finitely presented group is the automorphism group of Thompson's group $F$, see Theorem 2 in \cite{Belk Hyde Matucci}. These embeddings allow further variations, say, $\overline T$ (together with its subgroup $\Q$) admits an embedding into two specific finitely presented simple groups $T\! \mathcal A$ and $V\! \mathcal A$.

See also the discussion in \cite{Mathoverflow An explicit example of a finitely presented group} where some additional mentions to this problem are attested, including personal recollections about how this question has been mentioned at the past conferences by Martin Bridson, Pierre de la Harpe, Laurent Bartholdi. 

Further ways of embedding of $\Q$ into a finitely generated group can be deduced from other results in the literature, such as:
Theorem~2 in \cite{Hall 59 embeds Q}; \,
Theorem~31.2 and Corollary~31.2 in \cite{Olshanskii book} (compare to Theorem 1 and Corollary 1 in \cite{Ashmanov Olshanskii}, etc.

The current topic also concerns the \textit{Boone-Higman conjecture} proposing that a finitely generated group has solvable word problem if and only if it can be embedded into a finitely presented simple group. The survey \cite{Belk Bleak Matucci Zaremsky} puts into context known results on this conjecture and the construction in \cite{Belk Hyde Matucci}. 

Also, Section~3 in \cite{Belk Bleak Matucci Zaremsky} outlines some motivation for development of interesting parallelism between group theory and mathematical logic, leading to the Higman Embedding Theorem and to Boone-Higman conjecture. 

\begin{Remark}
\label{RE The embedding of Q must be natural}
Specifically, Problem 14.10\;(a) of Kourovka Notebook   asks about a \textit{natural} embedding of $\mathbb Q$ into a finitely presented group. The large representation of the overgroup $T_{\!\mathcal Q}$ constructed in \cite{On explicit embeddings of Q} is not natural. However, the problem of embedding of $\Q$ into a finitely presented group can be found in the literature without that requirement also, see \cite{Johnson on Higman's interest} mentioned at the beginning of this section.
\end{Remark}

\subsection{The problem of explicit  embedding for $GL(n, \Q)$, other possible problems}
\label{SU The problem of explicit  embedding for GL(n Q)}
Problem 14.10 in \cite{kourovka} contains one more question which in the current edition of Kourovka is coined as Problem 14.10\;(c): 
{\it ``Find an explicit and ``natural'' finitely presented group $\Gamma_n$ and an embedding
of $GL(n, \Q)$ in $\Gamma_n$.
Another phrasing of the same problems is: find a simplicial complex $X$ which
covers a finite complex such that the fundamental group of $X$ is $\Q$ or, respectively,
$GL(n, \Q)$.''}\;
This question was recently mentioned in \cite{Belk Bleak Matucci Zaremsky} also.

We would like to announce that, as another application of Algorithm~\ref{AL Algorithm for explicit embedding of a recursive group}, 
an explicit embedding of $GL(n, \Q)$ into some finitely presented  $\Gamma_n$ reflecting this question is suggested. Moreover, that group $\Gamma_n$ can even be $2$-generator \cite{Explicit embeddings GL(n Q)}.

\medskip
If the reader is aware of \textit{any other open problems} on explicit embeddings of recursive groups into finitely presented groups, it would be great to mention them in the discussion \cite{Mathoverflow Problems on explicit embeddings of recursive groups}. It goes without saying that all credit will be gratefully acknowledged in any publication in which we use the information kindly provided in  \cite{Mathoverflow Problems on explicit embeddings of recursive groups}.

\subsection{Other embedding methods}
\label{SU Other embedding methods}

Higman's original proof in \cite{Higman Subgroups of fP groups} is very elaborate, and this mo\-ti\-vated other authors to suggest alternative proofs for it.

Lindon and Schupp present in Section IV\,\!.\,\!7 of \cite{Lyndon Schupp} a proof related to Diophantine subsets of $\Z^n$.
Solving Hilbert's Tenth Problem Matiyasevich establishes  that a subset of $\Z^n$ is recursively enumerable if and only if it is Diophantine \cite{Matiyasevich Diophantine}. 
The proof in \cite{Lyndon Schupp}  uses this Diophantine characterization along with Valiev's approach in \cite{Valiev}.

An approach reflecting  Aanderaa's  proof \cite{Aanderaa} has been included by Rotman in Chapter~12 of \cite{Rotman}. It applies the auxiliary constructions interpreting Turing machines via semigroups with specific generators, and also uses the Boone-Britton group  \cite{Boone Certain simple unsolvable problems, Britton The word problem}. Group diagrams allow \cite{Rotman} to shorten the proof of \cite{Aanderaa}.

\smallskip 

Besides the above named two well known textbooks \cite{Lyndon Schupp, Rotman}, other important proofs can be found in the literature, such as:

The proof of Higman's theorem can be very much simplified using the S-machines, see \cite{BORS2002}. 
While the main construction developed by Birget, Ol'shanskii, Rips, and Sapir is difficult to build, its complexity stems from the paper's additional goals.
Dropping from \cite{BORS2002} everything used to estimate the isoperimetric function, it is possible to get a far briefer argumentation for existence of  Higman's embedding.



An even simpler proof can be obtained from the work of Ol'shanskii and  Sapir \cite{OS2001}, but it would again require dropping the consideration of the isoperimetric function from \cite{OS2001}.

Shoenfield's textbook \cite{Shoenfield} in Mathematical logic does not cover Group theory in its main body, but it contains a remarkable \textit{Appendix} touching decision problems in groups.  In particular, it presents an interesting modification of the proof for Higman's theorem. The notion of benign subgroups from \cite{Higman Subgroups of fP groups}  is generalized in \cite{Shoenfield} to benign isomorphisms, benign subsets and benign subgroups, and then the main passage from recursion to groups is done by means of Principal Lemma on page 330.

The survey article \cite{Adyan Durnev}  of Adyan and Durnev presents an outline of the proof for Higman's theorem following the lines of Aanderaa's paper  \cite{Aanderaa}, and it is related to \cite{Post Recursive unsolvability, 
Markov impossibility of certain algorithms,
Novikov on algorithmic undecidability, 
Boone Certain simple unsolvable problems, 
Britton The word problem}.

\begin{Remark}
The reason, why we for Algorithm~\ref{AL Algorithm for explicit embedding of a recursive group} stick with the  methods of Higman, is not only the fact that \cite{Higman Subgroups of fP groups} is the oldest and most celebrated approach, but also that, it is based on the \textit{classic definition} of recursion via  composition, primitive recursion and minimization, see \cite{Davis, Rogers, Boolos Burgess Jeffrey}.
The above cited other methods, on the contrary, are mostly deducing recursion to ``third party'' results, such as, 
the solution of Hilbert's Tenth Problem, 
the interpretation of Turing machines via semigroups, S-machines,
the Boone-Britton group, the group diagrams.
\end{Remark}

The following remark is also relevant for the choice of the methods we used for construction of the explicit embedding:

\begin{Remark}
While developing Algorithm~\ref{AL Algorithm for explicit embedding of a recursive group}, we naturally investigated whether an explicit analogue of Higman's embedding could be constructed using the \textit{other} methods listed above, including the S-machines. Beyond reviewing the literature, we discussed this question with leading specialists in these fields. However, we could not find any approaches that would yield a shorter, explicit embedding construction.
\end{Remark}

\subsection{Auxiliary notation and references}
\label{SU Auxiliary notation and references}

In Chapter~\ref{SE Preliminary notation, constructions and references} we for future use collect some notation, definitions and references related to recursion, Higman operations, benign subgroups, free constructions. 

The proofs below are very dependent on constructions that we have proposed over recent years in \cite{Embeddings using universal words,
The Higman operations and  embeddings,
Auxiliary free constructions for explicit embeddings, 
On explicit embeddings of Q}.  
Since we want to avoid any repetition of fragments from older articles, we collect some brief results without any proofs in sections 
\ref{SU The embedding alpha into a 2-generator group},
\ref{SU The *-construction},
\ref{SU Construction of the group A},
\ref{SU Computing the conjugation of by d j}.  

In particular, in Section~\ref{SU The embedding alpha into a 2-generator group} we present the algorithm for explicit embedding of a countable group $G$ into a $2$-generator group $T_{\!G}$  from \cite{Embeddings using universal words}. We use it as the first tool for our embedding, see  steps~\ref{Step 1 AL Algorithm for explicit embedding of a recursive group}, \ref{Step 2 AL Algorithm for explicit embedding of a recursive group} in Algorithm~\ref{AL Algorithm for explicit embedding of a recursive group}. 

Section~\ref{SU The *-construction} introduces the generic $\bigast$-\textit{construction}: a ``nested'' combination of HNN-exten\-sions and free products with amalgamated subgroups. We suggested this construction in
\cite{Auxiliary free constructions for explicit embeddings} as it simplifies and unifies many of our proofs needed for steps \ref{Step 5 AL Algorithm for explicit embedding of a recursive group}\,--\,\ref{Step 7 AL Algorithm for explicit embedding of a recursive group} in Algorithm~\ref{AL Algorithm for explicit embedding of a recursive group}. 
Sections~\ref{SU Construction of the group A} 
and 
\ref{SU Computing the conjugation of by d j} define a specific group $\mathscr{A}$ and its basic properties to use them in later sections. 
This group $\mathscr{A}$  has already been used in \cite{On explicit embeddings of Q}.

\subsection{Figures and illustrations}
\label{SU Figures and illustrations}

To better illustrate some of our constructions, we accompany them by fi\-gu\-res~\ref{FI Figure_01_Star_construction},
~\ref{FI Figure_09_C},
~\ref{FI Figure_10_A}
in Chapter~\ref{SE Some auxiliary constructions};
figures~\ref{Figure_11_Theorem_A}, 
\ref{Figure_12_Un_Intersect},
\ref{Figure_13_KP},
\ref{Figure_14_KQ},
\ref{Figure_15_Extracting_Q1_and_ArhoX}
in Chapter~\ref{SE Theorem A and its proof steps};
and 
figures~\ref{Figure_16_If_AX_is_benign_ZX_is_benign},
\ref{Figure_17_If_ZX_is_benign_QX_is_benign},
\ref{Figure_18_Higman_rope_trick}
in Chapter~\ref{SE Theorem B and the final embedding}. 
If some \textit{initial} benign subgroups are given in a particular group, we highlight them by gray color in a figure. Also, if some \textit{new} benign subgroups have to be constructed based on the initial ones, we highlight them by dashed lines, see figures~\ref{Figure_11_Theorem_A}\,--\,\ref{Figure_15_Extracting_Q1_and_ArhoX}
in Chapter~\ref{SE Theorem A and its proof steps}
and 
figures~\ref{Figure_16_If_AX_is_benign_ZX_is_benign}\,--\,\ref{Figure_18_Higman_rope_trick}
in Chapter~\ref{SE Theorem B and the final embedding} with these features.  This graphical visualization hopefully makes identification of benign subgroups inside the general constructions clearer.

Likewise, we recommend the reader to check figures 1\,--\,8 in 
\cite{Auxiliary free constructions for explicit embeddings} which illustrate the $\bigast$-construc\-tion and some other constructions we use. Only one of those eight figures from \cite{Auxiliary free constructions for explicit embeddings} has been copied here as Figure~\ref{FI Figure_01_Star_construction} in Chapter~\ref{SE Some auxiliary constructions}. 

\subsection*{Acknowledgments}
\label{SU Acknowledgments}
I am very much  grateful to Prof.~Alexander Yu.~Ol'shanskii for the opportunities to discuss this work, including both Higman's construction and some alternative methods, such as the S-machines, that allow to achieve the result of \cite{Higman Subgroups of fP groups}. 

Considering the related article~\cite{Higman's reversing operation}, I had the occasions to discuss our constructions in detail with Prof.~Oleg Bogopolski who, among other helpful points, also suggested to illustrate the most complex constructions with figures. That idea was used in the current work, as well, see illustrations listed in Section~\ref{SU Figures and illustrations} above.


It is a pleasure to me to thank the  State Committee of Science MES RA for the grant 25RG-1A187 which partially supports the current research.

I am also thankful to the German Academic Exchange Service DAAD Fachliteratur Program, grant A/97/13683, for the opportunity to acquire academic literature, including the volume \cite{De La Harpe 2000} referred above.

\bigskip 
\section{Preliminary notation, constructions and references}
\label{SE Preliminary notation, constructions and references} 

\subsection{Recursive enumeration and recursive groups}
\label{SU Recursive enumeration and recursive groups} 

Higman uses the Kleene characterization of recursively enumerable subsets of the set of non-negative integers $\mathbb N_0$, as subsets that can be obtained as images of partial recursive functions on  $\mathbb N_0$, i.e., of functions that can be built from the \textit{zero}, \textit{successor} and \textit{projections} functions by means of \textit{composition} $\circ$, \textit{primitive recursion} $\rho$ and \textit{minimization} $\mu$ operations, see \cite{Davis, Rogers, Boolos Burgess Jeffrey} for background information.  
A newer popular term for recursive is \textit{computable}, but we stay closer to Higman's traditional notation here.

Since the G{\" o}del numbering can be used ``to code'' the elements of an \textit{arbitrary} effectively enumerable set $\E$ by means of integers from $\mathbb N_0  \!=\! \{0,1,2,\ldots\}$, the subsets of such a set $\E$ can also be ``coded'' by subsets of $\mathbb N_0$, and hence, the  notion of recursive enumeration is generalized for such subsets of $\E$ also.
In particular, if a group $G = \langle\, X \mathrel{|} R \,\rangle$ is given by some effectively enumerable generators $X$, then in the free group $F=\langle\, X \rangle$ on the alphabet $X$ the set of \textit{all} words apparently is effectively enumerable, and if the set of relations $R$ is recursively enumerable inside it in the above sense, then $G$ is called a \textit{recursive} group. 
There are other definitions of recursive groups -- the condition about recursively enumerable set $R$ could well be replaced by the condition that $R$ is a \textit{recursive} set (i.e., $R$ and $F\backslash R$ are \textit{both} recursively enumerable), see page 88 in \cite{Lyndon Schupp}. The words in $R$ can even be chosen to be \textit{positive} only, see page 451 in \cite{Rotman}. 

\medskip
To make it clear what we mean under \textit{``explicit''} in Algorithm~\ref{AL Algorithm for explicit embedding of a recursive group}, let us agree  to say that a group is given \textit{explicitly}, if its generators and defining relations are listed (effectively enumerated) explicitly; this has especially simple meaning if they are just finitely many. 
Moreover, in the proofs below such groups are going to be constructed from some  explicitly known subgroups via certain free constructions. 
Under an \textit{explicit} embedding $\varphi : G \to H$  we understand an embedding for which the images of all generators of $G$ under $\varphi$ are directly written as some words via the generators of $H$. 


\subsection{Integer-valued functions $f$}
\label{SU Integer functions f} 

Following \cite{Higman Subgroups of fP groups} denote by $\mathcal E$ the set of all functions $f : \Z \to \Z$ with finite supports, such as, the function $f$ sending the integers $-1, 0, 1, 2$ to $3,2,9,8$,\; and all other integers to $0$. 
When $f$ for a certain fixed $m=1,2,\ldots$ has the property that 
$f(i)=0$ for \textit{all} $i<0$ and $i\ge m$, then it is comfortable to record the function $f$ as a sequence $f=(j_0,\ldots,j_{m-1})$ assuming $f(i)=j_i$ for $i=0,\ldots,m-1$, e.g., the function $f=(2, 5, 3)$ sending the integers $0, 1, 2$ to $2, 5, 3$,\; and all other integers to $0$.
Clearly, $m$ may not be uniquely defined for $f$\!, and where necessary we may add extra zeros at the end of a sequence, e.g., the previous function can well be recorded as $f=(2, 5, 3,0,0)$ (the last two zeros change nothing in the way $f$ acts on $\Z$). In particular, the constant zero function can be written, say, as $f=(0)$ or as $f=(0,\ldots,0)$ where needed. 
See more in 2.2 of \cite{The Higman operations and  embeddings}.

Denote $\E_m\!=\big\{
(j_0,\ldots,j_{m-1})
\;\mathrel{|}\;
j_i\!\in \!\Z,\; i\!=\!0,\ldots,m-1\big\}$ for an $m=1,2,\ldots$ Clearly, $\E_m \subseteq \E$, and by the agreement above $\E_{m'}\subseteq \E_m$,
in case $m' \le m$.
Also denote $\Zz\!=\!\big\{(0)\big\}$ to consist of the constant \textit{zero} function only, and denote 
$\S\!=\!\big\{(n,\, n\!+\!1) \mathrel{|} n\in \Z\big\}$ to consist of all \textit{consecutive} integer pairs from $\E_2$ only.

For any $f\in \mathcal E$ and $k\in \Z$  define the function $f_{k}^+$\! as follows: 
$f_{k}^+(i)=f(i)$ for all $i\!\neq\! k$, and   
$f_{k}^+(k) = f(k)\!+\!1$.
When $f\in \E_m$, we shorten $f_{m-1}^+=f^+$\!. Say, for the above $f\!=\!(2, 5, 3)\in\E_3$ we have $f_{1}^+\!\!=(2, 6, 3)$ and 
$f^+\!\!=(2, 5, 4)$, we just add $1$ to the last  coordinate of $f$ to get $f^+$\!.

\subsection{The Higman operations}
\label{SU The Higman operations} 
Higman defines the following specific basic operations that  transform the subsets of $\E$ to some new subsets of  $\E$, see Section 2 in  \cite{Higman Subgroups of fP groups}:
\begin{equation}
\tag{H}
\label{EQ Higman operations}
\iota,\; 
\upsilon,\; 
\rho,\; 
\sigma,\; 
\tau,\; 
\theta,\; 
\zeta,\; 
\pi,\; 
\omega_m,
\end{equation}
$m=1,2,\ldots$
Although we are going to use the Higman operations  extensively, we do not include their definitions and main properties here, referring the reader to  Section 2 in  \cite{Higman Subgroups of fP groups} or to Section 2.3 in  \cite{The Higman operations and  embeddings}.

Also, in Section 2.4 in \cite{The Higman operations and  embeddings} we suggested some extra auxiliary Higman operations which hopefully make the work with the subsets of $\E$ more natural and intuitive:
\begin{equation}
\tag{H*}
\label{EQ auxiliary Higman operations}
\sigma^i, \;
\zeta_i,\;
\zeta_S,\;
\pi',\;
\pi_i,\;
\pi'_i,\;
\tau_{k,l},\;
\alpha,\;
\epsilon_S,\;
+,\;
\iota_n,\;
\upsilon_n.
\end{equation}
If a subset $\mathcal X$ of $\E$ can be obtained by operations \eqref{EQ Higman operations} and \eqref{EQ auxiliary Higman operations}, then it can be obtained by operations \eqref{EQ Higman operations} alone.

\subsection{Defining subgroups by integer sequences}
\label{SU Defining subgroups by integer sequences}

Let $F_3=\langle a, b, c\rangle$ be a free group of rank $3$. For any $i\in \Z$ denote $b_i=b^{c^i} = c^{-i} b \, c^i$ in $\langle b, c\rangle$. Then for any function $f\! \in \E$ one can define the following products in $\langle b, c\rangle$ and in $F_3$: 
\begin{equation}
\label{EQ defining b_f and a_b}
b_{f} = \cdots
b_{-1}^{f(-1)}
b_{0}^{f(0)} 
b_{1}^{f(1)}\cdots 
\quad \text{and} \quad
a_{f} = a^{b_{f}} = b^{-1}_{f} a\, b_{f}.
\end{equation}
Say, for the function 
$f$ sending\,  $-1, 0, 1, 2$ to $3,2,9,8$ in Section~\ref{SU Integer functions f} we have 
$b_f\!=\!b_{-1}^{3}
b_{0}^{2} 
b_{1}^{9}
b_{2}^{8}$.

When $f$ is in $\E_m$, we can more comfortably record it as 
$f=(j_0,\ldots,j_{m-1})$, see Section~\ref{SU Integer functions f}, and then write
$b_{f} = b_0^{j_0}  \cdots b_{m-1}^{j_{m-1}}$.
Say, for $f\!=\!(2, 5, 3)$
we may put
$b_f\!=\!
b_{(2, 5, 3)}\!=
b_{0}^{2} 
b_{1}^{5}
b_{2}^{3}$ and
$a_{(2, 5, 3)}\!=a^{b_{0}^{2} 
b_{1}^{5}
b_{2}^{3}}$.

For a set $\mathcal X$ of sequences from $\E$ denote by $A_{\mathcal X}$ the subgroup generated in $F$ by all the conjugates $a_{f} = a^{b_{f}}$ with $f\!\in \mathcal X$.
Say, $A_{\E_2}$ is the subgroup generated in $F_3$ by all words of type
$a^{b_{(j_0,\; j_1)}}
=  
b_{1}^{-j_1}
b_{0}^{-j_0}
\cdot a \cdot b_{0}^{j_0} 
b_{1}^{j_1}$, with all $j_0, j_1 \in \Z$.

For technical purposes we may use the above notation with some \textit{other} free generators, also. Say, in the free group $\langle d,e\rangle$
we may set  
$d_i=d^{e^i}$\!,\;\;
$d_{f} = \cdots
d_{-1}^{f(-1)}
d_{0}^{f(0)} 
d_{1}^{f(1)}\cdots $
Or in the free group $\langle g,h,k\rangle$
we may set  
$h_i=h^{k^i}$\!,\;\;
$h_{f} = \cdots
h_{-1}^{f(-1)}
h_{0}^{f(0)} 
h_{1}^{f(1)}\cdots $
and 
$g_{f} = g^{h_{f}}$\!.
Or else, we may take the isomorphic copy 
$\bar F=\langle \bar a, \bar b, \bar c\rangle$ of $F=\langle a, b, c\rangle$, and use inside it the elements $\bar b_i, \bar b_f, \bar a_f$ and the subgroup $\bar A_{\mathcal X}$ defined as expected.

\subsection{Benign subgroups}
\label{SU Benign subgroups and Higman operations} 

A subgroup $H$ in a finitely generated group $G$ is called a \textit{benign subgroup} in $G$, if $G$ can be embedded into a finitely presented group $K$ with a finitely generated subgroup $L\le K$ such that $G \cap L = H$.
For detailed information on benign subgroups we refer to  Sections 3, 4 in \cite{Higman Subgroups of fP groups}, see also sections 2, 4, 5 in \cite{Auxiliary free constructions for explicit embeddings}.

\begin{Remark}
\label{RE finite generated is benign}
From the definition of benign subgroup it is very easy to see that arbitrary \textit{finitely generated} subgroup $H$ in any \textit{finitely presented} group $G$ is benign in $G$.  Because $G$ itself can be chosen as the finitely presented overgroup $K_H$ of $G$ with a finitely generated subgroup $H = L_H$, such that 
$H\cap L_H = 
H \cap \,H = H$ inside $K_H=G$. We are going to often use this remark in the sequel.
\end{Remark}

\subsection{Free constructions}
\label{SU Free constructions} 

For background information on free products with amalgamations and on HNN-extensions we refer to 
\cite{Bogopolski} and \cite{Lyndon Schupp}. 
Our usage of the \textit{normal forms} in
free constructions is closer to~\cite{Bogopolski}.
Notation varies  in the literature, and to maintain uniformity we are going to adopt notation we used in \cite{Auxiliary free constructions for explicit embeddings}. 

Namely, if any groups $G$ and $H$ have subgroups, respectively, $A$ and $B$ isomorphic under  $\varphi : A \to B$, then the (generalized) free product of $G$ and $H$ with amalgamated subgroups $A$ and $B$ is denoted by
$G*_{\varphi} H$ (we are \textit{not} going to use the alternative notation $G*_{A=B} H$). When $G$ and $H$ are overgroups of the same subgroup $A$, and $\varphi$ is just the identical isomorphism on $A$, we write the above as $G*_{A} H$.

If $G$ has subgroups $A$ and $B$ isomorphic under  $\varphi : A \to B$, then the HNN-extension  of the base $G$ 
by some stable letter $t$
with respect to the isomorphism 
$\varphi$ is denoted by
$G*_{\varphi} t$.
In case when $A=B$ and $\varphi$ is  identity map on $A$, we denote $G*_{A} t$.
We also use HNN-extensions $G *_{\varphi_1, \varphi_2, \ldots} (t_1, t_2, \ldots)$  
with more than one stable letters, see \cite{Auxiliary free constructions for explicit embeddings} for details.

\medskip
Below we are going to use a series of facts about certain specific subgroups in  free constructions $G*_{\varphi} H$, \, $G*_{A} H$, \,$G*_{\varphi} t$ and $G*_{A} t$. We have collected them in Section 3 of  \cite{Auxiliary free constructions for explicit embeddings} to refer to that section whenever needed. 

\medskip
Notice that some of the constructions in this article could be replaced by shorter analogs using the wreath product methods we suggested in
\cite{Subnormal embedding theorems}\,--\,\cite{Subvariety structures}. However, here we intentionally use free constructions only to stay closer to the methods of Higman.

\subsection{The ``conjugates collecting'' process}
\label{SU The conjugates collecting process}

We are going to use the following simple, well known combinatorial trick. 
If $\X$ and $\Y$ 
are any disjoint subsets in a group $G$, then it is easy to verify that any element $w\in \langle \X,\Y \rangle$ can be written as:
\begin{equation}
\label{EQ elements <X,Y>}
w=u\cdot v
= x_1^{\pm v_1}
x_2^{\pm v_2}
\cdots
x_{k}^{\pm v_k}
\cdot
v
\end{equation}
where $v_1,v_2,\ldots,v_k,\, v\in\langle \Y \rangle$, and 
$x_1,x_2,\ldots,x_k \in \X$.
Indeed, first write $w$ as a product of some elements from $\X$, from $\Y$ and of their inverses. 
Next, by grouping
the elements from $\Y$ where necessary, and by adding some auxiliary trivial elements we rewrite $w$ as:
\begin{equation}
\label{EQ preliminary step}
w=z_1^{\vphantom8} x_1^{\pm1} 
z_2^{\vphantom8} \, x_2^{\pm1} z_3^{\vphantom8}
\cdots
z_k^{\vphantom8}\, x_k^{\pm1} z_{k+1}^{\vphantom8}
\end{equation}
where $x_1,\ldots,x_k \in \X$ and 
$z_1,\ldots,z_{k+1} \in \langle \Y \rangle$.
Say, if $\X =\{x_1, x_2, x_3\}$ and  
$\Y =\{y_1, y_2\}$, then 
$w=x_2^{-1} y_1^3  y_2^{\vphantom8} x_1^2 x_3^{\vphantom8}$ can be rewritten as 
$w=z_1\, x_2^{-1} z_2^{\vphantom8} x_1 z_3^{\vphantom8} x_1
z_4^{\vphantom8}
x_3^{\vphantom8}
z_5^{\vphantom8}
$,  
where
$z_1=1$,\,
$z_2=y_1^3  y_2^{\vphantom8}$,\,
$z_3=z_4=z_5=1$ are in $\langle \Y \rangle$. Then \eqref{EQ preliminary step} can be transformed to:
$$w=z_1^{\vphantom8} x_1^{\pm1}
z_1^{-1} \!\cdot\,
z_1^{\vphantom8}
z_2^{\vphantom8} \, x_2^{\pm1}
(z_1^{\vphantom8}
z_2^{\vphantom8})^{-1}
\!\cdot\,
z_1^{\vphantom8}
z_2^{\vphantom8}
z_3^{\vphantom8}
\cdots
(z_1^{\vphantom8}\!\cdots z_k^{\vphantom8})\, x_k^{\pm1}
(z_1^{\vphantom8}\!\cdots z_k^{\vphantom8})^{-1}
z_1^{\vphantom8}\!\cdots z_k^{\vphantom8} z_{k+1}^{\vphantom8},
$$
which is \eqref{EQ elements <X,Y>} for
$v_1^{\vphantom8}\!=z_1^{-1}$\!\!\!, \;
$v_2^{\vphantom8}\!=(z_1z_2)^{-1}$\!\!\!\!, \; 
$v_3^{\vphantom8}\!=(z_1 z_2 z_3)^{-1}$\!\!\!\!, \, $\ldots$
$v_k^{\vphantom8}\!=(z_1 \cdots z_k)^{-1}$\!\!\!\!, \;  
$v\!=z_1^{\vphantom8}\!\cdots\; z_k^{\vphantom8} z_{k+1}^{\vphantom8}$.

In a simplest case $\X=\{x\}$ and $\Y=\{y\}$ are of carnality $1$, and in the $2$-generator group $G=\langle x, y\rangle$ we can write any $w \in G$ as a product
of some conjugates of $x$ with a power of $y$:
\begin{equation}
\label{EQ elements from <x,y>}
w=x^{\pm y^{n_1}}\!x^{\pm y^{n_2}}\!\!\cdots\, x^{\pm y^{ n_s}}\!\!\cdot y^k=\!u\cdot v\,.
\end{equation}
As we will see later, this argument is helpful in HNN-extensions and other free constructions.

\bigskip 
\section{Higman's original construction}
\label{SE Higman's original construction}

\subsection{The main steps of the construction in \cite{Higman Subgroups of fP groups}}
\label{SU The main steps of construction in H} 

Let us start by a summary of the main steps of Higman's construction in sections 2--5 of \cite{Higman Subgroups of fP groups} by which he embeds a recursive group $G$ into a finitely presented group. 
We outline the construction in below five steps because it is presented that way at the end of \textit{``Introduction''} in \cite{Higman Subgroups of fP groups}.

These five steps will be used by us for two purposes below: 
\begin{itemize}
\item[$\circ$]
in Section~\ref{SU Is Higman's construction explicit} we use them to state which are the \textit{non-explicit} parts of \cite{Higman Subgroups of fP groups};

\item[$\circ$]
and in Chapter~\ref{SU The modified construction of the current work} we use them to  explain \textit{what we modify} to \cite{Higman Subgroups of fP groups} to overcome the found issues, and to have an explicit embedding.
\end{itemize}

\subsubsection{\textbf{Step 1.} 
Characterization of the  recursively enumerable subsets of $\E$ via Higman operations: page 457 and Section 2 in \cite{Higman Subgroups of fP groups}}
\label{SU Step 1. Characterization of recursively enumerable subsets via Higman operations}

Higman begins by the classic Kleene characterization of a recursively enumerable subsets of $\mathbb N_0$ which we mentioned in Section~\ref{SU Recursive enumeration and recursive groups} with references. 
Then the G{\" o}del numbering allows to generalize the notion of recursively enumerable sub\-sets of $\mathbb N_0$ to \textit{any} subsets of an \textit{effectively enumerable} set $\E$, in particular, of the set $\E$ of all integer-valued functions $f\! : \Z \to \Z$ with finite supports from the above Section~\ref{SU Integer functions f}, as this $\E$ certainly is  effectively enumerable. 

The first main technical result of \cite{Higman Subgroups of fP groups} is Theorem 3 that suggests an \textit{alternative characterization} for recursively enumerable subsets $\mathcal X$ of the above $\E$, without using the Kleene method and the G{\" o}del numbering, at all. Namely, they turn out to be the subsets constructed from two specific basic sets 
$\Zz$ and
$\S$, see Section~\ref{SU Integer functions f} above, by means of the Higman ope\-ra\-tions
$\iota,\; 
\upsilon,\; 
\rho,\; 
\sigma,\; 
\tau,\; 
\theta,\; 
\zeta,\;
\pi,\; 
\omega_m$, 
see \eqref{EQ Higman operations} in Section~\ref{SU The Higman operations}.   
The set of all such subsets of $\E$ is denoted via $\mathscr{S}$\!. 
By Theorem 3 in \cite{Higman Subgroups of fP groups}, \textit{a subset $\mathcal X$ of $\E$ is recursively enumerable if and only if $\mathcal X$ is in $\mathscr{S}$\!.}

\subsubsection{
\textbf{Step 2.} 
Characterization of the recursively enumerable subsets of $\E$ via benign subgroups in the free group of rank $3$: Section 3 and Section 4 in \cite{Higman Subgroups of fP groups}}
\label{SU Step 2. Characterization of the recursively enumerable subsets via benign subgroups}

In the above Step 1 Higman uses practically no group-theoretical constructions yet,\, they actually occur in the current step.
In the free group $F_3\!=\!\langle a,b,c\rangle$, using the conjugates $b_i=b^{c^i}$\!,\, the specific elements $b_{f}$ and $a_f$ are being defined for any function $f\! \in \E$, see \eqref{EQ defining b_f and a_b} in
Section~\ref{SU Defining subgroups by integer sequences} above.
This allows to define the respective subgroup $A_{\!\mathcal X} =\langle
a_f \;|\; f\in \mathcal X
\rangle$ inside $F_3$ for any subset $\mathcal X$ of $\E$.

Theorem~4 in \cite{Higman Subgroups of fP groups} states that \textit{$\mathcal X$ is recursively enumerable in $\E$ if and only if the respective subgroup $A_{\!\mathcal X}$ is \textit{benign} in $F_3$}, see definition and references for benign subgroups in Section~\ref{SU Benign subgroups and Higman operations} above.
Thus, discussion of recursive enumeration for subsets of $\E$ is being translated to the language of benign subgroups.

\medskip
These Step~1 and Step~2 occupy the main part of \cite{Higman Subgroups of fP groups} (sections 2\,--\,4), while the remaining three steps fit into a couple of pages in Section 5 \textit{``Conclusion''} of Higman's work.

\subsubsection{
\textbf{Step 3.} 
Characterization of the recursively enumerable subsets in free groups of rank $2$: page 473, Section 5 in \cite{Higman Subgroups of fP groups}}
\label{SU Step 3. Characterization of benign subsets in free groups of rank 2}
Lemma 5.1 in \cite{Higman Subgroups of fP groups} establishes connection between \textit{all} recursively enumerable subsets in the free group of rank $2$ and benign subgroups of a certain \textit{specific type} in the free group of rank $3$.

\subsubsection{
\textbf{Step 4.} 
Characterization of the recursively enumerable subgroups in free groups of any finite rank: page 474, Section 5 in \cite{Higman Subgroups of fP groups}}
\label{SU Step 4. Characterization of benign subgroups in free groups of finite rank}
As an adaptation of the previous step, Lemma 5.2 in \cite{Higman Subgroups of fP groups} shows that in \textit{any finitely generated free group, a subgroup is recursively enumerable if and only if it is benign}.
It is enough to argument this claim for the free group of rank $2$, as the general cases can be deduced to this.

\subsubsection{
\textbf{Step 5.} 
The final embedding by ``Higman Rope Trick'': pages 474\,--\,475, Section 5 in \cite{Higman Subgroups of fP groups}}
\label{SU Step 5. Final embedding by Higman Rope Trick}
Let the given finitely generated recursive group $G$ have a presentation $G = \langle\, X \mathrel{|} R \,\rangle$ with a finite $X$ and a recursively enumerable $R$, that is, $G \cong F/\langle R \rangle^F$ for the free group $F$ of rank $|\,X|$. As $R$ is recursively enu\-merable, its normal closure $\langle R \rangle^F$\! is also recursively enu\-merable for very simple combinatorial reasons and, by previous step it is \textit{benign} in $F$. 

Thus, by Lemma~3.5 in \cite{Higman Subgroups of fP groups} providing two alternative definitions for benign subgroups, the free product $H$ of two copies of $F$ amalgamated in $\langle R \rangle^F$ can be embedded into a certain finitely presented group $K$. 

Finally, a specific HNN-extension of the direct product $K\! \times \! G$ is being constructed to be the finitely presented overgroup of $G$ we are looking for, see page 475 in \cite{Higman Subgroups of fP groups}. A process often called the ``Higman Rope Trick'' then shows that all but \textit{finitely} many  of the relations of this HNN-extension are redundant, also see the related discussion \cite{Higman rope trick}.

\subsection{Is Higman's embedding explicit?}
\label{SU Is Higman's construction explicit} 

Suspicion, that not all steps of Higman's original construction \cite{Higman Subgroups of fP groups} may be explicit, arise already from the fact that for some well-known recursive groups the problem of their explicit embedding  into finitely presented groups has been (or still is) open for decades. 
See, for example, the question on explicit embedding of the recursive group $\Q$ into a finitely presented group asked in \cite{Johnson on Higman's interest, De La Harpe 2000, kourovka} and outlined in Section~\ref{SU An application of the algorithm for Q} above, including the cited remark of Johnson about embedding of $\Q$ that \textit{``continues to elude us''}.
Yet another example of this type is the matrix group $GL(n, \Q)$ mentioned in Section~\ref{SU The problem of explicit  embedding for GL(n Q)}.

As far as we know, neither Higman himself nor his descendants had ever stated the explicitness of the embedding of \cite{Higman Subgroups of fP groups} in the literature.

\medskip
Besides such indirect arguments of rather ``historical'' nature, it is reasonable to analyze the steps of Higman embedding \cite{Higman Subgroups of fP groups} to directly indicate its parts which suggest \textit{no explicit mechanism} to accomplish them, i.e., to state what makes some parts of \cite{Higman Subgroups of fP groups} non-explicit.

With this objective in mind, suppose a recursive group is explicitly given as $G = \langle\, X \mathrel{|} R \,\rangle$, and try to follow Higman's construction \textit{literally} to see which obstacles we face. 
In below points and examples we illustrate these obstacles displayed, in particular, for the group $\Q$.

\begin{Remark}
\label{RE we are not using the below steps}
Notice that in our modified constructions for the proofs of
Theorem~\ref{TH Theorem A}
and
Theorem~\ref{TH Theorem B}
below, we are \textit{not} going to use the steps causing obstacles. We mention those steps just to study explicitness of \cite{Higman Subgroups of fP groups}.
\end{Remark}

\subsubsection{Construction of the sequences set $\mathcal X$ from $R$}
\label{SU Construction of X from R}   

In order to build the finitely presented group $K$, used in Step 5 for the given recursive group $G = \langle\, X \mathrel{|} R \,\rangle \cong F/\langle R \rangle^F$, see Point~\ref{SU Step 5. Final embedding by Higman Rope Trick} above, Higman starts from the recursively enumerable relations set $R$ in the free group $F$ of rank $|\,X|$. Since the  enumeration of $R$ is very easy to continue on its normal closure $\langle R \rangle^F$\!, the latter is also recursively enumerable. \cite{Higman Subgroups of fP groups} denotes that closure by $R$, but here we prefer $\langle R \rangle^F$\!, as we wish to distinguish it from the set $R$ clearly.

Then ``moving backwards'' via Higman's
Step~4\,--\,Step~2 for $\langle R \rangle^F$\!, see points~\ref{SU Step 4. Characterization of benign subgroups in free groups of finite rank}\,--\,\ref{SU Step 2. Characterization of the recursively enumerable subsets via benign subgroups} above, one should arrive to the respective  subset $\mathcal X$ inside $\E$, and then to the benign subgroup $A_{\!\mathcal X}$ inside the free group $F_3=\langle a,b,c\rangle$ of rank $3$.

To do this Higman first embeds the free group $F$ into a free group $F_2=\langle x,y \rangle$ of rank $2$, see page 474 in \cite{Higman Subgroups of fP groups}. That embedding can be built using any of the well known textbook methods \cite{Robinson, Rotman, Bogopolski}. 
By the remark preceding Lemma~3.8 in \cite{Higman Subgroups of fP groups}, the normal subgroup $\langle R \rangle^F$ is benign in $F$ if and only if its image under this embedding is benign in $F_2$. 

After this embedding, we can imagine all the relations from $\langle R \rangle^F$\! are rewritten in just \textit{two} letters $x,y$.
The purpose of this passage is that, it allows to directly output the integer sequences $f\in \mathcal X$ from those words on $x,y$.


\begin{Example}
\label{EX Going from Q to X is harsh PART 1}
A presentation of $\Q  = \langle\, X \mathrel{|} R \,\rangle$ by generators and defining relations is given by \eqref{EQ Q genetic code} in Example~\ref{EX first presentation for Q SECOND}. 
The free group $F= \langle a_1, a_2,\ldots \rangle$ needed for $\Q$, is of countable rank.

Choose, say, two relations
$w_3=a_3^3\,a_{2}^{-1}$ 
and 
$w_2=a_2^2\,a_{1}^{-1}$
from Example~\ref{EX first presentation for Q SECOND}. 
Conjugating these relations by the random words, say, $u = a_1 a_2^2$ and $v = a_3 a_2^5$ from $F$, we in $\langle R \rangle^F$\! may get a word of type:
\begin{equation*}
\begin{split}
w & = w_3^{\,u} \cdot w_2^{v}\cdot w_3^{u^3}\! \cdot w_2^{v^{-1}}\\
& =
a_2^{-2} a_1^{-1}  a_3^3\,a_{2}^{-1}  a_1  \,
a_2^{-3}  a_3^{-1} a_2^2 \,a_{1}^{-1} a_3  \, a_2^3\, a_1^{-1} a_2^{-2} a_1^{-1} a_2^{-2}              \\
& \hskip20mm 
\cdot
a_1^{-1} a_3^3\,   a_{2}^{-1} a_1\,  a_2^2    a_1\, a_2^2\, a_1\, a_2^2\,    a_3\,  a_2^{7} \,  a_{1}^{-1}\! a_2^{-5}  a_3^{-1} 
\end{split}
\end{equation*}
(some cancellations have been done, where needed).

Next embed $F=\langle a_1, a_2, \ldots \rangle$ into $F_2=\langle x,y \rangle$  by the rule:
$$
a_1 \to x, \quad 
a_2 \to x^y, \quad 
a_3 \to x^{y^2}\!\!,\;\; \ldots  
$$
(Higman does not suggest to use \textit{this} embedding necessarily, see page 474 in \cite{Higman Subgroups of fP groups}, but we apply it, as it is a popular textbook trick).
After this embedding the above obtained word $w$ (in three letters $a_1, a_2, a_3$) from $F$ maps to the following word (in two letters $x,y$) 
in $F_2$:
\begin{equation*}
\begin{split}
w & \to  (x^y)^{-2} x^{-1} (x^{y^2})^3 \,(x^y)^{-1} x \,(x^y)^{-3} (x^{y^2})^{-1}  (x^y)^2     \\
& \hskip14mm 
\cdot\;    x^{-1} x^{y^2} (x^y)^3 \, x^{-1}  (x^y)^{-2}x^{-1} (x^y)^{-2}     \\
& \hskip14mm 
\cdot\;  x^{-1}(x^{y^2})^3\,(x^y)^{-1}  x\,           (x^y)^2    x\,  (x^y)^2 x\,  (x^y)^2  \\
& \hskip14mm 
 \cdot\;  x^{y^2} (x^y)^7  x^{-1}  (x^y)^{-5}  (x^{y^2})^{-1} 
\end{split}
\end{equation*}
which can be simplified to:  
\begin{equation*} 
\begin{split}
&  y^{\!-1} \! x^{\!-2} \! y \!\cdot\! x^{\!-1}\!   \!\cdot\! 
 y^{\!-2}\! x^3\! y^2\! \!\cdot\!
 y^{\!-1}\! x^{\!-1}\! y \!\cdot\! x\!\cdot\! y^{\!-1}\! x^{\!-3}\! y\!\cdot\!
y^{\!-2}\! x^{\!-1}\! y^2\!  \!\cdot\! y^{\!-1}\! x^2\! y   \!\cdot\! y^{\!-1}\! x^{\!-2}\! y        \\
& \hskip3mm  \cdot   x^{-1} \cdot y^{-2} x  y^2 \cdot  y^{-1} x^3 y      \cdot x^{-1} \cdot y^{-1}         x^{-2} y   \cdot x^{-1}        
\\
& \hskip3mm \cdot    x^{-1} \!\cdot\! y^{-2}  x^3  y^2 \!\cdot\! y^{-1} x^{-1} y \!\cdot\! x      \!\cdot\! y^{-1}  x^{2} y \!\cdot\!
x \!\cdot\! y^{-1}x^{2} y \!\cdot\! x   \!\cdot\! y^{-1} x^{2} y         \\
& \hskip3mm \cdot  y^{-2} x y^2 \cdot y^{-1} x^{7} y \cdot x^{-1}  \cdot y^{-1} x^{-5} y \cdot 
y^{-2} x^{-1} y^2.
\end{split}
\end{equation*}
From such a word Higman extracts its exponents to record an integer-valued sequence $f$ to be included in the set $\mathcal X$:
\begin{equation} \small 
\label{EQ Long f for Q}
\begin{split}
& {\normalsize f} \!=  \big(0,  \!-1, \!\!-2, \!1,  \!\!-1,\!\!-2,\!3,\!2, \!\!-1,\!\!-1, \!1, \!1,\!\!-1,\!\! -3, \!1, \!\!-2, \!\!-1, \!2, \!\!-1, \!2, \!1, \!\!-1, \!\!-2, \!1 , \\
& \hskip20mm  -1, -2, 1, 2 -1, 3,  1, -1, -1, -2 , 1, -1, \\ 
& \hskip20mm  -1, -2, 3,2, -1, -1, 1, 1, -1, 2, 1, 1,     -1, 2, 1, 1, -1, 2, 1, \\
& \hskip20mm  -2, 1, 2, -1, 7, 1, -1, -1, -5, 1, -2 -1, 2    \big),
\end{split}
\end{equation}
notice how it starts by $0$ as we have to append an $x^0$ before $w$. Such a sequence has to be loaded into $\mathcal X$ for \textit{every} word in $\langle R \rangle^F$\!. 

This step alone shows what a huge set $\mathcal X$ of ``astronomic'' size one could get following Higman's steps \textit{literally}: there is no bound for the lengths of sequences $f$ in $\mathcal X$, because instead of the above short word $w$ one could take a product of many more words $w_i$, conjugated by much longer words from $F$.  

There is certainly no doubt that this set $\mathcal X$ is recursively enumerable, in the sense that there exists a partial recursive function that outputs $\mathcal X$  as its image. But this fact is practically impossible to use this set in the next steps to come. 

\medskip 
Check sections~\ref{SU The embedding alpha into a 2-generator group} and \ref{SU Using the 2-generator group to get the set X} to see how we suggest to overcome this issue. In particular, in Example~\ref{EX first presentation for Q SECOND} and Example~\ref{EX writing X from TQ for rational Q}, our modified embedding method uses for $\Q$ the by far slimmer set $\mathcal X = \big\{ 
f_k \mathrel{|} k=2,3,\ldots \big\}$ of sequences of type \eqref{EQ Higman code for Q} \textit{only}, all of them of length $19$. The sequences \eqref{EQ Higman code for Q} are all of certain simple ``pattern'', which is going to make the work with them manageable in the next steps. 
\end{Example}

\subsubsection{Missing explicit construction of $\mathcal X$ by \eqref{EQ Higman operations}}
\label{SU Explicit construction of X via Higman operations}

Higman's next objective is the construction of the set $\mathcal X$, prepared in the previous point, via the operations
\eqref{EQ Higman operations} from two basic subsets $\Zz$ and $\S$ of $\E$, see sections~\ref{SU Integer functions f} and \ref{SU The Higman operations} above for definitions and notation.

In construction of our explicit embeddings, in particular for the group $\Q$ in \cite{On explicit embeddings of Q}, we never follow this fragment of \cite{Higman Subgroups of fP groups}, see Remark~\ref{RE we are not using the below steps}. We use the much shorter constructions from \cite{The Higman operations and  embeddings} where applicable, see Example~\ref{EX Going from Q to X is harsh PART 2}.

However, here we briefly outline the respective fragment from \cite{Higman Subgroups of fP groups}, in order to state that it is \textit{explicit}, and to show what makes it very \textit{uncomfortable} to use.

\medskip 
Lemma 2.8 in \cite{Higman Subgroups of fP groups} provides a very long but \textit{yet explicit} algorithm to construct certain partial recursive functions $f(n,r)$, $a(r)$, $b(r)$, with $r=0,1,2,\ldots$ These functions one-by-one ``record'' all the sequences $g\in\mathcal X$, and ``mark'' their start- and end points, in the sense that for any such $g$ there is some $r$ for which $g(n)=f(n,r)$ for all $n\in \Z$, while $f(n,r)=0$ for all the remaining coordinates $n<a(r)$ and $n>b(r)$. 
Such functions 
$f_{\E}(n,r)$, 
$a_{\E}(r)$, 
$b_{\E}(r)$  
are explicitly built first for the case of the whole set $\mathcal X = \E$ of all sequences with finite supports, and then it is noticed that for any other generic recursively enumerable subset $\mathcal X \subseteq \E$ there exists a partial recursive function $h(s)$ such that the $r$'th function $f_{\E}(n,r)$ corresponds to a function in $\mathcal X$ if and only if $r=h(s)$ for some $s$. Theoretical \textit{existence} of such $h(s)$ is clear, as $\mathcal X$ is recursively enumerable, and it consists of functions with finite supports only. 

Then for every multi-variable integer-based function $f(x_1,\ldots,x_n)$ its \textit{graph} is being defined before Lemma 2.2, and $\mathscr F$ is  denoted to be the set of all functions whose graphs are in $\mathscr S$. Lemmas 2.1\,--\,2.7 show that $\mathscr F$ contains all the partial recursive functions. 
Hence, the graphs of the above functions $f(n,r)$, $a(r)$, $b(r)$ also are in $\mathscr F$, i.e., their graphs can be constructed via the operations \eqref{EQ Higman operations}.

Using the above steps, Higman finishes the proof of Theorem 3 on pages 463\,--\,464 by showing how the earlier picked sequence $g\in \mathcal X$ can be obtained  via \eqref{EQ Higman operations} using the graphs of these functions $f(n,r)$, $a(r)$, $b(r)$.  

\medskip 
Although each of the listed steps is doable, the \textit{only} method to construct $\mathcal X$ via \eqref{EQ Higman operations} suggested in \cite{Higman Subgroups of fP groups}, is to present the functions $f(n,r)$, $a(r)$, $b(r)$ by  Kleene's characterization, that is, to construct $f(n,r)$, $a(r)$, $b(r)$ from the basic functions 
$n(x)=0$,\, 
$s(x)=x\!+\!1$, \,
$u_m^i(x_1,\ldots,x_m)$\,
by means of compo\-sition $\circ$, primitive recursion $\rho$, and minimization $\mu$, and then to apply a series of operations \eqref{EQ Higman operations} for each of the steps of that characterization to construct their graphs via \eqref{EQ Higman operations}, then to load these into the explicit proof of Theorem 3.

These specific proofs are effective, and theoretically are explicit (maybe for some very simple groups), but they require such a vast routine of steps that their application to non-trivial examples is not a manageable task.

\medskip 
Check Section~\ref{SU Writing X via Higman operations} to see how we plan to overcome this issue, using the method offered in \cite{The Higman operations and  embeddings}. In brief, \cite{The Higman operations and  embeddings} suggests how one could build a subset $\mathcal X \subseteq \mathcal E$ by the operations \eqref{EQ Higman operations}, in case $\mathcal X$ consists of sequences following certain generic ``patterns''.

\subsubsection{Defining the subgroup $A_{\mathcal X}\le F_3$ corresponding to the set $\mathcal X$}
\label{SU Construction of A_X from X} 

Next \cite{Higman Subgroups of fP groups} uses the conjugates
$b_i=b^{c^i}$\!,\, $i\in \Z$, to build in $F_3 \!= \langle a, b, c \rangle$ the product $b_f$, and the conjugate $a_f = a^{b_f}$ for each of the above sequences $f \! \in \mathcal X$, see the notation \eqref{EQ defining b_f and a_b}  in Section~\ref{SU Defining subgroups by integer sequences}. 
All such conjugates $a_f$ generate inside $F_3$  the subgroup $A_{\mathcal X} =\langle a_f \mathrel{|} f \in \mathcal X \rangle$ corresponding to the set $\mathcal X$.

According to Higman's original construction, there can be such words $b_f$ of \textit{arbitrarily} high length, see the remark after Example~\ref{EX Going from Q to X is harsh PART 1}, and this makes the subgroup $A_{\mathcal X}$ very large. 

Whereas in our modified method all such $b_f$, together with $a_f$ are of some specific type only, and this much simplifies our work with the respective subgroup $A_{\mathcal X}$, see Section~3.2 in \cite{On explicit embeddings of Q}.
A sample of such a much simpler set $\mathcal X$ with a much simpler subgroup $A_{\mathcal X}$ can be found in Example~\ref{EX Going from Q to X is harsh PART 3} in Section~\ref{SU Building K_X and L_x for the benign subgroup A_X in F} below.

\subsubsection{Construction of $K_{\mathcal X}$ and $L_{\mathcal X}$ for the benign subgroup $A_{\mathcal X}$ of $F_3$}
\label{SU Obtaining KX and LX for the benign subgroup AX}

By Theorem 4 in \cite{Higman Subgroups of fP groups}, if the relations set $R$ is recursive, then the above defined subgroup $A_{\mathcal X}$ is benign in $F_3\!=\langle a, b, c \rangle$. That is, there exist a finitely presented overgroup $K_{\mathcal X}$ of $F_3$ with a finitely generated subgroup $L_{\mathcal X} \le K_{\mathcal X}$, such that $F_3 \cap K_{\mathcal X} = A_{\mathcal X}$  holds, see definitions in Section~\ref{SU Benign subgroups and Higman operations}, notice Remark~\ref{RE finite generated is benign}.

To construct the needed $K_{\mathcal X}$ and $L_{\mathcal X}$, Higman launches a step-by-step recursive procedure, ``directed'' by those steps which were earlier applied for construction of $\mathcal X$ (from $\Zz$ and $\S$, by means of the operations \eqref{EQ Higman operations}), see Point~\ref{SU Explicit construction of X via Higman operations} above. 
%
Namely, \cite{Higman Subgroups of fP groups} shows that:
\begin{itemize}
\item[$\circ$]   
The initial subgroups $A_{\mathcal Z}$ and $A_{\mathcal S}$ are benign in $F_3$, that is, there exist finitely presented groups 
$K_{\mathcal Z}$ and $K_{\mathcal S}$, with subgroups 
$L_{\mathcal Z}$ and $L_{\mathcal S}$, respectively,
such that 
$F_3 \cap L_{\mathcal Z} = A_{\mathcal Z}$ and
$F_3 \cap L_{\mathcal S} = A_{\mathcal S}$
hold.
\item[$\circ$] 
If the subset 
$\mathcal Y$ by any of the operations \eqref{EQ Higman operations} is obtained from certain subsets $\mathcal X$ for which $A_{\mathcal X}$ is benign in $F_3$, then $A_{\mathcal Y}$ is also benign in $F_3$, that is, there exist 
$K_{\mathcal Y}$ and $L_{\mathcal Y}$, 
such that $F_3 \cap L_{\mathcal Y} = A_{\mathcal Y}$ holds.

\item[$\circ$] 
After the previous step, $\mathcal X$ is replaced by $\mathcal Y$, and using the next of the  operations \eqref{EQ Higman operations} we applied, the next pair of $K_{\mathcal Y}$ and $L_{\mathcal Y}$ is being constricted. 

\item[$\circ$] 
When the construction of the desired set $\mathcal X$ by operations \eqref{EQ Higman operations} reaches its terminal step, then the \textit{last} outputted $K_{\mathcal X}$ and $L_{\mathcal X}$ are the groups we are looking for.
\end{itemize}

This process is done in the proof for Higman's Theorem~4, see Lemmas 4.4\,--\,4.10 in \cite{Higman Subgroups of fP groups}. 

\medskip 
Some of these steps are perfectly explicit, such as, the construction of  $K_{\mathcal Z}$, $K_{\mathcal S}$, $L_{\mathcal Z}$, $L_{\mathcal S}$
for two subgroups $A_{\mathcal Z}$, $A_{\mathcal S}$ of $\langle a, b, c \rangle$, see the proof of Lemma 4.4 on page 470 in 
\cite{Higman Subgroups of fP groups}.

\medskip 
However, there are two problems which do \textit{not} allow us to \textit{directly} use these proofs from \cite{Higman Subgroups of fP groups}  for our \textit{explicit} embedding:

\medskip 
\textit{First problem.} Higman notices on page 468 in  \cite{Higman Subgroups of fP groups} that, if some group $A$ is a subgroup of two finitely generated groups $G$ and $H$, which are embeddable into some finitely presented group, then $A$ is benign in both $G$ and $H$, or in neither. For the theoretical purposes of \cite{Higman Subgroups of fP groups} this makes it very convenient to sometimes ``switch'' the group in which we show $A_{\mathcal X}$ is benign. This shortens the proofs of lemmas 4.4\,--\,4.10 much, but this is \textit{not} appropriate for our practical purposes, because after each Higman operation \eqref{EQ Higman operations} we wish to always keep the current $A_{\!\mathcal X}$ benign in the \textit{same} group $F_3$, necessarily, so that we are in position to recursively apply the next operation as many times as needed, to get the terminal group $A_{\!\mathcal X}$ again benign in $F_3$, and not in some \textit{other} group. 

\medskip 
\textit{Second problem.} \cite{Higman Subgroups of fP groups} often uses Lemma 3.7, stating that if $H$ is the image of $G$ under a homomorphism $\phi$, then under certain conditions, $A$ is benign in $G$ if and only if $\phi(A)$ is be\-nign in $H$. Again, this shortens the proofs in \cite{Higman Subgroups of fP groups} very much, as it sometimes is enough to show that the given $A_{\!\mathcal X}$ is an image (or a pre-image) of a known benign subgroup. But this is \textit{not} appropriate for purposes of this work, because one has to make sure if the function $\phi$ itself is \textit{explicitly} given, and because extra steps must be added to go from the finitely presented overgroup obtained for $\phi(A_{\!\mathcal X})$ (or for $\phi^{-1}(A_{\!\mathcal X})$) to that needed for $A_{\!\mathcal X}$.

\medskip 
We overcome the above two problems by modifying the constructions to make sure, that after each of the Higman operations \eqref{EQ Higman operations}, the newly obtained subgroup  $A_{\mathcal Y}$ is benign in $F_3$ \textit{necessarily}, see details in Section~\ref{SU Building K_X and L_x for the benign subgroup A_X in F} below.

\subsubsection{Proceeding from the benign subgroup of $F_3$ to a benign subgroup of $F_2$}
\label{SU Proceeding from the benign subgroup AX of <a b c> to a benign subgroup of <x y>}

Further \cite{Higman Subgroups of fP groups} uses the above benign subgroup $A_{\mathcal X}$ of $F_3=\langle a,b,c\rangle$, and the obtained groups 
$K_{\mathcal X}, L_{\mathcal X}$, to const\-ruct an auxiliary benign subgroup in the free group $F_2=\langle x,y\rangle$, together with its fi\-ni\-tely presented overgroup $K$ and the finitely generated subgroup $L \le K$.

This step of \cite{Higman Subgroups of fP groups} is explicit, and we could use it as it is for our construction. However, we use a simpler idea with a new group $F_{\mathcal X}$, see Section~\ref{SU Reusing Higman's final step with Higman Rope Trick} below.

\subsubsection{Returning from $F_2$ to the initial free group $F$}
\label{SU Returning from F_2 to F}

As we warned in Point~\ref{SU Construction of X from R}, the initial group $G = \langle\, X \mathrel{|} R \,\rangle$ was given as a factor group $G \cong F/\langle R \rangle^F$ of some free group $F = \langle X \rangle$. 
Whereas $K_{\mathcal X}, L_{\mathcal X}$ are found for another free group $F_2=\langle x,y\rangle$.

Higman replaced this $F$ by the $2$-generator group $F_2$ in order to replace all the relations from $\langle R \rangle^F$ by some words in two letters $x,y$, and thus, to be able to extract the set of sequences $f\in \mathcal X$ from $R$, see Point~\ref{SU Construction of X from R} above.

\medskip
We confirm that this step in \cite{Higman Subgroups of fP groups} is explicit, and one could use it as it is. 
However, we do \textit{not} need this step, at all, because, as we will mention in Section~\ref{SU The embedding alpha into a 2-generator group}, we first embed $G$ into an auxiliary $2$-generator group $T_G$, and continue the construction for this $T_G$. That is, the groups $K_{\mathcal X}, L_{\mathcal X}$ obtained after the previous step, can be used with \textit{no} changes, see also Section~\ref{SU Reusing Higman's final step with Higman Rope Trick}.

\subsubsection{The ``Higman Rope Trick''}
\label{SU The Higman Rope Trick FIRST occurance}
At the very end, \cite{Higman Subgroups of fP groups} proceeds to Step 5 with the ``Higman Rope Trick'' for some HNN-extension of the direct product $K\!\times \!G$ mentioned in Point~\ref{SU Step 5. Final embedding by Higman Rope Trick} above. The proofs of this step also are explicit, and we can use them with minimal adaptation to build the wanted finitely presented group $\mathcal Q$ containing both $T_G$ and $G$. We use this idea with minor changes only, see Section~\ref{SU Reusing Higman's final step with Higman Rope Trick}.

\subsubsection{Conclusion}
\label{SU Conclusion}

The obstacles in points~\ref{SU Construction of X from R}--\ref{SU Obtaining KX and LX for the benign subgroup AX} could already be sufficient for us to refrain from calling the construction in \cite{Higman Subgroups of fP groups} an explicit one. Higman provides no method about how these steps can be performed for actual groups.
As points~\ref{SU Construction of X from R}--\ref{SU The Higman Rope Trick FIRST occurance} with examples indicate, 
Higman's original embedding involves not only steps that demand extra amendments to make them explicit, but also steps for which it is completely unclear how to make their explicit analogs.

Even for some uncomplicated groups, such as $\Q$, it is virtually impossible to build the needed embedding by just following Higman's steps literally, see Example~\ref{EX Going from Q to X is harsh PART 1} above, and examples~\ref{EX Going from Q to X is harsh PART 2},
\ref{EX Going from Q to X is harsh PART 3} in Chapter~\ref{SU The modified construction of the current work}.  

It is hard to say whether it could be possible to append  some new chapters to Higman's original proof to try to make it explicit.
In any case, that would produce a by far longer proof cluttered with details, perhaps concealing the main purpose of Higman's work.
The central objective of the fundamental research 
in \cite{Higman Subgroups of fP groups} is to reveal  deep connections between Mathematical logic and Group theory. Hence, overloading it with many more details could probably be counterproductive to achieving that major goal. 

\medskip 
On the other hand, Higman's construction contains \textit{no} theoretical obstacles to developing its explicit version, albeit at the cost of very substantial modifications.  The current work is an attempt to do that. 

As mentioned in \nameref{SE Introduction}, explicitness of \cite{Higman Subgroups of fP groups} and the quoted remark of Valiev from \cite{Valiev example} have been a subject of conference and online disputations, and we found it appropriate to consider this issue here before proposing the modified construction in the chapters below.

\subsection{Effectiveness of Higman's construction}
\label{SU Effectiveness of Higman's construction}

Given the latest discussions on the Higman Embedding Theorem, including \cite{Belk Hyde Matucci, Belk Bleak Matucci Zaremsky, Bridson Nyberg-Brodda 2025}, it is suitable to mention yet another recent characterization of Higman's construction.

\subsubsection{Usage of the term ``effective'' in \cite{Bridson Nyberg-Brodda 2025}}
\label{SU Usage of the term effective by Bridson Nyberg-Brodda}

Bridson and Nyberg-Brodda, discussing Higman's embedding in \cite{Bridson Nyberg-Brodda 2025}, mention: 
\textit{``Higman’s proof of the Embedding Theorem is effective'' but ``it leaves open the challenge of finding ``natural'' embeddings of particular recursively presented groups''.}

This characterization surely is correct, and it has \textit{no conflict} with our Section~\ref{SU Is Higman's construction explicit} \textit{``Is Higman’s embedding explicit?''}. For, in Mathematical Logic the terms \textit{effective} and \textit{recursive} are used interchangeably; effective refers to theoretical existence of an algorithm (Turing machine), while recursive means theoretical possibility to be constructed from the base functions by composition, primitive recursion and minimization in the sense of Kleene. Under Church-Turing Thesis effective and recursive are \textit{equivalent} \cite{Rogers, Davis, Davis, Enderton}. 

In Example~\ref{EX Going from Q to X is harsh PART 1} we have already shown that by direct application of Higman's proof, say, to $\Q$, we gat the set $\mathcal X$, which certainly is recursively enumerable (hence, also effectively enumerable), but which cannot be explicitly used in the next steps of \cite{Higman Subgroups of fP groups}.


In other words, characterizing the construction in \cite{Higman Subgroups of fP groups} as \textit{effective} does not mean it yields an explicit algorithm (which may just be too difficult to apply to actual groups). Instead, the term ``effective'' merely reformulates the recursive nature of the underlying construction.

\subsubsection{Usage of the term ``effective'' by Higman in \cite{Higman Subgroups of fP groups}}
\label{SU Usage of the term effective by Higman}

Since Higman himself used the term ``effective'' often, it is justifiable to check in which sense did he utilize that word in \cite{Higman Subgroups of fP groups}. He used the terms \textit{``effectively enumerable''} or \textit{``effective enumeration''} to describe certain sets on pages 455, 456, 458, 462, 463, 466, and he described certain embeddings to be \textit{``effective''} on page
456 of \cite{Higman Subgroups of fP groups}.

Although Higman did not difine these two terms, we can be confident he relied on Davis's textbook \cite{Davis}, for, he deferred to \cite{Davis} for \textit{all} definitions concerning logic and computability, see pages 456–457 of \cite{Higman Subgroups of fP groups}.

Chapter 1 and Chapter 3 of Davis \cite{Davis} show that the definitions of effective and recursive align precisely to what we outlined in previous point, including their equivalence under Church-Turing Thesis.
To be even more accurate, notice that, Davis did \textit{not} give a special name to that hypothesis in \cite{Davis}, but he called it ``Church's Thesis'' in 
\cite{DavisSigalWeyuker} later.


\bigskip
\section{An outline of our modified construction}
\label{SU The modified construction of the current work}

\noindent 
Some steps of Algorithm~\ref{AL Algorithm for explicit embedding of a recursive group} are substantially different from Higman's construction, while others follow \cite{Higman Subgroups of fP groups}, adding where needed, some elements to guarantee explicitness of the embedding. In this chapter we look at the steps of Algorithm~\ref{AL Algorithm for explicit embedding of a recursive group}, firstly, to refer the reader to the respective proofs (inside or outside this article), and secondly, to stress the parts with substantial changes to \cite{Higman Subgroups of fP groups}.

\subsection{The embedding $\alpha: G \to  T_{\!G}$ \,[Steps 
\ref{Step 1 AL Algorithm for explicit embedding of a recursive group}, \ref{Step 2 AL Algorithm for explicit embedding of a recursive group}
in Algorithm~\ref{AL Algorithm for explicit embedding of a recursive group}]}
\label{SU The embedding alpha into a 2-generator group}

As we will see in sections~\ref{SU Using the 2-generator group to get the set X} and \ref{SU Equality and the final embedding}, serious simplification for the embedding of an initial recursive group $G = \langle\, X \mathrel{|} R \,\rangle$  can be achieved, if $G$ first is embedded into a specific \textit{intermediate} $2$-generator recursive group $T_{\!G}$.

Being recursive, the group $G$ is at most \textit{countable}, and we can apply the Higman, Neumann and Neumann Theorem \cite{HigmanNeumannNeumann} stating that any countable group $G$ can be embedded into a 
$2$-generator group.
It is not hard to deduce from the original proofs in \cite{HigmanNeumannNeumann} that the relations of that $2$-generator group can explicitly be given, in case the relations $R$ are known. However, the process of finding those relations by the methods of \cite{HigmanNeumannNeumann} is not very simple, and it may require very long routine for some groups. 

To simplify this we in \cite{Embeddings using universal words}  have suggested a specific $2$-generator group $T_G$ together with an algorithm explicitly defining the embedding
$\alpha: G \to  T_G$,
and automatically writing down a set of defining relations for $T_{\!G}$ from the given defining relations  $R$ of $G$. 
Let us briefly outline that algorithm with some slight modification of notation from \cite{Embeddings using universal words}. 

Write our recursive group as
$G = \langle\, X \mathrel{|} R \,\rangle = \langle a_1, a_2,\ldots \mathrel{|} R \,\rangle$
where the generators $a_1, a_2,\ldots$ are effectively enumerated, and the relations $w \in R$ are recursively enumerable, i.e., they form the image of some partial recursive function, or roughly speaking, there is an algorithm writing them down one by one (in whatever order).

In the free group
$F_2=\langle
x,y
\rangle$ consider some specific ``universal words'':
\begin{equation}
\label{EQ definition of a_i(x,y)}
a_i(x,y) = y^{(x y^i)^{\,2}\, x^{\!-1}} 
\!\! y^{-x} \!
,\quad i=1,2,\ldots
\end{equation}
If a relation 
$w \!\in R$
is a word of length, say, $k$ on  letters 
$a_{i_{1}},\ldots,a_{i_{k}}\!\! \in X$, then replacing inside $w=w(a_{i_{1}},\ldots,a_{i_{k}})$ each letter $a_{i_{j}}$ by the $i_{j}$\!'th word $a_{i_{j}}(x,y)$ we get a new word:
\begin{equation}
\label{EQ definition of r'_s}
w' (x,y)=
w \big(a_{i_{1}}\!(x,y),\ldots,a_{i_{k}}\!(x,y)\big)
\end{equation}
on just \textit{two} letters $x,y$ in the free group $\langle
x,y
\rangle$, see Example~\ref{EX first presentation for Q SECOND} below for $\Q$.
The set $R'$ of all such new words $w' (x,y)$ in has $\langle
x,y
\rangle$ a normal closure $\langle\, R' \,\rangle^{\! F_2}$\!, the factor group by which is the $2$-generator group $T_{\!G} = F_2 / \langle\, R' \,\rangle^{\! F_2}
= \langle\, x,y \mathrel{|} R' \,\rangle$ we look for.
The map $\alpha$ sending each generator $a_i$ to the word $a_i(x,y) \! \in F_2$, and then to the coset 
$\langle\, R' \,\rangle^{\! F_2} \, a_i(x,y) \in T_{\!G}$ can be continued to the desired embedding of $G$ into $T_{\!G}$:

\begin{Theorem}[Theorem~1.1 in \cite{Embeddings using universal words}]
\label{TH universal embedding}
For any countable group $G = \langle a_1, a_2,\ldots \mathrel{|} R \,\rangle $ the above map $\alpha: a_i \to a_i(x,y)$,\; $i=1,2,\ldots$\,, defines an injective embedding:
\begin{equation}
\label{EQ embedding alpha}
\alpha: G \to  T_{\!G},
\end{equation}
of $G$ into the $2$-generator group 
$
T_{\!G}=\big\langle x,y 
\,\mathrel{|}\,
R'
\big\rangle.
$
\end{Theorem}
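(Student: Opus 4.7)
The strategy splits naturally into an easy well-definedness step and a harder injectivity step.

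The plan for well-definedness is immediate from the construction of $T_G$: the assignment $a_i \mapsto a_i(x,y)$ extends to a homomorphism $\tilde\alpha: F_\infty = \langle a_1, a_2, \ldots\rangle \to F_2$ of free groups, and by the very definition of $R'$ in \eqref{EQ definition of r'_s} one has $\tilde\alpha(w) = w'(x,y) \in R'$ for every $w\in R$. Hence $\tilde\alpha(\langle R\rangle^{F_\infty}) \subseteq \langle R'\rangle^{F_2}$, and $\tilde\alpha$ descends to a homomorphism $\alpha: G \to T_G$.

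For injectivity the plan is to construct a ``retraction'': an auxiliary group $H$ containing a faithful copy of $G$ together with two distinguished elements $\tilde x, \tilde y \in H$ satisfying
\begin{equation*}
a_i(\tilde x, \tilde y) \;=\; a_i \quad \text{for every } i = 1, 2, \ldots
\end{equation*}
Once such $H$ is in hand, the assignment $x \mapsto \tilde x$, $y \mapsto \tilde y$ extends to a homomorphism $F_2 \to H$ which, by functoriality, sends each $w'(x,y)$ to $w(a_1,a_2,\ldots) = 1_G$; so its kernel contains $\langle R'\rangle^{F_2}$ and we obtain a well-defined homomorphism $\beta: T_G \to H$. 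The composition $\beta \circ \alpha$ sends every $a_i$ to $a_i$, and is therefore the identity embedding $G \hookrightarrow H$. In particular $\alpha$ is injective.

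The main obstacle is thus the explicit construction of $H$. The plan is to start from the free product $G_0 = G * \langle \tilde y\rangle_\infty$ and then adjoin a single stable letter $\tilde x$ via an HNN-extension whose associated isomorphism encodes, in one sweep, all of the infinitely many equations $\tilde y^{(\tilde x\tilde y^i)^2 \tilde x^{-1}} = a_i \cdot \tilde y^{\tilde x}$ that are equivalent to $a_i(\tilde x,\tilde y) = a_i$. The key point that makes this feasible is that the conjugators $(\tilde x\tilde y^i)^2 \tilde x^{-1}$, as $i$ ranges over $\mathbb{N}$, are organized by the single integer parameter $i$, so the whole family can be realized by a \emph{finite} data of conjugation rules for $\tilde x$; this is precisely what the ``universal'' form of the word $a_i(x,y)$ is engineered to permit. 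The delicate technical point — and the heart of any detailed proof — is verifying that the associated subgroups of $G_0$ chosen as the domain and codomain of the HNN isomorphism are indeed isomorphic freely generated subgroups, so that Britton's lemma applies, $G_0$ embeds into $H$, and consequently $G$ embeds into $H$. For this one performs a Nielsen/normal-form calculation inside the free product $G_0$, using the conjugates-collecting process of Section~\ref{SU The conjugates collecting process} to bring arbitrary words into the form \eqref{EQ elements from <x,y>} and thereby confirm that no unintended relation between the $\tilde y$-conjugates is forced.
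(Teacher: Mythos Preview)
The paper does not contain its own proof of this theorem: it is quoted verbatim as Theorem~1.1 from the author's earlier article \cite{Embeddings using universal words}, and the surrounding Section~\ref{SU The embedding alpha into a 2-generator group} only outlines the construction of $T_{\!G}$ and refers the reader to that paper for the argument. So there is no in-paper proof to compare your proposal against.

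That said, your plan is the natural one and is essentially the strategy behind the HNN-type embedding theorems on which \cite{Embeddings using universal words} is built: show well-definedness trivially, then manufacture a retraction by realizing the universal words inside an overgroup $H \supseteq G$. Your identification of the crux --- namely, that the HNN extension over $G * \langle \tilde y\rangle$ requires the two associated subgroups to be free of the same rank so that the defining isomorphism exists and Britton's lemma guarantees $G \hookrightarrow H$ --- is exactly right, and is the substantive content of the proof in \cite{Embeddings using universal words}. One caution: your sketch leaves the actual choice of associated subgroups and the verification of their freeness entirely unspecified, and this is not a routine calculation; the particular shape $y^{(xy^i)^2 x^{-1}} y^{-x}$ is chosen precisely so that a workable pair of free subgroups can be exhibited, and reconstructing that choice from scratch is the real work. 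If you intend to supply a self-contained proof, you will need to name those subgroups explicitly and carry out the normal-form verification rather than gesture at it.
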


For some specific cases the formula for ``universal words'' can be simplified. Say, for a \textit{torsion free} group $G$ the words $a_i(x,y)$ in \eqref{EQ definition of a_i(x,y)} can be replaced by shorter words:
\begin{equation}
\label{EQ definition of bar a_i(x,y)}
a_i(x,y)
=
y^{(x y^i)^{\,2} x^{\!-1}}\!
,\quad i=1,2,\ldots,
\end{equation}
see Theorem~3.2 in \cite{Embeddings using universal words}. 
We stress this case here because \eqref{EQ definition of bar a_i(x,y)} was used for the (torsion free) group $\Q$ in \cite{On explicit embeddings of Q}, and it is going to be used in examples below.

\medskip 
Regardless which of formulas \eqref{EQ definition of a_i(x,y)} or \eqref{EQ definition of bar a_i(x,y)} we use, from Theorem~\ref{TH universal embedding} it is evident that:

\begin{Corollary}
\label{CO if G is recursive then TG is also recirsive}
In the above notation, if the group $G$ is recursive, then $T_{\!G}$ is also recursive.
\end{Corollary}

\begin{Example}
\label{EX first presentation for Q SECOND}
Let us explain
steps 
\ref{Step 1 AL Algorithm for explicit embedding of a recursive group}, \ref{Step 2 AL Algorithm for explicit embedding of a recursive group}
of Algorithm~\ref{AL Algorithm for explicit embedding of a recursive group}, i.e., the transaction from $G$ to $T_{\!G}$, by applying them for the rational group $\Q$. 
As an initial presentation for $\Q$ we may take:
\begin{equation}
\label{EQ Q genetic code}
\Q = \big\langle a_1, a_2,\ldots \mathrel{|} 
a_{k}^{k}=a_{k-1},\;\, k=2,3,\ldots \big\rangle
\end{equation}
where $a_k$ corresponds to the rational ${1 \over k!}$ with $k=2,3,\ldots$\,, see page 70 in \cite{Johnson} and elsewhere.
To apply the algorithm of \cite{Embeddings using universal words} first rewrite each relation $a_k^k=a_{k-1}$ as 
$w_k=a_k^k\,a_{k-1}^{-1}=1$. Then, using the shorter formula \eqref{EQ definition of bar a_i(x,y)} for the (torsion free) group $\Q$,
map each letter $a_k$ to the word 
$
\alpha (a_k )=y^{(x y^k)^{\,2} y^{\!-1}}
$\!\!.
To each relation $w_k$ put into correspondence the new relation
$$
w'_k (x,y)
=\,
(y^k)^{(x y^k)^{\,2} x^{\!-1}} 
y^{-(x y^{k-1})^{\,2} x^{-1}}\!\!,
$$
to get the embedding $\alpha:\Q\to T_\Q$
of $\Q$ into the $2$-generator recursively presented group:
\begin{equation}
\label{EQ TG genetic code}
T_\Q=\big\langle x,y 
\;\mathrel{|}\;
(y^k)^{(x y^k)^{\,2} x^{\!-1}} 
y^{-(x y^{k-1})^{\,2} x^{\!-1}}
\!\!,\;\; k=2,3,\ldots
\big\rangle.
\end{equation}
Any finitely presented overgroup of $T_\Q$ is an overgroup for $\Q$ also, so we have the freedom to continue the work to embed the $2$-generator group  $T_\Q$ (apparently recursive with relations \eqref{EQ TG genetic code}) into a finitely presented overgroup.
See also Remark~\ref{RE One could try to produce X from Q directly} below.
\end{Example}

\subsection{Using the $2$-generator group $T_{\!G}$ to get the set $\mathcal X$
\,[Step  
\ref{Step 3 AL Algorithm for explicit embedding of a recursive group} 
in Algorithm~\ref{AL Algorithm for explicit embedding of a recursive group}]}
\label{SU Using the 2-generator group to get the set X}

As we warned in Point~\ref{SU Construction of X from R},  
following Higman's construction literally, we have to construct from $R$ the subset  $\mathcal X$ of $\E$, and we have some serious problems in that path, see Example~\ref{EX Going from Q to X is harsh PART 1}. In Algorithm~\ref{AL Algorithm for explicit embedding of a recursive group} all those issues vanish, as we work with the $2$-generator recursive group $T_{\!G} 
= \langle\, x,y \mathrel{|} R' \,\rangle$ built above. 
Then each relation $w'(x,y) \in R'$ already is a word on just two letters $x,y$:
\begin{equation}
\label{EQ random relation on x and y}
w(x,y)' = x^{j_0}y^{j_1} \cdots 
x^{j_{2r}}y^{j_{2r+1}}
\end{equation}
written for some integers $j_0,j_1,\ldots,j_{2r},j_{2r+1}$; 
the cases $j_0=0$, or $j_{2r+1}$ are \textit{not} ruled out (e.g., a relation may start by $y$ and it may end by $x$).
In such a case we can  \textit{directly write down} the sequence for $w'$: 
\begin{equation}
\label{EQ f_i occurs first}
f =(j_0, j_1, \ldots ,j_{2r},\,j_{2r+1}),
\end{equation}
i.e.,  \eqref{EQ f_i occurs first} is the respective function (sequence of integers) to be included into $\mathcal X$ for the relation $w'(x,y)$.
The set $\mathcal X$ produced this way for all $w' \in R'$ clearly is recursively enumerable. 

\medskip
See also Section~\ref{SU Equality and the final embedding} with Remark~\ref{RE why embedding into 2-generator was good} stressing why usage of the $2$ generator group $T_{\!G} = F_2 / \langle\, R' \,\rangle^{\! F_2}
= \langle\, x,y \mathrel{|} R' \,\rangle$  is useful  for the \textit{last} steps of our algorithm, also.

\medskip
Let us continue Example~\ref{EX first presentation for Q SECOND} to see how the set $\mathcal X$ may look for the group $T_\Q$ holding $\Q$:

\begin{Example}
\label{EX writing X from TQ for rational Q} 
For the group $\Q$ written via \eqref{EQ Q genetic code} we in Example~\ref{EX first presentation for Q SECOND} found the group $T_\Q$ writ\-ten via \eqref{EQ TG genetic code}.
Each of its relations $w'_k(x,y)$ can be rewritten as:
\begin{equation}
\label{EQ long version for w(x,y)}
w'_k (x,y)
=
x\,y^{-k} x^{-1} y^{-k} x^{-1} \cdot y^k \cdot x\, y^k x\;  y \cdot
x^{-1} y^{1-k} x^{-1} \cdot y^{-1} \cdot
x \, y^{k-1}\, x\, y^{k-1} x^{-1},
\end{equation} 
$k=2,3,\ldots$ in order to match the format of \eqref{EQ random relation on x and y}.
Using the passage from \eqref{EQ random relation on x and y} to \eqref{EQ f_i occurs first} we trivially output the respective sequence:
\begin{equation}
\label{EQ Higman code for Q}
f_k=
\big(1,-k,-1, -k , -1,\; k,\;  1,\;  k, \; 1,\;  1, -1,\;  1\!\!-\!k, -1,-1,\;  1,\;  k\!-\!\!1,\;  1,\;  k\!-\!\!1,\;  -1\big),
\end{equation} 
$k=2,3,\ldots$, see also Section~3.2 in \cite{On explicit embeddings of Q}. 
Thus, we get a subset 
$\mathcal X = \big\{ 
f_k \mathrel{|} k=2,3,\ldots \big\}$ of $\E$
to work with in the next steps (in \cite{On explicit embeddings of Q} this set is denoted by $\mathcal T$).
See also Section~9 in \cite{On explicit embeddings of Q} where we explicitly give two finitely presented overgroups $\mathcal{Q}$ and $T_{\!\mathcal{Q}}$ both holding $\Q$.
\end{Example}

\begin{Remark} 
\label{RE One could try to produce X from Q directly}
Compare the ease of writing the uniformly written sequences \eqref{EQ Higman code for Q}  (differing from each other by the value $k=2,3,\ldots$ \textit{only}),  with the  difficulties and unclear issues with sequences of type \eqref{EQ Long f for Q} (which are not limited even in length) in the original construction in \cite{Higman Subgroups of fP groups}. 
In fact, we earlier started our attempts to explicitly embed $\Q$ into a finitely presented group by trying to write down the set $\mathcal X$ for $\Q$ applying the methods of \cite{Higman Subgroups of fP groups} to  \eqref{EQ Q genetic code}. The futility of those attempts forced us to come to the trick with $T_{\!{\Q}}$.
\end{Remark}

Compare this remark with later Remark~\ref{RE why embedding into 2-generator was good}, in which we stress one more important advantage of the embedding 
$\alpha: G \to  T_{\!G}$.

\subsection{Writing $\mathcal X$ via Higman operations \,[Step  
\ref{Step 4 AL Algorithm for explicit embedding of a recursive group} 
in Algorithm~\ref{AL Algorithm for explicit embedding of a recursive group}]}
\label{SU Writing X via Higman operations}

After the set $\mathcal X$ is known, one has to build this set $\mathcal X$ from $\Zz$ and $\S$ via the operations \eqref{EQ Higman operations}.
As we saw in Point~\ref{SU Explicit construction of X via Higman operations} above, the method with functions $f(n,r)$, $a(r)$, $b(r)$ used in \cite{Higman Subgroups of fP groups} is explicit and, theoretically, it \textit{can} be used in Algorithm~\ref{AL Algorithm for explicit embedding of a recursive group} as it is. However, it involves very many routine steps, and we have \textit{never} used it in our particular embeddings, including those for $\Q$. 

\smallskip
We in \cite{The Higman operations and  embeddings} studied some alternative methods to construct $\mathcal X$ from $\Zz$ and $\S$ via the operations \eqref{EQ Higman operations}, based on the \textit{structure} of its sequences, see Remark~3.8, Example~4.11 and Remark~4.12 in \cite{The Higman operations and  embeddings}. These alternative methods do not cover the cases of all recursive groups, but they are usable for wide classes of groups, such as the free abelian, metabelian, soluble, nilpotent groups, the quasicyclic group $\Co_{p^\infty}$, 
divisible abelian groups, etc.
In particular, for the group $\Q$ this method has been applied in \cite{On explicit embeddings of Q} to construct $\mathcal X$ via some elementary steps.      

Also, in \cite{The Higman operations and  embeddings} we in addition to \eqref{EQ Higman operations} suggested certain ``auxiliary'' operations \eqref{EQ auxiliary Higman operations} that simplify the process even more. They allow to simultaneously apply more than one Higman operations at once.

\begin{Example}
\label{EX Going from Q to X is harsh PART 2}
For the group $\Q$ we in \cite{On explicit embeddings of Q} did not use Higman's functions $f(n,r)$, $a(r)$, $b(r)$, at all. Instead, we deduced the process of construction of $\mathcal X$ by \eqref{EQ Higman operations} from the simple ``pattern'' of the sequences  of type \eqref{EQ Higman code for Q} in 
$\mathcal X = \big\{  
f_k \mathrel{|} k=2,3,\ldots \big\}$, also compare to sequence \eqref{EQ Long f for Q} in Example~\ref{EX Going from Q to X is harsh PART 1} above.

Under simple ``pattern'' we mean that all the sequences $f_k$ in this $\mathcal X$ for $\Q$  are of length $19$, and they differ from each other in $k$ only. Also, their coordinates are mostly $\pm 1$, and the few other coordinates are 
either $\pm k$ or $\pm (k-1)$. As it is shown in Chapters 4\,--7
of \cite{On explicit embeddings of Q}, from this structure it is not hard to construct this particular set $\mathcal X$ by \eqref{EQ Higman operations}. 
Those chapters are not brief enough to explain them here, but the reader can check in \cite{On explicit embeddings of Q}, how we:
\begin{itemize}
\item[$\circ$]  
go from to $\mathcal Z$ to $\zeta_1 \mathcal Z$ in Section 4.2 (here $\zeta_1 = \sigma  \zeta \sigma^{-1}$);

\item[$\circ$]  
then go from $\mathcal S$ to $\tau \mathcal S$ in Section 4.3;

\item[$\circ$]  
next, we go from these two already obtained sets $\zeta_1 \mathcal Z$ and $\tau \mathcal S$ to the set $\upsilon(\zeta_{\!1} \Zz , \tau\S)$ in Section 4.4,  etc... 
\end{itemize}
By such steps we eventually conclude the construction of  $\mathcal X = \big\{ 
f_k \mathrel{|} k=2,3,\ldots \big\}$ via \eqref{EQ Higman operations} in Point~7.5 of \cite{On explicit embeddings of Q}. 

\end{Example}

\subsection{Building $K_{\!\mathcal X}$ and $L_{\!\mathcal X}$ for the benign subgroup $A_{\!\mathcal X}$ in $F_3$ and in $F$
\,[Step  
\ref{Step 5 AL Algorithm for explicit embedding of a recursive group} 
in Algorithm~\ref{AL Algorithm for explicit embedding of a recursive group}]}
\label{SU Building K_X and L_x for the benign subgroup A_X in F}

Assume the set $\mathcal X$ has already been built from the sets $\Zz$ and $\S$, using the operations \eqref{EQ Higman operations} in any of two methods mentioned in the Step~\ref{Step 4 AL Algorithm for explicit embedding of a recursive group} of Algorithm~\ref{AL Algorithm for explicit embedding of a recursive group}.

Following \cite{Higman Subgroups of fP groups}, we use the conjugates
$b_i=b^{c^i}$\!,\, $i\in \Z$, to build the product $b_f$, and the conjugate $a_f = a^{b_f}$\! for each of the above sequences $f\in \mathcal X$, see the notation \eqref{EQ defining b_f and a_b} in Section~\ref{SU Defining subgroups by integer sequences}. 
All such conjugates $a_f$ generate inside $F_3\!=\langle a, b, c \rangle$ the subgroup $A_{\mathcal X} =\langle a_f \mathrel{|} f \in \mathcal X \rangle$ corresponding to the set $\mathcal X$.

Since in our modified construction utilizes much smaller sets $\mathcal X$, we get much simpler elements $b_f$, $a_f$ that belong to a single specific type only. This much simplifies our work with the respective subgroup $A_{\mathcal X}$, see Section~3.2 in \cite{On explicit embeddings of Q}, and Example~\ref{EX Going from Q to X is harsh PART 3} below. 

\medskip 
We next build the finitely presented overgroup  $K_{\!\mathcal X}$ of $F_3$, and the finitely generated subgroup $L_{\!\mathcal X}\le K_{\!\mathcal X}$ for the \textit{benign} subgroup $A_{\!\mathcal X}$ in $F_3$, such that $F_3 \cap L_{\!\mathcal X} = A_{\!\mathcal X}$ holds.
This is done in the proof of Theorem~\ref{TH Theorem A} in Chapter~\ref{SE Theorem A and its proof steps}, which follows the general scheme of the proof for Theorem 4 in Chapter~4 of \cite{Higman Subgroups of fP groups}.

However, to address two problems mentioned in Point~\ref{SU Obtaining KX and LX for the benign subgroup AX} above, we have done the following main changes:

\smallskip
\textit{Change 1.} As we warned in \ref{SU Obtaining KX and LX for the benign subgroup AX},\, \cite{Higman Subgroups of fP groups} often ``switches'' the groups in which $A_{\!\mathcal X}$ is benign, and this creates problems for the construction of the explicit embedding. Theorem~\ref{TH Theorem A} below always makes sure the current benign subgroup $A_{\!\mathcal X}$ is benign in $F_3$ \textit{necessarily}. 
This is important because we may use the operations from \eqref{EQ Higman operations} for many times, so that each operation accepts an initial $A_{\!\mathcal X}$ benign in $F_3$ (with explicitly known $K_{\!\mathcal X}$ and $L_{\!\mathcal X}$), then it produces a new set $\mathcal Y$ from $\mathcal X$, and outputs a new subgroup $A_{\!\mathcal Y}$ benign again in $F_3$ (still with explicitly known $K_{\!\mathcal Y}$ and $L_{\!\mathcal Y}$).
This allows us to rename $\mathcal Y$ by $\mathcal X$ after each step, and to repeat such steps as many times as needed to arrive to the desired set $\mathcal X$ by which we in Step \ref{Step 4 AL Algorithm for explicit embedding of a recursive group} in Algorithm~\ref{AL Algorithm for explicit embedding of a recursive group} have ``coded'' the defining relations of $T_{\!G}$.

\smallskip
\textit{Change 2.}
Theorem 4 in \cite{Higman Subgroups of fP groups} discusses theoretical \textit{possibility} of construction of $K_{\!\mathcal X}$ and  $L_{\!\mathcal X}$ without writing them down explicitly for all operations \eqref{EQ Higman operations}, whereas in each point of Theorem~\ref{TH Theorem A} we make sure that \textit{the current} $K_{\!\mathcal X}$ is explicitly written (via generators and defining relations), and $L_{\!\mathcal X}$ is explicitly indicated (by its generators) after each application of any of Higman operations.

\medskip
The proofs for this step are in Chapter~\ref{SE Theorem A and its proof steps}, and they consume the most part of this article, see sections~\ref{SU The proof for the case of Z and S}, \ref{SU The proof for iota and upsilon}, \ref{SU The proof for rho}\,--\,\ref{SU The proof for omega_m} in the proof of Theorem~\ref{TH Theorem A}, and the respective groups  $K_{\!\mathcal X}$ and  $L_{\!\mathcal X}$ for all operations \eqref{EQ Higman operations} are explicitly recorded in 
\ref{SU The proof for the case of Z and S}, 
\ref{SU The proof for iota and upsilon}, 
\ref{SU Writing K rho X  by its generators and defining relations}, 
\ref{SU Writing K sigma X  by its generators and defining relations SHORT}, 
\ref{SU Writing K zeta X  by its generators and defining relations}, 
\ref{SU Writing K pi X  by its generators and defining relations}, 
\ref{SU Writing K theta X  by its generators and defining relations}, 
\ref{SU Writing K tau X  by its generators and defining relations}, 
\ref{SU Writing K_omega B by generators and defining relations}.

\begin{Example}
\label{EX Going from Q to X is harsh PART 3}
Continuing 
Example~\ref{EX Going from Q to X is harsh PART 2}, we for the set $\mathcal X = \big\{ 
f_k \mathrel{|} k=2,3,\ldots \big\}$ of sequences \eqref{EQ Higman code for Q} easily get the products $b_{f_k}$ and the conjugates $a_{f_k}$ in $F_3$. Say, for the sequence 
$$
f_3=
\big(1,-3,-1, -3 , -1,\; 3,\;  1,\;  3, \; 1,\;  1, -1,\;  -2, -1,-1,\;  1,\;  2,\;  1,\;  2,\;  -1\big)
$$
choosen for $k=3$, we have:
$$
b_{f_3} = 
b_0^{1}\,
b_1^{-3}
b_2^{-1}
b_3^{-3}
b_4^{-1}
b_5^{3}
b_6^{1}
b_7^{3}
b_8^{1}
b_9^{1}
b_{10}^{-1}
b_{11}^{-2}
b_{12}^{-1}
b_{13}^{-1}
b_{14}^{1}
b_{15}^{2}
b_{16}^{1}
b_{17}^{2}
b_{18}^{-1},
$$
together with the respective conjugate:
$$
a_{f_3} = a^{b_{f_3}}
= 
b_{18}^{1}\,
b_{17}^{-2}\,
\cdots 
b_{1}^{3}\,
b_{0}^{-1}
\; \cdot \;
a 
\; \cdot \; 
b_0^{1}\,
b_1^{-3}
\cdots \,
b_{17}^{2}\,
b_{18}^{-1} .
$$ 

Let us  indicate that for the group $\Q$ the terminal set of sequences $\mathcal X$ is denoted by $\mathcal T$ in \cite{On explicit embeddings of Q}. The  explicit procedure of construction of the respective $K_{\mathcal X}=K_{\mathcal T}$ and $L_{\mathcal X}=L_{\mathcal T}$ is completed in Sections 4\,--7 of \cite{On explicit embeddings of Q}, and they are written down in Section 7.6 of \cite{On explicit embeddings of Q}. 
\end{Example}

\subsection{The ``Higman Rope Trick'' for the $2$-generator case 
\,[Steps  
\ref{Step 6 AL Algorithm for explicit embedding of a recursive group}\,--\,\ref{Step 8 AL Algorithm for explicit embedding of a recursive group} 
in Algorithm~\ref{AL Algorithm for explicit embedding of a recursive group}]}
\label{SU Reusing Higman's final step with Higman Rope Trick}

For a subset $\mathcal X\subseteq \E$ we in Chapter~\ref{SE Theorem B and the final embedding} define a new group $T_{\!\mathcal X}$ via \eqref{EQ introducing T_X}, and prove Theorem~\ref{TH Theorem B} on its explicit embedding $\beta: T_{\!\mathcal X} \to \mathcal G$  into a finitely presented group $\mathcal G$, provided that the groups  $K_{\!\mathcal X}$, $L_{\!\mathcal X}$ are explicitly known for the benign subgroup $A_{\!\mathcal X}$ of $F$. 
Theorem~\ref{TH Theorem B} has been proved by some adaptations of Higman's constructions from Section 5 in \cite{Higman Subgroups of fP groups}, and the group $\mathcal G$ is explicitly given via  \eqref{EQ relations of G FULL}. Also see some related discussion in \cite{Higman rope trick}. 

Then we once again use the advantage that our embedding was through the $2$-generator group $T_{\!G}$, because in such a particular case $T_{\!\mathcal X}$ simply  \textit{is equal} to the group $T_{\! G}$ from Step~\ref{Step 1 AL Algorithm for explicit embedding of a recursive group}.
Hence, $\beta$ is an embedding of $T_{\! G}$ into $\mathcal G$ also, and as an explicit embedding $\varphi$ of the initial $G$ into  $\mathcal G$ one can take the composition $\varphi: G \to \mathcal G$ of $\alpha$ from 
Step~\ref{Step 1 AL Algorithm for explicit embedding of a recursive group} with this $\beta$, see Section~\ref{SU Equality and the final embedding}.

\smallskip
For the \textit{optional} Step \ref{Step 8 AL Algorithm for explicit embedding of a recursive group} 
in Al\-go\-rithm~\ref{AL Algorithm for explicit embedding of a recursive group}
we need one more explicit embedding $\gamma : \mathcal G \to T_{\!\mathcal G}$ of $\mathcal G$  into a certain $2$-generator finitely presented $T_{\!\mathcal G}$ built in Section~\ref{SU Embedding G into the 2-generator group TG}, again utilizing the ``universal words'' from \cite{Embeddings using universal words}.
Then the embedding $\psi:G\to T_{\!\mathcal G}$ is the composition \eqref{EQ composition psi} of $\varphi$ with $\gamma$.

\bigskip
\section{Some auxiliary constructions}
\label{SE Some auxiliary constructions}

\subsection{The $\bigast$-construction}
\label{SU The *-construction} 

Let us begin with the brief notation of the $\bigast$-\textit{construction }and  its basic properties. All the proofs with many more examples are given in \cite{Auxiliary free constructions for explicit embeddings}, and here we just state them in a format comfortable for  the chapters to follow.

Let $G, M, K_1,\ldots,K_r$ be any groups such that the conditions:
$$
\text{$G\le M  \le K_1,\ldots,K_r$ \;\;and\;\; $K_i \cap\,  K_j=M$}
$$
hold for any distinct indices $i,j=1,\ldots,r$.
Picking a subgroup $L_i\le K_i$ for each $i=1,\ldots,r$
we first build the HNN-extensions $K_i *_{L_i} t_i$ of the group $K_i$ with the base subgroup $L_i$ and with the stable letter $t_i$, and next using these $r$ groups we construct an auxiliary ``nested'' free construction:
\begin{equation}
\label{EQ initial form of star construction}
\Big(\cdots
\Big( \big( (K_1 *_{L_1} t_1) *_M (K_2 *_{L_2} t_2) \big) *_M  (K_3 *_{L_3} t_3)\Big)\cdots 
\Big) *_M  (K_r *_{L_r} t_r)
\end{equation}
by amalgamating all these HNN-extensions in their common subgroup $M$. 
For the sake of briefness let us denote the above bulky construction \eqref{EQ initial form of star construction} via
\begin{equation}
\label{EQ star construction short form} 
\textstyle
\bigast_{i=1}^{r}(K_i, L_i, t_i)_M,
\end{equation}
and also agree to set $A_i = G \cap \, L_i$ for each $i$.

\begin{figure}[h]
\includegraphics[width=390px]{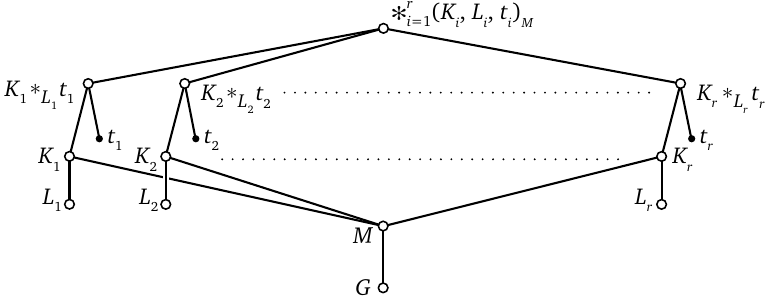}
\caption{Construction of the group\, $\bigast_{i=1}^{r}(K_i, L_i, t_i)_M$ in \eqref{EQ star construction short form}.} 
\label{FI Figure_01_Star_construction}
\end{figure}

The group 
\eqref{EQ star construction short form} 
may coincide with certain well known constructions in some specific particular cases:

\begin{Example}
When a certain group $G$ with its subgroups $A_1,\ldots,A_r$ is fixed, then taking $K_i=M=G$ and $L_i=A_i$ for each $i=1,\ldots,r$,  we 
have the HNN-extensions $G *_{A_i} t_i$, and then 
$\bigast_{i=1}^{r}(G, A_i, t_i)_G$ is the amalgamation of all such $G *_{A_i} t_i$ by their subgroup $G$. That is nothing but the usual HNN-extension with multiple stable letters $t_1,\ldots,t_r$: 
$$
G *_{A_1,\ldots,\,A_r } \!(t_1,\ldots,t_r),
$$
compare with the more general Lemma~\ref{LE intersection in bigger group multi-dimensional} below.
\end{Example} 

\begin{Example}
If we again put $K_i=M=G$ for each $i=1,\ldots,r$, and choose \textit{trivial} subgroups  $L_i=A_i=\1$, then $\bigast_{i=1}^{r}(G, A_i, t_i)_G$ simply is the ordinary free product of $G$ with the free group $\langle t_1,\ldots,t_r \rangle\cong F_r$ of rank $r$:
$$
G*\langle t_1 \rangle * \cdots * \langle t_r \rangle 
= G * \langle t_1,\ldots,t_r \rangle
= G * F_r.
$$ 
\end{Example} 

\begin{Example}
Consider the case when $L_i=K_i=M=G$ for each $i=1,\ldots,r$.  Then in $G *_{G} t_i$ conjugation by stable letter $t_i$ just fixes the whole $G$, which means this HNN-extension is the direct product $G \times \langle t_i \rangle$ for $\langle t_i \rangle \cong \Z$. Then the $\bigast$-construction $\bigast_{i=1}^{r}(G, G, t_i)_G$ turns out to be  the \textit{direct} product: 
$$
G \times \langle t_1,\ldots,t_r \rangle
\cong
G \times F_r.
$$ 
\end{Example}

The main reason why we introduce this construction is that many of rather complicated constructions, used in \cite{Higman Subgroups of fP groups} and elsewhere in the literature, turn out to be \textit{particular cases} of \eqref{EQ star construction short form} even if they are looking very differently. Hence, we find it reasonable to define one general construction and to collect its basic properties in \cite{Auxiliary free constructions for explicit embeddings} in order to refer to them wherever needed.  
The below lemmas 
\ref{LE intersection in bigger group multi-dimensional}\,--\,\ref{LE Ksi for G} are proven in sections 3, 4 in \cite{Auxiliary free constructions for explicit embeddings}.

\begin{Lemma}
\label{LE intersection in bigger group multi-dimensional}
If $G\le M  \le K_1,\ldots,K_r$ are groups mentioned above, then in\, ${\bigast}_{i=1}^{r}(K_i, L_i, t_i)_M$
the following equality holds:
$$
\langle G, t_1,\ldots,t_r \rangle= 
G *_{A_1,\ldots,\,A_r } \!(t_1,\ldots,t_r).
$$
\end{Lemma}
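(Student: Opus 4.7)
The plan is to construct the natural map $\phi \colon G *_{A_1,\ldots,A_r}(t_1,\ldots,t_r) \to \bigast_{i=1}^{r}(K_i,L_i,t_i)_M$ which sends each $g\in G$ to itself and each $t_i$ to $t_i$, and then to show that $\phi$ is an injective homomorphism whose image is $\langle G, t_1,\ldots,t_r\rangle$. Well-definedness is immediate: the defining relations $t_i^{-1} a\, t_i = a$ ($a\in A_i$) of the HNN-extension on the left hold in the $\bigast$-construction because $A_i = G\cap L_i \subseteq L_i$ and, inside the factor $K_i *_{L_i} t_i$, the stable letter $t_i$ fixes $L_i$ pointwise. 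The image equals $\langle G, t_1,\ldots,t_r\rangle$ by construction, so everything reduces to proving that $\phi$ is injective.

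For injectivity I would take a non-trivial reduced normal form
$$
w \,=\, g_0\, t_{i_1}^{\epsilon_1}\, g_1\, t_{i_2}^{\epsilon_2}\cdots t_{i_n}^{\epsilon_n}\, g_n
$$
in $G *_{A_1,\ldots,A_r}(t_1,\ldots,t_r)$, meaning no subword $t_i^{\epsilon}\, g\, t_i^{-\epsilon}$ with $g\in A_i$ occurs. The case $n=0$ is trivial since $G\le M$ visibly embeds into the $\bigast$-construction. Otherwise I would partition the index sequence $i_1,\ldots,i_n$ into maximal runs of equal indices. Writing $j_k$ for the common index of the $k$-th run, $R_k$ for the alternating product of $t_{j_k}^{\pm 1}$'s with intermediate $G$-elements inside that run, and $h_1,\ldots,h_{p-1}\in G\le M$ for the $G$-elements separating consecutive runs, this gives
$$
w \,=\, g_0 \cdot R_1 \cdot h_1 \cdot R_2 \cdot h_2 \cdots h_{p-1} \cdot R_p \cdot g_n,
$$
with consecutive run-indices $j_k \ne j_{k+1}$ by maximality.

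The two decisive verifications are then: (i) each $R_k$, viewed inside the single-stable-letter HNN-extension $K_{j_k} *_{L_{j_k}} t_{j_k}$, is a reduced HNN-word --- the hypothesis that $w$ has no pinch with $g\in A_{j_k}=G\cap L_{j_k}$ is equivalent, for $g\in G$, to the absence of pinches with $g\in L_{j_k}$, so Britton's Lemma applies and forces $R_k \notin K_{j_k}$ (in particular $R_k \notin M$); and (ii) the nested amalgamation over the common subgroup $M$ is associative (using the hypothesis $K_i\cap K_j = M$ built into the $\bigast$-construction), so it coincides with the multi-factor amalgamated free product $*_M\{K_i *_{L_i} t_i\}_{i=1}^r$, to which the standard normal form theorem for amalgamated products applies. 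Setting $m = g_0 \in M$, $u_k = R_k h_k$ for $k<p$, and $u_p = R_p g_n$, we obtain that each $u_k$ lies in the factor $K_{j_k} *_{L_{j_k}} t_{j_k}$, is not in $M$ (since $R_k\notin K_{j_k}$ while $h_k\in K_{j_k}$), and consecutive $u_k, u_{k+1}$ lie in distinct factors. Thus $w = m\cdot u_1\cdot u_2\cdots u_p$ is in reduced form and hence non-trivial in the $\bigast$-construction.

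The hard part will be the bookkeeping of two concurrent normal form arguments --- Britton's Lemma for the single-letter HNN-extensions $K_i *_{L_i} t_i$ on the inner level, and the amalgamated free product normal form over $M$ on the outer level --- and correctly exploiting the defining identity $A_i = G \cap L_i$ to bridge between them so that reducedness of $w$ translates to reducedness of each $R_k$ inside its factor. Once those are in place, the injectivity, and therefore the desired equality, follows.
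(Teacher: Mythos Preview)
Your argument is correct. The paper does not actually prove this lemma here; it is stated without proof and the reader is referred to sections~3--4 of \cite{Auxiliary free constructions for explicit embeddings}. Your approach---defining the canonical map, then establishing injectivity by combining Britton's Lemma inside each factor $K_{j_k}*_{L_{j_k}}t_{j_k}$ with the normal form theorem for the amalgamated product over $M$---is exactly the natural one, and the key observation that for $g\in G$ the condition $g\in L_{j_k}$ is equivalent to $g\in A_{j_k}=G\cap L_{j_k}$ is precisely what makes the reducedness transfer from $w$ to each run $R_k$.

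One small remark: your parenthetical that the hypothesis $K_i\cap K_j=M$ is what gives associativity of the nested amalgamation is not quite the right attribution. The identification of the iterated amalgam with the multi-factor amalgam $*_M\{K_i*_{L_i}t_i\}$ holds by the universal property regardless; the hypothesis $K_i\cap K_j=M$ is rather a standing assumption in the paper ensuring that the various $K_i$ share exactly $M$ so that the construction is unambiguously set up. This does not affect your proof.
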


\begin{Lemma}
\label{LE intersection in HNN extension multi-dimensional}
Let $A_1,\ldots,\,A_r$ be any subgroups in a group $G$ with the intersection 
$I=\bigcap_{\,i=1}^{\,r} \,A_i$. 
Then in $G *_{A_1,\ldots,\,A_r} (t_1,\ldots,t_r)$ we have:
\begin{equation}
\label{EQ gemeral intersection in HNN}
\textstyle
G \cap G^{t_1 \cdots\, t_r}
= I.
\end{equation}
\end{Lemma}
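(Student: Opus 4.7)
The plan is to prove the two inclusions separately, with the nontrivial one by induction on $r$.

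The easy direction $I \subseteq G \cap G^{t_1\cdots t_r}$ is immediate: if $g \in I$, then $g \in A_i$ for every $i$, so the defining relation $t_i^{-1} g\, t_i = g$ holds for each stable letter. Thus $g$ commutes with each $t_i$, so $(t_1\cdots t_r)^{-1} g (t_1\cdots t_r) = g$, and $g \in G \cap G^{t_1\cdots t_r}$.

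For the reverse inclusion I would use the fact (already visible from the presentation) that the multi-letter HNN-extension $G *_{A_1,\ldots,A_r}(t_1,\ldots,t_r)$ coincides with the iterated HNN-extension: setting $H_0 = G$ and $H_k = H_{k-1} *_{A_k} t_k$ for $k = 1,\ldots,r$, we have $H_r = G *_{A_1,\ldots,A_r}(t_1,\ldots,t_r)$, with $G \le H_{k-1} \le H_k$ at each stage. The base case $r=1$ is the classical HNN-extension identity $G \cap G^{t_1} = A_1$ (a direct consequence of Britton's Lemma, or equivalently, the normal form theorem for $G*_{A_1} t_1$). For the induction step, write $H = H_{r-1} = G *_{A_1,\ldots,A_{r-1}}(t_1,\ldots,t_{r-1})$ and observe that our ambient group is $H *_{A_r} t_r$. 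Suppose $g \in G$ and $g = (t_1\cdots t_r)^{-1} g' (t_1\cdots t_r)$ for some $g' \in G$. Setting $w = (t_1\cdots t_{r-1})^{-1} g' (t_1\cdots t_{r-1}) \in H$, we get $g = t_r^{-1} w\, t_r$. Applying the single-letter identity inside $H *_{A_r} t_r$, we obtain $g \in H \cap H^{t_r^{-1}} = A_r$, and moreover $w = g$. But then $w \in G$ and $w = (t_1\cdots t_{r-1})^{-1} g' (t_1\cdots t_{r-1})$ realizes $w$ as an element of $G \cap G^{t_1\cdots t_{r-1}}$ computed inside $H$; by the inductive hypothesis this intersection equals $\bigcap_{i=1}^{r-1} A_i$. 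Combining, $g \in A_r \cap \bigcap_{i=1}^{r-1} A_i = I$.

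The main obstacle is the legitimacy of the iterated-HNN presentation, i.e., verifying that each intermediate HNN-extension $H_k = H_{k-1} *_{A_k} t_k$ is well defined (we need $A_k$ to embed in $H_{k-1}$ under the natural inclusion $G \hookrightarrow H_{k-1}$, which follows from the fact that $G$ embeds in every HNN-extension along a subgroup) and that the resulting presentation agrees with $G *_{A_1,\ldots,A_r}(t_1,\ldots,t_r)$. This is essentially a bookkeeping check comparing generators and relations, and it is the kind of fact collected in Section~3 of \cite{Auxiliary free constructions for explicit embeddings}. The only other delicate point is making sure that the application of the single-letter identity $H \cap H^{t_r^{-1}} = A_r$ in the induction step uses $A_r$ as a subgroup of $H$ via the same inclusion $A_r \le G \le H$ under which the inductive identity is stated; this is automatic from the construction of $H$ but is worth pointing out so that the final identification $g \in A_r \cap \bigcap_{i=1}^{r-1} A_i$ takes place inside $G$.
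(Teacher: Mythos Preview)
Your argument is correct. The easy inclusion is immediate, and the inductive step via the iterated HNN-extension together with Britton's Lemma (the single-letter identity $H\cap H^{t_r}=A_r$) is the standard route. One small slip: from $g=t_r^{-1}w\,t_r=w^{t_r}$ you get $g\in H\cap H^{t_r}$, not $H\cap H^{t_r^{-1}}$; since the associated isomorphism on $A_r$ is the identity, both intersections equal $A_r$, so the conclusion is unaffected.

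As to comparison with the paper: this lemma is not proved in the present text but is imported from the companion article \cite{Auxiliary free constructions for explicit embeddings} (see the sentence preceding Lemma~\ref{LE intersection in bigger group multi-dimensional}), so no side-by-side comparison is available here. Your induction-plus-Britton approach is exactly the kind of argument one expects for such a statement, and your remark that the iterated presentation of the multi-letter HNN-extension is the only point needing bookkeeping is well placed; that identification is indeed among the facts collected in Section~3 of \cite{Auxiliary free constructions for explicit embeddings}.
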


\begin{Lemma}
\label{LE join in HNN extension multi-dimensional}
Let $A_1,\ldots,\,A_r$ be any subgroups in a group $G$ with the join
$J=\big\langle\bigcup_{\,i=1}^{\,r} \,A_i\big\rangle$. 
Then in $G *_{A_1,\ldots,\,A_r} (t_1,\ldots,t_r)$ we have:
\begin{equation}
\label{EQ gemeral intersection in HNN multi-dimensional}
\textstyle 
G \cap \big\langle 
\bigcup_{\,i=1}^{\,r} \,G^{t_i} \big\rangle
=J.
\end{equation}
\end{Lemma}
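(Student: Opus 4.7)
My plan is to mimic the strategy behind the companion result Lemma~\ref{LE intersection in HNN extension multi-dimensional}, relying on Britton's lemma inside the HNN extension $G *_{A_1,\ldots,A_r}(t_1,\ldots,t_r)$.

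The inclusion $J \subseteq G \cap \langle \bigcup_{i=1}^{r} G^{t_i}\rangle$ is immediate: each stable letter $t_i$ fixes its associated subgroup pointwise by conjugation, so $A_i = A_i^{t_i} \subseteq G^{t_i}$, and combined with $A_i \subseteq G$ this gives $A_i \subseteq G \cap \langle \bigcup_j G^{t_j}\rangle$ for every $i$. Taking the join of the $A_i$'s, the whole of $J$ lies in the intersection, as required.

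For the reverse inclusion I would take an arbitrary $w \in G \cap \langle \bigcup_i G^{t_i}\rangle$ and express it as a product of the natural generators of the right-hand subgroup:
\[
w = \prod_{j=1}^{n}\, t_{i_j}^{-1}\, h_j\, t_{i_j}, \qquad h_j \in G.
\]
Using the identity $(t_i^{-1} h t_i)(t_i^{-1} h' t_i) = t_i^{-1}(hh')\,t_i$, I can merge any two consecutive factors sharing the same index, and so assume that $i_j \neq i_{j+1}$ for every $j < n$. Viewing $w$ then as a word in the HNN extension with $2n$ stable-letter occurrences, the hypothesis $w \in G$ together with Britton's lemma forces a pinch. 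The constraint $i_j \neq i_{j+1}$ rules out pinches across block boundaries, since each junction $t_{i_j} \cdot 1 \cdot t_{i_{j+1}}^{-1}$ involves different stable letters; so the only admissible pinches are of the form $t_{i_j}^{-1} h_j t_{i_j}$ with $h_j \in A_{i_j}$, and after such a pinch the block collapses to $h_j \in A_{i_j} \subseteq J$. Iterating the procedure --- pinch, re-merge any newly created same-index adjacencies, and apply Britton's lemma again --- each round strictly decreases the stable-letter count, so the process terminates with $w$ realised as a product of elements, each belonging to some $A_{i_j}$. Hence $w \in J$, completing the inclusion.

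The main obstacle I anticipate is the bookkeeping around the induction: after one pinch, the extracted element $h_j \in A_{i_j}$ sits between two stable letters of possibly distinct generators $t_{i_{j-1}}, t_{i_{j+1}}$, so the surviving word is no longer in the original ``block'' normal form. One has to verify that after each reduction the residual word can still be brought into a form suitable for another invocation of Britton's lemma, and that every element harvested along the way actually lies in some $A_i$. This is the same sort of careful bookkeeping that underlies the proof of Lemma~\ref{LE intersection in HNN extension multi-dimensional}, which serves here as the direct template.
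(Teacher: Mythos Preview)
The paper does not give an in-text proof of this lemma; it defers to \cite{Auxiliary free constructions for explicit embeddings}, so there is no paper-side argument to compare against. Evaluating your proposal on its own: the easy inclusion is fine, and the Britton's-lemma strategy for the hard inclusion is the right one. But the obstacle you flag in your last paragraph is real and is \emph{not} resolved by ``re-merge any newly created same-index adjacencies''. After pinching block $j$ (so $h_j\in A_{i_j}$), the residual word carries $h_j$ between $t_{i_{j-1}}$ and $t_{i_{j+1}}^{-1}$; since $i_{j-1},i_{j+1}$ both differ from $i_j$ by your reduction, there is no reason for $h_j$ to lie in $A_{i_{j-1}}$ or $A_{i_{j+1}}$, and so you cannot restore block form by merging the neighbouring blocks across $h_j$.

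The clean fix is to strengthen the induction hypothesis: show that any $w\in G$ of the shape
\[
w \;=\; g_0\,(t_{i_1}^{-1}h_1t_{i_1})\,g_1\,(t_{i_2}^{-1}h_2t_{i_2})\,g_2\cdots g_{n-1}\,(t_{i_n}^{-1}h_nt_{i_n})\,g_n,
\qquad h_j\in G,\ \ g_j\in J,
\]
already lies in $J$ (and drop the requirement $i_j\neq i_{j+1}$). Britton's lemma now offers two kinds of pinch: either some $h_j\in A_{i_j}$, whence that block collapses and the new separator $g_{j-1}h_jg_j$ is again in $J$; or $i_j=i_{j+1}$ with $g_j\in A_{i_j}$, whence blocks $j$ and $j{+}1$ merge into a single block with middle element $h_jg_jh_{j+1}\in G$. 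Either way the stable-letter count drops by two and the displayed form is preserved, so the induction goes through. Your target statement is the special case $g_0=\cdots=g_n=1$.
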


These lemmas allow to build new benign subgroups from the existing ones:

\begin{Corollary}
\label{CO intersection and join are benign multi-dimensional}
If the subgroups $A_1,\ldots,\,A_r$ are benign in a finitely generated group $G$, then:
\begin{enumerate}
\item 
\label{PO 1 CO intersection and join are benign multi-dimensional}
their intersection $I=\bigcap_{\,i=1}^{\,r} \,A_i$ is also benign in $G$;
\item 
\label{PO 2 CO intersection and join are benign multi-dimensional}
their join $J=\big\langle\bigcup_{\,i=1}^{\,r} \,A_i\big\rangle$ is also benign in $G$.
\end{enumerate}
Moreover, if the finitely presented groups $K_i$ with their finitely generated subgroups $L_i$ can be 
given for each $A_i$ explicitly, then the respective finitely presented overgroups $K_I$ and $K_J$ with finitely generated  subgroups $L_I$ and $L_J$
can also be given for $I$ and for $J$ explicitly.
\end{Corollary}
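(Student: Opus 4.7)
The plan is to prove both parts at once by a single application of the $\bigast$-construction from Section~5.1. By hypothesis, for each $i=1,\ldots,r$ I am given an explicit finitely presented overgroup $K_i \supseteq G$ and a finitely generated subgroup $L_i \le K_i$ with $G \cap L_i = A_i$. Taking $M = G$, I form the nested amalgamation
\begin{equation*}
\mathcal{K} \;=\; \textstyle\bigast_{i=1}^{r}(K_i, L_i, t_i)_G
\end{equation*}
defined by (5.1). This group is finitely presented: one stacks the finite presentations of the $K_i$, identifying their copies of a finite generating set of $G$, and then adjoins the stable letters $t_1,\ldots,t_r$ subject to the relations $t_i^{-1}\ell\, t_i = \ell$ for the finitely many generators $\ell$ of $L_i$. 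The hypothesis $K_i \cap K_j = G$ needed to invoke the construction holds automatically inside the successive amalgamations by the normal form theorem for free products with amalgamation, so no preparatory ``gluing'' step is required.

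For part (\ref{PO 1 CO intersection and join are benign multi-dimensional}), I set $K_I = \mathcal{K}$ and $L_I = G^{t_1 t_2 \cdots t_r}$, which is finitely generated by $\{g^{t_1 \cdots t_r} \mid g \in X\}$ for any finite generating set $X$ of $G$, and whose generators are written down explicitly from the data. Lemma~\ref{LE intersection in bigger group multi-dimensional} identifies $\langle G, t_1, \ldots, t_r\rangle$ inside $\mathcal{K}$ with the multi-stable-letter HNN-extension $G *_{A_1,\ldots,A_r}(t_1,\ldots,t_r)$, and then Lemma~\ref{LE intersection in HNN extension multi-dimensional} yields $G \cap L_I = I$, witnessing that $I$ is benign in $G$.

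For part (\ref{PO 2 CO intersection and join are benign multi-dimensional}), inside the \emph{same} group $\mathcal{K}$, I take $L_J = \langle G^{t_1}, \ldots, G^{t_r}\rangle$, finitely generated by the union over $i$ of $\{g^{t_i} \mid g \in X\}$. Using the same identification via Lemma~\ref{LE intersection in bigger group multi-dimensional}, Lemma~\ref{LE join in HNN extension multi-dimensional} then gives $G \cap L_J = J$, so $J$ is benign. The ``Moreover'' clause is immediate from the construction, since the presentation of $K_I = K_J = \mathcal{K}$ and the generators of $L_I$, $L_J$ are produced by an explicit, mechanical procedure from the given $K_i$, $L_i$.

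The only delicate point I anticipate is verifying that the intersections $G \cap L_I$ and $G \cap L_J$ taken inside the full group $\mathcal{K}$ genuinely coincide with the intersections computed inside the smaller subgroup $\langle G, t_1, \ldots, t_r\rangle$ where Lemmas~\ref{LE intersection in HNN extension multi-dimensional} and \ref{LE join in HNN extension multi-dimensional} are formulated. This follows, however, from the injectivity of the inclusion supplied by Lemma~\ref{LE intersection in bigger group multi-dimensional}, which guarantees that no new identifications of elements of $G$ are introduced when passing from the inner HNN-extension to the ambient $\bigast$-construction.
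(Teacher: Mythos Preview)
Your proposal is correct and follows essentially the same approach as the paper: form the single $\bigast$-construction $\mathcal{K}=\bigast_{i=1}^{r}(K_i,L_i,t_i)_G$, then invoke Lemma~\ref{LE intersection in bigger group multi-dimensional} together with Lemma~\ref{LE intersection in HNN extension multi-dimensional} (resp.\ Lemma~\ref{LE join in HNN extension multi-dimensional}) to obtain $L_I=G^{t_1\cdots t_r}$ and $L_J=\langle G^{t_1},\ldots,G^{t_r}\rangle$. The paper records exactly these choices in the paragraph following the corollary (referring to Section~4.3 of \cite{Auxiliary free constructions for explicit embeddings}), and your additional remark on why the intersections computed in $\langle G,t_1,\ldots,t_r\rangle$ agree with those in $\mathcal{K}$ is a welcome clarification.
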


Check Section~4.3 in \cite{Auxiliary free constructions for explicit embeddings} to see that as $K_I$ and $K_J$ one may take $\textstyle{\bigast}_{i=1}^{r}(K_i, L_i, t_i)_M$ which evidently is \textit{finitely presented} for $M=G$. Also, one may choose the subgroups $L_I=G^{t_1 \cdots\, t_r}$ and $L_J=\big\langle 
\bigcup_{\,i=1}^{\,r} \,G^{t_i} \big\rangle$ which evidently are \textit{finitely generated}. 
See Figure~6 in 
\cite{Auxiliary free constructions for explicit embeddings} illustrating the proof of Corollary~\ref{CO intersection and join are benign multi-dimensional}.  

\begin{Remark}
\label{RE one purpose of the *-construction}
Corollary~\ref{CO intersection and join are benign multi-dimensional} stresses one of the reasons why the $\bigast$-construction
\eqref{EQ star construction short form} may be helpful in composition of finitely presented groups containing the given $G$. If the subgroups $A_1,\ldots,\,A_r$ are \textit{not} finitely generated, then $G *_{A_1,\ldots,\,A_r } \!(t_1,\ldots,t_r)$ may \textit{not} be finitely presented, since the non-finitely generated subgroups $A_i$ may add infinitely many new defining relations for this HNN-extension. 
However, if all  $A_i$ are \textit{benign}, we can embed that HNN-extension into a \textit{finitely presented} $\bigast$-construction $\bigast_{i=1}^{r}(K_i, L_i, t_i)_M$ in which we have the freedom to choose as large finitely presented groups $K_i$ as needed, just making sure the subgroups $L_i$ and $M$ are finitely generated (then they will bring just finitely many new defining relations for \eqref{EQ star construction short form}). This trick will be used repeatedly below, and in many cases the choice $M=G$ will already be enough.
\end{Remark} 

The following technical fact 
proved in Section~4.4 of \cite{Auxiliary free constructions for explicit embeddings} displays some ``bigger'' free products inside HNN-extensions and inside $\bigast$-constructions \eqref{EQ star construction short form}, as soon as some ``smaller'' free products are known inside $G$:

\begin{Corollary}
\label{CO smaller free product to the larger free product}
Let $A_1,\ldots,\,A_r$ be any subgroups in a group $G$ such that their join $J$ in $G$ is isomorphic to their free product
$\prod_{i=1}^{r} \,A_i$.  
Then the join
$\big\langle 
\bigcup_{\,i=1}^{\,r} G^{t_i} \big\rangle$
is isomorphic to the free product $\prod_{i=1}^{r} G^{t_i}$
in $G *_{A_1,\ldots,\,A_r } \!(t_1,\ldots,t_r)$, and hence in $\bigast_{i=1}^{r}(K_i, L_i, t_i)_M$.
\end{Corollary}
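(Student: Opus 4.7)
The plan is to verify the freeness criterion directly through Britton's lemma for multi-stable-letter HNN extensions. Suppose $w = w_1 w_2 \cdots w_n$ is an alternating product with $w_j \in G^{t_{i_j}}\setminus\{1\}$ and $i_j \neq i_{j+1}$; writing $w_j = t_{i_j}^{-1} g_j t_{i_j}$ with $g_j \in G \setminus \{1\}$, I would expand $w$ into a word of length $2n$ in the stable letters of $G *_{A_1, \ldots, A_r}(t_1, \ldots, t_r)$ and show it is nontrivial there. The inventory of potential pinches is immediate: only the inner pairs $t_{i_j}^{-1} g_j t_{i_j}$ can pinch (precisely when $g_j \in A_{i_j}$), whereas the outer pairs $t_{i_j} \cdot 1 \cdot t_{i_{j+1}}^{-1}$ involve distinct stable letters by the hypothesis $i_j \neq i_{j+1}$ and so never do. Partition the positions into $S = \{j : g_j \in A_{i_j}\}$ and its complement. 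Two extreme cases finish quickly: if $S = \emptyset$, the expanded word is already reduced and Britton's lemma yields $w \neq 1$; if $S = \{1, \ldots, n\}$, simultaneous pinching collapses $w$ to $g_1 g_2 \cdots g_n \in G$, which by the hypothesis $J \cong \prod_i A_i$ is an alternating reduced word of length $n \geq 1$ in a free product, hence nontrivial.

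The mixed case carries the main obstacle. After pinching every $j \in S$ at once, one obtains a new word of the shape
\[
h_0 \cdot t_{i_{j_1}}^{-1} g_{j_1} t_{i_{j_1}} \cdot h_1 \cdots h_{p-1} \cdot t_{i_{j_p}}^{-1} g_{j_p} t_{i_{j_p}} \cdot h_p,
\]
with $\{j_1 < \cdots < j_p\} = \{1,\ldots,n\}\setminus S$ and each intermediate $h_k = g_{j_k+1}\cdots g_{j_{k+1}-1}$ a product of consecutive pinched letters (with analogous prefix $h_0$ and suffix $h_p$). The task reduces to verifying that this new word is itself reduced. Inner pinches are excluded by construction, since $g_{j_k} \notin A_{i_{j_k}}$. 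Only the outer pinches $t_{i_{j_k}} h_k t_{i_{j_{k+1}}}^{-1}$ remain to rule out, which would require both $i_{j_k} = i_{j_{k+1}}$ and $h_k \in A_{i_{j_k}}$. An empty $h_k$ (i.e.\ $j_{k+1} = j_k + 1$) already has $i_{j_k} \neq i_{j_{k+1}}$ by the original hypothesis and causes no pinch. A nonempty $h_k$ is an alternating reduced word of length $j_{k+1} - j_k - 1 \geq 1$ in the free product $J \cong \prod_i A_i$; by the uniqueness of normal forms in a free product, $h_k$ can belong to the single factor $A_{i_{j_k}}$ only if its length is exactly one, in which case $h_k = g_{j_k+1} \in A_{i_{j_k+1}} \cap A_{i_{j_k}} = \{1\}$ (distinct factors of a free product intersect trivially, and $i_{j_k+1} \neq i_{j_k}$), contradicting $g_{j_k+1} \neq 1$. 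Thus no outer pinch survives, the word is reduced, and Britton's lemma delivers $w \neq 1$. This establishes the isomorphism $\big\langle \bigcup_{i=1}^r G^{t_i}\big\rangle \cong \prod_{i=1}^r G^{t_i}$ inside the HNN extension.

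The transfer from the HNN extension to the larger group $\bigast_{i=1}^{r}(K_i, L_i, t_i)_M$ is immediate from Lemma~\ref{LE intersection in bigger group multi-dimensional}: that lemma identifies the subgroup $\langle G, t_1, \ldots, t_r\rangle$ inside the $\bigast$-construction with exactly the HNN extension $G *_{A_1, \ldots, A_r}(t_1, \ldots, t_r)$ just analysed, and the join $\big\langle \bigcup_{i=1}^r G^{t_i}\big\rangle$ sits inside this subgroup, so its free-product structure is inherited in the ambient $\bigast$-construction.
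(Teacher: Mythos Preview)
Your proof is correct. The Britton's lemma argument is carried out carefully: the key observation that after simultaneously pinching all positions in $S$ each intermediate block $h_k$ is a nontrivial alternating word in the free product $\prod_i A_i$ (hence cannot lie in a single factor $A_{i_{j_k}}$ unless it has length one, in which case the consecutive-index condition forces a trivial intersection) is exactly what is needed to exclude the remaining outer pinches. The transfer to the $\bigast$-construction via Lemma~\ref{LE intersection in bigger group multi-dimensional} is also correct.

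As for comparison: the present paper does not actually prove Corollary~\ref{CO smaller free product to the larger free product} here. It is stated as ``a technical fact proved in Section~4.4 of \cite{Auxiliary free constructions for explicit embeddings}'' and imported without argument. So there is no in-paper proof to compare against. Your direct Britton's-lemma verification is a natural and self-contained route; whether the cited external proof proceeds the same way or via a different device (e.g.\ a universal-property or normal-form argument for amalgams) cannot be determined from this document alone.
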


\subsection{Construction of the group $\mathscr{A}$}
\label{SU Construction of the group A}   

Let us use the tools above to construct the group $\mathscr{A}$ to use it in the proofs in the coming sections. 
In the free group $\langle b,c \rangle$ of rank $2$ we denoted  $b_i = b^{c^i}\!\!$,\; see Section~\ref{SU Defining subgroups by integer sequences}. 
For a fixed integer
$m$ define two isomorphisms
$\xi_m$ and $\xi'_m$ by the rules:
$\xi_m(b)=b_{-m+1},\;
\xi'_m(b)=b_{-m}$,\;
$\xi_m(c)=\xi'_m(c)=c^2$
of $\langle b,c \rangle$, and using them set the HNN-extension: 
\begin{equation}
\label{EQ Ksi two defined}
\Xi_m = \langle b,c \rangle *_{\xi_m, \xi'_m} (t_m, t'_m).
\end{equation}
See Figure~7 of 
\cite{Auxiliary free constructions for explicit embeddings} illustrating the construction of the group $\Xi_m$.  
We have proved the following technical lemmas in \cite{Auxiliary free constructions for explicit embeddings}:
\begin{Lemma}
\label{LE Ksi}
In the above notation the following equalities hold for any $m$ in $\Xi_m$:
\begin{equation}
\label{EQ Ksi two equality}
\begin{split}
\langle b,c \rangle \cap \langle b_m, t_m, t'_m\rangle &= \langle b_m, b_{m+1},\ldots\rangle,
\\
\langle b,c \rangle \cap \langle b_{m-1}, t_m, t'_m\rangle &= \langle b_{m-1}, b_{m-2},\ldots\rangle.
\end{split}
\end{equation}
\end{Lemma}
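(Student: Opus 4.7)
The plan is to prove each of the two equalities in \eqref{EQ Ksi two equality} by establishing both inclusions separately, with the bulk of the work in the $\subseteq$ direction.

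For the inclusion $\supseteq$, a direct computation using $\xi_m(b)=b_{-m+1}$, $\xi_m(c)=c^2$ and $b_i=c^{-i}bc^i$ yields
\[
t_m^{-1}b_it_m=\xi_m(b_i)=b_{2i-m+1}\quad\text{and}\quad (t'_m)^{-1}b_it'_m=\xi'_m(b_i)=b_{2i-m}
\]
for every $i\in\Z$. Specialising to $i=m$ one obtains $t_m^{-1}b_mt_m=b_{m+1}$ and $(t'_m)^{-1}b_mt'_m=b_m$, so $t'_m$ centralises $b_m$; specialising to $i=m-1$ one obtains the dual statement that $t_m$ fixes $b_{m-1}$ while $(t'_m)^{-1}b_{m-1}t'_m=b_{m-2}$. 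A strong induction on $k\geq 0$ then shows $b_{m+k}\in\langle b_m,t_m,t'_m\rangle$: for $k=2j$ one writes $b_{m+k}=(t'_m)^{-1}b_{m+j}t'_m$, and for $k=2j+1$ one writes $b_{m+k}=t_m^{-1}b_{m+j}t_m$, with $j<k$ in both cases. This yields the first $\supseteq$; the symmetric induction (using that $t_m$ fixes $b_{m-1}$ and $t'_m$ shifts it downward) yields the second.

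For the inclusion $\subseteq$, I would invoke Britton's lemma in the HNN extension $\Xi_m$. Given an arbitrary $w\in\langle b,c\rangle\cap\langle b_m,t_m,t'_m\rangle$, write it as a word in $\{b_m^{\pm 1},t_m^{\pm 1},(t'_m)^{\pm 1}\}$, each base-piece being a power of $b_m$. The key structural observation is that the domain associated subgroup of each HNN-stable letter equals the whole base $\langle b,c\rangle$, so every subword of the form $t_m^{-1}ut_m$ or $(t'_m)^{-1}ut'_m$ with $u$ in the base is a pinch, reducible to $\xi_m(u)\in\langle b_{-m+1},c^2\rangle$ or $\xi'_m(u)\in\langle b_{-m},c^2\rangle$ respectively. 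For the reverse-oriented pinches one needs $u$ in the image subgroup; a direct computation gives $b_m\in\langle b_{-m},c^2\rangle$ but $b_m\notin\langle b_{-m+1},c^2\rangle$, so among our initial letters only the $t'_m$-oriented inverse pinch appears, producing back $b_m^a$. Since $w$ lies in the base, Britton's lemma forces complete elimination of stable letters via such pinches.

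The main technical obstacle is verifying that every intermediate pinch reduction remains inside the target subgroup $B_m:=\langle b_m,b_{m+1},\ldots\rangle$. This reduces to the invariance $\xi_m(B_m)\subseteq B_m$ and $\xi'_m(B_m)\subseteq B_m$, which is immediate from $2i-m+1\geq m$ and $2i-m\geq m$ whenever $i\geq m$, together with the dual invariance of $B_m\cap\langle b_{-m+1},c^2\rangle$ and $B_m\cap\langle b_{-m},c^2\rangle$ under $\xi_m^{-1}$ and $\xi'^{-1}_m$ respectively (verified by the same index inequalities applied to the inverse shift). A straightforward induction on the total number of stable-letter occurrences then confirms that the fully reduced element of $\langle b,c\rangle$ representing $w$ lies in $B_m$, completing $\subseteq$. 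The second equality is handled identically, replacing $B_m$ by $B'_{m-1}:=\langle b_{m-1},b_{m-2},\ldots\rangle$ and noting the analogous invariances $2i-m+1\leq m-1$ and $2i-m\leq m-1$ for $i\leq m-1$.
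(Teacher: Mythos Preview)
Your argument is correct. The paper does not prove this lemma here but defers to \cite{Auxiliary free constructions for explicit embeddings}, so a direct comparison of approaches is not possible; that said, your Britton-reduction strategy with the running invariant ``all base pieces lie in $B_m$'' is exactly the kind of argument one expects and is consistent with the paper's toolkit.

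One spot deserves a fuller sentence. For the reverse pinches you need $\xi_m^{-1}\big(B_m\cap\operatorname{Im}\xi_m\big)\subseteq B_m$, and you justify this by ``the same index inequalities applied to the inverse shift.'' That is right, but the inequality only applies once you know that any element of $B_m\cap\operatorname{Im}\xi_m$ is already a word in the $b_j$ with $j\ge m$ of the correct parity; this is not a tautology, since $\operatorname{Im}\xi_m=\langle b_{-m+1},c^2\rangle$ is described by different generators. The clean way to close this is to observe that $\xi_m$ restricted to the normal closure $N=\langle b_j:j\in\Z\rangle$ acts by the re-indexing $b_j\mapsto b_{2j-m+1}$ and carries reduced words in the $b_j$ to reduced words; hence for $v\in N$ one has $\xi_m(v)\in B_m\iff$ every index $2j-m+1\ge m\iff$ every $j\ge m\iff v\in B_m$, while for $v\notin N$ the $c$-exponent sum of $\xi_m(v)$ is nonzero so $\xi_m(v)\notin B_m$. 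This gives $\xi_m^{-1}(B_m\cap\operatorname{Im}\xi_m)=B_m$ exactly, and the analogous statement for $\xi'_m$ follows the same way with $j\mapsto 2j-m$. With that sentence added, your induction on the number of stable letters goes through without further comment. (Your earlier remark that initially only the $t'_m$-oriented inverse pinch is available is correct but not needed once the general invariance is in place.)
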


\begin{Lemma}
\label{LE Ksi for G}
In the above notation the following equalities hold for any $m$ in $\langle a \rangle * \,\Xi_m$:
\begin{equation}
\label{EQ a simple example of bening subgroup}
\begin{split}
F \cap \langle b_m, t_m, t'_m\rangle = \langle b_m, b_{m+1},\ldots\rangle
\;\;\; {\it and} \;\;\;  
F \cap \langle a, b_m, t_m, t'_m\rangle = \langle a, b_m, b_{m+1},\ldots\rangle,
\hskip3mm \\
F \! \cap \!\langle b_{m-1}, t_m, t'_m\rangle  \! =  \! \langle b_{m-1}, b_{m-2},\ldots\rangle
\;\; {\it and} \;\;\,  
F \! \cap \! \langle a, b_{m-1}, t_m, t'_m\rangle \!  =  \! \langle a, b_{m-1}, b_{m-2},\ldots\rangle.
\end{split}
\end{equation}
\end{Lemma}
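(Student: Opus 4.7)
My plan is to deduce Lemma~\ref{LE Ksi for G} from Lemma~\ref{LE Ksi} by a normal-form argument inside the free product $\langle a\rangle * \Xi_m$. First, I observe that since $\langle b,c\rangle$ is a subgroup of $\Xi_m$, the free product $F = \langle a\rangle * \langle b,c\rangle$ embeds canonically into $\langle a\rangle * \Xi_m$, and the inclusion of subgroups respects the induced free product structure on any subgroup of the form $\langle a\rangle * H$ with $H\le \Xi_m$. In particular, $\langle a, b_m, t_m, t'_m\rangle = \langle a\rangle * \langle b_m, t_m, t'_m\rangle$ as a free product inside $\langle a\rangle * \Xi_m$.

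Next, take an element $w \in F \cap \langle b_m, t_m, t'_m\rangle$. Since $\langle b_m, t_m, t'_m\rangle \le \Xi_m$, the element $w$ lies in the $\Xi_m$-factor of the ambient free product $\langle a\rangle * \Xi_m$; but $w\in F$ means its free-product normal form relative to $\langle a\rangle * \langle b,c\rangle$ consists of alternating syllables from $\langle a\rangle$ and $\langle b,c\rangle$. By the uniqueness of normal form in $\langle a\rangle * \Xi_m$, comparing with the description of $w$ as a single $\Xi_m$-syllable forces all $\langle a\rangle$-syllables to be trivial and $w\in \langle b,c\rangle$. Hence $w\in \langle b,c\rangle \cap \langle b_m, t_m, t'_m\rangle$, which equals $\langle b_m, b_{m+1},\ldots\rangle$ by Lemma~\ref{LE Ksi}. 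The reverse inclusion is immediate since $\langle b_m, b_{m+1},\ldots\rangle$ lies inside both $F$ and $\langle b_m, t_m, t'_m\rangle$.

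For the second equality, I repeat the argument with $w\in F\cap \langle a, b_m, t_m, t'_m\rangle$. Writing $w$ in its normal form with respect to $\langle a\rangle * \Xi_m$, its $\langle a\rangle$-syllables are powers of $a$ (since both $F$ and $\langle a, b_m, t_m, t'_m\rangle$ allow them freely), while each $\Xi_m$-syllable must lie in $\langle b,c\rangle$ (because $w\in F$) and simultaneously in $\langle b_m, t_m, t'_m\rangle$ (because $w\in \langle a\rangle * \langle b_m, t_m, t'_m\rangle$). Thus each such syllable lies in $\langle b,c\rangle \cap \langle b_m, t_m, t'_m\rangle = \langle b_m, b_{m+1},\ldots\rangle$ by Lemma~\ref{LE Ksi}, so $w\in \langle a, b_m, b_{m+1},\ldots\rangle$. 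Again the opposite inclusion is trivial. The two equalities involving $b_{m-1}$ are handled in exactly the same way, invoking the second equality of \eqref{EQ Ksi two equality} in place of the first.

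I expect essentially no obstacle: the content is purely the normal form theorem for free products combined with Lemma~\ref{LE Ksi}. The only point requiring a little care is justifying that the subgroup $\langle a, b_m, t_m, t'_m\rangle$ inside $\langle a\rangle * \Xi_m$ really is the internal free product $\langle a\rangle * \langle b_m, t_m, t'_m\rangle$, but this is automatic since $\langle a\rangle$ and $\langle b_m, t_m, t'_m\rangle$ sit in distinct free factors and so generate their free product freely.
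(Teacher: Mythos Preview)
Your proof is correct. The argument reduces cleanly to Lemma~\ref{LE Ksi} via the normal form theorem for free products: in $\langle a\rangle * \Xi_m$, any element admits a unique alternating expression, and since $\langle b,c\rangle$ and $\langle b_m,t_m,t'_m\rangle$ both embed into the $\Xi_m$-factor, the syllable-by-syllable comparison you describe is valid. The one point you flag as needing care, namely $\langle a,b_m,t_m,t'_m\rangle=\langle a\rangle * \langle b_m,t_m,t'_m\rangle$, is indeed automatic for subgroups lying in distinct free factors, and your justification is fine.

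As for comparison with the paper: the present paper does not actually prove Lemma~\ref{LE Ksi for G} here; it defers the proof to Sections~3--4 of \cite{Auxiliary free constructions for explicit embeddings}. That companion paper develops general statements about intersections of subgroups in free products and HNN-extensions (see the references to Corollary~3.5 and Remark~3.6 of \cite{Auxiliary free constructions for explicit embeddings} used elsewhere in the present paper), and Lemma~\ref{LE Ksi for G} is an instance of those. Your direct normal-form argument is essentially what underlies those general statements, so the approaches coincide in spirit; yours is simply the specialization written out by hand rather than invoked as a black box.
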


These lemmas provide us with infinitely generated benign subgroups of four types inside the free groups $\langle b,c \rangle$ and $F_3\!=\langle a,b,c \rangle$. In particular, 
$\langle b_1, b_2,\ldots \rangle$ is benign in $F_3$ for the finitely presented overgroup $\langle a \rangle * \Xi_1$, and for its finitely generated subgroup $\langle b_1, t_1, t'_1\rangle$.
Also the subgroup $\langle a, b_{0}, b_{-1},\ldots\rangle$ is benign in $F_3$ for the same finitely presented $\langle a \rangle * \Xi_1$ and for its finitely ge\-nerated subgroup $\langle a, b_{0}, t_1, t'_1\rangle$.

\begin{figure}[h]
\includegraphics[width=390px]{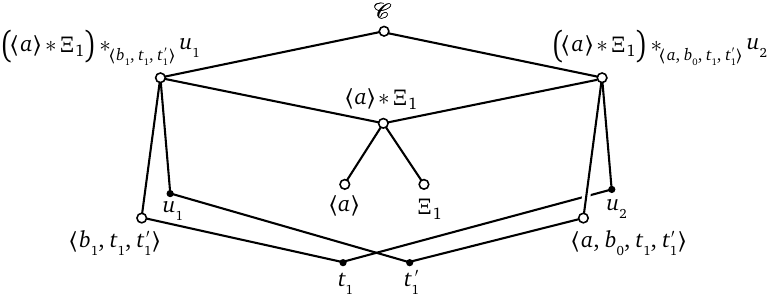}
\caption{Construction of the group  $\mathscr{C}$.}
\label{FI Figure_09_C}
\end{figure}

Use these groups to build the 
$\bigast$-construction:
\begin{equation}
\label{EQ Defining C}
\mathscr{C} 
= \Big( \big(\langle a \rangle\! * \Xi_1\big) *_{\langle b_1, t_1, t'_1 \rangle} u_1 \Big)
\,*_{\langle a \rangle *\, \Xi_1}
\Big(\big(\langle a \rangle\! * \Xi_1\big) *_{\langle a, b_{0}, t_1, t'_1 \rangle} u_2 \Big)
\end{equation}
which is finitely presented, and which can explicitly be given by generators and defining relations via:
\begin{equation}
\label{EQ C by generators and defining relations}
\begin{split}
\mathscr{C} & = \big\langle a, b, c, t_1, t'_1, u_1, u_2 \mathrel{\;|\;}  
b^{t_1}= b,\;
b^{t'_1}= b^{c^{-1}}\!\!\!\!,\;\;
c^{t_1}=c^{t'_1}=c^2; \\
& \hskip37mm 
\text{$u_1$ fixes $b^c, t_1, t'_1$};\;\;\;
\text{$u_2$ fixes $a, b, t_1, t'_1$}
\big\rangle
\end{split}
\end{equation}
where ``fixes'' means ``fixes under conjugation'', e.g., $t_1^{u_1}=t_1$.

By Corollary~\ref{CO intersection and join are benign multi-dimensional}\;\eqref{PO 2 CO intersection and join are benign multi-dimensional} the join $J$ of $\langle b_1, b_2,\ldots  \rangle$ and $\langle a, b_{0}, b_{-1},\ldots\rangle$ is benign in $F_3$ for the finitely presented overgroup $K_J=\mathscr{C}$ and for its finitely generated subgroup
$L_J=\langle F_3^{u_1}, F_3^{u_2} \rangle$.

Further, in $F_3$ the subgroups 
$\langle b_1, b_2,\ldots  \rangle$ and $\langle a, b_{0}, b_{-1},\ldots\rangle$
clearly generate their \textit{free} pro\-duct.
Hence, by Corollary~\ref{CO smaller free product to the larger free product} the groups  $F_3^{u_1}$ and $F_3^{u_2}$ also generate their free product $F_3^{u_1} *\, F_3^{u_2}$ in $\mathscr{C}$, as well as in its subgroup: 
$$
F_3*_{F_3 \,\cap\, \langle b_1, t_1, t'_1 \rangle, 
\;\;\; 
F_3\,\cap\,\langle a, b_{0}, t_1, t'_1 \rangle} 
(u_1,u_2)
\;=\;
F_3 *_{\langle b_1, b_2,\ldots  \rangle, \;\; \langle a, b_{0}, b_{-1},\ldots\rangle} (u_1,u_2),
$$ 
see Lemma~\ref{LE intersection in bigger group multi-dimensional} and Lemma~\ref{LE Ksi for G}. Then \textit{arbitrary} two isomorphisms defined on $F_3^{u_1}$ and on $F_3^{u_2}$ can be continued to an isomorphism on the whole subgroup  $F_3^{u_1}* F_3^{u_2}$ inside $\mathscr{C}$.
Choose the trivial automorphism in $F_3^{\,u_1}$ and the conjugation by $b^{u_2}$ in $F_3^{\,u_2}$\!,\; and denote their common continuation in $F_3^{u_1}* F_3^{u_2}$ via $\omega$.
Inside $F_3$ this $\omega$ leaves the elements $ b_1, b_2,\ldots $ intact, but it sends $a, b_{0}, b_{-1},\ldots$ to their conjugates $a^b\!,\, b_{0}^b,\, b_{-1}^b,\ldots$

\begin{figure}[h]
\includegraphics[width=390px]{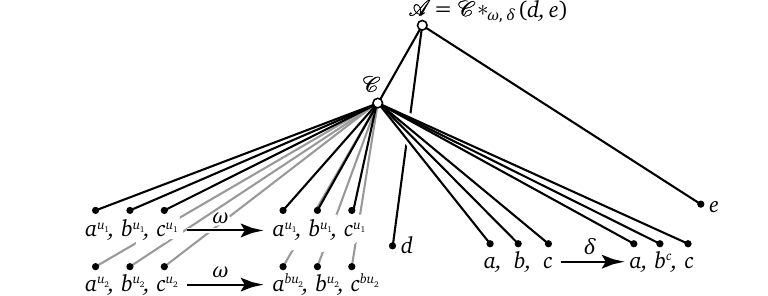}
\caption{Construction of the group  $\mathscr{A}$.}
\label{FI Figure_10_A}
\end{figure}

Next, denote by $\delta$ the isomorphism of $F_3$ sending $a, b, c$ to $a, b^c, c$. Now we can define one of the key technical groups of this article as the HNN-extension:
$$
\mathscr{A} = \mathscr{C} \! *_{\omega, \delta}\!(d,e)
$$
which is finitely presented because  $\mathscr{C}$ is finitely presented, while $\omega$ is determined by its values on just six conjugates $a^{u_1}, b^{u_1}, c^{u_1}, a^{u_2}, b^{u_2}, c^{u_2}$, and 
$\delta$ is determined by its values on just three generators $a, b, c$.\,
This group can explicitly be written as:
\begin{equation}
\label{EQ relations A}
\begin{split}
\mathscr{A} 
& \!=\! 
\big\langle a, b, c, t_1, t'_1, u_1, u_2, d,e \mathrel{\;|\;}  
b^{t_1}\!=\! b,\;
b^{t'_1}\!=\! b^{c^{-1}}\!\!\!\!,\;
c^{t_1}\!=\!c^{t'_1}\!=\!c^2; \\[-2pt]
& \hskip17mm 
\text{$u_1$ fixes $b^c\!, t_1, t'_1$};\;\;\;\;
\text{$u_2$ fixes $a, b, t_1, t'_1$}; \\[-2pt]
& \hskip17mm  
\text{$d$ fixes 
$a^{u_1}$\!,\, 
$b^{u_1}$\!,\, 
$c^{u_1}$};  \\[-2pt]
& \hskip17mm \text{$d$ sends $a^{u_2}\!\!,\; b^{u_2}\!\!,\; c^{u_2}$ to $a^{b u_2}\!,\; b^{u_2}\!,\; c^{b u_2}$};\\[-2pt]
& \hskip17mm 
\text{$e$\; sends $a,b,c$ \;to\; $a,b^c\!,\; c$}
\big\rangle,
\end{split}
\end{equation}
with ``$d$ sends $a^{u_2}$ to $a^{b u_2}$''
simply meaning 
$a^{u_2 d}=a^{b u_2}$.
For later purposes denote the set of generators of $\mathscr{A}$ by 
\begin{equation}
\label{EQ generators of XA}
X_{\! \mathscr{A}}=\big\{
a, b, c, t_1, t'_1, u_1, u_2, d,e
\big\}.
\end{equation}
Denoting the set of defining relations of $\mathscr{A}$ from \eqref{EQ relations A} by 
$R_{\! \mathscr{A}}$ we have
$\mathscr{A}=\langle
\,X_{\! \mathscr{A}}
\;|\;
R_{\! \mathscr{A}}
\rangle$, i.e., 
$\mathscr{A}$ is given by $9$ generators and $1+1+2+3+4+3+3+3=20$ relations.

\subsection{Computing the conjugation of $a_f$ by $d_j$ in $\mathscr{A}$}
\label{SU Computing the conjugation of by d j} 

A useful computational feature takes place in $\mathscr{A}$. 
Namely, using the earlier notation 
$f_{j}^+$\!,\, 
$f_{j}^-$\!,\, 
$f^+$\!,\, 
$f^-$ 
from Section~\ref{SU Integer functions f} 
we can for a given $a_f$ consider the elements, say, $a_{f_{j}^+}$ or $a_{f^-}$ in $\mathscr{A}$, \textit{inside} $F_3$.
Also, using the remark about $\langle d,e\rangle$ in Section~\ref{SU Defining subgroups by integer sequences} we can use the elements $d_i$ and $d_f$ in $\mathscr{A}$, \textit{outside} $F_3$.

The following lemma uses this notation, and it is one of the main reasons for the sake of which the group $\mathscr{A}$ was thus constructed:

\begin{Lemma}
\label{LE action of d_m on f} For any $f \in \mathcal E$ and any $j\in \Z$ 
we have 
$
a_f^{d_j} =\! a_{f_{j}^+}$ 
and
$
a_f^{\,d_j^{-1}}\!\! = a_{f_{j}^-}
$ in $\mathscr{A}$.
\end{Lemma}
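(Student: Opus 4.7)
The plan is to first reduce the general claim $a_f^{d_j} = a_{f_j^+}$ to its $j=0$ special case by conjugating with the stable letter $e$, and then to prove $a_p^d = a_{p_0^+}$ directly by exploiting the free-product decomposition $\mathcal J * \mathcal K$ inside $F$ on which the defining relations of $\mathscr{A}$ make the action of $d$ completely transparent.

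For the reduction I would first exploit that the HNN-relation for $e$ in \eqref{EQ relations A} forces $a^e = a$, $c^e = c$ and $b^e = b^c = b_1$, whence $b_i^e = b_{i+1}$ by induction on $i$. Consequently $b_p^{e^k}$ is just the sequence $b_p$ shifted by $k$, so $a_p^{e^k} = a_{p'}$ with $p'(n) = p(n-k)$. Writing $d_j = e^{-j} d\, e^j$ and setting $p(k) := f(k+j)$ gives $a_f^{d_j} = \bigl((a_p)^d\bigr)^{e^j}$; once the $j=0$ case $a_p^d = a_{p_0^+}$ is established, applying $e^j$ relocates the incremented coordinate back to position $j$ and outputs $a_{f_j^+}$. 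The companion identity $a_f^{d_j^{-1}} = a_{f_j^-}$ then drops out by inverting $a_p^d = a_{p_0^+}$.

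Next I would pin down how $d$ acts on the split $\mathcal J * \mathcal K$ inside $F$, where $\mathcal J = \langle b_1, b_2, \ldots\rangle$ and $\mathcal K = \langle a, b_0, b_{-1}, \ldots\rangle$. By Lemma~\ref{LE Ksi for G} (with $m=1$) these coincide with $F \cap \langle b_1, t_1, t_1'\rangle$ and $F \cap \langle a, b_0, t_1, t_1'\rangle$. From \eqref{EQ relations A}, $u_1$ commutes with $b^c, t_1, t_1'$ and $u_2$ commutes with $a, b, t_1, t_1'$, so conjugation by $u_1$ fixes $\mathcal J$ pointwise and conjugation by $u_2$ fixes $\mathcal K$ pointwise; hence $\mathcal J \subseteq F \cap F^{u_1}$ and $\mathcal K \subseteq F \cap F^{u_2}$. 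The HNN-relations for $d$ now give $x^d = x$ for every $x \in \mathcal J$, whereas for $y \in \mathcal K$ the rule $(y^{u_2})^d = (y^b)^{u_2}$ dictated by $\omega$ collapses to $y^d = y^b$ via $[u_2, b] = [u_2, y] = 1$.

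Finally I would factor $b_p = b_p^- \cdot b_p^+$ with $b_p^- = \cdots b_{-1}^{p(-1)} b_0^{p(0)} \in \mathcal K$ and $b_p^+ = b_1^{p(1)} b_2^{p(2)} \cdots \in \mathcal J$, set $y = (b_p^-)^{-1} a\, b_p^- \in \mathcal K$ and compute
\[
a_p^d \;=\; \bigl((b_p^+)^d\bigr)^{-1} y^d\, (b_p^+)^d \;=\; (b_p^+)^{-1}\, b^{-1} (b_p^-)^{-1}\, a\, b_p^-\, b\, b_p^+.
\]
Since $b = b_0$, the word $b_p^-\, b_0\, b_p^+$ equals $b_{p_0^+}$ (only the exponent of $b_0$ is incremented by one), so $a_p^d = a_{p_0^+}$, as required. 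The main obstacle is really psychological: one must locate inside the intricate construction of $\mathscr{A}$ the precise reason why $d$ acts on $F$ as ``identity on $\mathcal J$, conjugation by $b$ on $\mathcal K$''. Once that recognition is made, everything else is an elementary verification, because $\omega$ and the centralising relations on $u_1, u_2$ were engineered exactly so that this computation goes through.
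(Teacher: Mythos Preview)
Your proof is correct and rests on exactly the same mechanism the paper's illustrative example exploits: the stable letter $d$ acts as the identity on $\mathcal J=\langle b_1,b_2,\ldots\rangle$ and as conjugation by $b=b_0$ on $\mathcal K=\langle a,b_0,b_{-1},\ldots\rangle$, while $e$ shifts indices. The only organizational difference is that the paper conjugates each factor $b_i$ and $a$ by $d_j$ separately and lets the surplus $b_j$'s telescope, whereas you first reduce to $j=0$ via $e$ and then use the clean split $b_p=b_p^- b_p^+$; your version is a touch tidier but substantively the same argument.
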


Its proof is uncomplicated after all preparations above. Hence we just bring simple examples that fully explain the  argument.
Taking, say,  $f=(2,5,3)$ and $j=1$ we write $b_f=b_0^{2}\,b_1^5\,b_2^{3}$ and calculate the routine: 
\begin{equation*}
\begin{split}
a_f^{d_1} & =
\big(
b_2^{\!-3}b_1^{\!-5}b_0^{\!-2}
\; a\;
b_0^{2}b_1^{5}b_2^{3} \,
\big)^{d_1}
\!\! \\
& =
\big(b_2^{\!-3} \big)^{d_1} 
\big(b_1^{\!-5} \big)^{d_1}
\big(b_0^{\!-2} \big)^{d_1}
\; (a)^{d_1} \;
\big(b_0^{2}\big)^{d_1}
\big(b_1^{5}\big)^{d_1}
\big(b_2^{3} 
\big)^{d_1}
\!\! \\
& = 
b_2^{-3}\,
\big(b_1^{\!-1} b_1^{\!-5} b_1^{\vphantom8}\big)
\big(b_1^{\!-1} b_0^{\!-2} b_1^{\vphantom8}\big)
\,\, \big(b_1^{\!-1} a b_1^{\vphantom8}\big) \,
\big(b_1^{\!-1} b_0^{2}b_1^{\vphantom8}\big)\,
\big(b_1^{\!-1} b_1^{5}b_1^{\vphantom8}\big)
\,b_2^{3}\\
& = b_2^{\!-3}
\big(b_1^{\!-1} b_1^{\!-5}\big)
b_0^{\!-2}
\;a\;
b_0^{2}
\big( b_1^{5} b_1^{1}\big)
b_2^{3}
\;=\; b_2^{\!-3}b_1^{\!-\,6}  b_0^{\!-2}
\;a\;
b_0^{2}b_1^6b_2^{3}\\
&= a_{f_{1}^+}
\end{split}
\end{equation*}
for the sequence
$f_{1}^+ \!= (2,\,\boldsymbol{5\!+\!1}\,,3)= (2,\boldsymbol{6},3)$.
Taking $j=2$  we would have $a_f^{d_2}=a_{f_2^+}\!=a_{f^+}$ where 
$f_{2}^+ \!= f^+ \!\!= (2,5,\,\boldsymbol{3\!+\!1})= (2,5,\boldsymbol{4})$.

Hopefully, the calculation routine in the displayed example does not entomb the simple meaning of Lemma~\ref{LE action of d_m on f}: the conjugation by $d^j$ just ``lifts''  by $1$ the exponent of the the factor corresponding to the $j$'th coordinate of $f$ inside $a_f$.

\begin{Remark}
\label{RE order of d_i does not matter}
The following feature of this lemma will be used repeatedly. 
The \textit{order} of elements $d_i$ acting on $a_f$ does \textit{not} matter, i.e., $a_f^{d_{j_1}d_{j_2}}$ and $a_f^{d_{j_2}\,d_{j_1}}$ are equal for any $f, j_1, j_2$. Say, for the above $f=(2,5,3)$ we have  
$a_f=a_{(2,5,3)}^{d_{1}d_{2}}
= a_{(2,5,3)}^{d_{2}d_{1}}
=a_{(2,\boldsymbol 6, \boldsymbol 4)}$.
\end{Remark}

\bigskip
\section{Theorem~\ref{TH Theorem A} and its proof steps 
}
\label{SE Theorem A and its proof steps}

\subsection{Theorem~\ref{TH Theorem A} on benign subgroups}
\label{SU Theorem A on benign subgroups} 

For notation of the sets $\E$, $\Zz$, $\S$ see Section~\ref{SU Integer functions f}, and for the Higman operations $\iota, 
\upsilon, 
\rho, 
\sigma, 
\tau, 
\theta, 
\zeta, 
\pi, 
\omega_m,
$
$m=1,2,\ldots$
in \eqref{EQ Higman operations} over the subsets of $\E$ 
see Section~\ref{SU The Higman operations}. 
Of these operations $\iota, 
\upsilon$  
are \textit{binary}, and the rest are \textit{unary} operations.
For the subgroup $A_{\!\mathcal X}$ defined in the free group $F_3\!=\langle a,b,c \rangle$ for a subset $\mathcal X\! \subseteq \E$ see Section~\ref{SU Defining subgroups by integer sequences}.
If $A_{\!\mathcal X}$ is benign in $F_3$, then we denote the respective finitely presented overgroup of $F_3$ by $K_{\!\mathcal X}$, and the  respective finitely generated subgroup of the latter by $L_{\!\mathcal X}$, see Section~\ref{SU Benign subgroups and Higman operations}. 
Under an \textit{explicitly given} group we understand a group explicitly given by its generators and defining relations, see Section~\ref{SU Recursive enumeration and recursive groups}.

\begin{figure}[h]
\includegraphics[width=390px]{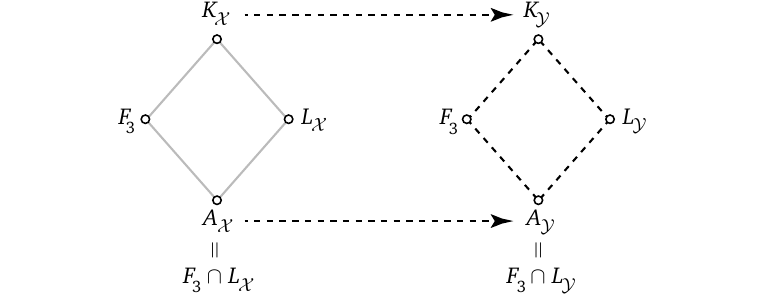}
\caption{An illustration for statement \eqref{PO 3 TH Theorem A} in Theorem~\ref{TH Theorem A}.}
\label{Figure_11_Theorem_A}
\end{figure}

With this notation the theorem below allows to explicitly build new benign subgroups from the existing ones:

\begin{TheoremABC}{A}
\label{TH Theorem A}
Let $\mathcal Y$ be a subset of $\E$ for one of the following cases:
\begin{enumerate}
\item 
\label{PO 1 TH Theorem A}
$\mathcal Y$ is one of sets and $\Zz$ or $\S$;

\item
\label{PO 2 TH Theorem A}
$\mathcal Y$ is obtained from sets $\mathcal X_1, \mathcal X_2 \subseteq \E$ by any of operations  
$\iota$ or $\upsilon$,
with $A_{\mathcal X_1}$, $A_{\mathcal X_2}$  benign in $F_3$;

\item
\label{PO 3 TH Theorem A}
$\mathcal Y$ is obtained from $\mathcal X \subseteq \E$ by any of operations
$
\rho,
\sigma,
\tau, 
\theta, 
\zeta, 
\pi,
\omega_m
$, with $A_{\mathcal X}$  benign in $F_3$.
\end{enumerate}
Then $A_{\mathcal Y}$ is benign in $F_3$. Moreover, for case \eqref{PO 1 TH Theorem A} the groups $K_{\mathcal Y}$ and $L_{\mathcal Y}$ can always be given explicitly. 
For case \eqref{PO 2 TH Theorem A} they can be given explicitly, if $K_{\mathcal X_1}$, $K_{\mathcal X_2}$ and 
$L_{\mathcal X_1}$, $L_{\mathcal X_2}$ are known explicitly.  
For case \eqref{PO 3 TH Theorem A} 
they can be given explicitly, if $K_{\mathcal X}$ and  $L_{\mathcal X}$ are known explicitly.
\end{TheoremABC}

The proof will be given in sections \ref{SU The proof for the case of Z and S}\,--\,\ref{SU The proof for omega_m} below, and the promised groups $K_{\mathcal Y}$ and $L_{\mathcal Y}$ will be  written down explicitly in \ref{SU The proof for the case of Z and S}, 
\ref{SU The proof for iota and upsilon}, 
\ref{SU Writing K rho X  by its generators and defining relations}, 
\ref{SU Writing K sigma X  by its generators and defining relations SHORT}, 
\ref{SU Writing K zeta X  by its generators and defining relations}, 
\ref{SU Writing K pi X  by its generators and defining relations}, 
\ref{SU Writing K theta X  by its generators and defining relations}, 
\ref{SU Writing K tau X  by its generators and defining relations}, 
\ref{SU Writing K_omega B by generators and defining relations}.

\begin{Remark}
\label{RE What is novelty}
Comparing this theorem with lemmas~4.4\,--\,4.10 in \cite{Higman Subgroups of fP groups}, we see that the only  novelty of Theorem~\ref{TH Theorem A} is that it constructs an \textit{explicit} finitely presented group $K_{\mathcal Y}$ and its \textit{explicit} finitely generated subgroup $L_{\mathcal Y}$ for each $\mathcal Y$ involved. Also, Theorem~\ref{TH Theorem A} makes sure each $A_{\mathcal Y}$ is benign in the \textit{same} free group $F_3$ of rank $3$ \textit{necessarily}, see the differences stressed in Section~\ref{SU Building K_X and L_x for the benign subgroup A_X in F}.
These features are required for explicit embedding of recursive groups later.
We stress that some of the steps of Higman's construction \textit{already are explicit} in \cite{Higman Subgroups of fP groups}, and for them we do not need the constructions developed in \cite{Auxiliary free constructions for explicit embeddings}.
\end{Remark}

The extra ``auxiliary'' Higman operations 
$\sigma^i, 
\zeta_i,
\zeta_S,
\pi',
\pi_i,
\pi'_i,
\tau_{k,l},
\alpha,
\epsilon_S,
+,\;
\iota_n,
\upsilon_n$ of \eqref{EQ auxiliary Higman operations} were introduced in \cite{The Higman operations and embeddings} to simplify usage of the Higman operations, see Section~\ref{SU The Higman operations} above. As we have seen in 
Section~2.4 of \cite{The Higman operations and embeddings}, each of \eqref{EQ auxiliary Higman operations} is a combination of some of original Higman operations \eqref{EQ Higman operations}. Hence the analog of  Theorem~\ref{TH Theorem A} holds true for extra ``auxiliary'' operations \eqref{EQ auxiliary Higman operations} also.

\subsection{The proof for the case of $\Zz$ and $\S$}
\label{SU The proof for the case of Z and S} 

The first case $\mathcal Y = \Zz$ is trivial by Remark~\ref{RE finite generated is benign}:  
$A_{\Zz}$ is benign in $F_3$ simply because $A_{\Zz}=
\langle a_{(0)} \rangle
=
\langle a\rangle
$, 
defined by a \textit{single} function $f\!=\!(0)$, is a finitely generated (cyclic) group, and we can just pick 
$K_{\Zz}=F_3$ and $L_{\Zz}=\langle a\rangle$.

\medskip 
For the second case $\mathcal Y = \S$ as the finitely presented group $K_{\S}$ choose $\mathscr{A}$ from Section~\ref{SU Construction of the group A}.
Since $\S$ contains the function $f\!=\!(0,1)$, then repeatedly using the tech\-ni\-cal Lemma~\ref{LE action of d_m on f} with $a_{f}=a_{(0,1)}$  for $2n$ times we have:
\begin{equation}
\label{EQ d0d1 acts on a_01}
a_{(0,1)}^{(d_0 d_1)^n}
\!\!=\big(a_{(0,1)}^{d_0 }\big)^{d_1  \, (d_0 d_1)^{n-1}}
\!\!\!\!=a_{(0+1,\,1)}^{d_1  \, (d_0 d_1)^{n-1}}
\!\!\!=a_{(1,\,1+1)}^{(d_0 d_1)^{n-1}}
\!\!\!=a_{(2,\,2+1)}^{(d_0 d_1)^{n-2}}
\!\!\!= \cdots 
=a_{(n,n+1)}\, ,
\end{equation}
that is, $a_{(n,n+1)}$ belongs to $ \langle
a_{f},\; d_0 d_1
\rangle $ for any $n\in \Z$.
Since we also have $a_{(n,n+1)}\in F_3$, then $A_{\S}\subseteq F_3\cap \langle
a_{(0,1)}, \;d_0 d_1
\rangle$ holds.

On the other hand, applying the ``conjugates collecting'' process \eqref{EQ elements from <x,y>} 
for $x=a_{(0,1)}$
and
$y= d_0 d_1$,
we can rewrite \textit{any} element $w\in \langle
a_{(0,1)},\; d_0 d_1
\rangle$
as $w=u\cdot v$, where $u$ is a product of some conjugates $x^{\pm y^{n_i}}\!=a_{(0,1)}^{\pm (d_0 d_1)^{n_i}}$\!\!\!\!,\, and $v$ is equal to $y^k=(d_0 d_1)^k$ for a certain $n_i, k$.
By \eqref{EQ d0d1 acts on a_01} all those conjugates are in $F_3$.
Thus, if we additionally show that from $w \in F_3$ it follows  $v\in F_3$, then we will have  
$F_3 \cap \langle
a_{f}, \;d_0 d_1
\rangle \subseteq A_{\S}$, which together with the previous inclusion means $F_3 \cap \langle
a_{f}, \;d_0 d_1\rangle
= A_{\S}$, i.e.,  $A_{\S}$ is benign for the above $K_{\S}$ and its finitely generated subgroup 
$L_{\S}=\langle
a_{f},\; d_0 d_1
\rangle$.
As an explicit presentation of $K_{\S}=\mathscr{A}$ just pick \eqref{EQ relations A}.

It remains to verify that $v\in F_3$ takes place for  $v=1$ only.
$\mathscr{A}$ is the ``nested'' HNN-extension: 
$$\mathscr{A}= \mathscr{C} \! *_{\omega, \delta}\!(d,e) = \big(\mathscr{C} \! *_{\omega}\!d \big) 
*_{\delta} e.$$
By uniqueness of normal form in both HNN-extensions it is clear that the product $v = (d^{e^0} d^{e^1})^k  
= (d\, e^{-1}  d\, e)^k$ of the \textit{stable} letters $d,e$ is in normal form in $\mathscr{A}$. A normal form involving only stable letters is inside $\mathscr{C}$
(and in particular, in $F$) \textit{only} if it is trivial.
%

\subsection{The proof for the operations $\iota$ and $\upsilon$}
\label{SU The proof for iota and upsilon} 

Suppose $\mathcal Y = \iota(\mathcal X_1, \mathcal X_2)=\mathcal X_1 \cap \mathcal X_2$, and 
the finitely presented overgroups
$K_{\mathcal X_1}$ and $K_{\mathcal X_2}$ of $F_3$ together with finitely generated subgroups
$L_{\mathcal X_1} \le K_{\mathcal X_1}$
and
$L_{\mathcal X_2} \le K_{\mathcal X_2}$
are explicitly given: $K_{\mathcal X_1}=\langle
\,Z_{1}
\;|\;
S_{1}
\rangle$ and $K_{\mathcal X_2}=\langle
\,Z_{2}
\;|\;
S_{2}
\rangle$, while the (finitely many) generators of $F_3$, $L_{\mathcal X_1}$, $L_{\mathcal X_2}$ can effectively be computed via the generators from $Z_{1}$, $Z_{2}$, respectively.

Since $K_{\mathcal X_1}$ and $K_{\mathcal X_2}$ both are overgroups of $F_3$, their intersection contains $F_3$.
Without loss of generality we may assume $K_{\mathcal X_1} \!\cap K_{\mathcal X_2}$ is strictly \textit{equal} to $F_3$ because these two overgroups are built independently, and the only requirement they share is to contain $F_3$. I.e., we may assume none of the generators of $K_{\mathcal X_1}$, except $a,b,c$, has been used in construction of $K_{\mathcal X_2}$.

Then the $\bigast$-construction $K_{\mathcal Y}=\textstyle{\bigast}_{i=1}^{2}(K_{\mathcal X_i}, L_{\mathcal X_i}, v_i)_F$ 
build for $G=M=F$
is finitely presented, and  
by Lemma~\ref{LE intersection in bigger group multi-dimensional} and 
Lemma~\ref{LE intersection in HNN extension multi-dimensional}
we have: 
$$
F_3 \cap F_3^{v_1 v_2} = A_1 \cap\, A_2 = \big(F_3 \cap L_{\mathcal X_1}\big)\cap 
\big(F_3 \cap L_{\mathcal X_2}\big) 
\,=\,
A_{\mathcal X_1} \cap \, A_{\mathcal X_2}
= A_{\mathcal X_1 \cap\, \mathcal X_2}
=\, A_{\mathcal Y}.
$$
As a finitely generated subgroup of $K_{\mathcal Y}$ choose $L_{\mathcal Y} = F_3^{v_1 v_2}$ with just \textit{three} generators $a^{v_1 v_2}\!,$ $b^{v_1 v_2}\!,$ $ c^{v_1 v_2}$. As to explicit presentation of $K_{\mathcal Y}$, we may use \eqref{EQ star construction short form} and \eqref{EQ initial form of star construction} to write:
\begin{equation}
\label{EQ relations K_Y}
\begin{split}
K_{\mathcal Y}
& \!=\! 
\big\langle
\,Z_{1}, Z_{2}, v_1, v_2  
\mathrel{\,|\,}  
a(Z_{1})\!=\! a(Z_{2}),\, 
b(Z_{1})\!=\! b(Z_{2}),\,
c(Z_{1})\!=\! c(Z_{2});\\
& \hskip35mm 
\text{$v_1$ fixes the generators of $L_{\mathcal X_1}$}; \\
& \hskip35mm 
\text{$v_2$ fixes the generators of $L_{\mathcal X_2}$} 
\big\rangle
\end{split}
\end{equation}
where $a(Z_{1})$ is the copy of $a$ written as a word on $Z_{1}$, $a(Z_{2})$ is the copy of $a$ written as a word on $Z_{2}$, etc.; 
we made them equal to guarantee $K_{\mathcal X_1} \!\cap K_{\mathcal X_2} = F_3$.

Notice that in \eqref{EQ relations K_Y} we did not include any relations identifying two copies of $M$ 
(compare to \eqref{EQ initial form of star construction}) because here $M=F_3$, and we had already identified the copies of $F_3$ in both groups in the first row of \eqref{EQ relations K_Y}.

\begin{figure}[h]
\includegraphics[width=390px]{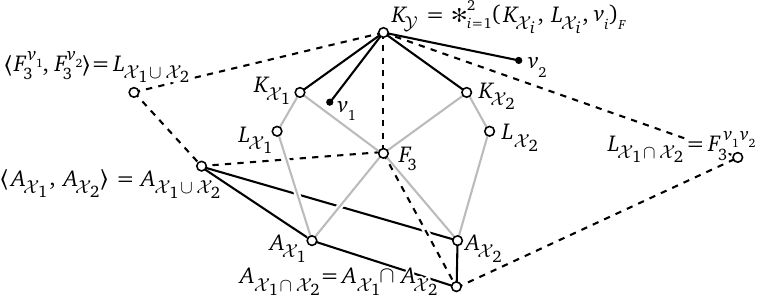}
\caption{Construction of $K_{\mathcal Y}$ for $\mathcal Y = \mathcal X_1 \cap \mathcal X_2$ and for $\mathcal Y = \mathcal X_1 \cup \mathcal X_2$.}
\label{Figure_12_Un_Intersect}
\end{figure}

The case  $\mathcal Y = \upsilon(\mathcal X_1, \mathcal X_2)=\mathcal X_1 \cup \mathcal X_2$ is analogous to the proof above, with one difference: we have $A_{\mathcal Y} =
A_{\mathcal X_1 \cup\, \mathcal X_2}
=\langle\,  A_{\mathcal X_1} ,\; A_{\mathcal X_2}\rangle
$. Using Lemma~\ref{LE intersection in bigger group multi-dimensional} and 
Lemma~\ref{LE join in HNN extension multi-dimensional}
we have: 
$$
F_3 \cap 
\langle\,  F_3^{v_1} \!,\; F_3^{v_2}\rangle
=\langle\,  A_1,\; A_2\rangle
=  \big\langle F_3 \cap L_{\mathcal X_1} ,\;
F_3 \cap L_{\mathcal X_2}\big\rangle 
\,=\,
\langle\,  A_{\mathcal X_1} ,\; A_{\mathcal X_2}\rangle
= A_{\mathcal X_1 \cup\, \mathcal X_2}
= \,A_{\mathcal Y}.
$$
This time we choose $L_{\mathcal Y} = \langle\,  F_3^{v_1} \!,\; F_3^{v_2}\rangle$ with just \textit{six} generators 
$a^{v_1}\!,$ $b^{v_1}\!,$ $ c^{v_1}\!,$
$a^{v_2}\!,$ $b^{v_2}\!,$ $ c^{v_2}$. 
As $K_{\mathcal Y}$ we take the same group $\mathscr{A}$ used above admitting explicit presentation \eqref{EQ relations K_Y}.

\subsection{Some auxiliary adaptations} 
\label{SU Some auxiliary adaptations}

Before we proceed to the remaining Higman operations we need some adaptation in notation and construction. 

\subsubsection{Adding $a,b,c$ to the generators $Z$}
\label{RE abc can be added} 

Assume the hypothesis of Theorem~\ref{TH Theorem A} holds for $\mathcal X$:
the group $K_{\mathcal X}=\langle
\, Z \;|\;  S \rangle$ with its subgroup $L_{\mathcal X}\le K_{\mathcal X}$ are given explicitly, and the embedding 
of $F_3=\langle a,b,c \rangle$ into $K_{\mathcal X}$ is explicitly known. 
Since this embedding is explicit, it is possible to write the free generators $a,b,c$ as certain words $a=a(Z),\, b=b(Z),\, c=c(Z)$ on the alphabet $Z$.
In many cases, such as the proofs in \cite{On explicit embeddings of Q}, $K_{\mathcal X}$ already is constructed so that $Z$ contains the letters $a,b,c$.
But even if $K_{\mathcal X}$ is given by some other generators \textit{not} involving $a,b,c$, we can apply  Tietze transformations: add the words $a=a(Z),\, b=b(Z),\,  c=c(Z)$ to the defining relations $S$, and add the letters $a,b,c$ to the generators $Z$. Hence we can always assume the generators of $F_3$ are included among the letters in $Z$. This is going to simplify the notation below.
%

\subsubsection{An auxiliary copy $\bar{\mathscr{A}}$ of $\mathscr{A}$}
\label{NOR SU Auxiliary copy of A built here}  

In analogy to the generating set $X_{\!\mathscr{A}}$ given in \eqref{EQ generators of XA} introduce a new generating set:  
\begin{equation}
\label{EQ generators of X bar A}
X_{\! \bar {\mathscr{A}}}=\big\{
\bar a,  \bar b,  \bar c,  \bar t_1,  \bar t'_1, \bar  u_1,  \bar u_2,  \bar d, \bar e
\big\}
\end{equation}
to construct a copy $\bar {\mathscr{A}}$ of the group $\mathscr{A}$ applying the same procedure as in Section~\ref{SU Construction of the group A}.
This group has the relations $R_{\! \bar {\mathscr{A}}}$ obtained from the relations $R_{\! {\mathscr{A}}}$ of \eqref{EQ relations A} by just appending bars on each letter, such as 
$\bar b^{\bar t_1}\!=\! \bar b$,\,
$\bar b^{\bar t'_1}\!=\! \bar b^{\bar c^{\; -1}}$\!\!\!,\,  etc.
In particular, inside $\bar {\mathscr{A}}$ the sub\-group
$\bar F \!=\! \langle \bar a, \bar c, \bar c\rangle$ is a free group of rank $3$. 

Next, in addition to the elements $b_i, b_f, a_f \!\in\! F_3$,\,
$d_i, d_f \!\in \!{\mathscr{A}}$ we  introduce the elements   
$\bar b_i, \bar b_f, \bar a_f\! \in\! \bar F_3$, \,
$\bar d_i, \bar d_f \!\in\! \bar {\mathscr{A}}$ 
expectedly defined as $\bar b_i\! = \bar b^{\bar c^i}$\!\!,\;
$\bar b_f\! 
= \cdots
\bar b_{-1}^{f(-1)}
\bar b_{0}^{f(0)} 
\bar b_{1}^{f(1)}\cdots$,
\,
$\bar a_f\!=\bar a^{\bar b_f}$;
\;
$\bar d_i = \bar d^{\bar e^i}$\!\!,\;\;
$\bar d_f 
= \cdots
\bar d_{-1}^{f(-1)}
\bar d_{0}^{f(0)} 
\bar d_{1}^{f(1)}\cdots$,
compare to Section~\ref{SU Defining subgroups by integer sequences}.

\subsubsection{Construction of the direct product $K_P=\bar {\mathcal K} \times \mathscr{A}$}
\label{SU Construction of the direct product K x A}  

If for the given $\mathcal X\subseteq \E$ the subgroup $A_{\mathcal X}$ is benign in $F_3$, then $\bar A_{\mathcal X}
= \langle \bar a_f \;|\; f\in \mathcal X\rangle$ clearly is benign in $\bar F_3$. In case the overgroup $K_{\mathcal X}=\langle
\, Z
\;|\;
S
\rangle$ and its subgroup $L_{\mathcal X}\le K_{\mathcal X}$ can explicitly be constructed for $A_{\mathcal X}$, the respective $\bar K_{\mathcal X}=\langle
\,\bar Z
\;|\;
\bar S
\rangle$ and  $\bar L_{\mathcal X}\le \bar K_{\mathcal X}$ can explicitly be built for $\bar A_{\mathcal X}$.

\medskip
Since $\mathscr{A}$ was built by adjoining some new letters $t_1, t'_1, u_1, u_2, d,e$ to $F_3\!=\langle a,b,c \rangle$, we may suppose none of these new letters was involved in construction of $K_{\mathcal X}$. Since $K_{\mathcal X}$ by construction contains $a,b,c$, compare to Point~\ref{RE abc can be added}, we can assume its intersection with  $\mathscr{A}$ is $F$ precisely, and so it is legal to define the group $\mathcal K = K_{\mathcal X} *_F \mathscr{A}$ in which: 
$$\mathscr{A} \cap L_{\mathcal X} 
= 
\big(
\mathscr{A} \cap L_{\mathcal X}
\big) \cap F
= 
\mathscr{A} \cap \big(L_{\mathcal X}
\cap F \big) 
= \mathscr{A} \cap A_{\mathcal X} \subseteq A_{\mathcal X}.$$
On the other hand, $A_{\mathcal X}$ is inside both $\mathscr{A}$ and $L_{\mathcal X}$, and so
$\mathscr{A} \cap L_{\mathcal X} = A_{\mathcal X}$. That is, $A_{\mathcal X}$ is also benign in the \textit{larger} group $\mathscr{A}$ for the finitely presented overgroup $\mathcal K$ and for the same finitely generated $L_{\mathcal X}$ mentioned above.  

Using the copy $\bar{\mathscr{A}}$ of $\mathscr{A}$ from  Point~\ref{NOR SU Auxiliary copy of A built here}, and modifying the steps above for the generators \eqref{EQ generators of X bar A}, we get the copies 
$\bar F,\,
\bar A_{\mathcal X},
\bar {\mathcal K}, 
\bar L_{\mathcal X}$ of the groups $F,\,
A_{\mathcal X},
{\mathcal K}, 
L_{\mathcal X}$, so that $\bar A_{\mathcal X}$ is also benign in $\bar{\mathscr{A}}$ for 
$\bar {\mathcal K}$
and 
$\bar L_{\mathcal X}$.

\begin{figure}[h]
\includegraphics[width=390px]{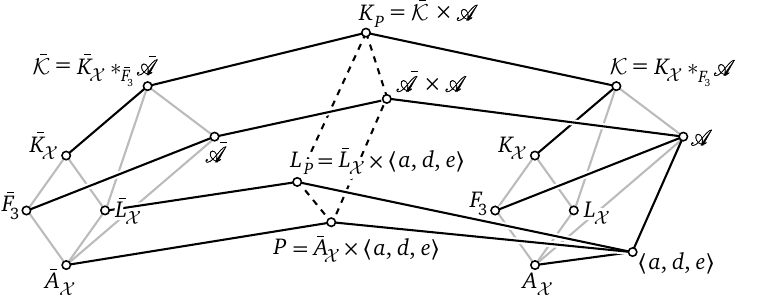}
\caption{Construction of $K_P=\bar {\mathcal K} \times \mathscr{A}$.}
\label{Figure_13_KP} 
\end{figure}

By Remark~\ref{RE finite generated is benign} 
the free subgroup 
$\langle a, d, e \rangle$ is benign in $\mathscr{A}$
for the finitely presented $\mathscr{A}$ and for the finitely generated $\langle a, d, e \rangle$.
Hence, the direct product: 
$$
P=\bar A_{\mathcal X} \times \langle a, d, e \rangle
$$ 
is benign in $\bar {\mathscr{A}} \times \mathscr{A}$ for the finitely presented overgroup
$K_P=\bar {\mathcal K} \times \mathscr{A}$ and for the finitely generated subgroup $L_P=\bar L_{\!\mathcal X} \!\times \langle a, d, e \rangle \le K_P$.
\medskip 

For each $f\!\in\! \mathcal X$ we by Lemma~\ref{LE action of d_m on f} evidently have $a_f\! = a^{b_f}\! = a^{d_f}$\!, this simple fact can be explained for, say, $f=(2,5,3)$:
\begin{equation} 
\label{EQ a^d_f = a^d_f}
a^{d_f}\!
=a^{d_0^{2}\,d_1^5\,d_2^{3}}\!
=(a^{d_0^{2}})^{\,d_1^5\,d_2^{3}}\!
=(a^{b_0^{2}})^{\,d_1^5\,d_2^{3}}\!
=(a^{b_0^{2}\,b_1^5})^{\,d_2^{3}}\!
=a^{b_0^{2}\,b_1^5\,b_2^{3}}\!
=a^{b_f}\! =a_f.
\end{equation}
Hence,  $A_{\mathcal X} \subseteq \langle a, d, e \rangle$, and similarly,  
$\bar A_{\mathcal X} = \langle \bar a^{\bar b_f} \;|\;  f\in \mathcal X\rangle
= \langle \bar a^{\bar d_f} \;|\;  f\in \mathcal X\rangle\subseteq \langle \bar a, \bar d, \bar e \rangle$, i.e., the above product $P$ certainly is inside $\langle \bar a, \bar d, \bar e \rangle \times \langle a, d, e \rangle$ also.  
We are going to use this fact in the proofs below often.

\subsection{The proof for the operation $\rho$}
\label{SU The proof for rho} 

The case with operation $\rho$ was recently covered in \cite{Higman's reversing operation}, but we include it in points \ref{SU Obtaining the benign subgroup for rho Q}\,--\,\ref{SU Writing K rho X  by its generators and defining relations} below to have complete proofs for \textit{all} operations \eqref{EQ Higman operations} here. 
Denote $\mathcal Y = \rho \mathcal X$, say, for $f=(2,5,3)\in \mathcal X$ the function $\rho f$ sends $-2, -1, 0$ respectively to $3,5,2$, and all other integers $i$ to $0$; notice that one \textit{cannot} write $\rho f$ as $(3,5,2)$.
Also in analogy with the copy $\bar{\mathscr{A}}$ for $\mathscr{A}$ in Point~\ref{NOR SU Auxiliary copy of A built here}, we may pick a copy $\bar K_{{\mathcal X}}=\langle
\,\bar Z
\;|\;
\bar S
\rangle$ of $K_{\mathcal X}=\langle
\,Z
\;|\;
S
\rangle$ on some new generators $\bar Z$.
%

\subsubsection{Obtaining the benign subgroup $Q$ for $\rho$}
\label{SU Obtaining the benign subgroup for rho Q}

For each function $f\!\in \E$ define in ${\mathscr{A}}$ the couple of auxiliary products:
\begin{equation*}
\begin{split}
d_{\rho f} = \cdots
d_{1}^{f(-1)}
d_{0}^{f(0)} 
d_{-1}^{f(1)}\cdots 
\quad\;
{\rm and}
\quad\quad
\tilde d_{\rho f} = \cdots
d_{-1}^{f(1)}
d_{0}^{f(0)} 
d_{1}^{f(-1)}\cdots
\end{split}
\end{equation*}
where $\tilde d_{\rho f}$ differs from $d_{\rho f}$ by \textit{reverse order} of its factors $d_{i}^{f(-i)}$ only. For example, 
for $f=(2,5,3)$ we have 
$d_{\rho f}
=  d_{-2}^{3} d_{-1}^{5} d_{0}^{2}$
and 
$\tilde d_{\rho f}
=  d_{0}^{2} d_{-1}^{5} d_{-2}^{3}$; compare these with the element $\bar d_{f}
=  \bar d_{0}^{2} \bar d_{1}^{5} \bar d_{2}^{3}$ used above.  

In the direct product 
$\bar{\mathscr{A}} \times {\mathscr{A}}$ choose the pairs $\lambda_f=\big(\bar d_f,\; \tilde d_{\rho f} \! \big)$. 
The $3$-generator subgroup 
$T=\big\langle 
(\bar a,\; a),\;
(\bar d,\; d),\;
(\bar e,\; e^{-1})
\big\rangle$ of this direct product clearly contains such 
$\lambda_f$ for every $f\!\in \mathcal E$.
This uncomplicated fact requires routine calculations, which are easier to explain on a simple example for, say, $f=(2,5,3)$.
Clearly, $T$ contains the product
$$
(\bar e,\; e^{-1})^{-2} 
\cdot \,
(\bar d,\; d) 
\cdot \,
(\bar e,\; e^{-1})^2
=
\big(\bar d^{\,\bar e^{\;\,2}}
\!\!,\;
d^{e^{\,-2}}\big)
=
\big(\bar d_2
,\;
d_{\,-2}\big),
$$
together with the cube
$
\big(\bar d_2
,\;
d_{\,-2}\big)^3
=
\big(\bar d_2^3
,\;
d_{\,-2}^3\big)
$ of the latter. 
Similarly, the product
$$
\lambda_f
=
\big(\bar d_0^2
,\;
d_{0}^2\big)
\cdot
\big(\bar d_1^5
,\;
d_{-1}^5\big)
\cdot
\big(\bar d_2^3
,\;
d_{-2}^3\big)
=
\big( \bar d_{0}^{2} \bar d_{1}^{5} \bar d_{2}^{3},\;\; d_{0}^{2}d_{-1}^{5} d_{-2}^{3} \big)
=\big(\bar d_f,\; \tilde d_{\rho f} \! \big)
$$
is also in $T$. Trivially, $T$  contains the conjugates $(\bar a,\; a)^{\lambda_f}=
\big(\bar a^{\;\bar d_f},\; a^{\tilde d_{\rho f}}\!\big)$ for \textit{all} such $f$.

\medskip 

By Remark~\ref{RE finite generated is benign}\, $T$
is benign in $\bar{\mathscr{A}} \!\times\! {\mathscr{A}}$, and for it one can choose $K_{T} = \bar{\mathscr{A}} \!\times\! {\mathscr{A}}$ and $L_{T} = T$.

\begin{figure}[h]
\includegraphics[width=390px]{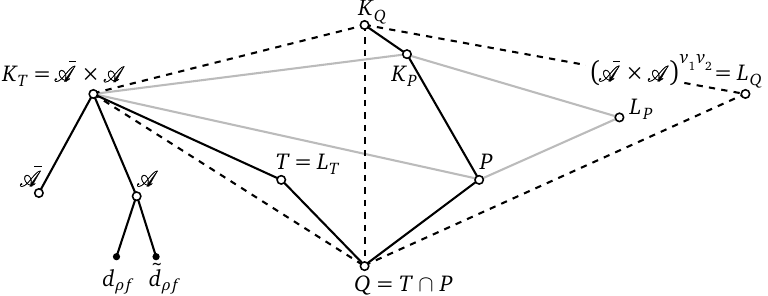}
\caption{Construction of $K_Q$.}
\label{Figure_14_KQ}
\end{figure}

Then according to Corollary~\ref{CO intersection and join are benign multi-dimensional}\;\eqref{PO 1 CO intersection and join are benign multi-dimensional} the intersection $Q=T \,\cap\, P$ of the above two benign subgroups is benign in $\bar{\mathscr{A}} \!\times\! {\mathscr{A}}$  for the finitely presented overgroup:
\begin{equation*}
\begin{split}
K_Q &
= \big(K_T *_{L_T} v_1\big) 
*_{\bar{\mathscr{A}} \!\times\! {\mathscr{A}}}
\big(K_P *_{L_P} v_2\big)\\
&= \Big(
\big(\bar{\mathscr{A}} \!\times\! {\mathscr{A}}\big) 
*_{T} 
v_1 
\Big)
\, *_{\bar{\mathscr{A}} \times {\mathscr{A}}}
\Big(
\big(\bar {\mathcal K} \!\times\! \mathscr{A}\big)
*_{\bar L_{\!\mathcal X} \times\, \langle a, d, e \rangle} 
v_2 
\Big)
\end{split}
\end{equation*}
with two new stable letters $v_1, v_2$, and for the $18$-generator subgroup $L_Q\!=\!\big(\!\bar{\mathscr{A}} \!\times\! {\mathscr{A}}\big)^{v_1v_2}$\! of $K_Q$.

\medskip  
$Q$ turns out to have simple structure. Namely, any couple from $Q$ is in $P=\bar A_{\mathcal X} \times \langle a, d, e \rangle$, and hence its first coordinate is an element in $\bar A_{\mathcal X}$  generated by some words $\bar a_f$ for certain $f\!\in \mathcal X$. For each of them using Lemma~\ref{LE action of d_m on f} we in analogy with \eqref{EQ a^d_f = a^d_f} have:
\begin{equation}
\label{EQ a_f = a^b_f = a^d_f}
\bar a_f 
= \bar a^{\bar b_f}
=\bar a^{\bar d_f}\!,
\end{equation}
i.e., that first coordinate can be rewritten as a word on $\bar a,\bar d, \bar e$.
On the other hand, our couple is inside the $3$-generator group $T=\big\langle 
(\bar a,\; a),\;
(\bar d,\; d),\;
(\bar e,\; e^{-1})
\big\rangle$, i.e., if its first coordinate is written as a word on 
$\bar a,\bar d, \bar e$, then the second coordinate can be obtained by replacing all  
$\bar a,\bar d, \bar e$ in that word by $a, d, e^{-1}$ respectively.
But that replacement simply transforms each $\bar a^{\bar d_f}$ to $a^{\tilde d_{\rho f}}$. The routine of this step is very easy to see on an example with, say, $f=(2,5,3)$ for which the first coordinate is:
\begin{equation*}
\begin{split}
\bar a_f &
= \bar a^{\bar b_f}
=\bar a^{\bar d_f}
= \bar a^{\bar d_{0}^{2} \bar d_{1}^{5} \bar d_{2}^{3}}
= \bar d_{2}^{\;-3}\bar d_{1}^{\;-5}  \bar d_{0}^{\;-2}\, \cdot \,
\bar a \,
\cdot\, \bar d_{0}^{2}\, \bar d_{1}^{5}\, \bar d_{2}^{3}\\
&= 
\bar e^{\;-2} \,\bar d^{\,-3} \,\bar e^{\;2} 
\cdot 
\bar e^{\;-1} \,\bar d^{\,-5} \,\bar e
\cdot 
\bar d^{-2}
\,\cdot\;\, 
\bar a 
\,\; \cdot \; \bar d^{2}
\cdot
\bar e^{\;-1} \,\bar d^{\,5} \,\bar e
\cdot 
\bar e^{\;-2} \,\bar d^{\,3} \,\bar e^{\;2}
\end{split}
\end{equation*}
with respect to which the second coordinate of the couple turns out to be: 
\begin{equation*}
\begin{split}
&
e^{\;2} \, d^{\,-3} \, e^{\;-2} 
\cdot \,
e \, d^{\,-5} \, e^{\;-1}
\cdot \,
d^{-2}
\cdot\,\,
a \,\,
\cdot \, d^{2}
\cdot\,
e \, d^{\,5} \, e^{\;-1}
\cdot \,
e \, d^{\,3} \, e^{\;-2}\\
& = 
d_{-2}^{\;-3} d_{-1}^{\;-5}   d_{0}^{\;-2}\, \cdot\,  \,
a \,
\, \cdot\,  d_{0}^{2}\,  d_{-1}^{5}\,  d_{-2}^{3}
= a^{d_{0}^{2}\,  d_{-1}^{5}\,  d_{-2}^{3}}=a^{\tilde d_{\rho f}}.
\end{split}
\end{equation*}
This means that the benign subgroup $Q$ of $\bar{\mathscr{A}} \!\times\! {\mathscr{A}}$ actually is of a simple format:
$$
Q = \big\langle 
\big(\bar a^{\bar d_f},\; a^{\tilde d_{\rho f}}\!\big)
\;|\;  f\in \mathcal X
\big\rangle
= \big\langle 
(\bar a,\; a)^{\lambda_f}
\;|\;  f\in \mathcal X
\big\rangle.
$$
Using this with \eqref{EQ a_f = a^b_f = a^d_f} we see that $Q$ lies inside $\bar F_3 \!\times\! F_3$, and so $Q$ is benign in $\bar F_3 \!\times\! F_3$ also for the same choice of $K_Q$ and $L_Q$ made earlier.
%

\subsubsection{``Extracting'' $A_{\rho \mathcal X}$ from $Q$}
\label{SU Extracting A rho X from Q} 

Next we have to modify the constructed benign subgroup $Q$ by a few steps  ``to extract'' the benign subgroup 
$A_{\rho \mathcal X}=
\big\langle a^{b_{\rho f}}
\;|\;  f\in \mathcal X
\big\rangle
$ from it.

Comparing \eqref{EQ a^d_f = a^d_f} to Remark~\ref{RE order of d_i does not matter} we see that $a^{\tilde d_{\rho f}}=a^{b_{\rho f}}=a_{\rho f}$, that is, $A_{\rho \mathcal X}$ is nothing but the group generated by the second coordinates $a^{\tilde d_{\rho f}}$ of all pairs from $Q$.

$\bar F_3 \cong \bar F_3 \times \1$ is benign in 
$\bar F_3 \times F_3$ for the finitely presented $\bar F_3 \times F_3$ and for the finitely generated $\bar F_3 \times \1$, see Remark~\ref{RE finite generated is benign}.  
Hence, by Corollary~\ref{CO intersection and join are benign multi-dimensional}\;\eqref{PO 2 CO intersection and join are benign multi-dimensional} the join 
$
Q_1=\big\langle \bar F_3 \!\times\! \1 ,\, Q \big\rangle
=\bar F_3 \!\times\! \langle
a_{\rho f} \mathrel{|} 
f \in \mathcal X \rangle
$ is benign in $\bar F_3 \!\times\! F_3$ for the finitely presented overgroup
$$
K_{Q_1}= 
\big((\bar F_3 \!\times\! F_3)\,*_{\bar F \times \1} w_1\big) 
\,*_{\bar F \times F}
\big(K_{Q} *_{L_{Q}} w_2\big)
$$
with two further new stable letters $w_1, w_2$,\, and for its $12$-generator subgroup
$$L_{Q_1} = \big\langle(\bar F_3 \!\times\! F_3)^{w_1}\!,\;(\bar F_3 \!\times\! F_3)^{w_2} \big\rangle.$$

\vskip-3mm
\begin{figure}[h]
\includegraphics[width=390px]{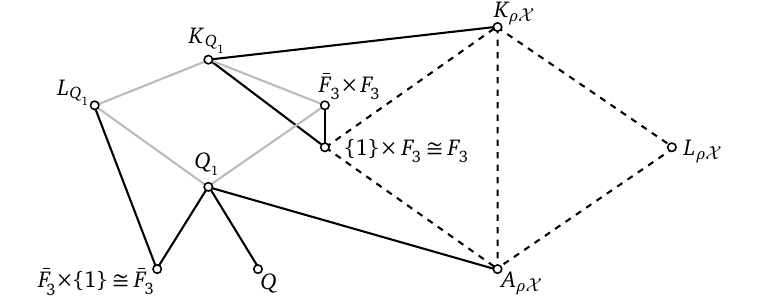}
\caption{``Extracting'' $Q_1$ and $A_{\rho \mathcal X}$ from $Q$.}
\label{Figure_15_Extracting_Q1_and_ArhoX}
\end{figure}

Finally,  
$F_3 = \1 \!\times\!  F_3 $ is benign in 
$\bar F_3 \!\times\! F_3$ for the finitely presented $\bar F_3 \!\times\! F_3$, and for the finitely ge\-ne\-ra\-ted $\1 \!\times\!  F_3$.  
Hence, 
by Corollary~\ref{CO intersection and join are benign multi-dimensional}\;\eqref{PO 1 CO intersection and join are benign multi-dimensional}
the intersection:
$$
A_{\rho \mathcal X}=A_{\mathcal Y}=
\big(\1 \!\times\!  F_3 \big) \cap Q_1 
=
\langle
a_{\rho f} \mathrel{|} 
f \in \mathcal X \rangle
$$ 
is benign in $\bar F_3 \!\times\! F_3$ for the finitely presented overgroup:
$$
K_{\rho \mathcal X}
=
K_{\mathcal Y}
=
\big((\bar F_3 \!\times\! F_3)\,*_{\1 \times  F_3} w_3\big) 
\,*_{\bar F_3 \times F_3}
(K_{Q_1} *_{L_{Q_1}} w_4)
$$
with stable letters $w_3, w_4$,\, and for the $6$-generator subgroup
$L_{\rho \mathcal X}
=L_{\mathcal Y}
=(\bar F_3 \!\times\! F_3)^{w_3 w_4}$ inside $K_{\rho \mathcal X}$.
But since $A_{\rho \mathcal X}$ is in $F_3$, it is benign in $F_3$ also, for the same choice of $K_{\rho \mathcal X}$ and $L_{\rho \mathcal X}$ above.

\subsubsection{Writing $K_{\rho \mathcal X}$  explicitly}
\label{SU Writing K rho X  by its generators and defining relations}   

Now from sections \ref{SU Construction of the group A} and \ref{NOR SU Auxiliary copy of A built here}  we for $\mathscr{A}$ and  $\bar {\mathscr{A}}$ know their generating sets \eqref{EQ generators of XA} and \eqref{EQ generators of X bar A}, along with their  presentations 
$\mathscr{A}=\langle
\,X_{\! \mathscr{A}}
\;|\;
R_{\! \mathscr{A}}
\rangle$
and
$\bar {\mathscr{A}}=\langle
\,X_{\! \bar {\mathscr{A}}}
\;|\;
R_{\! \bar {\mathscr{A}}}
\rangle$. 
Also, the groups 
$K_{\mathcal X}=\langle
\,Z
\;|\;
S
\rangle$ and $\bar K_{{\mathcal X}}=\langle
\,\bar Z
\;|\;
\bar S
\rangle$ together with the finitely many generators of $L_{\!\mathcal X}$ and of $\bar L_{\!\mathcal X}$
are supposed to be explicitly known.
By Point~\ref{RE abc can be added} we may suppose  
$X_{\! \mathscr{A}} \cap\, Z = \{a,b,c\}$ and 
$X_{\! \mathscr{\bar A}} \cap\, \bar Z = \{\bar a,\bar b,\bar c\}$.

The amalgamated product $\mathcal K = K_{\mathcal X} *_F \mathscr{A}$
can be generated by the generators 
$Z \backslash \{a,b,c\}$ of $K_{\mathcal X}$, together with the generators $X_{\! {\mathscr{A}}}$ (we exclude $a,b,c$ from $Z$ because they already were included in $X_{\! {\mathscr{A}}}$, see Point~\ref{RE abc can be added}).
As defining relations for the group $\mathcal K$ we may take the union $S \cup R_{\! \mathscr{A}}$. We can similarly treat the copy $\bar {\mathcal K}$ of ${\mathcal K}$.

Then the direct product $K_P=\bar {\mathcal K} \times \mathscr{A}$
can be given by the above mentioned generators and relation, \textit{plus} the relations making sure $\bar {\mathcal K}$ commutes with $\mathscr{A}$:
\begin{equation} 
\label{EQ K_P presentation}
\begin{split}
K_P=
\Big\langle
X_{\! \mathscr{A}},\;  X_{\! \bar{\mathscr{A}}}\,;\;\;
\bar Z 
\,\backslash\, \{ \bar a, \bar b, \bar c \}\;
\;\mathrel{|} \;\; 
R_{\mathscr{\!A}};\;
R_{\mathscr{\!\bar A}};\;
\bar S;\\
&\hskip-63mm 
\text{generators $X_{\! \mathscr{A}}$ commute with  $X_{\! \bar{\mathscr{A}}}$}\,;\; \\
&\hskip-63mm 
\text{generators $X_{\! \mathscr{A}}$ commute with  $\bar Z 
\backslash \{ \bar a, \bar b, \bar c \}$}
\Big\rangle.
\end{split}
\end{equation}

Next, taking into account the way we constructed $K_{T}$, $K_{Q}$, $K_{Q_1}$, $K_{\rho \mathcal X}$ with the \textit{fixing} effect of our new letters $v_1, v_2;\; w_1, w_2, w_3, w_4$ on certain finitely generated subgroups, we produce:
\begin{equation}
\label{EQ K rho X}
\begin{split}
K_{\rho \mathcal X}=
\Big\langle
X_{\! \mathscr{A}},\;  X_{\! \bar{\mathscr{A}}};\;
\bar Z
\backslash \{ \bar a, \bar b, \bar c \};\;
v_1, v_2;\;  w_1, w_2, w_3, w_4   
\;\mathrel{|} \;\; 
R_{\mathscr{A}};\;
R_{\mathscr{\bar A}};\;
\bar S;\\
&\hskip-99mm 
\text{generators $X_{\! \mathscr{A}}$ commute with  $X_{\! \bar{\mathscr{A}}}$};\; \\
&\hskip-99mm 
\text{generators $X_{\! \mathscr{A}}$ commute with  $\bar Z 
\,\backslash\, \{ \bar a, \bar b, \bar c \}$};\; \\
&\hskip-99mm 
\text{$v_1$ fixes $\bar a a,\;
\bar d d,\;
\bar e e^{-1} $};
\;\;\;
\text{$v_2$ fixes $\bar L_{\!\mathcal X}$ and $a,d,e$};\; 
\\
&\hskip-99mm \text{$w_1$ fixes  
$\bar a,\bar b,\bar c$};
\;\;
\text{$w_2$ fixes  $X_{\!\! \mathscr{A}}^{v_1 v_2}$ and $X_{\!\! \bar{\mathscr{A}}}^{v_1 v_2}$};\;\;
\text{$w_3$ fixes $ a, b, c$;}
\\
&\hskip-99mm  
\text{$w_4$ fixes 
$\big\{
a,b,c, \bar a, \bar b, \bar c
\big\}^{\! w_1} \! \cup \big\{
a,b,c, \bar a, \bar b, \bar c
\big\}^{\! w_2}$}
\Big\rangle\,.
\end{split}
\end{equation}
Lastly, as $L_{\rho \mathcal X}$ we can take the $6$-generator subgroup
generated by 
$
\big\{
a,b,c, \bar a, \bar b, \bar c
\big\}^{w_3 w_4}$ in $K_{\rho \mathcal X}$ by Corollary~\ref{CO intersection and join are benign multi-dimensional}\;\eqref{PO 1 CO intersection and join are benign multi-dimensional}.
%
%
In \eqref{EQ K rho X}\; ``$w_2$ fixes  $X_{\!\! \mathscr{A}}^{v_1 v_2}$ and $X_{\!\! \bar{\mathscr{A}}}^{v_1 v_2}$'' means that conjugation by $w_2$ fixes 
the conjugates of \textit{each} of the generators $X_{\!\! \mathscr{A}}$ and $X_{\!\! \bar{\mathscr{A}}}$
from \eqref{EQ generators of XA} or \eqref{EQ generators of X bar A} by the product $v_1 v_2$.

If $K_{\mathcal X}$ has $m$ generators (which we may assume include $a,b,c$) and $n$ defining relations, and if 
$L_{\mathcal X}$ has $k$ generators, then the group $K_{\rho \mathcal X}$ in \eqref{EQ K rho X} has 
$9 + 9 + (m - 3) + 2 +4 = m +21$ generators and 
$20+20+n+9\cdot 9 + 9\cdot (m-3)
+3 + k +3 + 3  + 2\cdot 9 +3 +2\cdot 6 
= n+9m+k+136$ 
defining relations.

\medskip
In the above constructions we have supposed that the overgroup $K_{\mathcal X}$ of $F_3$ has $a,b,c$ among its generators by Point~\ref{RE abc can be added}. Also observe a formatting issue in \eqref{EQ K rho X}: we write not 
``$v_1$ fixes
$(\bar a,\; a)$,
$(\bar d,\; d)$,
$(\bar e,\; e^{-1})$''
but 
``$v_1$ fixes
$\bar a a$,
$\bar d d$,
$\bar e e^{-1}$''
which has the same meaning  as all generators in 
$X_{\! \mathscr{A}}$ already commute with those in $X_{\! \bar{\mathscr{A}}}$ according to the second line of \eqref{EQ K rho X}.

\begin{Remark}
\label{RE Compare this constructions for rho with Higmans brief remark}
Compare the presentation \eqref{EQ K rho X} with Higman's very brief first paragraph in the proof of Lemma 4.6 on page 470 in \cite{Higman Subgroups of fP groups}. That paragraph only states that if $A_{\mathcal X}$ is benign, then $A_{\rho \mathcal X}$ is benign because there is an automorphism $\alpha$ sending $d_i$ to $d_{\,-i}$.
Explicit construction of the respective $K_{\rho \mathcal X}$ and $L_{\rho \mathcal X}$ is a non-trivial routine task, as we saw above.
\end{Remark}

Our figures~\ref{Figure_12_Un_Intersect}\,--\,\ref{Figure_15_Extracting_Q1_and_ArhoX} were to illustrate the Higman operations $\iota,\, \upsilon,\, \rho$. Since the general graphical pattern is understandable, we are \textit{not} going to illustrate the remaining operations $\sigma,\, 
\tau,\, 
\theta,\, 
\zeta,\, 
\pi,$\, 
$\omega_m$ 
from \eqref{EQ Higman operations}.
We will only include more figures in Chapter~\ref{SE Theorem B and the final embedding} to illustrate the final stages of the embedding, including the \textit{Higman Rope Trick} in Figure~\ref{Figure_18_Higman_rope_trick}.

\subsection{The proof for the operation $\sigma$}
\label{SU The proof for sigma} 

Assume $A_{\mathcal X}$ is benign in $F_3$ for the explicitly given finitely presented group $K_{\mathcal X}=\langle
\, Z
\;|\;
S
\rangle$ and for its finitely generated subgroup $L_{\mathcal X}$. 
Then the copy $\bar K_{{\mathcal X}}=\langle
\,\bar Z
\;|\;
\bar S
\rangle$ of $K_{\mathcal X}$ can be defined on some new generators $\bar Z$.
Denoting $\mathcal Y = \sigma \mathcal X$ we, say, for 
$f=(2,5,3)\in \mathcal X$ have 
$\sigma f = (0, 2,5,3)\in \mathcal Y$ and $a_{\sigma f}=
a^{b_1^2 b_2^5 b_3^3}$.

\begin{Remark}
\label{RE same letter for all operations}
For the operation $\sigma$ in the current section (and for the rest of Higman operations in below sections 
\ref{SU The proof for zeta},
\ref{SU The proof for pi},
\ref{SU The proof for theta},
\ref{SU The proof for tau},
\ref{SU The proof for omega_m})
we are going to apply constructions considerably distinct from the method used for $\rho$ in Section~\ref{SU The proof for rho} above. However, these constructions share some similarities which we  wish to stress advisedly. Hence, where reasonable, we will use similar notation, such as 
$\lambda_f$,\! 
$T$,
$Q$,\,
$Q_1$,\! 
$P$,
$\mathcal K$, etc., to denote elements and groups with similar purposes in technical steps. Compare, for example, the finitely generated groups $T$ used in points~\ref{SU Obtaining the benign subgroup for rho Q}, 
~\ref{SU Obtaining the benign subgroup for sigma Q SHORT},
~\ref{SU Obtaining the benign subgroup for theta Q},
~\ref{SU Obtaining the benign subgroup for tau Q}, etc., in very similar tricks.
\end{Remark}

\subsubsection{Obtaining the benign subgroup $Q$ for $\sigma$}
\label{SU Obtaining the benign subgroup for sigma Q SHORT}
For each $f\!\in \E$ we in $F_3$ have:
$$
b_{\sigma f} 
= \cdots
b_{-1+1}^{f(-1)}
b_{0+1}^{f(0)} 
b_{1+1}^{f(1)}\cdots
\;\; = \;\;
\cdots
b_{0}^{f(-1)}
b_{1}^{f(0)} 
b_{2}^{f(1)}\cdots
$$
Say, 
for $f=(2,5,3)$ we have 
$b_{\sigma f}
=  b_{1}^{2} b_{2}^{5} b_{3}^{3}=  b_{0}^{0} b_{1}^{2} b_{2}^{5} b_{3}^{3}$.
In 
$\bar{F}_3 \!\times\! F_3$ define the pairs $\lambda_f=\big(\bar b_f,\; b_{\sigma f}\! \big)$ for $f\!\in \E$. 
Then the $3$-generator subgroup 
$T=\big\langle 
(\bar a,\; a),\;
(\bar b,\; b^{c}),\;
(\bar c,\; c)
\big\rangle$ clearly contains  
$\lambda_f$ for every $f\!\in \mathcal E$.
Indeed, say, for $f=(2,5,3)$ the group $T$ contains the conjugate:
$$
(\bar c,\; c)^{-2} 
\cdot \,
(\bar b,\; b^{c}) 
\cdot \,
(\bar c,\; c)^2
=
\big(\bar b^{\,\bar c^{\;\,2}}
\!\!,\;
b^{c^{1+2}}\big)
=
\big(\bar b_2
,\;
b_{3}\big),
$$
together with its cube 
$
\big(\bar b_2^3
,\;
b_{3}^3\big)
$. 
For similar reasons the product
$$
\lambda_f
=
\big(\bar b_0^2
,\;
b_{1}^2\big)
\cdot
\big(\bar b_1^5
,\;
b_{2}^5\big)
\cdot
\big(\bar b_2^3
,\;
b_{3}^3\big) 
=
\big( \bar b_{0}^{2} \bar b_{1}^{5} \bar b_{2}^{3},\;\; 
b_{1}^{2}b_{2}^{5} b_{3}^{3} \big)
=\big(\bar b_f,\; b_{\sigma f} \! \big)
$$
is also in $T$. Hence, $T$  contains all the conjugates $(\bar a,\; a)^{\lambda_f}
=
\big(\bar a^{\bar b_f},\; a^{b_{\sigma f}}\!\big)
=(\bar a_f,\; a_{\sigma f})$,\;  $f\in \E$.

\medskip
Clearly, $T$
is benign in $\bar F_3 \!\times\! F_3$, and for it we can choose $K_{T} = \bar F_3 \!\times\! F_3$ and $L_{T} = T$.
Since the subgroup 
$F_3$ evidently is benign in $F_3$, the direct product: 
$$
P=\bar A_{\mathcal X} \times F_3
$$ 
is benign in $\bar F_3 \!\times\! F_3$ for the finitely presented overgroup
$K_{P}=\bar K_{\mathcal X} \times F_3$ and for the finitely generated subgroup $L_{P}=\bar L_{\!\mathcal X} \!\times F_3 \;\le\; K_{P}$.

\medskip   
By Corollary~\ref{CO intersection and join are benign multi-dimensional}\;\eqref{PO 1 CO intersection and join are benign multi-dimensional} the intersection $Q=T \,\cap\, P$ of the above benign subgroups 
is benign in $\bar F_3 \!\times\! F_3$  for the finitely presented overgroup:
\begin{equation*}
\begin{split}
K_Q &
= \big(K_T *_{L_T} v_1\big) 
*_{\bar F_3 \times F_3}
\big(K_{P} *_{L_{P}} v_2\big)\\
&= \Big(
\big(\bar F_3 \!\times\! F_3\big) 
*_{T} 
v_1 
\Big)
\, *_{\bar F_3 \times F_3}
\Big(
\big(\bar K_{\mathcal X_3} \!\times\! F\big)
*_{\bar L_{\!\mathcal X} \times\, F_3} 
v_2 
\Big)
\end{split}
\end{equation*}
with two new stable letters $v_1, v_2$, and for the $6$-generator subgroup $L_Q\!=\big(\bar F_3 \!\times\! F_3\big)^{v_1v_2}$\!.

\medskip
Let us reveal the simple structure of $Q$. Namely, any couple from $Q$ is in $P=\bar A_{\mathcal X} \times F_3$, and hence, its first coordinate is an element in $\bar A_{\mathcal X}$  generated by some $\bar a_f=\bar a^{\bar b_f}$ for certain $f\!\in \mathcal X$. I.e., that first coordinate can be written as a word on $\bar a,\bar b, \bar c$.
On the other hand, our couple is in $T=\big\langle 
(\bar a,\; a),\;
(\bar b,\; b^{c}),\;
(\bar c,\; c)
\big\rangle$, i.e., if its first coordinate is  a word on 
$\bar a,\bar b, \bar c$, then the second coordinate can be obtained by replacing  in that word each of 
$\bar a,\bar b, \bar c$ by $a, b^c\!,\, c$ respectively.
But that just transforms $\bar a^{\bar b_f}$ to $a^{b_{\sigma f}}$ because, say, for $f=(2,5,3)$ the first coordinate is: 
\begin{equation*}
\begin{split}
\bar a_f &= 
\bar c^{\;-2} \,\bar b^{\,-3} \,\bar c^{\;2} 
\cdot 
\bar c^{\;-1} \,\bar b^{\,-5} \,\bar c
\cdot 
\bar b^{-2}
\,\cdot\;\, 
\bar a 
\,\; \cdot \; \bar b^{2}
\cdot
\bar c^{\;-1} \,\bar b^{\,5} \,\bar c
\cdot 
\bar c^{\;-2} \,\bar b^{\,3} \,\bar c^{\;2},
\end{split}
\end{equation*}
and the respective  second coordinate then has to be: 
\begin{equation*}
\begin{split}
&
c^{-2} \, (b^c)^{\,-3} \, c^{2} 
\cdot \,
c^{-1} \, (b^c)^{\,-5} \, c
\cdot \,
(b^c)^{-2}
\cdot\,\,
a \,
\cdot \, (b^c)^{\,2}
\cdot\,
c^{-1} \, (b^c)^{\,5} \, c
\cdot \,
c^{-2} \, (b^c)^{\,3} \, c\\
& = 
b_{2+1}^{\;-3} b_{1+1}^{\;-5}   b_{0+1}^{\;-2}\, \cdot\,  \,
a 
\, \cdot\,  b_{0+1}^{2}\,  b_{1+1}^{5}\,  b_{2+1}^{3}
= a^{b_{0+1}^{2} b_{1+1}^{5} b_{2+1}^{3}}
= a^{b_{1}^{2} b_{2}^{5} b_{3}^{3}}
=a^{b_{\,\sigma f}}\!.
\end{split}
\end{equation*}
Thus the benign subgroup $Q$ of $\bar F_3 \!\times\! F_3$, in fact, is:
$$
Q 
= \big\langle 
\big(\bar a_{f},\; a_{\sigma f}\!\big)
\;|\;  f\in \mathcal X
\big\rangle
= \big\langle 
\big(\bar a^{\bar b_f},\; a^{b_{\sigma f}}\!\big)
\;|\;  f\in \mathcal X
\big\rangle
= \big\langle 
(\bar a,\; a)^{\lambda_f}
\;|\;  f\in \mathcal X
\big\rangle.
$$

\subsubsection{``Extracting'' $A_{\sigma \mathcal X}$ from $Q$}
\label{SU Extracting A sigma X from Q SHORT}

Our next objective is to modify the obtained benign subgroup $Q$ via a few steps  ``to extract'' the benign subgroup 
$A_{\sigma \mathcal X}=
\big\langle a_{\sigma f}
\;|\;  f\in \mathcal X
\big\rangle
$ from it.

From the previous section we see that $A_{\sigma \mathcal X}$ is nothing but the group generated by the second coordinates $a^{b_{\sigma f}}=a_{\sigma f}$ of pairs from $Q$.

$\bar F_3 \cong \bar F_3 \times \1$ is benign in 
$\bar F_3 \times F_3$ for the finitely presented $\bar F_3 \times F_3$ and finitely generated $\bar F_3 \times \1$, see Remark~\ref{RE finite generated is benign}.  
Hence, by Corollary~\ref{CO intersection and join are benign multi-dimensional}\;\eqref{PO 2 CO intersection and join are benign multi-dimensional} the join 
$
Q_1=\big\langle \bar F_3 \!\times\! \1 ,\, Q \big\rangle
=\bar F_3 \!\times\! \langle
a_{\sigma f} \mathrel{|} 
f \in \mathcal X \rangle
$ is benign in $\bar F_3 \!\times\! F_3$ for the finitely presented:
$$
K_{Q_1}= 
\big((\bar F_3 \!\times\! F_3)\,*_{\bar F_3 \times \1} w_1\big) 
\,*_{\bar F_3 \times F_3}
\big(K_{Q} *_{L_{Q}} w_2\big)
$$
with two further new letters $w_1, w_2$,\, and for its $12$-generator subgroup:
$$
L_{Q_1} = \big\langle(\bar F_3 \!\times\! F_3)^{w_1}\!,\;(\bar F_3 \!\times\! F_3)^{w_2} \big\rangle.
$$
$F_3 = \1 \!\times\!  F_3$ is benign in 
$\bar F_3 \!\times\! F_3$ for the finitely presented $\bar F_3 \!\times\! F_3$ and finitely generated $\1 \!\times\!  F_3$.  
Hence, 
by Corollary~\ref{CO intersection and join are benign multi-dimensional}\;\eqref{PO 1 CO intersection and join are benign multi-dimensional}
the intersection:
$$
A_{\sigma \mathcal X}=A_{\mathcal Y}=
\big(\1 \!\times\!  F_3 \big) \cap Q_1 
=
\langle
a_{\sigma f} \mathrel{|} 
f \in \mathcal X \rangle
$$ 
is benign in $\bar F_3 \!\times\! F_3$ for the finitely presented group:
$$
K_{\sigma \mathcal X}
=
K_{\mathcal Y}
=
\big((\bar F_3 \!\times\! F_3)\,*_{\1 \times  F_3} w_3\big) 
\,*_{\bar F_3 \times F_3}
(K_{Q_1} *_{L_{Q_1}} w_4)
$$
with stable letters $w_3, w_4$,\, and for the $6$-generator subgroup
$L_{\sigma \mathcal X}
=L_{\mathcal Y}
=(\bar F_3 \!\times\! F_3)^{w_3 w_4}$.
But since $A_{\sigma \mathcal X}$ is inside $F_3$, it is benign in $F_3$ also, for the same choice of $K_{\sigma \mathcal X}$ and $L_{\sigma \mathcal X}$ above.

\subsubsection{Writing $K_{\sigma \mathcal X}$ explicitly}
\label{SU Writing K sigma X  by its generators and defining relations SHORT}   

Recall that the groups
$K_{\mathcal X}=\langle
\,Z
\;|\;
S
\rangle$ and $\bar K_{{\mathcal X}}=\langle
\,\bar Z
\;|\;
\bar S
\rangle$ together with the finitely many generators of $L_{\!\mathcal X}$ and of $\bar L_{\!\mathcal X}$
are explicitly given.
By  Point~\ref{RE abc can be added} we may suppose $Z$ includes
$a,b,c$, and $\bar Z$ includes
$\bar a,\bar b,\bar c$.

The finitely presented overgroup 
$K_{P}=\bar K_{\mathcal X} \times F_3$
of $P=\bar A_{\mathcal X} \times F_3$
is given by the relations of $\bar K_{\mathcal X}$, \textit{plus} the relations making sure $\bar K_{\mathcal X}$ commutes with $F_3$:
\begin{equation} 
\label{EQ K_P' presentation SHORT}
\begin{split}
K_P=
\Big\langle
a, b, c;\;  
\bar a, \bar b, \bar c\,;\;\;
\bar Z 
\,\backslash\, \{ \bar a, \bar b, \bar c \}\;
\;\mathrel{|} \;\; 
\bar S;\\
&\hskip-52mm 
\text{$a, b, c$ commute with $\bar a, \bar b, \bar c$ and $\bar Z 
\,\backslash\, \{ \bar a, \bar b, \bar c \}$}
\Big\rangle
\end{split}
\end{equation}
(in the first and second lines we exclude $\bar a, \bar b, \bar c$ from $\bar Z$ because of Point~\ref{RE abc can be added}).

Taking into account the way we constructed $K_{T}$, $K_{Q}$, $K_{Q_1}$, $K_{\sigma \mathcal X}$ with \textit{fixing} effect of the new letters $v_1, v_2;\; w_1, w_2, w_3, w_4$ (on respective finitely generated subgroups) we have:
\begin{equation}
\label{EQ K sigma X SHORT}
\begin{split}
K_{\sigma \mathcal X}=
\Big\langle
a, b, c;\;  
\bar a, \bar b, \bar c; \; 
\bar Z 
\backslash \{ \bar a, \bar b, \bar c \};\;
v_1, v_2;\;  w_1, w_2, w_3, w_4 
\;\;\mathrel{|} \;\; 
\bar S;\\
&\hskip-99mm 
\text{$a, b, c$ commute with $\bar a, \bar b, \bar c$ and with $\bar Z 
\,\backslash\, \{ \bar a, \bar b, \bar c \}$}\,;\; \\
&\hskip-99mm 
\text{$v_1$ fixes $\bar a a,\;
\bar b b^c,\;
\bar c c $};
\;\;\;
\text{$v_2$ fixes $\bar L_{\!\mathcal X}$ and $a,b,c$};\; 
\\
&\hskip-99mm \text{$w_1$ fixes  
$\bar a,\bar b,\bar c$};
\;
\text{$w_2$ fixes  $\big\{
a,b,c, \bar a, \bar b, \bar c
\big\}^{\!v_1 v_2}$}\!;\;
\text{$w_3$ fixes $ a, b, c$;}
\\
&\hskip-99mm  
\text{$w_4$ fixes 
$\big\{
a,b,c, \bar a, \bar b, \bar c
\big\}^{\! w_1} \! \cup \big\{
a,b,c, \bar a, \bar b, \bar c
\big\}^{\! w_2}$}
\Big\rangle\,.
\end{split}
\end{equation}
As $L_{\sigma \mathcal X}$ we can explicitly take the $6$-generator subgroup
$\big\langle
a,b,c, \bar a, \bar b, \bar c
\big\rangle^{w_3 w_4}$\! in $K_{\sigma \mathcal X}$ by Corollary~\ref{CO intersection and join are benign multi-dimensional}\;\eqref{PO 1 CO intersection and join are benign multi-dimensional}.
%
%
In \eqref{EQ K sigma X SHORT}\; ``$w_2$ fixes  $\big\{
a,b,c, \bar a, \bar b, \bar c
\big\}^{\!v_1 v_2}$'' means that conjugation by $w_2$ fixes 
the conjugates of \textit{each} of the generators $a,b,c, \bar a, \bar b, \bar c$ by the product $v_1 v_2$.

If $K_{\mathcal X}$ has $m$ generators (which we may assume include $a,b,c$, see Point~\ref{RE abc can be added}) and $n$ defining relations, and if 
$L_{\!\mathcal X}$ has $k$ generators, then the group $K_{\sigma \mathcal X}$ in \eqref{EQ K sigma X SHORT} has 
$3 + 3 + (m - 3) + 2 +4 = m +9$ generators and 
$n+3\cdot 3 + 3\cdot (m-3)
+3 + k +3 + 3  + 6 +3 +2\cdot 6 
= n+3m+k+30$ 
defining relations.

\subsection{The proof for the operation $\zeta$}
\label{SU The proof for zeta} 

Assume $A_{\mathcal X}$ is benign in $F_3$ for the finitely presented group $K_{\mathcal X}=\langle
\, Z
\;|\;
S
\rangle$ and for the finitely generated $L_{\mathcal X}\le K_{\mathcal X}$. Denote $\mathcal Y = \zeta \mathcal X$. If, say, 
$f=(2,5,3)\in \mathcal X$, then $\mathcal Y$ contains all possible triples $f=(n,5,3)$ with $n\in \Z$, and 
$A_{\mathcal Y}$ contains all the elements $a^{b_0^n b_1^5 b_2^3}$.

\subsubsection{Construction of $\mathcal K$ and $K_{\zeta \mathcal X}$}
\label{SU Construction of mathcal K and K zeta mathcal X}

For the group $\mathscr{A}$ from Section~\ref{SU Construction of the group A}
we can use the argument in Point~\ref{SU Construction of the direct product K x A}, to suppose $K_{\mathcal X} \cap \mathscr{A}=F_3$, and to define
$\mathcal K = K_{\mathcal X} *_{F_3} \mathscr{A}$.
For any $f\!\in \mathcal X$ and for any $n\in \Z$ we by Lemma~\ref{LE action of d_m on f} have in $\mathscr{A}$:
\begin{equation}
\label{EQ apply d^n on a_f}
a_f^{d^n}=\big(a_f^{d_0}\big)^{d_0^{n-1}}=
a_{f_0^+}^{d_0^{n-1}}
=\cdots = 
a_g
\end{equation}
where $g(0)=f(0)+n$, and $g(i)=f(i)$ for any $i\neq 0$. Here we assumed $n$ to be positive, but the negative case is covered using $f_0^-$, and so we have $
A_{\zeta \mathcal X} \subseteq \langle\, A_{\mathcal X}, d\rangle$.
Since $A_{\zeta \mathcal X} \subseteq F_3$, then also 
$A_{\zeta \mathcal X} \subseteq
F_3 \cap \langle\, A_{\mathcal X}, d\rangle$.

To show the reverse inclusion apply the ``conjugates collecting'' process \eqref{EQ elements <X,Y>} 
for the sets $\X=\big\{a_f \;|\; f\in \mathcal X\big\}$
and $\Y=\{d \}$.
We can write every word $w\in \langle\, A_{\mathcal X}, d\rangle= \langle \X,\Y \rangle$ as a product of words $u, v$ via:
\begin{equation}
\label{EQ write w as a product u.v for zeta}
w=u\cdot v
= 
a_{f_1}^{\pm v_1}
a_{f_2}^{\pm v_2}
\cdots
a_{f_k}^{\pm v_k}
\cdot
v
\end{equation}
where all $f_i$ are in $\mathcal X$ (and hence, all $a_{f_i}$ are in $\X$), and the  words $v_1,v_2,\ldots,v_k,\; v\in\langle \Y \rangle$
simply are some powers of $d$. 
As we saw above $a_{f_i}^{\pm v_i}
=a_{f_i}^{\pm d^{n_i}}
=a_{g_i}\in 
A_{\zeta \mathcal X}
$, i.e., $u$ always belongs to $A_{\mathcal Y}=A_{\zeta \mathcal X}$
in \eqref{EQ write w as a product u.v for zeta}, and so $u \in F_3$.
Thus, whenever $w\in F_3$, then also $v\in F_3$ holds. But from the last step of construction of $\mathscr{A}$ (as an HNN-extension) in Section~\ref{SU Construction of the group A} it is evident that a power of the stable letter $d$ is in $F_3$ only if it is trivial, and so
$
A_{\zeta \mathcal X} =
F_3 \cap 
\langle\, A_{\mathcal X}, d\rangle$.

$A_{\mathcal X}$ is benign not only in $F_3$ but also in $\mathcal K$ for the finitely presented $\mathcal K$ and for the same finitely generated $L_{\!\mathcal X}$ mentioned at the beginning of this section. $\langle d \rangle$ is benign in $\mathcal K$ by Remark~\ref{RE finite generated is benign}. Hence, 
by Corollary~\ref{CO intersection and join are benign multi-dimensional}\;\eqref{PO 2 CO intersection and join are benign multi-dimensional} the join $\langle\, A_{\mathcal X}, d\rangle$ is benign in $\mathcal K$ for the finitely presented group: 
$$
K_{\langle\, A_{\mathcal X}, d\rangle}
= \big(\mathcal K *_{L_{\mathcal X}} v_1 \big) \,*_{\mathcal K}
\big(\mathcal K *_{\langle d \rangle} v_2 \big),
$$
and for its finitely generated subgroup $L_{\langle\, A_{\mathcal X}, d\rangle}
= \langle\, 
\mathcal K^{\,v_1}, \;
\mathcal K^{\,v_2}
\rangle$.

As $F$ is also benign in $\mathcal K$, by Corollary~\ref{CO intersection and join are benign multi-dimensional}\;\eqref{PO 1 CO intersection and join are benign multi-dimensional} the intersection $A_{\zeta \mathcal X}$ is benign in $\mathcal K$ for the finitely presented overgroup:
$$
K_{\zeta \mathcal X}
=K_{\mathcal Y}
= 
\big(K_{\langle\, A_{\mathcal X}, d\rangle} *_{
L_{\langle\, A_{\mathcal X,\; d}\rangle}
} \!v_3\big)
\,*_{\mathcal K} 
\big(\mathcal K *_{F_3} v_4 \big), 
$$
and for its finitely generated subgroup 
$L_{\zeta \mathcal X}\!
=L_{\mathcal Y}\!
=\mathcal K^{\,v_3 v_4}$.
Since $A_{\zeta \mathcal X}$ entirely is inside $F_3$, then it is also benign in $F_3$ for the same choice of $K_{\mathcal Y}$ and $L_{\mathcal Y}$ above.

\subsubsection{Writing $K_{\zeta \mathcal X}$ explicitly}
\label{SU Writing K zeta X  by its generators and defining relations} 

Now we can write:
\begin{equation}
\label{EQ K zeta X}
\begin{split}
K_{\zeta \mathcal X}
=
\Big\langle
X_{\! \mathscr{A}};\; 
Z 
\backslash \{a, b, c \};\;v_1,\ldots, v_4 
\;\mathrel{|} \;\;  
R_{\mathscr{A}};\;
S; \\
&\hskip-57mm 
\text{$v_1$ fixes the generators of  $L_{\!\mathcal X}$};
\;\;\;
\text{$v_2$ fixes $d$};\; 
\\
&\hskip-57mm \text{$v_3$ fixes $X_{\! \mathscr{A}}^{v_1}$, $Z^{v_1}$, $X_{\! \mathscr{A}}^{v_2}$, $Z^{v_2}$\,;}\;\;\;
\text{$v_4$ fixes $a,b,c$}
\Big\rangle\,.
\end{split}
\end{equation}
In the first line we exclude $a, b, c$ from the generators $Z$ by Point~\ref{RE abc can be added}.
As a finitely generated subgroup  in $K_{\zeta \mathcal X}$ take $L_{\zeta \mathcal X}=
\big\langle
X_{\! \mathscr{A}}^{v_3 v_4} 
\cup
Z^{v_3 v_4}\big\rangle$.

If $K_{\mathcal X}$ has $m$ generators (which we may assume include $a,b,c$, see Point~\ref{RE abc can be added}) and $n$ defining relations, and if 
$L_{\!\mathcal X}$ has $k$ generators, then the group $K_{\zeta \mathcal X}$ in \eqref{EQ K zeta X}
has 
$9+ (m-3)+4 = m+10$ generators and 
$20+n+k+1
+ 9 + m + 9 + m + 3 
= 2m + n+ k + 42
$ relations.

\subsection{The proof for the operation $\pi$}
\label{SU The proof for pi} 

Assume $A_{\mathcal X}$ is benign in $F$ for the finitely presented group $K_{\mathcal X}=\langle
\, Z
\;|\;
S
\rangle$ and for the finitely generated $L_{\mathcal X}\le K_{\mathcal X}$. Denote $\mathcal Y = \pi \mathcal X$. If, say, 
$f=(2,5,3)\in \mathcal X$, then $\mathcal Y$ contains all possible tuples of type $f=(2,j_1,\ldots,j_{m-1})$ for all $m=1,2,\ldots$
%
For this case we are going to mix some constructions from sections \ref{SU The proof for zeta} and \ref{SU The *-construction}.
%

\subsubsection{Construction of a new $\mathcal K$ and $K_{\pi \mathcal X}$}
\label{SU Construciton of mathcal K* and K pi mathcal X}

In  $\mathscr{A}$ the stable letters $d,e$ clearly generate a free subgroup $\langle d,e\rangle$ of rank $2$. 
In analogy to the isomorphisms $\xi_m, \xi'_m$ on $\langle b,c\rangle$ in Section~\ref{SU The *-construction}, 
we can define  
isomorphisms
$\psi_m, \psi_m'$ by the rules:
$\psi_m(d)=d_{\,-m+1},\;
\psi_m'(d)=d_{\,-m}$,\;
$\psi_m(e)=\psi_m'(e)=e^2$\!.\;
Taking $m=1$ we have 
$\psi_1(d)=d_0=d,\;
\psi_1'(d)=d_{\,-1}$,\;
$\psi_1(e)=\psi_1'(e)=e^2$.
Use these $\psi_1, \psi_1'$ to construct the HNN-extension
$\mathscr{A}^* = \mathscr{A} *_{\psi_1, \psi_1'} (x_1, x'_1)$ in which $d, e, x_1, x'_1$ evidently generate the subgroup:
$$
\Psi_1 =\langle d, e, x_1, x'_1\rangle
= \langle d, e\rangle
*_{\psi_1, \psi_1'} (x_1, x'_1)
\;\cong\; \Xi_1.
$$
Since the analogs of  statements on $\Xi_m$ from Section~\ref{SU The *-construction} also hold for $\Psi_m$, we adapting Lemma~\ref{LE Ksi}, and denoting $D=\langle d_1, d_{2},\ldots\,\rangle$ have:   
\begin{equation}
\label{EQ D obtained in two ways}
\langle d,e \rangle \cap \langle d_1, x_1, x'_1\rangle = D
\quad\text{and also}\quad
\mathscr{A} \cap \langle d_1, x_1, x'_1\rangle = D,
\end{equation}
i.e., $D$ is benign in $\mathscr{A}$ for the finitely presented overgroup
$K_D=\mathscr{A}^*$ and for the finitely generated 
$L_D=\langle d_1, x_1, x'_1\rangle$. 

As in Section~\ref{SU The proof for zeta}, we may suppose none of $t_1, t'_1, u_1, u_2, d,e$ and \textit{also} none of $x_1, x'_1$ is involved in  $K_{\mathcal X}$, so 
$K_{\mathcal X}$ and $\mathscr{A}^*$ intersect in $F$, and we can define $\mathcal K = K_{\mathcal X} *_F \mathscr{A}^*$ (the group $\mathcal K$ of Section~\ref{SU Construction of mathcal K and K zeta mathcal X} is the analog of the current $\mathcal K$, it plays a similar role, and we prefer to denote them by the same symbol).
For any $f\in \mathcal X$, for any $n\in \Z$, and for any positive index $j=1,2,\ldots$ we, using Lemma~\ref{LE action of d_m on f} in analogy with \eqref{EQ apply d^n on a_f},  compute in $\mathscr{A}^*$:
\begin{equation}
\label{EQ next apply d_j^n on a_f}
a_f^{d_j^n}=\big(a_f^{d_j}\big)^{d_j^{n-1}}=
a_{f_j^+}^{d_j^{n-1}}
=\cdots = 
a_g
\end{equation}
where $g(j)=f(j)+n$, and $g(i)=f(i)$ for all $i\neq j$. Here we applied a positive $n$, while the negative case could be covered using $f_j^-$ instead.
Then taking another positive value for $j$ we could repeatedly apply the analog of \eqref{EQ next apply d_j^n on a_f} on $a_g$. This way we could after finitely many steps  construct arbitrary element from 
$A_{\pi \mathcal X}$. Since each $a_g$ is also in $F_3$,  we arrive to the inclusion
$
A_{\pi \mathcal X} \subseteq
F_3 \cap \langle\, A_{\mathcal X}, D\rangle$.

To achieve the reverse inclusion apply the ``conjugates collecting'' process \eqref{EQ elements <X,Y>} 
for the sets $\X=\big\{a_f \;|\; f\in \mathcal X\big\}$
and $\Y=\big\{ d_1, d_{2},\ldots\,\big\}$.
Write every word $w\in \langle\, A_{\mathcal X}, D\rangle= \langle \X,\Y \rangle$ as:
\begin{equation*}
w=u\cdot v
= 
a_{f_1}^{\pm v_1}
a_{f_2}^{\pm v_2}
\cdots
a_{f_k}^{\pm v_k}
\cdot
v
\end{equation*}
where all $f_i$ are in $\mathcal X$ (and hence, all $a_{f_i}$ are in $\X$), and the   $v_1,v_2,\ldots,v_k,\; v\in\langle \Y \rangle$
are some words on $d_1, d_{2},\ldots$, i.e., some words on the letters $d,e$.
As we have shown by repeated application of \eqref{EQ next apply d_j^n on a_f} above, each $a_{f_i}^{\pm v_i}$ is in $A_{\pi \mathcal X}$. So $u$ is also in $A_{\mathcal Y}=A_{\pi \mathcal X}$ and, hence, is in $u \in F_3$. 
Thus, whenever $w\in F_3$, then  $v\in F_3$ also. But from construction of $\mathscr{A}$ 
(as an HNN-extension) in Section~\ref{SU Construction of the group A}
it is clear that a word on $d_1, d_{2},\ldots$ (i.e., a word on stable letters $d,e$) is in $F_3$ only if it is trivial, and so we have the equality
$
A_{\pi \mathcal X} =
F \cap 
\langle\, A_{\mathcal X}, D\rangle$.

$D$ is benign in $\mathcal K$ because it is benign in its subgroups $\mathscr{A}$ (or even in $\langle d,e \rangle$), see \eqref{EQ D obtained in two ways}.
$A_{\mathcal X}$ is benign not only in $F_3$ but also in $\mathcal K$ for the finitely presented $\mathcal K$ and for the same finitely generated $L_{\mathcal X}$ supposed above. 
Hence, 
by Corollary~\ref{CO intersection and join are benign multi-dimensional}\;\eqref{PO 2 CO intersection and join are benign multi-dimensional} the join $\langle\, A_{\mathcal X}, D \rangle$ is also benign in $\mathcal K$ for the finitely presented group: 
$$
K_{\langle\, A_{\mathcal X}, d\rangle}
= \big(\mathcal K *_{L_{\mathcal X}} v_1 \big) *_{\mathcal K}
\big(\mathcal K *_{\langle d_1, x_1, x'_1\rangle} v_2 \big),
$$
and for its finitely generated subgroup $L_{\langle\, A_{\mathcal X},\, D\rangle}
= \langle\, 
\mathcal K^{\,v_1}, \;
\mathcal K^{\,v_2}
\rangle$.

As $F_3$ is also benign in $\mathcal K$, by Corollary~\ref{CO intersection and join are benign multi-dimensional}\;\eqref{PO 1 CO intersection and join are benign multi-dimensional} the intersection $A_{\pi \mathcal X} =
F_3 \cap 
\langle\, A_{\mathcal X}, D\rangle$ is benign in $\mathcal K$ for the finitely presented overgroup:
$$
K_{\pi \mathcal X}
=K_{\mathcal Y}
= 
\big(K_{\langle\, A_{\mathcal X},\, D\rangle} *_{L_{\langle\, A_{\mathcal X},\; D\rangle}} \!v_3\big)
*_{\mathcal K} 
\big(\mathcal K *_{F_3} v_4 \big) 
$$
and for its finitely generated subgroup 
$L_{\pi \mathcal X}\!
=L_{\mathcal Y}\!
=\mathcal K^{\,v_3 v_4}$.
Since $A_{\pi \mathcal X}$ entirely is inside $F_3$, then it is also benign in $F_3$ for the same choice of $K_{\mathcal Y}$ and $L_{\mathcal Y}$ made above.

\subsubsection{Writing $K_{\pi \mathcal X}$ explicitly}
\label{SU Writing K pi X  by its generators and defining relations} 

It remains to explicitly write:
\begin{equation}
\label{EQ K pi X}
\begin{split}
K_{\pi \mathcal X}
=
\Big\langle
X_{\! \mathscr{A}};\; 
Z
\backslash \{a, b, c \};\;
x_1,x_2 ;\;
v_1,\ldots, v_4 
\;\mathrel{|} \;\;  
R_{\mathscr{A}};\;
S; \\
&\hskip-72mm 
\text{$x_1$ sends  $d,e$ to $d, e^2$};
\;\;\;\;
\text{$x'_1$ sends  $d,e$ to $d^{e^{-1}}, e^2$};\; 
\\
&\hskip-72mm 
\text{$v_1$ fixes the generators of  $L_{\!\mathcal X}$};
\;\;\;
\text{$v_2$ fixes $d^e, x_1, x'_1$};\; 
\\
&\hskip-72mm \text{$v_3$ fixes $X_{\! \mathscr{A}}^{\! v_1}$, $X^{\! v_1}$, 
$\{x_1, x'_1\}^{\! v_1}$,\;
$X_{\! \mathscr{A}}^{\! v_2}$, $X^{\! v_2}$,
$\{x_1, x'_1\}^{\! v_2}$
;}\\
&\hskip-72mm 
\text{$v_4$ fixes $a,b,c$}
\Big\rangle\,.
\end{split}
\end{equation}
We exclude $a, b, c$ from the generators $Z$ because they already are included in $X_{\! \mathscr{A}}$.
As a finitely generated subgroup  in $K_{\pi \mathcal X}$ take $L_{\pi \mathcal X}=
\mathcal K^{v_3 v_4}
=
\big\langle
X_{\! \mathscr{A}}^{v_3 v_4}
\cup
Z^{v_3 v_4}
\cup
\{x_1, x'_1\}^{v_3 v_4}
\big\rangle$.

If $K_{\mathcal X}$ has $m$ generators (which we may assume include $a,b,c$, see Point~\ref{RE abc can be added}) and $n$ defining relations, and if 
$L_{\!\mathcal X}$ has $k$ generators, then the group $K_{\pi \mathcal X}$ in \eqref{EQ K pi X}
has 
$9+ (m-3)+2+4 = m+12$ generators and 
$20+n+2+2+k+3+2\cdot(9+m+2)+3 = n+k+2m+52
$ defining relations.

\subsection{The proof for the operation $\theta$}
\label{SU The proof for theta}

Assume $A_{\mathcal X}$ is benign in $F_3$ for the explicitly given fini\-tely presented group $K_{\mathcal X}=\langle
\, Z
\;|\;
S
\rangle$ and for the finitely generated $L_{\mathcal X}\le K_{\mathcal X}$. 
Denote $\mathcal Y = \theta \mathcal X$. 
If, say, 
$f=(2,5,3)$ or $f=(2,5,3,8)$ are in $\mathcal X$, then $\mathcal Y$ contains the couple $\theta f = (2,3)$.

We are free to use the copy $\bar F = \langle
\bar a, \bar b, \bar c
\rangle$ of $F$, and suppose the copies 
$\bar K_{\mathcal X}=\langle
\, \bar Z
\;|\;
\bar S
\rangle$ and
$\bar L_{\mathcal X}$ are given for the copy $\bar A_{\mathcal X}$ of $A_{\mathcal X}$, see Point~\ref{NOR SU Auxiliary copy of A built here}.

\subsubsection{Finding the benign subgroup $O$}
\label{SU Finding the benign subgroup O} 

We reuse the copy
$\bar{\mathscr{A}}$ of $\mathscr{A}$
with the generating set $X_{\! \bar {\mathscr{A}}}$ from \eqref{EQ generators of X bar A}.
In analogy with Point~\ref{SU Construciton of mathcal K* and K pi mathcal X}, in $\bar {\mathscr{A}}$ the letters $\bar d, \bar e$ generate a free subgroup $\langle \bar d,\bar e\rangle$, and we can define an isomorphism $\varepsilon$ on it sending $\bar d,\bar e$ to $\bar d_2, \bar e$.
Using it construct the HNN-extension
$\bar {\mathscr{A}}^{*} = \bar {\mathscr{A}} *_{\varepsilon} y$ inside which $\bar d, \bar e, y$ clearly generate the HNN-extension:
$$
\langle \bar d, \bar e,y\rangle
= \langle \bar d, \bar e\rangle
*_{\varepsilon} y.
$$
Set $O=\langle \bar d_{j} \;|\; j\!=\!2k+\!1,\; k\in \Z\rangle$ to be the subgroup generated by $\bar d_{j}$ for all odd $j$, and show that $O$ is benign in $\bar {\mathscr{A}}$. It is easy to show that:
\begin{equation}
\label{EQ O obtained in two ways}
\langle \bar d,\bar e \rangle \cap \langle \bar d_1, y\rangle = O
\quad\text{and}\quad
\bar {\mathscr{A}} \cap \langle \bar d_1, y\rangle = O.
\end{equation}
Clearly, only the first of equalities \eqref{EQ O obtained in two ways} needs a verification.
For any $j=2k\!+\!1$ we have 
$\bar d_{j}
=\bar d^{\bar e^{\;2k+1}}
=\bar d^{y^{k}\cdot\, \bar e}
=\bar d_1^{y^k}$\!\!\!,\; and so\; $O \subseteq \langle \bar d,\bar e \rangle \cap \langle \bar d_1, y\rangle$.
On the other hand, using the process \eqref{EQ elements from <x,y>} applied to  $x=\bar d_1$ and to the current $y$ we can rewrite any $w\in \langle \bar d_1, y\rangle$ as:
\begin{equation}
\label{EQ elements from O}
w=\bar d_1^{\;\pm y^{k_1}}
\bar d_1^{\;\pm y^{k_2}}\!\!\cdots\, \bar d_1^{\;\pm y^{ k_s}}\!\!\!\cdot y^{l}=\!u\cdot v\,.
\end{equation}
As we just saw, all the $\bar d_1^{\;\pm y^{k_i}}$\!\! are equal to some $\bar d_j$ with \textit{odd} $j$ and, thus,  $u\in O$. If also $w\in \langle \bar d,\bar e \rangle$, then $v\!=\!y^l$ must be trivial, as it is a power of the stable letter $y$ of our HNN-extension.
\eqref{EQ O obtained in two ways} is proven, and so $O$ is benign  in $\bar {\mathscr{A}}$ for the finitely presented overgroup
$K_O=\bar {\mathscr{A}}^{*}$ and for the finitely generated 
$L_O=\langle \bar d_1, y\rangle$.

\subsubsection{Finding the benign intersection $\bar A_{\vartheta \mathcal X}$}
\label{SU Finding the benign intersection A} 

Denote by $\vartheta \mathcal X$ the auxiliary set of all functions $f\!\in \E$ for which there is a $g\in \mathcal X$ such that $f(j) = g(j)$ on all \textit{even} indices $j=2k$, and the value $f(j)$ is arbitrary for all \textit{odd} indices $j=2k\!+\!1$. In this step we show that $\bar A_{\vartheta \mathcal X}$ is benign in $\bar F$ in order to use this for the required set $\theta \mathcal X$ later.

We may suppose none of the letters $\bar t_1, \bar t'_1, \bar u_1, \bar u_2, \bar d,\, \bar e$ and $y$ was involved in construction of $\bar K_{\mathcal X}$, i.e., 
$\bar K_{\mathcal X} \cap \bar {\mathscr{A}}^{*} = \bar F$, and we can set $
\mathcal K 
= \bar K_{\mathcal X} *_{\bar F}  K_{O}
= \bar K_{\mathcal X} *_{\bar F} \bar {\mathscr{A}}^*$ 
(the current group $\mathcal K$ of course is different from $\mathcal K$ used in points~\ref{SU Construction of mathcal K and K zeta mathcal X} and \ref{SU Construciton of mathcal K* and K pi mathcal X}, but it plays a similar role):
$$
\mathcal K
=
\Big\langle
X_{\! \bar{\mathscr{A}}};\; 
\bar Z
\backslash \{\bar a, \bar b, \bar c \};\;
y ;\;
a,b,c 
\;\mathrel{|} \; 
R_{\!\bar{\mathscr{A}}};\;
\bar S;\;  
\text{$y$ sends $\bar d,\bar e$ \,to\, $\bar d_2, \bar e$}
\Big\rangle.
$$
For below references denote the set of $9+m-3+1+3=m+10$ generators of $\mathcal K$ by:
\begin{equation}
\label{EQ Generators of mathcal K}
X_{\!\mathcal K}
=
X_{\! \bar{\mathscr{A}}}
\cup\,
\bar Z
\backslash \{\bar a, \bar b, \bar c \}
\,\cup 
\{
y ;\;
a,b,c
\}.
\end{equation}

For any $f\!\in \mathcal X$,
for any positive $n$, 
and for any \textit{odd} integer index $j=2k\!+\!1$ we again by Lemma~\ref{LE action of d_m on f} 
compute in $\bar {\mathscr{A}}^*$:
\begin{equation}
\label{EQ next apply odd d_j on a_f}
\bar a_f^{{\bar d}_j^{\,n}}
=\big(\bar a_f^{{\bar d}_{2k+1}}\big)^{{\bar d}_{2k+1}^{\;n\,-1}}=
\bar a_{f_{2k+1}^+}^{\; {\bar d}_{2k+1}^{\;n\,-1}}
=\cdots = 
\bar a_g
\end{equation}
where $g(j)=f(j)+n$, and $g(i)=f(i)$ for all $i\neq j$. Here we used a positive $n$, while the negative case could be covered by $f_j^-$.
Choosing yet another \textit{odd} $j$ we could apply the analog of \eqref{EQ next apply odd d_j on a_f} on $\bar a_g$. After finitely many such steps we construct all the elements $\bar a_g$ from 
$\bar A_{\vartheta \mathcal X}$. Since these $\bar a_g$ also are in $\bar F_3$,  we arrive to the inclusion
$
\bar A_{\vartheta \mathcal X} \subseteq
\bar F_3 \cap \langle\, \bar A_{\mathcal X}, O\rangle$.

For the reverse inclusion apply \eqref{EQ elements <X,Y>} 
for the sets $\X=\big\{\bar a_f \;|\; f\in \mathcal X\big\}$
and $\Y=\big\{
\bar d_{j} \;|\; j=2k\!+\!1,\; k \in \Z
\big\}$.
Write every word $w\in \langle\, \bar A_{\mathcal X}, O\rangle= \langle \X,\Y \rangle$ as:
\begin{equation*}
w=u\cdot v
= 
\bar a_{f_1}^{\;\pm v_1}
\bar a_{f_2}^{\;\pm v_2}
\cdots
\bar a_{f_k}^{\;\pm v_k}
\cdot
v
\end{equation*}
where all $f_i$ are in $\mathcal X$, and    $v_1,v_2,\ldots,v_k,\; v\in\langle \Y \rangle$
are words on some letters $\bar d_j$ with various \textit{odd} indices $j$.
As we have seen in \eqref{EQ next apply odd d_j on a_f}, each $\bar a_{f_i}^{\;\pm v_i}$ is in $\bar A_{\vartheta \mathcal X}$. So $u$ is also in $\bar A_{\vartheta \mathcal X} \subseteq \bar F_3$.
Thus, whenever $w\in \bar F_3$, then  $v\in \bar F_3$ also. But $v$ is a word on stable letters $\bar d,\bar e$ and it can be  in $\bar F_3$ only if it is trivial. 

We proved that 
$
\bar A_{\vartheta \mathcal X} =
\bar F_3 \cap 
\langle\, \bar A_{\mathcal X}, O\rangle$, i.e., $\bar A_{\vartheta \mathcal X}$ is constructed from three benign subgroups $\bar A_{\mathcal X},\, O,\, \bar F_3$ by a join and an intersection. 
By Corollary~\ref{CO intersection and join are benign multi-dimensional}\;\eqref{PO 2 CO intersection and join are benign multi-dimensional} the join $\langle\, \bar A_{\mathcal X}, O\rangle$ is benign in $\mathcal K$ for the finitely presented group: 
$$
K_{\langle\, \bar A_{\mathcal X}, \, O\rangle}
= \big(\mathcal K *_{\bar L_{\mathcal X}} v_1 \big) *_{\mathcal K}
\big(\mathcal K *_{\langle \bar d_1,  \,y \rangle} v_2 \big),
$$
and for its finitely generated subgroup $L_{\langle\, \bar A_{\mathcal X}, O\rangle}
= \langle\, 
\mathcal K^{v_1}, \;
\mathcal K^{v_2}
\rangle$.
Then by Corollary~\ref{CO intersection and join are benign multi-dimensional}\;\eqref{PO 1 CO intersection and join are benign multi-dimensional} the intersection $\bar A_{\vartheta \mathcal X}$ is benign in $\mathcal K$ for the finitely presented:
$$
K_{\vartheta \mathcal X}
= 
\big(K_{\langle\, \bar A_{\mathcal X}, O\rangle} *_{
L_{\langle\, \bar A_{\mathcal X},\; O\rangle}
} \!v_3\big)
*_{\mathcal K} 
\big(\mathcal K *_{\bar F_3} v_4 \big) 
$$
and for its finitely generated subgroup 
$L_{\vartheta \mathcal X}\!
=\mathcal K^{\,v_3 v_4}$.
Since $\bar A_{\vartheta \mathcal X}$ is in $\bar F_3$, then it is also benign in $\bar F_3$ for the same choice of $K_{\vartheta \mathcal X}$ and $L_{\vartheta \mathcal X}$ just made above.

\subsubsection{Obtaining the benign subgroup $Q$ for $\theta$}
\label{SU Obtaining the benign subgroup for theta Q}

By our construction 
$\vartheta \mathcal X$ is the set of all functions $f\!\in \E$ which coincide with some $g\!\in \mathcal X$ on all \textit{even} indices, and which may have arbitrary coordinates on \textit{odd} indices. 
In particular, $\vartheta \mathcal X$ contains all those  $f\!\in \E$ which coincide with some $g\!\in \mathcal X$ on all even indices, and are \textit{zero} on all odd indices. 

If
$T=\big\langle 
(\bar a,\; a),\;
(\bar b,\; b),\;
(\bar c^{\;2}\!,\; c)
\big\rangle$, and
$Q= T \cap(\bar A_{\vartheta \mathcal X} \!\times F_3) $, then the combinatorial meaning of this intersection is uncomplicated to understand. 
If, say, $f\!=(2,5,3,8)$ is in $\mathcal X$, then $\vartheta \mathcal X$ contains \textit{all} functions of type $(2,x,3,y)$ with $x,y\in \Z$ and, in particular, it contains the function $f_0$ obtained from $f$ by changing to zero all its coordinates for odd indices, in this case $f_0=(2,0,3,0)$.
Since $\theta$ for any $f$ simply ignores all coordinates of $f$ for odd indices, then
$\theta(\vartheta \mathcal X)= \theta \mathcal X$. 
In particular, this means that $\theta f_0 = \theta f$, such as $\theta(2,0,3,0)=\theta(2,5,3,8)=(2,3)$.
Since $T$ for any $i$ contains the couple $
(\bar b_{2\, \cdot \, i},\, b_i) =(\bar b^{(\bar c^{2})^i}\!\!\!,\;\; b^{c^i})
= (\bar b, b)^{(\bar c^{\;2}\!\!,\; c)^{\,i}}$\!\!,\;  it also contains the couples of type $(\bar a_{f_0} ,\; a_{\theta f_0})$, such as:
$$
(\bar a,\, a)^{\;
(\bar b_{2\,\cdot\, 0},\; b_0)^{\,2}
\,\cdot\;
(\bar b_{2\,\cdot\, 1},\; b_1)^{\,3}
}
=
\big(
\bar a^{\;
\bar b_0^2 \,
\bar b_2^3 \,
}
,\;\;
a^{\,
b_0^2 \,
b_1^3 \,
}
\big)
=
\big(
\bar a^{\;
\bar b_0^2 \;
\boldsymbol{\bar b_1^0} \,
\bar b_2^3 \;
\boldsymbol{\bar b_3^0} 
}
,\;\;
a^{\,
b_0^2 \,
b_1^3 \,
}
\big)
=
(\bar a_{f_0} ,\; a_{\theta f_0})
$$
(we added the trivial factors ${\bar b_1^0}$ and ${\bar b_3^0}$ to ``reconstruct'' $f_0$). 
Clearly, $Q$ also contains the couples $(\bar a_{f_0} ,\; a_{\theta f_0})$ for all such functions $f_0$.

On the other hand, if a couple from $\bar A_{\vartheta \mathcal X} \times F_3$ is in $T$, then it must be generated by the couples 
$(\bar a,\; a),\;
(\bar b,\; b),\;
(\bar c^{\;2}\!,\; c)$ and, hence, its first coordinate has to involve $\bar c$ in even degrees \textit{only}, e.g., it may never contain subwords like 
$\bar b_3 = \bar b^{\, \bar c^{\,3}}$ or $\bar a^{\,\bar b_5}$.
But $\bar A_{\vartheta \mathcal X}$ is \textit{freely} generated by elements $\bar a_f$ for $f\!\in \vartheta \mathcal X$, such as $f\!=(2,5,3,8)$ or $(2,0,3,0)$. Hence, an element 
from $\bar A_{\vartheta \mathcal X}$ will be in $\langle \bar a, \bar b, \bar c^{\;2} \rangle$ only if all the coordinates for odd indices in $f$ are zero, i.e., $f=f_0$. 

The equality $Q=\big\langle 
(\bar a_{f_0} ,\; a_{\theta f_0})
\;|\;
f\in \mathcal X
\big
\rangle
=
\big\langle
(\bar a_{f_0} ,\; a_{\theta f})
\;|\;
f\in \mathcal X
\big
\rangle
$
has been proved, and for any $f\in \mathcal X$ we have the element $a_{\theta f}\in A_{\theta \mathcal X}$ standing as the \textit{second} coordinate in one of the couples $(\bar a_{f_0} ,\; a_{\theta f})$ above. This does \textit{not} mean that $\bar a_{f}$ also occurs as a \textit{first} coordinate in one of those couples, but we are not in need of that fact either.

\medskip 
Since $\bar A_{\vartheta \mathcal X}$ is benign in $\bar F_3$ for the earlier mentioned $K_{\vartheta \mathcal X}$ and $L_{\vartheta \mathcal X}$, then the direct product 
$\bar A_{\vartheta \mathcal X} \times F_3$ is benign in $\bar F_3 \times F_3$ for $K_{\vartheta \mathcal X} \times F_3$ and $L_{\vartheta \mathcal X} \times F_3$.
The subgroup $T=\big\langle 
(\bar a,\; a),\;
(\bar b,\; b),\;
(\bar c^{\;2}\!,\; c)
\big\rangle$
is benign in $\bar F_3 \times F_3$ by Remark~\ref{RE finite generated is benign} for $K_T=\bar F_3 \!\times\! F_3$ and $L_T=T$.
Hence, by Corollary~\ref{CO intersection and join are benign multi-dimensional}\;\eqref{PO 1 CO intersection and join are benign multi-dimensional} their intersection 
$Q$ is benign in $\bar F_3 \!\times\! F_3$ for the finitely presented:
$$
K_{Q}
= 
\big( 
(K_{\vartheta \mathcal X} \!\times\! F_3)
*_{
L_{\vartheta \mathcal X} \times \,F_3
} \; v_5\big)
*_{\bar F_3 \times F_3} 
\big((\bar F_3 \!\times\! F_3) *_T v_6 \big) 
$$
and for its finitely generated subgroup 
$L_{Q}
=(\bar F_3 \!\times\! F_3)^{\,v_5 v_6}$.

\subsubsection{``Extracting'' $A_{\theta \mathcal X}$ from $Q$}
\label{SU Extracting A theta X from Q} 

To ``extract'' the $A_{\theta \mathcal X}$ from $Q$ notice that by Corollary~\ref{CO intersection and join are benign multi-dimensional}\;\eqref{PO 2 CO intersection and join are benign multi-dimensional} the join 
$Q_1 = \big\langle 
\bar F_3\times  \1,\;
Q 
\big\rangle$ is benign in 
$\bar F_3 \!\times\! F_3$ for the finitely presented:
$$
K_{Q_1}
= 
\big((\bar F_3 \!\times\! F_3) 
*_{\bar F_3 \times \1} w_1 \big) 
*_{\bar F_3 \times F_3} 
\big( 
K_{Q}
*_{L_Q}  w_2\big)
$$
and for its finitely generated subgroup: 
$$
L_{Q_1} = \big\langle(\bar F_3 \!\times\! F_3)^{w_1}\!,\;(\bar F_3 \!\times\! F_3)^{w_2} \big\rangle.
$$
Then by Corollary~\ref{CO intersection and join are benign multi-dimensional}\;\eqref{PO 1 CO intersection and join are benign multi-dimensional} the intersection
$A_{\theta \mathcal X}\!=
\big(\1 \!\times\!  F_3 \big) \cap Q_1$
is benign in $\bar F_3 \!\times\! F_3$ for the finitely presented:
$$
K_{\theta \mathcal X}
=
\big((\bar F_3 \!\times\! F_3)\,*_{\1 \times  F_3} w_3\big) 
\,*_{\bar F_3 \times F_3}
\big(K_{Q_1} *_{L_{Q_1}}\! w_{4}\big)
$$
and for the finitely presented
$L_{\theta \mathcal X}
=(\bar F_3 \!\times\! F_3)^{w_3 w_4}$.
Since $A_{\theta \mathcal X}$ is inside $F_3$, it is benign in $F_3$ also, with the same choice for $K_{\theta \mathcal X}$, 
$L_{\theta \mathcal X}$.

\subsubsection{Writing $K_{\theta \mathcal X}$ explicitly}
\label{SU Writing K theta X  by its generators and defining relations} 

Now taking into account the above notation we can write:
\begin{equation}
\label{EQ K theta X}
\begin{split}
K_{\theta \mathcal X}
=
\Big\langle
X_{\! \bar{\mathscr{A}}};\; 
\bar Z 
\backslash \{\bar a, \bar b, \bar c \};\;
y ;\;
a,b,c ;\;
v_1,\ldots, v_{6}; 
w_1,\ldots, w_{4}
\;\mathrel{|} \; 
R_{\bar{\mathscr{A}}};\;
\bar S; \\
&\hskip-93mm 
\text{$y$ sends $\bar d,\bar e$ \,to\, $\bar d_2, \bar e$};
\\
&\hskip-93mm 
\text{$v_1$ fixes the generators of  $\bar L_{\!\mathcal X}$};
\;\;\;
\text{$v_2$ fixes $\bar d_1,  \,y$};\; 
\\
&\hskip-93mm \text{$v_3$ fixes $X_{\!\mathcal K}^{v_1}\! \cup X_{\!\mathcal K}^{v_2}$;} 
\hskip3mm 
\text{$v_4, w_1$ fix $\bar a, \bar b, \bar c$};\\
&\hskip-93mm 
\text{$a,b,c$ commute with $X_{\! \bar{\mathscr{A}}};\; 
\bar Z;\;
y ;\;
v_1,\ldots, v_{4}$};\\
&\hskip-93mm 
\text{$v_5$ fixes $X_{\!\mathcal K}^{v_3 v_4}$};
\hskip3mm
\text{$v_5, w_3$ fix $a, b,  c$};\\
&\hskip-93mm 
\text{$v_6$ fixes $
\bar a a,\,
\bar b b,\,
\bar c^{\;2} c
$};
\hskip3mm
\text{$w_2$ fixes $\big\{
a,b,c, \bar a, \bar b, \bar c
\big\}^{\! v_5 v_6}$};\\
&\hskip-93mm 
\text{$w_4$ fixes $\big\{
a,b,c, \bar a, \bar b, \bar c
\big\}^{w_1}
\!\cup
\big\{
a,b,c, \bar a, \bar b, \bar c
\big\}^{w_2}$}
\Big\rangle\,.
\end{split}
\end{equation}
By Point~\ref{RE abc can be added} we exclude $a, b, c$ from the generators $Z$.
As a finitely generated subgroup  in $K_{\theta \mathcal X}$ take $L_{\theta \mathcal X}=
(\bar F_3 \!\times\! F_3)^{w_3 w_4}
=
\big\langle
a,b,c, \bar a, \bar b, \bar c
\big\rangle^{w_3 w_4}$.
If $K_{\mathcal X}$ has $m$ generators and $n$ defining relations, and if 
$L_{\!\mathcal X}$ has $k$ generators, then the group $K_{\theta \mathcal X}$ in \eqref{EQ K theta X}
has 
$9+ (m-3)+1+3+6+4=m+20$ generators and 
$20+n+2+k+2 + 2\cdot (m + 10)+
2 \cdot 3+
3 \cdot (9 + m-3 + 1 + 4)
+ (m + 10) 
+ 2 \cdot 3
+ 3 + 6 + 2\cdot 6
= n + 6m + k + 111
$ defining relations.

\medskip 

\subsection{The proof for the operation $\tau$}
\label{SU The proof for tau}

Let $\mathcal Y = \tau \mathcal X$, i.e., when $\mathcal X$ contains, say, $f\!=(2,5,3)$, then $\mathcal Y$ contains $\tau f\!=(5,2,3)$. 
Assume the hypothesis of Theorem~\ref{TH Theorem A} holds for $\mathcal X$: the subgroup $A_{\mathcal X}$ is benign in $F_3$ for an explicitly given finitely presented  $K_{\mathcal X}=\langle
\, Z
\;|\;
S
\rangle$ and for its finitely generated subgroup $L_{\mathcal X}\le K_{\mathcal X}$.

\subsubsection{Writing $\langle d_i \;|\; i\in \Z \rangle$ as a product of three benign factors}
\label{SU Writing d_i as a product of three} 

Following the construction in Point~\ref{SU Construciton of mathcal K* and K pi mathcal X}, 
reuse the isomorphisms 
$\psi_m, \psi_m'$ 
on $\langle d,e\rangle$. 
For $m=0$ we have $\psi_0(d)=d_{1},\;
\psi_0'(d)=d_{0}=d$,\;
$\psi_0(e)=\psi_0'(e)=e^2$, using which we can define 
$\mathscr{B}_0 = \mathscr{A} *_{\psi_0, \psi_0'} (x_0, x'_0)$. 
Inside this group we clearly have:
$$
\langle d, e, x_0, x'_0\rangle
= \langle d, e\rangle
*_{\psi_0, \psi_0'} (x_0, x'_0)
\;\cong\; \Xi_{\,0}.
$$
Using an analog of Lemma~\ref{LE Ksi} we for the subgroup $D_0=\langle d_{\,-1}, d_{\,-2},\ldots\,\rangle$ have:   
\begin{equation}
\label{EQ D_0 obtained in two ways!!!}
\langle d,e \rangle \cap \langle d_{\,-1}, x_0, x'_0\rangle = D_0
\quad\text{and also}\quad
\mathscr{A} \cap \langle d_{\,-1}, x_0, x'_0\rangle = D_0,
\end{equation}
from where $D_0$ is benign in $\mathscr{A}$ for the finitely presented overgroup
$K_{D_0}=\mathscr{B}_0$ and for its finitely generated subgroup
$L_{D_0}=\langle d_{\,-1}, x_0, x'_0\rangle$.

\medskip
Similarly, for $m=2$ we have $\psi_2(d)=d_{\,-1},\;
\psi_2'(d)=d_{\,-2}$,\;
$\psi_2(e)=\psi_2'(e)=e^2$ by which we can define
$\mathscr{B}_2 = \mathscr{A} *_{\psi_2, \psi_2'} (x_2, x'_2)$ to discover inside it:
$$
\langle d, e, x_2, x'_2\rangle
= \langle d, e\rangle
*_{\psi_2, \psi_2'} (x_2, x'_2)
\;\cong\; \Xi_2.
$$
Then we for the subgroup $D_2=\langle d_2, d_{3},\ldots\,\rangle$ have:   
\begin{equation}
\label{EQ D_2 obtained in two ways!!!}
\langle d,e \rangle \cap \langle d_2, x_2, x'_2\rangle = D_2
\quad\text{and also}\quad
\mathscr{A} \cap \langle d_2, x_2, x'_2\rangle = D_2,
\end{equation}
that is, $D_2$ is benign in $\mathscr{A}$ for 
$K_{D_2}=\mathscr{B}_2$ and for
$L_{D_2}=\langle d_2, x_2, x'_2\rangle$.

Notice that neither $D_0$ nor $D_2$ involved the elements $d_0$ and $d_1$. The subgroup $D_1=\langle d_0, d_1 \rangle$ they generate is benign in $\mathscr{A}$ for 
$K_{D_1}=\mathscr{A}$ and for
$L_{D_1}=D_1$ by Remark~\ref{RE finite generated is benign}.

\medskip
The mentioned three finitely presented groups 
$K_{D_0}, K_{D_1} ,K_{D_2}$ intersect strictly in $\mathscr{A}$, and so we can form the $\bigast$-construction:
\begin{equation}
\label{EQ D_0 D_1 D_2}
\begin{split}
\mathscr{B}&
=
(K_{D_0}*_{L_{D_0}} y_0) 
\,*_{\!\mathscr{A}}
(K_{D_1} *_{L_{D_1}} y_1) 
\,*_{\!\mathscr{A}}
(K_{D_2} *_{L_{D_2}} y_2)\\
& 
= 
(\mathscr{B}_0 *_{\langle d_{\,-1}, x_0, x'_0\rangle} y_0) 
\,*_{\!\mathscr{A}}
(\mathscr{A} *_{ \langle d_0,\; d_1 \rangle } y_1)   
\,*_{\!\mathscr{A}}
(\mathscr{B}_2 *_{\langle d_2, x_2, x'_2\rangle} y_2)\,.
\end{split}
\end{equation}
Inside $\langle d,e \rangle$
the subgroups 
$D_0, D_1, D_2$ generate their \textit{free} product: 
\begin{equation}
\label{EQ D_0 D_1 D_2 details}
D_0 * D_1 *\, D_2 = 
\langle \ldots d_{\,-2}, d_{\,-1} \rangle
*
\langle d_0,\; d_1 \rangle 
* 
\langle d_2, d_3, \ldots \rangle
\end{equation} 
which is just the free group $\langle d_i \;|\; i\in \Z \rangle$ of countable rank.

\subsubsection{Obtaining the benign subgroup $Q$ for $\tau$}
\label{SU Obtaining the benign subgroup for tau Q}

Since the above subgroups $D_0, D_1, D_2$ also are inside $\mathscr{B}$, then by Corollary~\ref{CO smaller free product to the larger free product}
the sub\-groups:
$\mathscr{A}^{y_0}$\!,
$\mathscr{A}^{y_1}$\!,
$\mathscr{A}^{y_2}$
together generate in 
$\mathscr{B}$
their \textit{free} product
$\mathscr{A}^{y_0}* 
\mathscr{A}^{y_1} * 
\mathscr{A}^{y_2}$.
Hence any three isomorphisms on the above free factors $\mathscr{A}^{y_0}$\!,
$\mathscr{A}^{y_1}$\!,
$\mathscr{A}^{y_2}$ (or on arbitrary subgroups inside them) have a common continuation.
On 
$\mathscr{A}^{y_0}$ and $\mathscr{A}^{y_2}$ choose the identity isomorphisms, 
and on the subgroup $D_1^{y_1}=\langle d_0^{y_1}\!,\, d_1^{y_1} \rangle$ of the  factor $\mathscr{A}^{y_1}$
choose the swapping isomorphism sending 
$d_0^{y_1}\!,\, d_1^{y_1}$ to 
$d_1^{y_1}\!,\, d_0^{y_1}$ respectively. 
Denote their common continuation on $\mathscr{A}^{y_0} * D_1^{y_1} * \mathscr{A}^{y_2}$ by $\gamma$. 
Clearly, $\gamma$ can be well defined by its values on $9+2+9=20$ generators in $X_{\! \mathscr{A}}^{y_0} 
\cup \big\{ 
d_0^{y_1}\!,\, d_1^{y_1}
\big\}\cup 
X_{\! \mathscr{A}}^{y_2}$.

\medskip
As a generating set for $\mathscr{B}$ one may choose: 
\begin{equation}
\label{EQ generators of B}
X_{\! \mathscr{B}} = X_{\! \mathscr{A}} \cup \big\{
x_0, x'_0, x_2, x'_2,\,
y_0, y_1, y_2
\big\}.
\end{equation}
Then in analogy with 
Point~\ref{NOR SU Auxiliary copy of A built here} 
we can choose its copy: 
\begin{equation}
\label{EQ generators of bar B}
X_{\! \bar{\mathscr{B}}} = X_{\! \bar{\mathscr{A}}} \cup \big\{
\bar x_0, \bar x'_0, \bar x_2, \bar x'_2,\,
\bar y_0, \bar y_1, \bar y_2
\big\},
\end{equation}
and using it build the copy 
$\bar {\mathscr{B}}$ of 
${\mathscr{B}}$ via a  procedure similar to that above. 
Inside the direct product $\bar {\mathscr{B}} \!\times\! \mathscr{B}$ choose the set of $20$ couples:
\begin{equation}
\label{EQ Couples in BxB}
\Big\{ \big(\bar x^{\bar y_0},x^{y_0}\big)
\mathrel{\;|\,} x\!\in X_{\!{\mathscr{A}}}
\Big\} 
\cup 
\big\{ (\bar d_0^{\bar y_1},d_1^{y_1}),\; 
(\bar d_1^{\bar y_1},d_0^{y_1})
\big\}
\cup 
\Big\{ \big(\bar x^{\bar y_2},x^{y_2}\big)
\mathrel{\;|\,} x\! \in X_{\!{\mathscr{A}}}
\Big\}.
\end{equation}
Correlation of this set  with the above function $\gamma$ is easy to notice:
since $\gamma$ is an identical map over $\mathscr{A}^{y_0}$, then the couples $(\bar x^{\bar y_0},x^{y_0})$ with $x\!\in X_{\! {\mathscr{A}}}$ are the couples
$\big(\bar a^{\bar y_0}, \gamma(a^{y_0})\big)$, $\big(\bar b^{\bar y_0} , \gamma(b^{y_0})\big), \ldots , \big(\bar e^{\bar y_0}, \gamma(e^{y_0})\big)$. Similarly for $\mathscr{A}^{y_2}$ we have the couples $\big(\bar a^{\bar y_2}, \gamma(a^{y_2})\big)$, $\big(\bar b^{\bar y_2}, $ $ \gamma(b^{y_2})\big)$, $ \ldots , \big(\bar e^{\bar y_2}, \gamma(e^{y_2})\big)$.
Lastly, since $\gamma$ just swaps $d_0^{y_1}$ and $d_1^{y_1}$, the central two couples in \eqref{EQ Couples in BxB} are equal to 
$\big(\bar d_0^{\bar y_1},\gamma(d_0^{y_1})\big)$,\;
$\big(\bar d_1^{\bar y_1},\gamma(d_1^{y_1})\big)$.
That is, in \eqref{EQ Couples in BxB} the second coordinate of each couple is the image of the first coordinate under $\gamma$, with just ``the bar removed''.

Recall that $\mathscr{B}_0$ and $\mathscr{B}_2$ were constructed so that \eqref{EQ D_0 obtained in two ways!!!} and \eqref{EQ D_2 obtained in two ways!!!} hold. Hence for any of 
$d_{\,-1}, d_{\,-2},\ldots$
from $D_0$
we have $d_{i}^{y_0}=d_{\,i}$; and for any of 
$d_{\,2}, d_{3},\ldots$
from $D_2$
we have $d_{i}^{y_2}=d_{\,i}$.
Similarly, $d_{0}^{y_1}=d_{0}$ and $d_{1}^{y_1}=d_{1}$, since 
$L_{D_1}=\langle d_0, d_1 \rangle$.
Thus, the subgroup $T$ generated by $20$ couples
\eqref{EQ Couples in BxB} contains the set of all the \textit{infinitely many} couples:
\begin{equation}
\label{EQ Couples with dxd}
\big\{ (\bar a,\; a)
\big\} 
\cup
\big\{ (\bar d_i,d_i)
\mathrel{\;|\;} i \in \Z \backslash \{0,1\} 
\big\} 
\cup  
\big\{ (\bar d_0,d_1),\; 
(\bar d_1,d_0)
\big\}
\end{equation}
which is nothing but $\big\{ \big(\bar a,\; \gamma(a)\big)
\big\} 
\cup
\big\{ \big(\bar d_i,\gamma(d_{i})\big)
\mathrel{\;|\;} i \in \Z 
\big\}$.

\medskip
Now we are ready to again adapt the idea from Point~\ref{SU Obtaining the benign subgroup for rho Q} for the operation $\tau$, i.e., to denote $P=\bar A_{\mathcal X} \times \langle a, d, e \rangle$
in  $\bar F_3 \!\times F_3$, and set
$Q=T \cap P$ to establish that this intersection has the simple structure 
$Q=\big\langle 
(\bar a_f,\; a_{\tau f}\!)
\;|\;  f\in \mathcal X
\big\rangle$.

$T$ contains $Q$, for, from \eqref{EQ Couples with dxd} we can deduce that  $T$ for every $f\!\!\in \mathcal E$ contains a specific element
$\lambda_f=\big(\bar d_f,\;  \tilde d_{f}  \big)$,
where $\tilde d_{f}$ is obtained from $d_{f}$ by replacing its two factors 
$d_0^{f(0)}$\!\!,\; $d_1^{f(1)}$ by the factors $d_1^{f(0)}$\!\!,\; $d_0^{f(1)}$ respectively.
Say, for $f=(2,5,3)$ we have 
$d_{f}=d_0^2 d_1^5 d_2^3$
and
$\tilde d_{f}=d_1^2 d_0^5 d_2^3$.
Then $\lambda_f$ indeed is in $T$ because the couples
$(\bar d_0,\; d_1)$,
$(\bar d_1,\; d_0)$,
$(\bar d_2,\; d_2)$
are in $T$, and so the product: 
$$
(\bar d_0,\; d_1)^2 
(\bar d_1,\; d_0)^5
(\bar d_2,\; d_2)^3
=
(\bar d_0^2 \bar d_1^5 \bar d_2^3,\; d_1^2 d_0^5 d_2^3)
=
(\bar d_f,\; \tilde d_{f})
=
\lambda_f
$$
is also in $T$.
Notice that  
$\tilde d_{f} = d_1^2 d_0^5 d_2^3$
differs from 
$d_{\tau f} = d_{(5,2,3)}= d_0^5 d_1^2  d_2^3$
in the \textit{order} of factors $d_0$ and $d_1$ only.
But since by Remark~\ref{RE order of d_i does not matter} the order of $d_i$ does not matter in action of $d_i$ on $a_f$, we get $a^{d_1^2 d_0^5 d_2^3}=a^{d_0^5 d_1^2  d_2^3}=a^{b_0^5 b_1^2  b_2^3}=a_{(5,2,3)}=a_{\tau f}$, and so
$(\bar a_f,\; a_f)^{\lambda_f}
=
(\bar a_f,\; a_{\tau f})
$
is in $T$.
Since also $Q \subseteq P$, we thus have  $Q \subseteq T \cap P$.

To get the reverse inclusion notice that any couple from $P$ has its first coordinate inside 
$\bar A_{\mathcal X}$, i.e., that coordinate is generated by some elements $\bar a_f 
= \bar a^{\bar b_f}
=\bar a^{\bar d_f}$.
On the other hand, our couple is also in $T$, and from \eqref{EQ Couples with dxd} it follows that  if its first coordinate is rewritten as a word on 
$\bar a,\bar d_i$, then the second coordinate in the same couple can be obtained by replacing 
$\bar a, \bar d_0, \bar d_1$ by $a, d_1, d_0$, and then $\bar d_i$ 
by $d_i$ for all $i\neq 0,1$. 
But this just transforms $\bar a^{\bar d_f}$ to $a^{\tilde d_{f}}=a^{b_{\tau f}}=a_{\tau f}$. Say, for $f=(2,5,3)$  the first coordinate is:
\begin{equation*}
\bar a_f 
= \bar a^{\bar b_f}
=\bar a^{\bar d_f}
= \bar a^{\bar d_{0}^{2} \bar d_{1}^{5} \bar d_{2}^{3}}
= \bar d_{2}^{\;-3}\bar d_{1}^{\;-5}  \bar d_{0}^{\;-2}\, \cdot \,
\bar a \,
\cdot\, \bar d_{0}^{2}\, \bar d_{1}^{5}\, \bar d_{2}^{3}
\end{equation*}
and then the second coordinate  has to be: 
\begin{equation*}
d_{2}^{\;-3}  d_{0}^{\;-5}    d_{1}^{\;-2}\, \cdot \,
a \,
\cdot\,   d_{1}^{2}\,   d_{0}^{5}\,   d_{2}^{3}
= a^{d_{1}^{2}\,  d_{0}^{5}\,  d_{2}^{3}}
=a^{b_{\tau f}}
=a_{\tau f}.
\end{equation*}
Thus, $Q$ has a simple description:
$$
Q = \big\langle 
\big(\bar a^{\bar b_f}\!,\; a^{ b_{\tau f}}\!\big)
\;|\;  f\in \mathcal X
\big\rangle
=\big\langle 
(\bar a_f,\; a_{\tau f}\!)
\;|\;  f\in \mathcal X
\big\rangle.
$$
Compare the above used elements $\tilde d_{f}$ and $\lambda_f$ to their similarly denoted, but yet slightly different, analogs in Point~\ref{SU Obtaining the benign subgroup for rho Q}.

\medskip
Slightly adapting the construction of Point~\ref{SU Construction of the direct product K x A} notice that $K_{\!\mathcal X}$ could be built to intersect with $\mathscr{B}$ in $F$ strictly. This allows us to define 
$\mathcal K = K_{\mathcal X} *_F \mathscr{B}$
(compare this with the group $\mathcal K$ in Point~\ref{SU Construction of the direct product K x A}), such that 
$\mathscr{B} \cap L_{\mathcal X} = A_{\mathcal X}$, that is, 
$A_{\mathcal X}$ is also benign in $\mathscr{B}$ for the finitely presented overgroup $\mathcal K$ and for the same finitely generated subgroup $L_{\!\mathcal X}$ mentioned above. 
The analog $\bar{\mathcal K}$ of $\mathcal K$ can be constructed for $\bar{\mathscr{B}}$ so that  
$P=\bar A_{\mathcal X} \times \langle a, d, e \rangle$ is benign in $\bar{\mathscr{B}} \!\times\! {\mathscr{B}}$ for 
$K_P=\bar {\mathcal K} \times \mathscr{B}$ and for $L_P=\bar L_{\!\mathcal X} \!\times \langle a, d, e \rangle \le K_P$.

The $20$-generator group $T$
is benign in $\bar{\mathscr{B}} \!\times\! {\mathscr{B}}$  for  $K_{T} = \bar{\mathscr{B}} \!\times\! {\mathscr{B}}$ and for $L_{T} = T$.
Hence, the intersection $Q$ is also benign in $\bar{\mathscr{B}} \!\times\! {\mathscr{B}}$ for the finitely presented overgroup:
\begin{equation}
\label{EQ K_Q defined for tau}
\begin{split}
K_Q &
= \big(K_T *_{L_T} v_1\big) 
*_{\bar{\mathscr{B}} \times {\mathscr{B}}}
\big(K_P *_{L_P} v_2\big)\\
&= \Big(
\big(\bar{\mathscr{B}} \times {\mathscr{B}}\big) 
*_{T} 
v_1 
\Big)
\, *_{\bar{\mathscr{B}} \times {\mathscr{B}}}
\Big(
\big(\bar {\mathcal K} \!\times\! \mathscr{B}\big)
*_{\bar L_{\!\mathcal X} \times\, \langle a, d, e \rangle} 
v_2 
\Big)
\end{split}
\end{equation}
and for its $32$-generator subgroup $L_Q\!=\big(\!\bar{\mathscr{B}} \!\times\! {\mathscr{B}}\big)^{v_1v_2}$\!.

But since $Q=\big\langle 
(\bar a_f,\; a_{\tau f}\!)
\;|\;  f\in \mathcal X
\big\rangle$ lies inside $\bar F_3 \!\times\! F_3$, then $Q$ is benign in $\bar F_3 \!\times\! F_3$ also for the same choice of $K_Q$ and $L_Q$.

\subsubsection{``Extracting'' $A_{\tau \mathcal X}$ from $Q$}
\label{SU Extracting A tau X from Q SHORT}

In analogy with Point~\ref{SU Extracting A rho X from Q} we ``extract'' the benign subgroup 
$A_{\tau \mathcal X}=
\big\langle a_{\tau f}
\;|\;  f\in \mathcal X
\big\rangle
$ from $Q$, skipping some explanation details below.

The join 
$
Q_1=\big\langle \bar F_3 \!\times\! \1 ,\, Q \big\rangle
=\bar F_3 \!\times\! \langle
a_{\tau f} \mathrel{|} 
f \in \mathcal X \rangle
$ is benign in $\bar F_3 \!\times\! F_3$ for the finitely presented:
$$
K_{Q_1}= 
\big((\bar F_3 \!\times\! F_3)\,*_{\bar F_3 \times \1} w_1\big) 
\,*_{\bar F_3 \times F_3}
\big(K_{Q} *_{L_{Q}} w_2\big)
$$
and for its $12$-generator subgroup:
$$
L_{Q_1} = \big\langle(\bar F_3 \!\times\! F_3)^{w_1}\!,\;(\bar F_3 \!\times\! F_3)^{w_2} \big\rangle.
$$
The intersection:
$$
A_{\tau \mathcal X}=A_{\mathcal Y}=
\big(\1 \!\times\!  F_3 \big) \cap Q_1 
=
\langle
a_{\tau f} \mathrel{|} 
f \in \mathcal X \rangle
$$ 
is benign in $\bar F_3 \!\times\! F_3$ for the finitely presented overgroup:
$$
K_{\tau \mathcal X}
=
K_{\mathcal Y}
=
\big((\bar F_3 \!\times\! F_3)\,*_{\1 \times  F_3} w_3\big) 
\,*_{\bar F_3 \times F_3}
(K_{Q_1} *_{L_{Q_1}} w_4)
$$
and for its $6$-generator subgroup
$L_{\tau \mathcal X}
=L_{\mathcal Y}
=(\bar F_3 \!\times\! F_3)^{w_3 w_4}$.
But since $A_{\tau \mathcal X}$ is inside $F_3$, it is benign in $F_3$ also, for the same choice of $K_{\tau \mathcal X}$ and $L_{\tau \mathcal X}$ made above.  

\subsubsection{Writing $K_{\tau \mathcal X}$ explicitly}
\label{SU Writing K tau X  by its generators and defining relations} 

Recalling the above constructions, in particular, the generating sets  
$X_{\! \mathscr{B}}$ and $X_{\! \bar{\mathscr{B}}}$
in
\eqref{EQ generators of B} and
\eqref{EQ generators of bar B}
we explicitly have:
\begin{equation}
\label{EQ K tau X}
\begin{split}
K_{\tau \mathcal X}=
\Big\langle
X_{\! \mathscr{B}},\;  X_{\! \bar{\mathscr{B}}};\;\;
\bar Z 
\backslash \{ \bar a, \bar b, \bar c \};\;
y_0, y_1, y_2;\;
v_1, v_2;\;\; w_1,\ldots , w_4 
\;\mathrel{|} \\
&\hskip-87mm 
R_{\!\mathscr{A}};\;
R_{\!\mathscr{\bar A}};\;
\bar S;\; \\
&\hskip-87mm 
d^{x_0}= d_{\,1},\;
d^{x'_0} = {d},\;\;
e^{x_0}=e^{x'_0}=e^2;\; \\
&\hskip-87mm 
d^{x_2}= d_{\,-1},\;
d^{x'_2} = {d_{\,-2}},\;\;
e^{x_2}=e^{x'_2}=e^2;\; \\
&\hskip-87mm 
\text{$y_0$ fixes $d_{\,-1}, x_0, x'_0$;\, 
$y_1$ fixes $d_0, d_1$;\,
$y_2$ fixes $d_2, x_2, x'_2$}; 
\\
&\hskip-87mm 
\bar d^{\bar x_0}= \bar d_{\,1},\;
\bar d^{\bar x'_0} = {\bar d},\;\;
\bar e^{\bar x_0}=\bar e^{\bar x'_0}=\bar e^{\;2};\; \\
&\hskip-87mm 
\bar d^{\bar x_2}= \bar d_{\,-1},\;
\bar d^{\bar x'_2} = {\bar d_{\,-2}},\;\;
\bar e^{\bar x_2}=\bar e^{\bar x'_2}=\bar e^{\;2};\; \\
&\hskip-87mm 
\text{$y_0$ fixes $d_{\,-1}, x_0, x'_0$;\, 
$y_1$ fixes $d_0, d_1$;\,
$y_2$ fixes $d_2, x_2, x'_2$}; 
\\
&\hskip-87mm 
\text{$X_{\! \mathscr{B}}$ commutes with  $X_{\! \bar{\mathscr{B}}}$ and $\bar Z 
\backslash \{ \bar a, \bar b, \bar c \}$};\; \\
&\hskip-87mm 
\text{$v_1$ fixes $20$ couples \eqref{EQ Couples in BxB};\; 
$v_2$ fixes  
$\bar L_{\!\mathcal X}$ and $a,d,e$;
}
\\
&\hskip-87mm 
\text{$w_1$ fixes $\bar a,\bar b,\bar c$;\; $w_2$ fixes  
$X_{\! \mathscr{B}}^{\! v_1 v_2} \cup  X_{\! \bar{\mathscr{B}}}^{\! v_1 v_2} $};\;\;
\text{$w_3$ fixes $a,b,c$};\\
&\hskip-87mm 
\text{$w_4$ fixes 
$\big\{ 
a,b,c, \bar a, \bar b, \bar c
a,b,c, \bar a, \bar b, \bar c
\big\}^{\! w_1}$}
\Big\rangle\,.
\end{split}
\end{equation}
As the finitely generated subgroup $L_{\tau \mathcal X}$ in $K_{\tau \mathcal X}$ we can explicitly take 
$\big\langle
a,b,c, \bar a, \bar b, \bar c
\big\rangle^{w_3 w_4}$\!\!.
In \eqref{EQ K tau X} the notation 
$X_{\! \mathscr{B}}^{\! v_1 v_2}$
stands for the set of conjugates of all generators from $X_{\! \mathscr{B}}$ by $v_1 v_2$; and  
$X_{\! \bar{\mathscr{B}}}^{\! v_1 v_2}$ is defined analogously.
In the $1$'st and $9$'th lines of  \eqref{EQ K tau X} we exclude $\bar a, \bar b, \bar c$ from $\bar Z$ because they were already included in $X_{\! \bar{\mathscr{B}}}$.
If $K_{\mathcal X}$ has $m$ generators and $n$ defining relations, and if 
$L_{\!\mathcal X}$ has $k$ generators, then the group $K_{\tau \mathcal X}$ in \eqref{EQ K tau X}
has 
$16+ 16+ (m-3)+3+2+4=m+38$ generators and 
$20+20+n+2\cdot (4+4+3+2+3)
+ 16\cdot (16+m-3)+20+(k+3)
+3 + 2\cdot 16 +3 +
2\cdot 6
= n + 16m + k + 345
$ defining relations.

\medskip

\subsection{The proof for the operation $\omega_m$}
\label{SU The proof for omega_m}

Assume the hypothesis of Theorem~\ref{TH Theorem A} holds for $\mathcal X$, 
the group $K_{\mathcal X}=\langle
\, Z \;|\;  S \rangle$ with its subgroup $L_{\mathcal X}\le K_{\mathcal X}$ are given explicitly, and denote $\mathcal Y = \omega_m \mathcal X$ for some $m=1,2,\ldots$\;,
see a simple example for $m=3$  in \eqref{EQ the f promised}.

Two agreements are going to simplify the proofs below.  
Firstly, since the names of free generators do not actually matter,  later we are going to suppose that $A_{\mathcal X}$ is benign in a free group of rank $3$ on some differently named generators $g,h,k$ which will be introduced below, compare with Section~\ref{SU Defining subgroups by integer sequences}.
Secondly, for the set $\mathcal X_m  \!=\! \mathcal X \cap \E_m$ it is trivial to notice that $\omega_m(\mathcal X)=\omega_m(\mathcal X_m)$, and so without loss of generality we may reduce our consideration to the case $\mathcal X \subseteq \E_m$. 
This, in particular, allows us to write all  $f$ in $\mathcal X = \mathcal X_m$  as sequences $f=(j_0,\ldots,j_{m-1})$. If a \textit{shorter} sequence contains less than $m$ integers, we can without loss of generality extend its length to $m$ by appending some extra $0$'s at the end, see Section~\ref{SU Integer functions f}.

\subsubsection{The groups $\Gamma$ and $\mathscr{G}$}
\label{SU The groups Xi Gamma}  
In the groups $\Xi_m$ and $\Xi_0$ we by Lemma~\ref{LE Ksi} have:
\begin{equation*} 
\begin{split}
\langle b,c \rangle \cap \langle b_m, t_m, t'_m\rangle = \langle b_m, b_{m+1},\ldots\rangle 
& \text{ in } \Xi_m,
\\
\langle b,c \rangle \cap \langle b_{-1}, t_0, t'_0\rangle = \langle b_{-1}, b_{-2},\ldots\rangle 
& \text{ in } \Xi_0.\\
\end{split}
\end{equation*} 
In analogy with the group $\mathscr{C}$ given in \eqref{EQ Defining C}, 
build the finitely presented $\textstyle{\bigast}$-construction:         
$$
\mathscr{Z}
= \left(\Xi_m *_{\langle b_m, t_m, t'_m\rangle} r_1 \right)
\, *_{\langle b,c\rangle}
\left(\Xi_0 *_{\langle b_{-1}, t_0, t'_0\rangle} r_2 \right).
$$
In $\langle b,c \rangle$ the subgroup 
$B_m = \langle \ldots b_{-2}, b_{-1};\;\;
b_m, b_{m+1},\ldots\rangle$ is the join of the above $\langle b_m, b_{m+1},\ldots\rangle$ and $\langle b_{-1}, b_{-2},\ldots\rangle$, and so by Corollary~\ref{CO intersection and join are benign multi-dimensional}  it is benign in $\langle b,c \rangle$. As a finitely presented overgroup of $\langle b,c \rangle$ one can take $K_{B_m}\!=\mathscr{Z}$, and as its finitely generated subgroup one can pick $L_{B_m}\!= P_m\!=\big\langle\langle b,c\rangle^{r_1} \!,\, \langle b,c\rangle^{r_2} \big\rangle$, see Corollary~\ref{CO intersection and join are benign multi-dimensional}\;\eqref{PO 2 CO intersection and join are benign multi-dimensional}. Also, check Figure~8 in 
\cite{Auxiliary free constructions for explicit embeddings} illustrating this construction.

The letters $g,h,k$ were not so far used, and we  may now involve them to build:
\begin{equation}
\label{EQ second Gamma}
\Gamma = \langle b,c \rangle *_{B_m} \!(g,h,k)
\quad {\rm and} \quad
\mathscr{G} = \mathscr{Z} *_{P_m}\!(g,h,k)
\end{equation}
with three stable letters $g,h,k$ all fixing the subgroups $B_m$ and $P_m$\! respectively. 
The second one of the groups \eqref{EQ second Gamma} clearly is finitely presented, because $\mathscr{Z}$ is finitely presented, and $P_m$ is finitely generated. 
Taking into account $\langle b,c\rangle
\cap\,
P_m\!
=B_m$,
and using $P_m$\! and $B_m$ as the groups $A$ and $A'$ 
of Corollary~3.5\;(1), and Remark~3.6
from 
\cite{Auxiliary free constructions for explicit embeddings}, we see that $\Gamma$ is a subgroup of $\mathscr{G}$, given as an intersection:   
\begin{equation}
\label{<EQ b,c,g,h,k> = Gamma}
\langle b,c, g,h,k\rangle
= \langle b,c \rangle *_{\langle b,c\rangle \;\cap\; P_m} \!(g,h,k)
=\Gamma.
\end{equation}

For this group, in analogy with the elements $b_i, b_f$ and $a_f$, define the  elements
$h_i=h^{k^i}$\!\!,\,
$h_f 
= \cdots
h_{-1}^{f(-1)}
h_{0}^{f(0)} 
h_{1}^{f(1)}\cdots$,\,
and
$g_f 
= g^{h_f}$\!, see Section~\ref{SU Defining subgroups by integer sequences}.   

\medskip
Since the particular names of free generators of $F_3$ do not actually matter, we may suppose the analog $G_{\mathcal X} = \langle g^{h_f}
\;|\;
f\in \mathcal X
\rangle$ of $A_{\mathcal X}$ is benign in the free group $F'_3=\langle g,h,k \rangle$ of rank $3$, and the respective finitely presented overgroup $K_{\mathcal X}=\langle
\, Z
\;|\;
S
\rangle$ 
of 
$F'_3$, 
and the finitely generated $L_{\mathcal X}\le K_{\mathcal X}$, with $F'_3 \cap L_{\mathcal X} = G_{\mathcal X}$, are explicitly given.
In the beginning of Section~\ref{SU The proof for omega_m} we introduced $K_{\mathcal X}, L_{\mathcal X}$ for $F_3$, but using the same symbols for $F'_3$ should cause no confusion.

\subsubsection{Construction of $\Delta$}
\label{SU Construction of Delta}

The free group $\langle b,c \rangle$ contains a free subgroup $\langle b_i \mathrel{|} i\in \Z\rangle$  of countable rank, which is a free product $B_m * \,\tilde B_{m}$ with
$B_m$ mentioned above, and with 
its $m$-generator ``complement''\;
$\tilde B_{m}\!=\!\langle b_0,\ldots b_{m-1}\rangle$.  
In $\Gamma$ pick the subgroup $R=\langle g_f b_f^{-1} \mathrel{|} f\in \mathcal E_m\rangle$, and since the letter $a$ was \textit{not} involved in construction of $\Gamma$ or of $\mathscr{G}$, we can consider it a  new stable letter to build the HNN-extension $\Gamma *_R a$ (shortly we will see that in $\Gamma *_R a$ three elements $a,b,c$ are free generators for $\langle a,b,c\rangle$, and so we have no conflict with the above usage of $F_3=\langle a,b,c\rangle$ as a free group of rank $3$).

The intersection $\langle b,c \rangle \cap R$ is trivial because the non-trivial words of type $g_f b_f^{-1}$ generate $R$ freely, and so any non-trivial word they generate must involve in its normal form at least one stable letter $g$, and hence it need to be outside $\langle b,c \rangle$.
Therefore by Corollary 3.5\;(1) in \cite{Auxiliary free constructions for explicit embeddings} the subgroup generated in $\Gamma *_R a$ by $\langle b,c \rangle$ together with  $a$, is equal to: 
$$
\langle b,c \rangle *_{\langle b,c \rangle \,\cap \,R} a = \langle b,c \rangle *_{\{1\}} a
= \langle b,c \rangle *  a=\langle a,b,c \rangle=F_3.
$$
Hence, $a,b,c$ are \textit{free} generators, and the map sending $a,b,c$ to $a,b^{c^m}\!\!\!\!\!,\;c$ can be continued to an isomorphism  $\rho$ from $F_3$ to its subgroup.  Identifying $\rho$ to a  stable letter $r$ we arrive to:
\begin{equation}
\label{EQ nested form}
\Delta = \big(\Gamma *_R a \big) *_\rho r
= \Big(\big(\langle b,c \rangle *_{B_m} (g,h,k)\big) *_R a \Big) *_\rho r.
\end{equation}

\subsubsection{Discovering the subgroup $A_{\omega_m \mathcal X}$\! inside $\Delta$}
\label{SU Obtaining G cup} 

Introduce the subgroup $W_{\mathcal X}=\langle g_f\!,\, a,\, r \mathrel{|} f\in \mathcal X\rangle$ in $\Delta$. The objective of this section is to prove:

\begin{Lemma} In the above notation the following equality holds:
\label{LE intersection F and W}
\begin{equation}
\label{EQ intersection}
F_3\cap W_{\mathcal X}= A_{\omega_m \mathcal X}.
\end{equation}
\end{Lemma}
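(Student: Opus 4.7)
Both inclusions in \eqref{EQ intersection} will rest on the single identity
\[
a^{g_f}=a^{b_f}\qquad (f\in\E_m),
\]
which is the rewrite of $(g_f b_f^{-1})^a=g_f b_f^{-1}$: the element $g_f b_f^{-1}$ lies in $R$, and $a$ fixes $R$ pointwise in $\Gamma *_R a$. Conjugating by $r^j$ and using $a^r=a$ together with $b_i^r=b_{i+m}$, this identity extends to $a^{g_f^{\,r^j}}=a^{b_{\sigma^{jm}f}}$ for every $j\in\Z$. I will also need that $g_{f^{(i)}}^{\,r^i}$ commutes with $b_{\sigma^{jm}f^{(j)}}$ whenever $i\ne j$: conjugating back by $r^{-i}$ reduces this to the commutation of $g_{f^{(i)}}$ with $b_{\sigma^{(j-i)m}f^{(j)}}$, which holds because the latter then has support outside $[0,m)$, lies in $B_m$, and so commutes with $\langle g,h,k\rangle$.

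For the inclusion $A_{\omega_m\mathcal X}\subseteq F\cap W_{\mathcal X}$ I take $g\in \omega_m\mathcal X$ with block decomposition $g=(f^{(0)},\ldots,f^{(\ell-1)})$, $f^{(j)}\in\mathcal X\cap\E_m$, so that $b_g=b_{f^{(0)}}\, b_{\sigma^m f^{(1)}}\cdots b_{\sigma^{(\ell-1)m}f^{(\ell-1)}}$. Iterating $a^{b_{\sigma^{jm}f^{(j)}}}=a^{g_{f^{(j)}}^{\,r^j}}$ from the innermost factor outward, and using the commutations above to pull each $g$-factor past the remaining $b$-factors, induction on $\ell$ yields the closed form
\[
a_g=a^{b_g}=a^{\,g_{f^{(\ell-1)}}^{\,r^{\ell-1}}\cdots\, g_{f^{(1)}}^{\,r}\, g_{f^{(0)}}^{\vphantom{r}}},
\]
which evidently lies in $W_{\mathcal X}=\langle g_f,a,r\mid f\in\mathcal X\rangle$; combined with $A_{\omega_m\mathcal X}\subseteq F$, this gives the inclusion.

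For the reverse inclusion $F\cap W_{\mathcal X}\subseteq A_{\omega_m\mathcal X}$, apply the conjugates collecting process \eqref{EQ elements <X,Y>} with $\X=\{a\}$ and $\Y=\{g_f:f\in\mathcal X\}\cup\{r\}$ to write any $w\in W_{\mathcal X}$ as $w=a^{\varepsilon_1 v_1}\cdots a^{\varepsilon_k v_k}\cdot v$, where $v_i,v\in\langle g_f,r\mid f\in\mathcal X\rangle$. Using $[a,r]=1$ and the identity $a^{g_f^{\,r^j}}=a^{b_{\sigma^{jm}f}}$, each conjugator $v_i$ can be reorganised so that $a^{v_i}=a_{g_i}$ for some $g_i\in\omega_m\mathcal X$: the $r$-exponents occurring in $v_i$ distribute the $g_f$-factors over disjoint blocks of length $m$, the $g_f$-labels prescribe which blocks from $\mathcal X$ occupy them, and the commutations from the first paragraph validate the rewriting. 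Hence each $a^{\varepsilon_i v_i}\in A_{\omega_m\mathcal X}$, and so is their product. The hypothesis $w\in F$ finally forces $v\in\langle g_f,r\rangle\cap F$, and this intersection is trivial by the HNN-normal form in $\Delta=(\Gamma *_R a)*_\rho r$: a non-trivial word in $\{g_f,r\}$ either carries a non-trivial $r$-syllable, or reduces to a non-trivial element of $\langle g,h,k\rangle$, and neither can lie in $F=\langle a,b,c\rangle$, since $\langle g,h,k\rangle\cap\langle b,c\rangle=\{1\}$ in $\Gamma$.

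The main obstacle will be making the reorganisation step of the second direction fully rigorous: one must verify that the freeness of $R=\langle g_f b_f^{-1}\mid f\in\E_m\rangle$ and the transversality of $\langle b,c\rangle$ and $\langle g,h,k\rangle$ inside $\Gamma$ preclude any hidden pinching in the nested HNN-normal forms of $\Delta$ that would produce an element of $F$ outside the block-concatenation pattern prescribed by $\omega_m$.
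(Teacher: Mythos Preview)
Your forward inclusion $A_{\omega_m\mathcal X}\subseteq F\cap W_{\mathcal X}$ is correct, and your closed formula
\[
a_g \;=\; a^{\,g_{f^{(\ell-1)}}^{\,r^{\ell-1}}\cdots\, g_{f^{(1)}}^{\,r}\, g_{f^{(0)}}}
\]
is a tidy improvement over the paper's step-by-step construction; the commutation argument via $B_m$ is exactly right.

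The reverse inclusion, however, does not go through as you propose. The claim ``each conjugator $v_i$ can be reorganised so that $a^{v_i}=a_{g_i}$ for some $g_i\in\omega_m\mathcal X$'' is false for generic $v_i\in\langle g_f,r\mid f\in\mathcal X\rangle$. Take the simplest case $v_i=g_{f_1}g_{f_2}$ with $f_1,f_2\in\mathcal X\subseteq\E_m$ and no $r$'s: then $a^{v_i}=(a^{b_{f_1}})^{g_{f_2}}$, and since $b_{f_1}$ has support in $[0,m)$ it lies \emph{outside} $B_m$, so $g_{f_2}$ does \emph{not} commute with it and you cannot reduce $a^{v_i}$ to any $a_{g_i}$. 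Your heuristic ``the $r$-exponents distribute the $g_f$-factors over disjoint blocks'' fails precisely when two $g_f$-factors carry the same $r$-exponent. The obstacle you flag in your last paragraph is therefore not a bookkeeping issue but a genuine obstruction: the per-factor conclusion $a^{\varepsilon_i v_i}\in A_{\omega_m\mathcal X}$ is simply not available without first using the hypothesis $w\in F$.

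What the paper does instead is to push $w$ into normal form in the nested HNN-extension $\Delta=\big((\langle b,c\rangle*_{B_m}(g,h,k))*_R a\big)*_\rho r$ and observe that the \emph{only} mechanism by which the stable letters $g,h,k$ can disappear is a replacement $g_f^{-1}a^{b_l}g_f\mapsto a^{b_{l'}}$, which is licensed exactly when $l$ vanishes on $[0,m)$ (so that $b_l\in B_m$ commutes with $g_f$). When this condition fails, the occurrence of $g_f$ survives in normal form and $w\notin F$. This case split is the missing ingredient: it is the assumption $w\in F$, read through the normal form, that forces each surviving conjugate of $a$ to land in $A_{\omega_m\mathcal X}$, not any intrinsic property of the $v_i$. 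Your collecting step with $\X=\{a\}$, $\Y=\{g_f,r\}$ is a reasonable preliminary move, but after it you must still perform this normal-form case analysis rather than assert the reorganisation.
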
 

Uncomplicated routine of the proof of the lemma will follow from a series of simple observations and examples below. 
Firstly, by the agreement in the beginning of Section~\ref{SU The proof for omega_m} we may suppose $\mathcal X \subseteq\E_m$, and all functions in $\mathcal X$ are of the form $f\!=(j_0,\ldots,j_{m-1})$.

For arbitrary sequence  $h\!\in \omega_m \mathcal X$ the element $a_h\!=a^{b_h}$ is inside $W_{\mathcal X}$. 
Let us display this almost trivial fact by a routine step-by-step construction example. 
Let $m=3$, and let  $(7,2,4)$, $(2,5,3)$ be in $\mathcal X$.
Then the set $\omega_3 \mathcal X$ contains the sequence, say, 
\begin{equation}
\label{EQ the f promised}
h=(
0,0,0,\;\;
7,2,4,\;\;
0,0,0,\;\;
0,0,0,\;\;
0,0,0,\;\;
2,5,3,\;\;
7,2,4).
\end{equation}
To show that $a_h = a^{b_h}$ is in $W_{\mathcal X}$ we start by the initial sequences 
$l_1=(7,2,4)$ and 
$l_2=(2,5,3)$ 
in $\mathcal X$, and then use them via a few steps to arrive to the sequence $h$ above. 

In these steps we are going to use the evident fact that the relation $(g_f^{\vphantom8} b_f^{-1})^{\,a}=g_f^{\vphantom8} b_f^{-1}$  is equivalent to $a^{g_f\vphantom{j}}\!=a^{b_f}$.

\vskip-1mm
\textit{Step 1.} Since  $l_1=(7,2,4)\in \mathcal X$, then $g_{l_1}\!\! \in W_{\mathcal X}$, and so 
$
a^{g_{l_1}}\!\!=
a^{b_{l_1}}=
a^{b_0^{7}\,b_1^{2}b_2^{4}}
\in W_{\mathcal X}$, see Lemma~\ref{LE action of d_m on f} and Remark~\ref{RE order of d_i does not matter}. 

\textit{Step 2.} 
Since
$b_i^r=b_i^\rho\!
=(b^\rho)^{(c^i)^{\,\rho}}\!
=(b^{c^3})^{c^i}\!
=b^{c^{i+3}}\!
=b_{i+3}$, then conjugating the previously constructed element $a^{b_{l_1}}$ by $r$ we get:
$$
\big(a^{b_{l_1}}\!\big)^{r}\!\!=
\big(a^r\big)^{(b_0^{7}\,b_1^{2}b_2^{4})^{\,r}}\!\!=
a^{b_3^{7}\,b_4^{2}b_5^{4}}
=
a^{b_0^{0}\,b_1^{0}b_2^{0} \;\cdot\; b_3^{7}\,b_4^{2}b_5^{4}}=
a^{b_{l_3}}\in W_{\mathcal X}
$$
for the sequence $l_3=(0,0,0,\;
7,2,4)$.

Next, conjugating $a^{b_{l_3}}$ by 
$g_{l_2}$ we have:
$$
\big(\! a^{b_{l_3}}\big)^{g_{l_2}}
\!=\, 
a^{b_{l_3}\cdot \,g_{l_2}}
= a^{b_3^{7}\,b_4^{2}b_5^{4} \;\cdot \,g_{l_2}}
.$$

\textit{Step 3.} Each of the stable letters $g,h,k$ commutes with any 
$b_i$ for $i <0$ or $i\ge m=3$, 
so $g_{l_2}$ commutes with $b_3^{7}\,b_4^{2}b_5^{4}$, and then:
$$
a^{b_3^{7}\,b_4^{2}b_5^{4} \;\cdot \,g_{l_2}}
=
a^{\,g_{l_2} \cdot \; b_3^{7}\,b_4^{2}b_5^{4} }.
$$
Then for one more time applying Step 1 to $a^{g_{l_2}} $ we transform the above to:
\vskip-3mm
$$
a^{\,g_{l_2} \cdot \; b_3^{7}\,b_4^{2}b_5^{4} }
=
a^{b_0^{2}\,b_1^{5}b_2^{3} 
\;\cdot\; 
b_3^{7}\,b_4^{2}b_5^{4}}=
a^{b_{l_4}}
$$
for the sequence $l_4=(2,5,3,\;\;
7,2,4)$.
Next, we repeat Step 2 for the above $a^{b_{l_4}}$ for \textit{four times} i.e., conjugate the above by $r^4$ to get 
the element $a^{b_{l_5}}$ for the sequence:
$$
l_5=(0,0,0,\;\;
0,0,0,\;\;
0,0,0,\;\;
0,0,0,\;\;
2,5,3,\;\;
7,2,4)
.$$
Next apply Step 3 and Step 1 again to conjugate 
$a^{b_{l_5}}$ by 
$g_{l_1}$. We get the element 
$a^{b_{l_6}}$ for the sequence:
$$
l_6=(7,2,4,\;\;
0,0,0,\;\;
0,0,0,\;\;
0,0,0,\;\;
2,5,3,\;\;
7,2,4)
.$$
Then we again apply Step 2, i.e., conjugate $a^{b_{l_6}}$  by $r$ to construct in $ W_{\mathcal X}$ the element $a^{b_{h}}=a_h$
with the sequence $h$ promised in \eqref{EQ the f promised} above. 

Since such a procedure can easily be performed for random $m$ and for an \textit{arbitrary} $h \in \omega_m \mathcal X$, we get that $A_{\omega_m \mathcal X} \le W_{\mathcal X}$.
Since also $A_{\omega_m \mathcal X} \le F_3$, we then have $A_{\omega_m \mathcal X} \le F_3 \cap W_{\mathcal X}$.

\bigskip 
Next assume some word $w$ from $W_{\mathcal X}=\langle g_f\!,\, a,\, r \mathrel{|} f\in \mathcal X \rangle$ is in $F_3$, and deduce from \eqref{EQ nested form} that it is in $A_{\omega_m \mathcal X}$ necessarily.

Since $w$ is also in $\Delta$, it can be brought to its normal form involving stable letter $r$ and some elements from $\Gamma *_R a$.
The latter elements, in turn, can be brought to normal forms involving stable letter $a$ and some elements from $\Gamma$. 
Then the latters can further be brought to normal forms involving stable letters  $g, h,k$ and some elements from $\langle b,c\rangle$. 
That is, $w$ can be brought to a \textit{``nested'' normal form} reflecting three
``nested'' HNN-extensions in the right-hand side of \eqref{EQ nested form}.
Let us detect the cases when it involves nothing but the letters $a,b,c$.
The only relations of $\Gamma$ involve $g,h,k$, and they are equivalent to  $a^{g_f}\!=a^{b_f}$. Thus, the only way by which $g,h,k$ may be eliminated in the above normal form  is to have in $w$ subwords of type $g_f^{\!-1} a\, g_f^{\vphantom8} = a^{g_f}$\! which can be replaced by respective subwords $a^{b_f}\!\in F_3$. 
If after this procedure some subwords $g_f$ still remain, then three scenario cases are possible: 

\textit{Case 1}. 
The word $w$ may contain a subword of type $w'=g_f^{-1}  a^{b_l}  g_f^{\vphantom8}$ for such an $l$ that $l(i)=0$ for $i=0,\ldots,m\!-\!1$.
Check the example of Step 1, when this is achieved for $l=l_3=(0,0,0,\;
7,2,4)$ and $f=l_2=(2,5,3)$. 
Then just replace $w'$ by $a^{b_{l'}}$ for an $l'\in \omega_m \mathcal X$ (such as $l'=l_4=(2,5,3,\;\;
7,2,4)$ in our example). 

\textit{Case 2}. If  $w'=g_f^{-1}  a^{b_l}  g_f$,\, but the condition $l(i)=0$ fails for an $i=0,\ldots,m-1$, then $g_f$ does \textit{not} commute with $b_l$, so we cannot apply the relation $a^{g_f}\!=a^{b_f}$\!, and so $w \notin G $.
Turning to  example in steps 1--3, notice that for, say, $f=(7, 2, 4) \in \mathcal X$ we may \textit{never} get something like
$a^{(g_f)^{\,2} }\!\!
=\big(a^{b_0^{7}\,b_1^{2}\,b_2^{4}}\big)^{g_f}\!\!
=a^{(b_0^{7}\,b_1^{2}\,b_2^{4})^2}
$ because $g_f$ does not commute with $b_0,  b_1,  b_2$. That is, all the \textit{new} functions $l$ we get \textit{exclusively} are from $\omega_m \mathcal X$.

\textit{Case 3}. If $g_f$ is in $w$, but is not in a subword $g_f^{-1}  a^{b_l}  g_f$, we again have $w \notin F$, unless all such $g_f$ trivially cancel each other.

This means, if $w \in F_3$, then  elimination of  $g,h,k$  turns $w$ to a product of elements from  $\langle r\rangle$ and of some  $a^{b_f}$ for some $f\in \omega_m \mathcal X$ ($a$ is also of that type, as $(0)\in\mathcal X$).
Now apply    \ref{SU The conjugates collecting process}  for 
$\X=\{a^{b_f} \mathrel{|} f\in \omega_m \mathcal X \}$
and $\Y=\{r\}$ to state that
$w$ is a product of some power $r^i$ and of some elements each of which is an $a^{b_f}$ conjugated by a power $r^{n_i}$ of $r$.
These conjugates certainly are in $\omega_m \mathcal X$ (see Step 2 above), and so $w\in F_3$ holds if and only if $i=0$, i.e., if 
$w\in A_{\omega_m \mathcal X}$.

Therefore, $F_3\cap W_{\mathcal X} \le A_{\omega_m \mathcal X}$ holds, and equality \eqref{EQ intersection} has been proved.

\medskip
The equality \eqref{EQ intersection} does not \textit{yet} mean that $A_{\omega_m \mathcal X}$ is benign in $F_3$ because the group $\Delta$
of \eqref{EQ nested form} may \textit{not} necessarily be finitely presented,
and $W_{\mathcal X}$ may \textit{not} necessarily be finitely generated. Our near objective is to replace $\Delta$ by a  finitely presented alternative $\mathscr{D}$ in which these two ``defects'' are corrected, see Point~\ref{SU Construction of finitely presented DD} below.

\subsubsection{Presenting $R$ as a join}
\label{SU Presenting as a join} 

Let us present $R$ as a join of $m+1$ subgroups in $\Gamma$, each benign in  $\mathscr{G}$.  
Denote $\Phi_m=\langle b_0,\ldots,b_{m-1}, g, h_0,\ldots,h_{m-1} \rangle$, and notice that:

\begin{Lemma}
\label{LE new free subgroup}
$\Phi_m$ is freely generated by  $2m\!+\!1$ elements $b_0,\ldots,b_{m-1}, g, h_0,\ldots,h_{m-1}$\! in $\Gamma$, and hence in $\mathscr{G}$.
\end{Lemma}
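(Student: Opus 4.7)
The plan is to verify the claim via Britton's normal form theorem for the HNN extension $\Gamma = \langle b,c \rangle *_{B_m}(g,h,k)$. Since $\Gamma$ embeds in $\mathscr{G}$ by the identification in \eqref{<EQ b,c,g,h,k> = Gamma}, freeness inside $\Gamma$ will automatically give freeness inside $\mathscr{G}$. The strategy is to take an arbitrary non-trivial reduced word $W$ in the alphabet
\[X = \{b_0^{\pm 1}, \ldots, b_{m-1}^{\pm 1}, g^{\pm 1}, h_0^{\pm 1}, \ldots, h_{m-1}^{\pm 1}\},\]
and argue that it represents a non-trivial element of $\Gamma$.

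First I would decompose $W$ into alternating blocks
\[W = w_0\, s_1\, w_1\, s_2 \cdots s_n\, w_n,\]
where each $s_i \in \{g^{\pm 1}, h_0^{\pm 1}, \ldots, h_{m-1}^{\pm 1}\}$ and each $w_i \in \tilde B_{m} = \langle b_0, \ldots, b_{m-1}\rangle$. Substituting $h_j^{\pm 1} = k^{-j} h^{\pm 1} k^j$ at every occurrence of $h_j^{\pm 1}$ yields a word $W'$ in the generators $b,c,g,h,k$ of $\Gamma$, with the stable letters $g,h,k$ made explicit and with the intermediate base pieces all lying inside $\langle b,c \rangle$.

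Next I would invoke Britton's Lemma for the triple HNN extension $\Gamma$. The key facts are: (i) $\tilde B_{m} \cap B_m = \{1\}$, since inside the free group $\langle b_i \mid i\in\Z\rangle \le \langle b,c\rangle$ the subgroups $\tilde B_{m}$ and $B_m$ are complementary free factors; (ii) within a single expansion $k^{-j} h^{\pm 1} k^j$ no pinch is produced, because the central $h$ separates the two $k$-blocks, while consecutive $k$-letters inside $k^{-j}$ or inside $k^j$ share the same sign; (iii) at a boundary $\cdots k^j \cdot w_i \cdot k^{-j'} \cdots$ between two adjacent expanded $h_j$-blocks, a candidate pinch $k \cdot u \cdot k^{-1}$ requires $u \in B_m$, so by (i) it forces $w_i = 1$, which makes $s_i$ and $s_{i+1}$ adjacent letters in $W$; reducedness of $W$ then rules out $s_i = s_{i+1}^{-1}$. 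A routine case analysis under the residual possibilities (distinguishing $j < j'$, $j = j'$ and $j > j'$ along with the sign combinations of $\epsilon, \delta$) shows that the cascade of admissible $k$-pinches only shortens the central $k$-segment to $k^{j - j'}$ and never cancels any $h^{\pm 1}$ or $g^{\pm 1}$ occurrence; an analogous, simpler analysis covers boundaries of the types $(g^{\pm 1}, g^{\pm 1})$, $(g^{\pm 1}, h_j^{\pm 1})$ and $(h_j^{\pm 1}, g^{\pm 1})$.

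Consequently, after every admissible pinch reduction the resulting word still contains all of its original $h^{\pm 1}$ and $g^{\pm 1}$ occurrences, so by Britton's Lemma it is non-trivial whenever $n \ge 1$. In the remaining case $n = 0$, i.e.\ when $W$ is a non-trivial word purely in $b_0^{\pm 1}, \ldots, b_{m-1}^{\pm 1}$, triviality in $\Gamma$ would force triviality already in the free factor $\tilde B_{m}$ of $\langle b_i \mid i\in\Z\rangle$, so $W \ne 1$ as well. The hardest part will be the bookkeeping in step (iii): tracking how a sequence of $k$-pinches can collapse the central $k^{j - j'}$-segment and could in principle bring two $h^{\pm 1}$'s into immediate contact, and verifying that reducedness of $W$ forbids the unique configuration, namely $s_i = s_{i+1}^{-1}$, in which those $h$-letters would actually annihilate.
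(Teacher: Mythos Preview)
Your approach is correct but takes a genuinely different route from the paper. You work directly in $\Gamma=\langle b,c\rangle *_{B_m}(g,h,k)$ and run a Britton's Lemma argument on the expanded word $W'$, checking case by case that no pinch can eliminate an $h^{\pm1}$ or $g^{\pm1}$ occurrence. The paper instead goes one level up: using the cited Corollary~3.5\,(1) (a general fact about subgroups of HNN-extensions), it identifies the larger subgroup $\langle b_0,\ldots,b_{m-1},\,g,h,k\rangle$ inside $\mathscr{G}$ as $\tilde B_m *_{\tilde B_m\cap P_m}(g,h,k)=\tilde B_m *\langle g,h,k\rangle$, which is free of rank $m+3$. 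Freeness of $\Phi_m$ is then immediate, since $b_0,\ldots,b_{m-1}$ freely generate $\tilde B_m$, the elements $g,h_0,\ldots,h_{m-1}$ freely generate a subgroup of the other free factor $\langle g,h,k\rangle$, and free generators chosen from distinct free factors of a free product are jointly free.

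Your argument is more self-contained (it needs nothing beyond Britton's Lemma and $\tilde B_m\cap B_m=\{1\}$), at the cost of the boundary case analysis you flag in step~(iii). The paper's argument is a couple of lines, but it leans on the external structural lemma. Both land at the same place; the paper's version simply avoids having to track pinch cascades by passing through the intermediate free group $F_{m+3}$.
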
 

\begin{proof}  

Firstly, $\tilde B_{m}=\langle b_0,\ldots, b_{m-1}\rangle$ has trivial intersection with $P_m$ because $\langle b,c\rangle
\cap\,
P_m\! =B_m$ implies  
$\tilde B_{m} \cap\,
P_m 
\le \big(\tilde B_{m} \cap\langle b,c\rangle\big) \cap
P_m 
=\tilde B_{m} \cap\, \big(\langle b,c\rangle \cap P_m\big)
=\tilde B_{m} \cap\,B_{m}
= \1$.
Therefore, in $\mathscr{G}$ we 
by Corollary~3.5\;(1) 
and by Remark~3.6 
in \cite{Auxiliary free constructions for explicit embeddings} have:
$$
\langle b_0,\ldots,b_{m-1},\; g,h,k\rangle
=\tilde B_{m} *_{\tilde B_{m}   \,\cap\;   P_m} (g,h,k) = \tilde B_{m} *_{{\1}} (g,h,k)
=\tilde B_{m} * \langle g,h,k \rangle
$$
which simply is a free group of (rank $m+3$).
Since $h_0,\ldots,h_{m\,-1}$ generate a free subgroup (of rank $m$) inside $\langle g,h,k \rangle$, they together with $g$ and with $b_0,\ldots,b_{m\,-1}$ generate a free subgroup (of rank $m+1+m=2m+1$)
inside $\langle b_0,\ldots,b_{m\,-1},\; g,h,k\rangle
\le \Gamma
\le \mathscr{G}
$.
\end{proof}   

Next we need a series of auxiliary benign subgroups in $\mathscr{G}$. Namely, for an integer $s=1,\ldots,m$ and for a sequence $f=(j_0, \ldots, j_{s-2}, j_{s-1})\in \mathcal E_s$ following the notation in Section~\ref{SU Integer functions f} write $f^+=(j_0, \ldots, j_{s-2},\, j_{s-1}\!+\!1)$.
In this notation for any $f$ the group $\mathscr{G}$ contains the elements 
$g_{f^+}^{\vphantom{1}} \!
\cdot 
b_{s-1}^{-1} 
\cdot 
g_f^{-1}$, such as, 
$g^{h_0^{2} h_1^{5} h_2^{3} h_3^{\boldsymbol{8}} }
\cdot
b_{3}^{-1} 
\cdot
g^{-\,h_0^{2} h_1^{5} h_2^{3} h_3^{7} }
$  
for the tuple $f\!=\!(2,5,3,7)$ of the length $s=4$.
Denote:
\begin{equation*}
\label{EQ Ksi 0 and m}
\begin{split}
V_{\mathcal E_s} &= \Big\langle 
g_{f^+}^{\vphantom{1}} \!
\cdot 
b_{s-1}^{-1} 
\cdot 
g_f^{-1} \mathrel{|} f\in \mathcal E_s
\Big\rangle
\\
&= \Big\langle 
g^{h_0^{i_0} \cdots\, h_{s-2}^{i_{s-2}}h_{s-1}^{(i_{s-1}\boldsymbol{+1})}} 
\!
\cdot 
b_{s-1}^{-1} 
\cdot 
g^{-h_0^{i_0} \cdots\, h_{s-2}^{i_{s-2}}h_{s-1}^{i_{s-1}}} \mathrel{|} i_{0}\ldots,i_{s-2},i_{s-1} \in \Z
\Big\rangle.
\end{split}
\end{equation*}

\begin{Lemma}
\label{LE VEm is bening}
In the above notation each
$V_{\mathcal E_s}$, $s=1,\ldots,m$, is a benign subgroup in $\mathscr{G}$ for some explicitly given finitely presented group and its finitely generated subgroup.
\end{Lemma}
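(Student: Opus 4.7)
The plan is to mimic, inside the rank-$(2m+1)$ free subgroup $\Phi_m$ of $\mathscr{G}$ from Lemma~\ref{LE new free subgroup}, the role played by $F=\langle a,b,c\rangle$ inside $\mathscr{A}$ in Section~\ref{SU Construction of the group A}, with the triple $g,h,k$ taking over the role of $a,b,c$. First I will take a fresh copy $\mathscr{A}'$ of $\mathscr{A}$ on renamed generators $g,h,k,\tilde t_1,\tilde t'_1,\tilde u_1,\tilde u_2,\tilde d,\tilde e$, with the same defining relations~\eqref{EQ relations A}, and amalgamate it with $\mathscr{G}$ along the free rank-$3$ subgroup $\langle g,h,k\rangle$, obtaining a finitely presented group $\widetilde{\mathscr{G}}=\mathscr{G}*_{\langle g,h,k\rangle}\mathscr{A}'$. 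To force $\tilde d,\tilde e$ to commute with $b_0,\ldots,b_{m-1}$ — which are in $\Phi_m$ but outside $\langle g,h,k\rangle$ — I will then replace $\widetilde{\mathscr{G}}$ by the further amalgam $\widetilde{\mathscr{G}}*_{\langle \tilde d,\tilde e\rangle}\big(\langle \tilde d,\tilde e\rangle\times\langle b_0,\ldots,b_{m-1}\rangle\big)$, which remains finitely presented because the subgroup of amalgamation is finitely generated.

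Second, I will invoke Lemma~\ref{LE action of d_m on f} in its translated form for $\mathscr{A}'$: conjugation by $\tilde d_j=\tilde d^{\tilde e^{\,j}}$ sends $g_f=g^{h_f}$ to $g_{f_{j}^+}$ for every $f\in \mathcal{E}$ and every $j\in \Z$. Choosing the seed element
\begin{equation*}
\omega_s \;=\; g^{h_{s-1}}\cdot b_{s-1}^{-1}\cdot g^{-1}\;=\; g_{(0,\ldots,0)^+}\cdot b_{s-1}^{-1}\cdot g_{(0,\ldots,0)}^{-1},
\end{equation*}
a routine calculation in the spirit of the one following Lemma~\ref{LE action of d_m on f} shows that for any $f=(i_0,\ldots,i_{s-1})\in\mathcal{E}_s$ one has
\begin{equation*}
\omega_s^{\,\tilde d_{0}^{i_0}\cdots\,\tilde d_{s-1}^{i_{s-1}}} \;=\; g_{f^+}\cdot b_{s-1}^{-1}\cdot g_f^{-1},
\end{equation*}
producing every generator of $V_{\mathcal{E}_s}$; the crucial point is that $\tilde d_0,\ldots,\tilde d_{s-1}$ fix $b_{s-1}$ pointwise by construction of the second amalgam, so the central factor $b_{s-1}^{-1}$ passes through the conjugation unchanged while the $g$-factors shift exactly as required.

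Third, applying the ``conjugates collecting'' process from Section~\ref{SU The conjugates collecting process} to $\X=\{\omega_s\}$ and $\Y=\{\tilde d_0,\ldots,\tilde d_{s-1}\}$, and combining it with the standard HNN-normal-form argument that a word purely in stable letters of an HNN-extension belongs to the base only when it is trivial, I obtain the equality
\begin{equation*}
V_{\mathcal{E}_s}\;=\;\Phi_m\,\cap\,\big\langle\,\omega_s,\;\tilde d_0,\ldots,\tilde d_{s-1}\,\big\rangle
\end{equation*}
inside the finitely presented overgroup built in the first step. Since $\Phi_m$ is finitely generated and both $\Phi_m$ and $\langle\omega_s,\tilde d_0,\ldots,\tilde d_{s-1}\rangle$ are explicitly presented benign subgroups of that overgroup, Corollary~\ref{CO intersection and join are benign multi-dimensional}\;\eqref{PO 1 CO intersection and join are benign multi-dimensional} immediately yields that $V_{\mathcal{E}_s}$ is benign in $\mathscr{G}$, with $K_{V_{\mathcal{E}_s}}$ given as the corresponding $\bigast$-construction and $L_{V_{\mathcal{E}_s}}$ read off from the conjugated image of $\Phi_m$ after the two amalgamation stable letters; both can be written out by generators and defining relations using \eqref{EQ relations A} and the explicit construction of $\mathscr{G}$ in Section~\ref{SU The groups Xi Gamma}.

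The main obstacle is the bookkeeping in the first step: in the naive amalgam $\mathscr{G}*_{\langle g,h,k\rangle}\mathscr{A}'$ the stable letter $\tilde d$ is free against $b_0,\ldots,b_{m-1}$, and without the second amalgam $b_{s-1}^{\,\tilde d_0^{i_0}\cdots\,\tilde d_{s-1}^{i_{s-1}}}\neq b_{s-1}$, which would break the identification of the conjugates of $\omega_s$ with the declared generators of $V_{\mathcal{E}_s}$. Verifying that the second amalgam is legal (i.e.\ that $\langle \tilde d,\tilde e\rangle$ embeds faithfully in both factors and that the resulting group keeps the normal-form properties invoked in lemmas~\ref{LE intersection in bigger group multi-dimensional}\,--\,\ref{LE intersection in HNN extension multi-dimensional}) is where the care is needed; once it is in place, the rest is standard.
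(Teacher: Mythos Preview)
Your second amalgam is where the argument breaks. In $\widetilde{\mathscr{G}}=\mathscr{G}*_{\langle g,h,k\rangle}\mathscr{A}'$ the subgroups $\langle\tilde d,\tilde e\rangle\le\mathscr{A}'$ and $\langle b_0,\ldots,b_{m-1}\rangle\le\mathscr{G}$ both meet the amalgamated subgroup $\langle g,h,k\rangle$ trivially, so by the normal form in amalgamated products they generate their \emph{free product} inside $\widetilde{\mathscr{G}}$. Forming $\widetilde{\mathscr{G}}*_{\langle \tilde d,\tilde e\rangle}\big(\langle\tilde d,\tilde e\rangle\times\langle b_0,\ldots,b_{m-1}\rangle\big)$ does not change this: the copies of $b_0,\ldots,b_{m-1}$ sitting in the new factor are \emph{new} elements, distinct from the $b_i\in\mathscr{G}$ that appear in your seed $\omega_s$. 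So after the second amalgam $\tilde d$ commutes with the fresh $b'_{s-1}$ but still fails to commute with the $b_{s-1}$ inside $\omega_s$, and the displayed identity $\omega_s^{\,\tilde d_0^{i_0}\cdots\tilde d_{s-1}^{i_{s-1}}}=g_{f^+}\,b_{s-1}^{-1}\,g_f^{-1}$ is false. There is no free construction (amalgam or HNN) that will impose commutation between already-existing elements generating a free product; the only way to do that is to pass to a quotient, and then you lose the embedding of $\mathscr{G}$.

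The paper sidesteps this entirely. Rather than importing a copy of $\mathscr{A}$ and then struggling to make the new stable letters interact with $b_{s-1}$, it works inside $\mathscr{G}$ from the start: for each $j=0,\ldots,s-1$ it defines an isomorphism $\lambda_{s-1,j}$ on the free subgroup $\langle b_{s-1},g,h_0,\ldots,h_{s-1}\rangle\le\Phi_m$ that \emph{fixes $b_{s-1}$ by fiat} and conjugates $g,h_0,\ldots,h_{j-1}$ by $h_j$. One then takes the single HNN-extension $\Lambda_s=\mathscr{G}*_{\lambda_{s-1,0},\ldots,\lambda_{s-1,s-1}}(l_{s-1,0},\ldots,l_{s-1,s-1})$. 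Conjugation by $l_{s-1,j}$ now increments the $j$'th coordinate of $f$ in $g_f$ while leaving $b_{s-1}$ untouched, exactly the effect you wanted from $\tilde d_j$, and the rest of your outline (seed $\omega_s$, conjugate-collecting, intersection with $\mathscr{G}$) goes through verbatim with $L_s=\langle\omega_s,\,l_{s-1,0},\ldots,l_{s-1,s-1}\rangle$. The fix, in other words, is not to transplant $\mathscr{A}$ but to define the right partial isomorphisms directly on $\Phi_m$.
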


\begin{proof}  
By Lemma~\ref{LE new free subgroup} 
the elements $b_{s-1}, g, h_0,\ldots, h_{s-1}$
are \textit{free} generators for the $(s+2)$-generator subgroup
$\langle b_{s-1}, g, h_0,\ldots, h_{s-1} \rangle$
of $\Phi_m$. Thus, any of the following maps $\lambda_{i,\,j}$ can be continued to an isomorphism on the free group $\langle b_{s-1}, g, h_0,\ldots, h_{s-1} \rangle$:
\begin{equation}
\label{EQ lambda definitions}
\begin{aligned}
& \lambda_{s-1,\,0} 
& \!\!\!\!{\rm sends} \hskip3mm
& b_{s-1}, g, h_0,\ldots, h_{s-2}, h_{s-1}
& {\rm to} \hskip3mm 
& b_{s-1}, g^{h_0}, h_0,\ldots, h_{s-2}, h_{s-1};\\
& \lambda_{s-1,\,1}
& {\rm sends} \hskip3mm
& b_{s-1}, g, h_0,\ldots, h_{s-2}, h_{s-1}
& {\rm to} \hskip3mm 
& b_{s-1}, g^{h_1}, h_0^{h_1},\ldots, h_{s-2}, h_{s-1};\\
& \hskip5mm\vdots & & \hskip19mm\vdots &  & \hskip22mm\vdots \\
& \lambda_{s-1,\, s-1} 
& {\rm sends} \hskip3mm
& b_{s-1}, g, h_0,\ldots, h_{s-2}, h_{s-1}
& {\rm to} \hskip3mm 
& b_{s-1}, g^{h_{s-1}}, h_0^{h_{s-1}},\ldots, h_{s-2}^{h_{s-1}}, \; h_{s-1}.
\end{aligned}
\end{equation}  
Say, for $m\!=\!1$ the map $\lambda_{0,0}$ sends  
$b_0, g, h_0$ to $b_0, g^{h_0}, h_0$;\,
while for $m\!=\!2$ the map
$\lambda_{1,0}$ sends  
$b_1, g, h_0, h_1$ to $b_1, g^{h_0}, h_0, h_1$ and 
$\lambda_{1,1}$ sends  
$b_1, g, h_0, h_1$ to $b_1, g^{h_1}, h_0^{h_1}, h_1$, 
etc... 

For these isomorphisms $\lambda_{i,\,j}$ respectively pick certain stable letters $l_{i,\,j}$ to construct:
$$
\Lambda_s=
\mathscr{G}
*_{\,\lambda_{s-1,\; 0},\;\ldots\;,\,\lambda_{s-1,\; s-1}} (l_{s-1,\; 0},\ldots,l_{s-1,\; s-1})
$$
for all the values $s=1,\ldots,m$.

\medskip

The effects of conjugation by elements $l_{s-1,\; 0},\ldots,l_{s-1,\; s-1}$ on the products $g_{f^+}^{\vphantom{1}} \!
\cdot 
b_{s-1}^{-1} 
\cdot 
g_f^{-1}$
is very easy to understand: $l_{s-1,\; i}$ just adds $1$ to the $i$'th coordinate of $f$, say, for $s=4$, $f=(2,5,3,7)$ and $l_{3,\,2}=l_{4-1,\,3-1}$ we have: 
\begin{equation}
\label{EQ example with l}
\begin{aligned}
\left(
g_{f^+}^{\vphantom{1}} \!
\cdot 
b_{3}^{-1} 
\cdot 
g_f^{-1}
\right)^{l_{3,\,2}}
&=\left(g^{h_{2}}\right)^{
\left(h_0^{h_{2}}\right)^{2}
\left(h_1^{h_{2}}\right)^{5} h_2^{3} \,h_3^{\boldsymbol{8}} }
\cdot
b_{3}^{-1} 
\cdot
\left(g^{h_{2}}\right)^{-\,
\left(h_0^{h_{2}}\right)^{2}
\left(h_1^{h_{2}}\right)^{5}
h_2^{3}\, h_3^{7} } \\
& =  g^{h_2 \cdot\, h_2^{-1}  h_0^2 h_2 
h_2^{-1} h_1^5 h_2
h_2^{3} h_3^{\boldsymbol{8}} }
\cdot
b_{3}^{-1} 
\cdot
g^{-\,h_2 \cdot\, h_2^{-1}  h_0^2 h_2 
h_2^{-1} h_1^5 h_2
h_2^{3} h_3^{7} }\\
& = g^{h_0^{2} h_1^{5} h_2^{\boldsymbol{4}} h_3^{\boldsymbol{8}} }
\cdot
b_{3}^{-1} 
\cdot
g^{-\,h_0^{2} h_1^{5} h_2^{\boldsymbol{4}} h_3^{7} }
=g_{f'^{\;+}}^{\vphantom{1}} \!
\cdot 
b_{3}^{-1} 
\cdot 
g_{f'}^{-1}
\in V_{\mathcal E_4}
\end{aligned}
\end{equation}
where $f'=(2,5,3+1,7)=(2,5,\boldsymbol{4},7)$.
In particular, actions of the above letters $l_{i,\,j}$ keep the elements from $V_{\mathcal E_s}$ inside $V_{\mathcal E_s}$.

\medskip

We in \ref{SU Integer functions f} agreed that for our purposes we may concatenate zero entries to any sequence $f$ without changing the respective elements $b_f, a_f, h_f, g_f$. Hence, interpret the zero sequence as $f_{0}=(0,\ldots, 0)\in \E_s$, and rewrite the product $g^{h_{1}} 
\!
\cdot 
b_{1}^{-1} 
\cdot 
g^{-1}$ as
$g_{f_0^+}^{\vphantom{1}} \!
\cdot 
b_{1}^{-1} 
\cdot 
g_{f_0}^{-1}
$.

Applying \eqref{EQ elements <X,Y>} for the sets $\X=\{
g^{h_{s-1}}  
\!
\cdot 
b_{s-1}^{-1} 
\cdot 
g^{-1}\}
$ and  
$\Y=\{l_{s-1,\; 0},\ldots,l_{s-1,\; s-1}\}$
we see that any element $w$ from $\langle \X
, \Y \rangle \le \Lambda_s$ is a product of elements of $g_{f^+}^{\vphantom{1}} \!
\cdot 
b_{s-1}^{-1} 
\cdot 
g_f^{-1}$ (for certain sequences $f\in \mathcal E_s$) and of certain powers of the stable letters $l_{s-1,\; 0},\ldots,l_{s-1,\; s-1}$. The word $w$ is inside $\mathscr{G}$ if and only if all those powers are cancelled out in the normal form, and $w$ in fact is in $V_{\mathcal E_s}$, that is, denoting $L_s = \langle
g^{h_{s-1}} 
\!
\cdot 
b_{s-1}^{-1} 
\cdot 
g^{-1}\!{\boldsymbol,}\;\;\; l_{s-1,\; 0},\ldots,l_{s-1,\; s-1}
\rangle$ we have 
$\mathscr{G}\, \cap \, L_s
\subseteq V_{\mathcal E_s}$.

On the other hand, for \textit{any} $f \in \E_s$ it is very easy to obtain $g_{f^+}^{\vphantom{1}} \!
\cdot 
b_{s-1}^{-1}  
\cdot 
g_f^{-1}$ via conjugations of  $g^{h_{s-1}} 
\!
\cdot 
b_{s-1}^{-1} 
\cdot 
g^{-1}$ by the free letters $l_{s-1,0},\ldots,l_{s-1,\;s-1}$, see \eqref{EQ example with l}. For instance, for $f=(2,5,3,7)$ we compute:
$$
g_{f^+}^{\vphantom{1}} \!
\cdot 
b_{3}^{-1}  
\cdot 
g_f^{-1} 
= 
\left(
g^{h_{3}} 
\!
\cdot 
b_{3}^{-1} 
\cdot 
g^{-1}
\right)^{\;
l_{3,0}^2 \, \cdot \;
l_{3,1}^5 \, \cdot \;
l_{3,2}^3 \, \cdot \;
l_{3,3}^7
}\!.
$$
Therefore, $\mathscr{G}\, \cap \, L_s
= V_{\mathcal E_s}$, 
i.e., $V_{\mathcal E_s}$ is benign in $\mathscr{G}$ for the above finitely presented group $\Lambda_s$ and for its $(s+1)$-generator subgroup $L_s$.
\end{proof}   

In addition to the groups given in the above proof set the auxiliary groups  
$V_{\mathcal E_0}=L_0=\langle g \rangle$ and $\Lambda_0=\mathscr{G}$. Since this $V_{\mathcal E_0}$ already is finitely generated, it trivially is benign in finitely generated $\mathscr{G}$ for the stated finitely presented $\Lambda_0$ and for the finitely generated $L_0$, see Remark~\ref{RE finite generated is benign}. 

\medskip
The collected information outputs:

\begin{Lemma}
\label{LE represent L}
$R=\langle g_f b_f^{-1} \mathrel{|} f\in \mathcal E_m\rangle$  is a benign subgroup in $\mathscr{G}$ for  some explicitly given finitely presented group and its finitely generated subgroup.
\end{Lemma}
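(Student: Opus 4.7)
The plan is to express $R$ as the join of the benign subgroups $V_{\mathcal E_0}, V_{\mathcal E_1}, \ldots, V_{\mathcal E_m}$ already shown to be benign in $\mathscr G$ (Lemma~\ref{LE VEm is bening} together with the trivial case $V_{\mathcal E_0} = \langle g\rangle$), and then invoke Corollary~\ref{CO intersection and join are benign multi-dimensional}\,\eqref{PO 2 CO intersection and join are benign multi-dimensional} to conclude benignness with an explicit $K_R$ and $L_R$.

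The pivotal algebraic observation is an elementary telescoping identity. For any $f=(j_0,\ldots,j_{s-1})\in \mathcal E_s$ one checks directly that
\begin{equation*}
b_{f^+}^{-1}\, b_f
= \bigl(b_0^{j_0}\!\cdots b_{s-2}^{j_{s-2}}\, b_{s-1}^{j_{s-1}+1}\bigr)^{-1} \bigl(b_0^{j_0}\!\cdots b_{s-2}^{j_{s-2}}\, b_{s-1}^{j_{s-1}}\bigr)
= b_{s-1}^{-1},
\end{equation*}
from which the key identity follows:
\begin{equation*}
g_{f^+}\, b_{s-1}^{-1}\, g_f^{-1} \;=\; (g_{f^+}\, b_{f^+}^{-1}) \cdot (g_f\, b_f^{-1})^{-1}.
\end{equation*}
Padding $f$ and $f^+$ by trailing zeros to elements of $\mathcal E_m$, both factors on the right are standard generators of $R$, so $V_{\mathcal E_s}\subseteq R$ for every $s=0,1,\ldots,m$, giving the inclusion $\langle V_{\mathcal E_0},\ldots,V_{\mathcal E_m}\rangle \subseteq R$.

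For the reverse inclusion I would proceed by induction on $s$. For $f=(j_0,\ldots,j_{m-1})\in \mathcal E_m$ and each $s=0,\ldots,m$, set $f^{(s)} = (j_0,\ldots,j_{s-1},0,\ldots,0)$, so $f^{(0)}=(0,\ldots,0)$ and $f^{(m)}=f$. The base case is immediate: $g_{f^{(0)}}\, b_{f^{(0)}}^{-1} = g \in V_{\mathcal E_0}$. For the induction step, $f^{(s)}$ and $f^{(s-1)}$ differ only in coordinate $s-1$ (by $j_{s-1}$), so the same identity applied iteratively to the truncation of $f^{(s)}$ to $\mathcal E_s$ (raising the $(s-1)$'th coordinate from $0$ up to $j_{s-1}$ one step at a time, or down if $j_{s-1}<0$) expresses $(g_{f^{(s)}}\, b_{f^{(s)}}^{-1}) \cdot (g_{f^{(s-1)}}\, b_{f^{(s-1)}}^{-1})^{-1}$ as a product of $|j_{s-1}|$ elements of $V_{\mathcal E_s}^{\pm 1}$. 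Combining with the inductive hypothesis gives $g_f\, b_f^{-1} \in \langle V_{\mathcal E_0},\ldots, V_{\mathcal E_m}\rangle$, and hence $R \subseteq \langle V_{\mathcal E_0},\ldots, V_{\mathcal E_m}\rangle$.

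Having identified $R$ as this join, I would then apply Corollary~\ref{CO intersection and join are benign multi-dimensional}\,\eqref{PO 2 CO intersection and join are benign multi-dimensional} to the $m+1$ explicitly constructed pairs $(\Lambda_s,L_s)$ from the proof of Lemma~\ref{LE VEm is bening}. Concretely, with new stable letters $y_0,\ldots,y_m$ one takes
\begin{equation*}
K_R \;=\; \bigast_{s=0}^{m} (\Lambda_s, L_s, y_s)_{\mathscr G}
\quad\text{and}\quad
L_R \;=\; \bigl\langle \mathscr G^{y_0},\, \mathscr G^{y_1},\,\ldots,\, \mathscr G^{y_m} \bigr\rangle,
\end{equation*}
which is finitely presented (as $\mathscr G$ and each $\Lambda_s$ are finitely presented and each $L_s$ is finitely generated), with $L_R$ finitely generated on the $3(m+1)$ conjugates of the generators of $\mathscr G$. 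There is no genuine obstacle here beyond bookkeeping; the only point that requires care is the direction of the $b_{s-1}^{-1}$ cancellation in the key identity, where the order of the non-commuting $b_i$ matters and one must verify that $f$ and $f^+$ share the prefix $b_0^{j_0}\cdots b_{s-2}^{j_{s-2}}$ so that this prefix cancels cleanly, leaving only the $b_{s-1}$ contribution.
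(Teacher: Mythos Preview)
Your proof is correct and follows essentially the same approach as the paper: both arguments express $R$ as the join $\langle V_{\mathcal E_0},\ldots,V_{\mathcal E_m}\rangle$ and then invoke Corollary~\ref{CO intersection and join are benign multi-dimensional}\,\eqref{PO 2 CO intersection and join are benign multi-dimensional} with the $\bigast$-construction over the pairs $(\Lambda_s,L_s)$ (the paper uses stable letters $p_0,\ldots,p_m$ where you use $y_0,\ldots,y_m$, and names the resulting groups $\mathscr{F}$ and $\mathscr{H}$). Where the paper establishes the key equality $\langle Z_{\mathcal E_{s-1}},V_{\mathcal E_s}\rangle=Z_{\mathcal E_s}$ by citing an external reference, you supply the explicit telescoping identity $g_{f^+}b_{s-1}^{-1}g_f^{-1}=(g_{f^+}b_{f^+}^{-1})(g_fb_f^{-1})^{-1}$ and the clean induction on truncations $f^{(s)}$; this is a genuine improvement in self-containment. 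One minor slip: $\mathscr{G}$ has $11$ generators (see \eqref{EQ relations G}), not $3$, so your $L_R$ is generated by $11(m+1)$ conjugates rather than $3(m+1)$.
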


\begin{proof}

First show that
$R$
is generated by its $m\!+\!1$ subgroups $V_{\mathcal E_0}, V_{\mathcal E_1}, \ldots, V_{\mathcal E_m}$.
For each $s=1,\ldots,m$
denote  
$Z_{\mathcal E_s}=\langle g_f b_f^{-1} \mathrel{|} f\in \mathcal E_s\rangle$, and also set $Z_{\mathcal E_0}=\langle g \rangle$.
In this notation $R$ is nothing but $Z_{\mathcal E_m}$ for $s=m$.
It is easy to see that 
$\langle Z_{\E_{s-1}}, V_{\E_s} \rangle = Z_{\E_s}$ for each $s$, see details in \cite{The Higman operations and  embeddings} using an original idea from \cite{Higman Subgroups of fP groups}.
Then:
$$
Z_{\E_{m}} 
= \langle Z_{\E_{m-1}}, V_{\E_{m}} \rangle 
= \langle Z_{\E_{m-2}}, 
V_{\E_{m-1}}, V_{\E_{m}} \rangle
= \cdots =
\big\langle V_{\E_{0}}, V_{\E_{1}}, \ldots, V_{\E_{m}} \big\rangle.
$$ 

\smallskip
By Lemma~\ref{LE VEm is bening} each
$V_{\mathcal E_s}$, $s=1,\ldots,m$, is benign in $\mathscr{G}$ for an explicitly given finitely presented group $\Lambda_s$ and its finitely generated subgroup $L_s$.
For $m=0$ the subgroup $V_{\mathcal E_0}=\langle g \rangle$ is benign in $\mathscr{G}$, as remarked above.

It remains to load these components into 
the $\bigast$-construction
\eqref{EQ star construction short form}, and to apply Corollary~\ref{CO intersection and join are benign multi-dimensional} to get the following finitely presented overgroup holding $\mathscr{G}$:
\begin{equation}
\label{EQ adapted construction with Theta} 
\mathscr{F}=
\Big(\cdots
\Big( \big( (\Lambda_0 *_{L_0} p_0) *_{\mathscr{G}} (\Lambda_1 *_{L_1} p_1) \big) *_{\mathscr{G}}  (\Lambda_2 *_{L_2} t_2)\Big)\cdots 
\Big) *_{\mathscr{G}}  (\Lambda_m *_{L_m} p_m),
\end{equation}
and its finitely generated subgroup 
$
\mathscr{H}
=\big\langle \mathscr{G}^{\,p_0}
,\ldots,
\mathscr{G}^{\,p_m}
\big\rangle$ for which $\mathscr{G} \cap \mathscr{H} = R$ holds.
\end{proof}

\subsubsection{Construction of finitely presented $\mathscr{D}$ and $K_{\omega_m \mathcal X}$}
\label{SU Construction of finitely presented DD}    

Observe that in construction of $\mathscr{F}$ we never used the letter $a\in G$. Hence, in analogy with construction of $\Gamma *_R a$ in  Point~\ref{SU Construction of Delta}, we can build the HNN-extension $\mathscr{F} *_\mathscr{H} a$ using $a$ as a stable letter fixing $\mathscr{H}$. 
Since $\mathscr{F}$ of \eqref{EQ adapted construction with Theta} is finitely presented, and $\mathscr{H}$ is finitely generated, $\mathscr{F} *_\mathscr{H} a$ is finitely presented.    

Inside $\mathscr{F} *_\mathscr{H} a$ the elements $a,b,c$ generate the same \textit{free} subgroup discussed in Point~\ref{SU Construction of Delta}, and so we can again define an isomorphism $\rho$ sending $a,b,c$ to $a,b^{c^m}\!\!\!\!\!,\;c$ together with the 
\textit{finitely presented} analog $\mathscr{D}$ of $\Delta$ from    
\eqref{EQ nested form}:
\begin{equation}
\label{EQ nested form analog}
\mathscr{D}
= \big(\mathscr{F} *_\mathscr{H} a \big) *_\rho r
= \Big(
\big(\bigast_{i=0}^{m}(\Lambda_i, L_i, p_i)_{\mathscr{G}}\big)
*_\mathscr{H} a \Big) *_\rho r.
\end{equation}
For any $\mathcal X \subseteq \E_m$ we in analogy with Point~\ref{SU Obtaining G cup} introduce $W_{\mathcal X}=\langle g_f\!,\, a,\, r \mathrel{|} f\in \mathcal X \rangle$ in $\mathscr{D}$. But since $W_{\mathcal X}$ is in the subgroup $\Delta$ of $\mathscr{D}$ already, we in $\mathscr{D}$ have the literal analog of \eqref{EQ intersection}:
\begin{equation*}
\label{EQ intersection analog}
F\cap W_{\mathcal X}= A_{\omega_m \mathcal X}.
\end{equation*}

\medskip
$\mathscr{D}$ was built via some free constructions 
by adjoining to  $\langle 
b,c
\rangle$ the new letters: 
\begin{equation} 
\label{EQ letters defined for Delta}
\begin{split}
& \hskip4mm t_m, t'_m, t_0, t'_0, r_1, r_2;\;
g,h,k;\; \\
& l_{s-1,\; 0},\ldots,l_{s-1,\; s-1}\;
(s\!=\!1,\ldots,m)
;\; \\
& \hskip11mm  p_0, p_1,\ldots, p_m;\; a, r.
\end{split} 
\end{equation}

For the benign subgroup $G_{\mathcal X} = \langle g^{h_f}
\;|\;
f\in \mathcal X
\rangle$ of $F'_3=\langle g,h,k\rangle$ we at the end of
Point~\ref{SU The groups Xi Gamma}
assumed to   explicitly have a finitely presented overgroup $K_{\mathcal X}=\langle
\, Z
\;|\;
S
\rangle$ of $F'_3$ with a finitely generated $L_{\mathcal X}$
such that $F'_3 \cap L_{\mathcal X} = G_{\mathcal X}$.\;
Since in construction of $\mathscr{D}$ we had the freedom to chose the new letters \eqref{EQ letters defined for Delta}, we may suppose all of them, except $g,h,k$, are \textit{not} used in $Z$ to define $K_{\mathcal X}=\langle
\, Z
\;|\;
S
\rangle$. This means
$K_{\mathcal X}$
and $\mathscr{D}$  intersect in $F'_3$ strictly, and hence, the finitely presented amalgamated product $K_{\mathcal X} *_{F'_3} \mathscr{D}$ can be defined.

\medskip
The subgroup 
$G_{\mathcal X}$ is benign in $\mathscr{D}$ also. Indeed,  since $F'_3 \cap L_{\mathcal X} = G_{\mathcal X}$
and
$F'_3 \cap \,G_{\mathcal X} = G_{\mathcal X}$, we can apply Corollary~3.2\;(3) in \cite{Auxiliary free constructions for explicit embeddings}  
to the subgroup $\Gamma' = \langle\, L_{\mathcal X} ,\, G_{\mathcal X} \rangle = L_{\mathcal X}$ of $\Gamma = K_{\mathcal X} *_{F'} \mathscr{D}$  
to get that  $\mathscr{D} \cap L_{\mathcal X} = G_{\mathcal X}$.
Being finitely generated $\langle a,r \rangle$ is benign in $\mathscr{D}$ for the finitely presented $\mathscr{D}$ and for the finitely generated $\langle a,r \rangle$, see Remark~\ref{RE finite generated is benign}.
Hence by Corollary~\ref{CO intersection and join are benign multi-dimensional}  
the join $ \big\langle G_{\mathcal X}, \langle a,r \rangle \big\rangle
=
\langle g_f\!,\, a,\, r \mathrel{|} f\in \mathcal X \rangle
=
W_{\mathcal X} 
$ is benign in $\mathscr{D}$.
As its finitely presented overgroup one may
by Lemma~\ref{LE join in HNN extension multi-dimensional} chose:
\begin{equation}
\label{EQ definition of L}
\mathscr{L}
=\big((K_{\mathcal X} *_{F'_3} \mathscr{D})*_{L_{\mathcal X}} q_1 \big) 
\;*_{\mathscr{D}}\,
\big(\mathscr{D} *_{\langle a,\, r \rangle} q_2\big),
\end{equation}
and as a finitely generated subgroup we may take $L' = \big\langle \mathscr{D}^{q_1}\!,\; \mathscr{D}^{q_2} \big\rangle$.

\medskip
$F_3$ is benign in $\mathscr{D}$ for the finitely presented $\mathscr{D}$ and for the finitely generated $F_3$.
Hence by
Corollary~\ref{CO intersection and join are benign multi-dimensional} 
and by \eqref{EQ intersection}
the intersection 
$F\cap W_{\mathcal X}= A_{\omega_m \mathcal X}$ is benign in $\mathscr{D}$ for the finitely presented $\bigast$-construction:
\begin{equation}
\label{EQ explicite KomegaB}
K_{\omega_m \mathcal X}=(\mathscr{L} *_{L'} q_3) \;*_{\mathscr{D}} \,
(\mathscr{D} *_{F_3} q_4)
\end{equation}
(generated by $Z$, by $b,c$, by the adjoined letters \eqref{EQ letters defined for Delta}
and by four new letters $q_1, q_2, q_3, q_4$),
and for the finitely generated subgroup:
$$
L_{\omega_m \mathcal X} = \mathscr{D}^{\,q_3 q_4}
$$ 
in the above $K_{\omega_m \mathcal X}$.
But since $F_3 \le \mathscr{D}$ and $A_{\omega_m \mathcal X} \le F_3$, we conclude that equality $F_3 \cap L_{\omega_m \mathcal X} = A_{\omega_m \mathcal X}$ also holds in $K_{\omega_m \mathcal X}$.
This concludes the proof of the promised fact that $A_{\omega_m \mathcal X}$ is benign in $F_3$. Below we have the respective groups $K_{\omega_m \mathcal X}$ and $L_{\omega_m \mathcal X}$
written vie free constructions.

\subsubsection{Explicitly writing $K_{\omega_m \mathcal X}$ by generators and defining relations}
\label{SU Writing K_omega B by generators and defining relations}
Using the definitions of  $\mathscr{Z}$ and $\mathscr{G}$ in Point~\ref{SU The groups Xi Gamma} we have:
\begin{equation}
\label{EQ relations Z}
\begin{split}
\mathscr{Z}
& = \big\langle
b, c, t_m, t'_m, t_0, t'_0, r_1, r_2 
\mathrel{\;\;|\;\;}
b^{t_m}= b^{c^{-m+1}}\!\!,\;
b^{t'_m}= b^{c^{-m}}\!\!,\;
\\
& \hskip9mm
b^{t_0}= b^{c}\!,\;
b^{t'_0}= b,\;\;
c^{t_m}=c^{t'_m}=c^{t_0}=c^{t'_0}=c^2;\\
& \hskip9mm
\text{$r_1$ fixes $b_m, t_m, t'_m$};\;\;\;
\text{$r_2$ fixes $b_{-1}, t_0, t'_0$}
\big\rangle.
\end{split}
\end{equation}
%
\begin{equation}
\label{EQ relations G}
\begin{split}
\mathscr{G}
& = \big\langle
b, c, t_m, t'_m, t_0, t'_0, r_1, r_2;\;
g,h,k
\mathrel{\,|\,}
\text{$14$ relations of $\mathscr{Z}$ from \eqref{EQ relations Z}};\\
& \hskip16mm
\text{$g,h,k$ fix 
$b^{r_1}\!,\; c^{r_1}\!,\; b^{r_2}\!,\; c^{r_2}$}
\big\rangle.
\end{split}
\end{equation}

\noindent 
Using 
$\Lambda_0, L_0, \;
\Lambda_1, L_1, \ldots, \Lambda_m, L_m$ and 
$\mathscr{F}$
in Point~\ref{SU Presenting as a join}, in particular, \eqref{EQ adapted construction with Theta}, write:
\begin{equation}
\label{EQ relations F}
\begin{split}
\mathscr{F}
& = \big\langle
b, c, t_m, t'_m, t_0, t'_0, r_1, r_2;\;\;
g,h,k;\; 
\\
& \hskip21mm
l_{s-1,\; 0},\ldots,l_{s-1,\; s-1}\;(s=1,\ldots,m)
;\;\;
p_0, p_1,\ldots, p_m
\mathrel{\;|\;}\\
& \hskip11mm
\text{$14$ relations of $\mathscr{Z}$ from \eqref{EQ relations Z}};\\
& \hskip11mm\text{$g,h,k$ fix 
$b^{r_1}\!,\; c^{r_1}\!,\; b^{r_2}\!,\; c^{r_2}$};\\
& \hskip11mm
\text{$l_{s-1,\; 0},\ldots,l_{s-1,\; s-1}$ act as $\lambda_{s-1,\; 0},\ldots,\lambda_{s-1,\; s-1}$  
in \eqref{EQ lambda definitions}},\\     
& \hskip11mm
\text{$p_0$ fixes $g$};\;
\text{$p_s$ fixes $g^{h_{s-1}} 
\!
\cdot 
b_{s-1}^{-1} 
\cdot 
g^{-1}\!{\boldsymbol,}\;\; l_{s-1,\; 0},\ldots,l_{s-1,\; s-1}$}\\
& \hskip59mm
\text{for each $s=1,\ldots,m$}
\big\rangle.
\end{split}
\end{equation}
In total, we got $2+6+3+ (1+\cdots+m) + (m\!+\!1) =11+{1 \over 2}(m\!+\!1)(m\!+\!2)$ generators and $14 + 3\cdot4 + \sum_{\,s=1}^m \!s(s\!+\!2) \;+ 1  
+ \sum_{\,s=1}^m (s\!+\!1)
=27+
\frac{1}{3}
(m^3+6m^2+8m)
$ relations in \eqref{EQ relations F}.
We used the clear fact that for each $s=1,\ldots,m$
the table \eqref{EQ lambda definitions}
produces $s(s\!+\!2)$ relations.

Using definition of $\mathscr{H}$ in Point~\ref{SU Presenting as a join} 
and
definition of $\mathscr{D}$ in Point~\ref{SU Construction of finitely presented DD}, we have:
\begin{equation}
\label{EQ relations D}
\begin{split}
\mathscr{D}
& = \big\langle
b, c, t_m, t'_m, t_0, t'_0, r_1, r_2;\;\;
g,h,k;\; 
\\
& \hskip18mm
l_{s-1,\; 0},\ldots,l_{s-1,\; s-1}\;(s=1,\ldots,m)
;\;\;
p_0, p_1,\ldots, p_m;\;\; a,r
\mathrel{\,|}\\
& \hskip11mm
\text{$\textstyle 27+
\frac{1}{3}
(m^3+6m^2+8m)$ relations of $\mathscr{F}$ from \eqref{EQ relations F}};\\
& \hskip11mm
\text{$a$ fixes the conjugates of 
$b, c, t_m, t'_m, t_0, t'_0, r_1, r_2;\;
g,h,k$}\\
& \hskip70mm
\text{by each of $p_0, \ldots, p_m$};\\
& \hskip11mm
\text{$r$ sends $a,b,c$ to $a,b^{c^m}\!\!\!\!,\;c$}
\big\rangle.
\end{split}
\end{equation}
Hence, $\mathscr{D}$ has $
11+{1 \over 2}m (m\!+\!1)+ (m\!+\!1) +2
= 13+{1 \over 2}(m\!+\!1)^2$ generators and 
$27+
\frac{1}{3}
(m^3+6m^2+8m) + 11(m+1) +3
=41+\frac{1}{3}(6m^2+m^3+41m)$ relations.

Then from definition of $\mathscr{L}$ in \eqref{EQ definition of L} and from definition of $L_{\mathcal X}$ in Point~\ref{SU Construction of finitely presented DD}:
\begin{equation}
\label{EQ relations L}
\begin{split}
\mathscr{L}
&  = \big\langle
b, c, t_m, t'_m, t_0, t'_0, r_1, r_2;\;\;
g,h,k;\; 
\\
& \hskip12mm
l_{s-1,\; 0},\ldots,l_{s-1,\; s-1}\;(s\!=\!1,\ldots,m)
;\;\\
& \hskip12mm
p_0, p_1,\ldots, p_m;\; a,r;\,Z;\, q_1, q_2
\mathrel{\,|}
\\
& \hskip18mm
\text{$\textstyle 41+\frac{1}{3}(6m^2+m^3+41m)$} \\
& \hskip25mm \text{relations of $\mathscr{D}$ in \eqref{EQ relations D}};\\
& \hskip18mm \text{relations $S$;\; $q_1$\! fixes} \\
& \hskip25mm \text{the generators of $L_{\mathcal X}$};\;
\text{$q_2$ fixes $a,r$}
\big\rangle.
\end{split}
\end{equation}
Thus, $\mathscr{L}$ has $15+{1 \over 2}(m\!+\!1)^2+|\,X|$ generators and $43+\frac{1}{3}(6m^2+m^3+41m)
+|R|+l_{\mathcal X}$ relations, where $l_{\mathcal X}$ is the (finite) number of generators of $L_{\mathcal X}$.

\medskip
Finally, from construction of 
$K_{\omega_m \mathcal X}$ in \eqref{EQ explicite KomegaB} and of $L' = \big\langle \mathscr{D}^{q_1}\!,\; \mathscr{D}^{q_2} \big\rangle$
in Point~\ref{SU Construction of finitely presented DD}:
\begin{equation}
\label{EQ relations K omega2 B}
\begin{split}
K_{\omega_m \mathcal X}
& = \big\langle
b, c, t_m, t'_m, t_0, t'_0, r_1, r_2;\;\;
g,h,k;\; 
\\
& \hskip17mm
l_{s-1,\; 0},\ldots,l_{s-1,\; s-1}\;(s\!=\!1,\ldots,m)
;\;\;
p_0, p_1,\ldots, p_m;\\
& \hskip17mm
\text{$a,r$;\; $Z$;\;\; $q_1,\, q_2$;\;\; $q_3,\, q_4$ }
\mathrel{\;\;|\;\;}
\\
& \hskip11mm
\text{$\textstyle 41+\frac{1}{3}(6m^2+m^3+41m)$ relations of $\mathscr{D}$ in \eqref{EQ relations D}};\\
& \hskip11mm
\text{relations $S$};\;
\text{$q_1$\! fixes generators of $L_{\mathcal X}$;\; $q_2$ fixes $a,r$};\\
& \hskip11mm
\text{$q_3$ fixes conjugates of all
$\textstyle 13+{1 \over 2}(m\!+\!1)^2$ generators}\\
& \hskip29mm 
\text{of $\mathscr{D}$ by each of $q_1,\, q_2$}; \;\;\;\;
\text{$q_4$ fixes $a,b,c$}
\big\rangle.
\end{split}
\end{equation}
$K_{\omega_m \mathcal X}$ has $17+{1 \over 2}(m\!+\!1)^2+|\,X|$ generators and
$41+\frac{1}{3}(6m^2+m^3+41m)
+|R|+l_{\mathcal X} +2
+ 2 \cdot \big( 13+{1 \over 2}(m\!+\!1)^2 \big) +3 
=73+ \frac{1}{3}(m^3+9m^2+47m)+|R|+l_{\mathcal X}$ relations, where $l_{\mathcal X}$ is the (finite) number of generators of $L_{\mathcal X}$. \;
Lastly, as $L_{\omega_m \mathcal X}$ pick the conjugate $\mathscr{D}^{\,q_3 q_4}$ of $\mathscr{D}$. It can be generated by its $13+{1 \over 2}(m\!+\!1)^2$ elements.

\medskip
The proof of Theorem~\ref{TH Theorem A}
hereby is finished.

\bigskip  
\section{Theorem~\ref{TH Theorem B} and the final embedding}
\label{SE Theorem B and the final embedding}

\subsection{Theorem~\ref{TH Theorem B} on embedding of $T_{\! \mathcal X}$}
\label{SU Theorem on embedding of TX} 

For any function $f\! \in \E$ define the word 
$w_f(x,y) = \cdots
x^{f(-1)}
y^{f(0)} 
x^{f(1)}\cdots$ in the free group $\langle x,y \rangle$ of rank $2$  
\;Say, for $f\!=(3,5,2)=(3,5,2,0)$ we have $w_f(x,y) = x^3 y^5 x^2 = x^3 y^5 x^2 y^0$\!. 
Notice that this notation is rather similar to that of Section~\ref{SU Defining subgroups by integer sequences}, but is yet different from the latter. 
For a subset $\mathcal X$ of $\E$ such words $w_f(x,y)$, $f\! \in \mathcal X$, generate a subgroup in $\langle x,y \rangle$, and the factor group of the latter by the normal closure of that subgroup defines the group:
\begin{equation}
\label{EQ introducing T_X}
\begin{split}
T_{\!\mathcal X} & = \langle x,y \rangle / \langle w_f(x,y) \;|\; f\! \in \mathcal X \rangle^{\langle x,y \rangle} \\
& =
\langle x,y 
\;|\; 
w_f(x,y)=1,\;\; f \in \mathcal X\rangle
,
\end{split}
\end{equation} 
i.e., the $2$-generator group defined by the relations $w_f(x,y)$ for all $f\! \in \mathcal X$.
We intentionally made this notation similar to $T_{\! G}$ used earlier, see Section~\ref{SU The embedding alpha into a 2-generator group}, because in an important particular case these groups are going to \textit{coincide} below, see Section~\ref{SU Equality and the final embedding}. 

Notice that for the same subset $\mathcal X$ we at the moment have both the subgroup $A_{\!\mathcal X}$ in $F_3=\langle a,b,c \rangle$, see Section~\ref{SU Defining subgroups by integer sequences}, and also the factor group $T_{\!\mathcal X}$ of $\langle x,y \rangle$. 
They are connected via:

\begin{TheoremABC}{B}
\label{TH Theorem B}
Let $\mathcal X$ be a subset of $\E$ for which $A_{\!\mathcal X}$ is benign in $F_3$.
Then $T_{\! \mathcal X}$ can be embedded into a finitely presented group $\mathcal G$.
Moreover, if the finitely presented overgroup $K_{\! \mathcal X}$ and its finitely generated subgroup $L_{\! \mathcal X}$ are given for $A_{\!\mathcal X}$ explicitly, then $\mathcal G$ can also be given explicitly.
\end{TheoremABC}

Check Section~\ref{SU Recursive enumeration and recursive groups} to  recall what we understand under \textit{explicitly given} groups.
The proof of Theorem~\ref{TH Theorem B} will be given in sections 
\ref{SU If AX is benign then}\,--\,\ref{SU Writing mathcal G by generators and defining relations}, and the promised group $\mathcal G$ can be found in Section~\ref{SU Writing mathcal G by generators and defining relations}.
See also Corollary~\ref{CO G can be embedded into 2-generator TG} in Section~\ref{SU Embedding G into the 2-generator group TG} stating that the above finitely presented group can even be $2$-\textit{generator}.

\subsection{If $A_{\!\mathcal X}$ is benign in $F_3\!=\!\langle a, b, c\rangle$, then $Z_{\!\mathcal X}$ is benign in $\langle z, r,\, s\rangle$}
\label{SU If AX is benign then}

For each function $f\!\in \E$, in analogy with the word $w_f(x,y)$ used in Section~\ref{SU Theorem on embedding of TX}, define a new word 
$w_f(r,\,s) = \cdots
s^{f(-1)}
r^{f(0)} 
s^{f(1)}\cdots$ in the free group $\langle z,r,\,s \rangle$ of rank $3$.
Then for a given subset $\mathcal X$ of $\E$ define the subgroup 
$Z_{\!\mathcal X}= \big\langle z^{w_f(r,\,s)} \mathrel{|} f \in \mathcal X \big\rangle$ in $\langle z,r,\,s \rangle$.

It turns out that
if $A_{\!\mathcal X}$ is benign in $F_3$, then  $Z_{\!\mathcal X}$ is benign in $\langle z,r,\,s \rangle$.
To show this notice that the free group $\langle z, r,\, s\rangle$ has an isomorphism $\lambda$ sending $z, r,\, s$ to $z,\, s, r$  ($\lambda$ just swaps $r$ with $s$). Using it we can build the HNN-extension $\mathcal L\!= \langle z, r,\, s\rangle *_{\lambda} l$. 
Since $F_3=\langle a, b, c\rangle$ is free, there is an injection $\phi\! :F_3 \to \mathcal L$ sending $a, b, c$ to $z, r, l$. It is trivial to check that for each $f \!\in \E$ we have $\phi(a_f) = z^{w_f(r,\,s)}$\!, say, for $f=(3,5,4,7)$ we have:
$$
a_f\!
=
a^{
\big(b^{(c^0)}\big)^{\! 3}
\big(b^{(c^1)}\big)^{\! 5}
\big(b^{(c^2)}\big)^{\! 4}
\big(b^{(c^3)}\big)^{\! 7}
}
\xrightarrow{\phantom{-} \phi \phantom{-}}
\;
z^{
\big(r^{(l^{\,0})}\big)^{\! 3}
\big(r^{(l^{\,1})}\big)^{\! 5}
\big(r^{(l^{\,2})}\big)^{\! 4}
\big(r^{(l^{\,3})}\big)^{\! 7}
}
\!\!
=z^{
r^3 s^5 r^4 s^7}
\!
=z^{w_f(r,\,s)}
$$

\vskip-2mm
\noindent
because
$\big(r^{(l^{\,0})}\big)^{\! 3}
\!\!=\!\big(\lambda^0(r) \big)^{\! 3}\!\!=\!r^3$\!,\,
$\big(r^{(l^{\,1})}\big)^{\! 7}
\!\!=\!\big(\lambda(r)\big)^{\! 7}\!\!=\!s^7$\!,\,
$\big(r^{(l^{\,2})}\big)^{\! 4}
\!\!=\!\big(\lambda^2(r) \big)^{\! 4}\!\!=\!r^4$
and
$\big(r^{(l^{\,3})}\big)^{\! 7}
\!\!=\!\big(\lambda^3(r)\big)^{\! 7}\!\!=\!s^7$\!.
Hence, $Z_{\!\mathcal X}$ is the image of 
$A_{\mathcal X}$ under $\phi$, and we can use  tricks with direct product similar to those used in points 
\ref{SU Obtaining the benign subgroup for rho Q}, \ref{SU Extracting A rho X from Q} and elsewhere.

Assume the finitely presented overgroup $K_{\mathcal X}=\langle
\,Z
\;|\;
S
\rangle$ and its finitely generated subgroup $L_{\!\mathcal X}$ are explicitly known for $A_{\!\mathcal X}$.
Since we had the freedom to chose the letters $z,r,\,s,l$ in construction of $\mathcal L$, we may assume they are disjoint from $K_{\!\mathcal X}$.

\begin{figure}[h]
\includegraphics[width=390px]{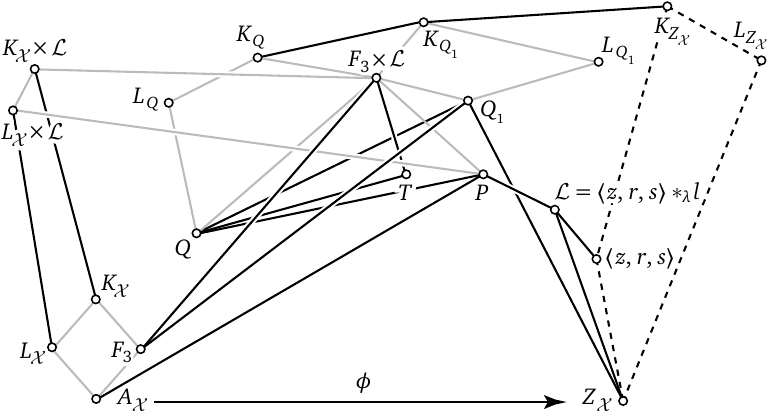}
\caption{If $A_{\!\mathcal X}$ is benign in $F_3$, then  $Z_{\!\mathcal X}$ is benign in $\langle z,r,\,s \rangle$.}
\label{Figure_16_If_AX_is_benign_ZX_is_benign}
\end{figure}

The $3$-generator subgroup 
$T\!=\big\langle 
(a,\, z),\;
(b,\, r),\;
(c,\, l)
\big\rangle$
is benign in the direct product $F_3 \times \mathcal L$,
see Remark~\ref{RE finite generated is benign}, and the  direct product $P=A_{\!\mathcal X} \times \mathcal L$ is benign in $F_3 \times \mathcal L$ for the finitely presented $K_{\!\mathcal X} \times \mathcal L$ and finitely generated $L_{\!\mathcal X} \times \mathcal L$.
Hence by Corollary~\ref{CO intersection and join are benign multi-dimensional}\,\eqref{PO 1 CO intersection and join are benign multi-dimensional} the intersection $Q=
T \cap P$ is benign in $F_3 \times \mathcal L$ for the finitely presented $\bigast$-construction:
$$
K_{Q} =
\big((F_3 \times \mathcal L)\;*_{T} v_1\big)
\,
*_{F_3 \times \mathcal L}
\big((K_{\mathcal X} \times \mathcal L)\;*_{L_{\mathcal X} \times \mathcal L} v_2\big)
$$ 
and for the finitely generated subgroup $L_{Q} =(F_3 \times \mathcal L)^{v_1 v_2}$ of the latter.

Since $T$ is also equal to  $\big\langle 
\big(a,\, \phi(a)\big),\;
\big(b,\, \phi(b)\big),\;
\big(c,\, \phi(c)\big)
\big\rangle$ and $\phi$ is an isomorphism, then 
$T=\big\langle
\big(w,\,\phi(w)\big) \mathrel{|} 
w \in  F_3\big\rangle$.
As the first coordinate of any couple from $P$ is in $A_{\!\mathcal X}$, we get the simple description 
$Q =
\big\langle
\big(w,\, \phi(w)\big) \mathrel{|} 
w \in   A_{\mathcal X}\big\rangle$, i.e., the second coordinates of couples of $Q$ in fact form the image $\phi(A_{\mathcal X})
=Z_{\mathcal X}$. 

\medskip
Modify $Q$ via a few steps to arrive to $Z_{\!\mathcal X}$ wanted.
$F_3 \cong F_3 \times \1$ is benign in 
$F_3 \times \mathcal L$ for the finitely presented $F_3 \times \mathcal L$ and finitely generated $F_3 \times \1$.  Hence the join 
$
Q_1=\big\langle F_3 \times \1 ,\, Q \big\rangle
=F_3 \times \langle
\phi(w) \mathrel{|} 
w \in   A_{\mathcal X}\rangle
$ is benign in $F_3 \times \mathcal L$ for the finitely presented $\bigast$-construction:
$$
K_{Q_1}=
\big((F_3 \times \mathcal L)\,*_{F_3 \times \1} w_1\big) 
\,*_{F_3 \times \mathcal L}
(K_{Q} *_{(F_3 \times \mathcal L)^{v_1 v_2}} \;w_2),
$$
and for the finitely generated subgroup 
$L_{Q_1} =\big\langle(F_3 \times \mathcal L)^{w_1},\;(F_3 \times \mathcal L)^{w_2} \big\rangle$ in $K_{Q_1}$.

Further,  
$\mathcal L = \1 \times  \mathcal L$ is benign in $F_3 \times \mathcal L$ for the finitely presented $F_3 \times \mathcal L$ and finitely generated $\1 \times  \mathcal L$.  Hence, the intersection
$$
\big(\1 \times  \mathcal L\big) \cap Q_1 
= \langle
\phi(w) \mathrel{|} 
w \in   A_{\mathcal X}\rangle
=\phi(A_{\mathcal X})
=Z_{\mathcal X}
$$ 
is benign in $F_3 \times \mathcal L$ for the finitely presented overgroup:
$$
K_{Z_{\mathcal X}}  =
\big((F_3 \times \mathcal L)\,*_{\1 \times  \mathcal L}\; w_3\big) 
\,*_{F_3 \times \mathcal L}
(K_{Q_1} *_{L_{Q_1}} w_4),
$$
and for its finitely generated subgroup
$L_{Z_{\mathcal X}} 
=(F_3 \times \mathcal L)^{w_3 w_4}$.
Since $Z_{\mathcal X}$ is entirely inside $\langle z,r,\,s \rangle$, 
we have $\langle z,r,\,s \rangle \cap\, L_{Z_{\mathcal X}} \!= Z_{\mathcal X}$, that is, $Z_{\mathcal X}$ is also benign in  $\langle z,r,\,s \rangle$ for the same groups $K_{Z_{\mathcal X}}$ and $L_{Z_{\mathcal X}}$ constructed above.

\subsection{Writing $K_{Z_{\mathcal X}}$  by generators and defining relations}
\label{SU Writing KZX by generators and defining relations} 

If $K_{\mathcal X}=\langle
\,Z
\;|\;
S
\rangle$ 
is given, following Point~\ref{RE abc can be added} we may assume $Z$ contains $a,b,c$. 
Also, the finitely many generators of $L_{\!\mathcal X}$ can effectively be computed inside $K_{\!\mathcal X}$.
Using definitions of $\mathcal L$, $Q$, $\phi$, $K_{Q}$ above we have:
\begin{equation}
\label{EQ relations KQ}
\begin{split}
K_{Q} 
& = \big\langle
\,Z;\;\; z, r,\, s, l; \;\;  v_1, v_2
\mathrel{\;\;|\;\;} S;\\
& \hskip11mm 
\text{$l$ sends $z, r,\, s$ to $z,\, s, r$};\\[-2pt]
& \hskip11mm 
\text{$z, r,\, s, l$ commute with generators $Z$};\\
& \hskip11mm 
\text{$v_1$ fixes $az,\; br,\; cl$;}\\
& \hskip11mm 
\text{$v_2$ fixes $z, r, s, l$ and the generators of  $L_{\!\mathcal X}$}
\big\rangle.
\end{split}
\end{equation} 
Notice that we do \textit{not} include the relations telling that $a,b,c$ commute with $z, r,\, s, l$ (to reflect the direct product $F_3 \times \mathcal L$) because 
$K_{\!\mathcal X}$ already includes $F_3$, and so the third line of \eqref{EQ relations KQ} already is enough. 
If $K_{\!\mathcal X}$ has $m$ generators (which we may assume include $a,b,c$) and $n$ defining relations, and if 
$L_{\mathcal X}$ has $k$ generators, then the group $K_{Q}$ in \eqref{EQ relations KQ} has 
$m+4+2=m+6$ generators and 
$n+3+4\cdot m + 3 + 4 + k = n+4m+k + 10$ 
defining relations.

\medskip
Next, using the definition of $K_{Q_1}$ in Section~\ref{SU If AX is benign then} we write:
\begin{equation}
\label{EQ relations KQ1}
\begin{split}
K_{Q_1}
& = \big\langle
\text{$m\!+\!6$ generators of $K_{Q_1}$ from \eqref{EQ relations KQ}};\;\;\;  w_1, w_2
\mathrel{\;\;|\;\;}\\
& \hskip11mm 
\text{$n+4m+k + 10$ relations of $K_{Q_1}$ from \eqref{EQ relations KQ}};\\
& \hskip11mm 
\text{$w_1$ fixes $a,b,c$};\\
& \hskip11mm 
\text{$w_2$ fixes conjugates of $a,b,c;\;  z, r,\, s, l$ by $v_1 \! v_2$}
\big\rangle. 
\end{split}
\end{equation}
The group $K_{Q_1}$ in \eqref{EQ relations KQ1} has 
$m+6+2=m+8$ generators and 
$n + 4m + k + 10 + 3 + 7 
= n + 4m + k + 20$ 
defining relations.

\medskip
Finally, by definition of $K_{Z_{\mathcal X}}$
\!above we have:
\begin{equation}
\label{EQ relations KZX}
\begin{split}
K_{Z_{\mathcal X}} \!\!
& = \big\langle
\text{$m\!+\!8$ generators of $K_{Q_1}$ from \eqref{EQ relations KQ1}};\;\;\;  w_3, w_4
\mathrel{\;\;|\;\;}\\
& \hskip11mm 
\text{$n + 4m + k + 20$ relations of $K_{Q_1}$ from \eqref{EQ relations KQ1}}; \\
& \hskip11mm 
\text{$w_3$ fixes $z, r, s, l$};\\
& \hskip11mm 
\text{$w_4$ fixes conj.\! of $a,b,c;\;  z, r,\, s, l$ by $w_1$ and $w_2$}
\big\rangle.
\end{split}
\end{equation}
$K_{Z_{\mathcal X}}$ has $m+8+2=m+10$ generators and $n + 4m + k + 20 + 4 + 2 \cdot 7 = 
n + 4m + k + 38$ relations.
The group  
$L_{Z_{\mathcal X}} 
=(F_3 \times \mathcal L)^{w_3 w_4}$
is a $7$-generator subgroup in $K_{Z_{\mathcal X}}$.

\subsection{If $Z_{\mathcal X}$ is benign in $\langle z, r,\, s\rangle$, then $Q_{\mathcal X}$ is benign in $\langle p,q \rangle$}
\label{SU If ZX is benign then Q(p,q) is also benign}

Take yet another free group $\langle p,q \rangle \cong F_2$,
and again in full analogy with the words $w_f(x,y)$ in Section~\ref{SU Theorem on embedding of TX}
introduce the words 
$w_f(p,q) = \cdots
q^{f(-1)}
p^{f(0)} 
q^{f(1)}\cdots$ for each $f \in \mathcal E$,\, and then also define the subgroup
$Q_{\mathcal X}  = 
\langle w_f(p,q) \mathrel{|} f \in \mathcal X \rangle$ in $\langle p,q \rangle$.

It turns out that $Q_{\mathcal X}$ is benign in $\langle p,q \rangle$ as soon as $Z_{\!\mathcal X}$ is benign in $\langle z,r,\,s \rangle$.
To show this we are going to ``connect'' $\langle z,r,\,s \rangle$ to $\langle p,q \rangle$ in a specific larger group. The idea is based on Higman's original idea mentioned in Section~5 in \cite{The Higman operations and  embeddings}, and a variant of this we recently used   for the group $\Q$ in \cite{On explicit embeddings of Q}.
Namely, pick an infinite cycle $\langle u \rangle$ and notice that the free product $K_{Z_{\mathcal X}} * \langle u \rangle$ contains the free subgroup $\langle z,r,\,s \rangle * \langle u \rangle=\langle z,r,\,s, u \rangle \cong F_4$. From the normal form of elements in free products it is trivial that in $K_{Z_{\mathcal X}}\! * \langle u \rangle$ the intersection $\langle z,r,\,s, u \rangle \cap L_{Z_{\mathcal X}}$ is nothing but $\langle z,r,\,s \rangle \cap L_{Z_{\mathcal X}}=Z_{\!\mathcal X}$, i.e., $Z_{\!\mathcal X}$ is benign in $\langle z,r,\,s, u \rangle$ for the finitely presented $K_{Z_{\mathcal X}}\! * \langle u \rangle$ and for the finitely generated  $L_{Z_{\mathcal X}}$.
In $\langle z,r,\,s, u  \rangle$ the words of type $u^{z^{w(r,\,s)}}$\!\!, with $w(r,\,s)$ running through the set of \textit{all} possible words on $r,\,s$ (no restriction depending for $\mathcal X$ for now), \textit{freely} generate a subgroup $\mathcal{J}_1$ of countable rank.

\vskip-2mm
\begin{figure}[h]
\includegraphics[width=390px]{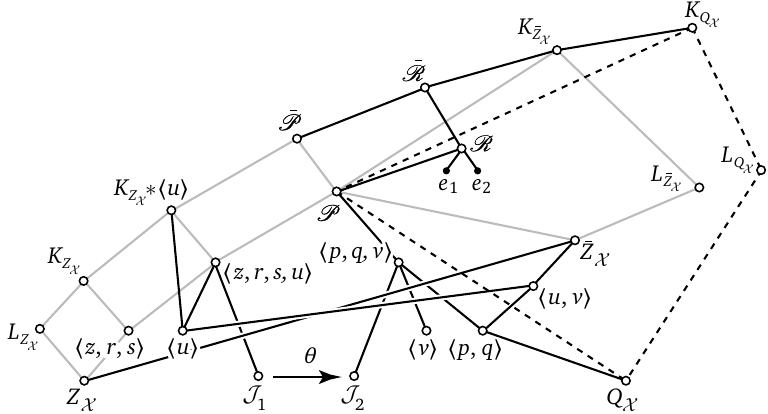}
\caption{If $Z_{\mathcal X}$ is benign in $\langle z, r,\, s\rangle$, then $Q_{\mathcal X}$ is benign in $\langle p,q \rangle$.}
\label{Figure_17_If_ZX_is_benign_QX_is_benign}
\end{figure} 

\medskip 
For another infinite cycle $\langle v \rangle$ in the free product $\langle p,q\rangle * \langle v \rangle=\langle p,q,v \rangle \cong F_3$ select the generators $v\cdot w(p,q)$ for yet another \textit{free} group $\mathcal{J}_2$ of countable rank. 
The map $\theta$ sending each $u^{z^{w(r,\,s)}}\!$ to $v\cdot w(p,q)$ can be continued to an isomorphism $\theta: \mathcal{J}_1 \to \mathcal{J}_2$ between two  subgroups in $K_{Z_{\mathcal X}}\! * \langle u \rangle$ and in $\langle p,q,v \rangle$. This lets us define the following two free products with amalgamation by $\theta$:
$$
\mathscr{P}
=
\langle z,r,\,s,u\rangle 
*_{\theta}
\langle p,q,v \rangle
,\quad\quad\quad
\bar{\mathscr{P}}
=
\big( K_{Z_{\mathcal X}}*\langle u \rangle \big)
*_{\theta}
\langle p,q,v \rangle
$$  
with $\mathscr{P} \le \bar{\mathscr{P}}$.
Since the words $v\cdot w(p,q)$ generate the whole $\langle p,q,v \rangle$, the latter entirely is inside $K_{Z_{\mathcal X}}*\langle u \rangle$, and we in fact have 
$\mathscr{P}
\cong
\langle z,r,\,s,u\rangle$
and
$\bar{\mathscr{P}}
\cong
K_{Z_{\mathcal X}}*\langle u \rangle$, that is, $\bar{\mathscr{P}}$ can be understood as the group $K_{Z_{\mathcal X}}*\langle u \rangle$ in which we denoted  
$u^z$ by $v$,
denoted each 
$u^{z^{w(r,\,s)}}$ by $v\cdot w(p,q)$, and then added the new relations 
$u^{z^{w(r,\,s)}}\!=\,v\cdot w(p,q)$ in order to mimic $\theta$.

\medskip
Further, $\langle z,r,\,s,u\rangle$ admits an isomorphism sending $z,r,\,s,u$ to $z^r\!,\, r,\,s,u$, and $\langle p,q,v \rangle$ admits an isomorphism sending $p,q,v$ to $p,q,v\cdot p$. It is trivial to verify that these two isomorphisms agree on words of type $u^{z^{w(r,\,s)}}$ and $v\cdot w(p,q)$, and so they have a common continuation $\eta_1$ on the whole $\mathscr{P}$. Similarly define an isomorphism $\eta_2$ on $\mathscr{P}$ sending 
$z,r,\,s,u$ to $z^s\!,\, r,\,s,u$
and 
$p,q,v$ to $p,q,v\cdot q$.
Using these isomorphisms define the finitely generated HNN-extensions:
$$
\mathscr{R} = \mathscr{P} *_{\eta_1,\eta_2} (e_1, e_2)
,\quad\quad\quad
\bar{\mathscr{R}} = \bar{\mathscr{P}} *_{\eta_1,\eta_2} (e_1, e_2)
$$
where $\mathscr{R} \le \bar{\mathscr{R}}$.
According to the above constructions, $\bar{\mathscr{R}}$ can be given by the relations:
\begin{enumerate}
\item 
\label{EN PO1 relations of bar R}
the \textit{finitely} many relations of $K_{Z_{\mathcal X}}$;

\item 
\label{EN PO2 relations of bar R}
the \textit{infinitely} many relations for $\theta$, i.e., those stating  $u^{z^{w(r,\,s)}}\!\!=v\cdot w(p,q)$ for all $w$;

\item 
\label{EN PO3 relations of bar R}
the \textit{finitely} many relations telling the images of $7$ generators $z,r,\,s,u;p,q,v$  under conjugation by $e_1$ and $e_2$.
\end{enumerate}

The relations of point \eqref{EN PO2 relations of bar R} above are mainly redundant, and they can  be replaced by the \textit{single} relation $u^z=v$ (which evidently is $u^{z^{w(r,\,s)}}\!\!=v\cdot w(p,q)$ for the \textit{trivial} word $w=1$). The routine of verification is simple, and we display the idea just by an example. For, say, the word $w(r,\,s) = w_f(r,\,s) 
= r^3 s^5 r^4 s^7$ from Section~\ref{SU If AX is benign then} we from $u^z=v$ can deduce:
\begin{equation}
\label{EQ 3 5 4 7}
u^{z^{r^3 s^5 r^4 s^7}}\!\! = v\cdot p^3 q^5 p^4 q^7
\end{equation}
in the following way. 
Putting the exponents in the above word $w$ in \textit{reverse order} write down the word $w'=w'(e_1, e_2) 
= e_2^7 e_1^4 e_2^5 e_1^3$ in stable letters $e_1, e_2$. Then:
\begin{equation}
\label{EQ kappa_2 kappa_1 first}
\begin{split}
\left(u^{z}\right)^{w'} \!&  
= \left(u^{z}\right)^{e_2^{\boldsymbol 7} e_1^4 e_2^5 e_1^3}
= \left((u^{e_2})^{z^{e_2}}\right)^{e_2^{\boldsymbol 6} e_1^4 e_2^5 e_1^3} 
= 
\left(u^{z^s}\right)^{e_2^{\boldsymbol 6} e_1^4 e_2^5 e_1^3}
\\
& 
= \big(u^{z^{s^7}}\big)^{e_1^4 e_2^5 e_1^3}
= \big(u^{z^{r^4 s^7}}\big)^{e_2^5 e_1^3}
= \big(u^{z^{s^5 r^4 s^7}}\big)^{e_1^3}
= u^{z^{r^3 s^5 r^4 s^7}};
\end{split}
\end{equation}
\begin{equation}
\label{EQ kappa_2 kappa_1 second}
\begin{split}
\hskip-1mm \left(v\right)^{w'} \!&  
= \left(v\right)^{e_2^{\boldsymbol 7} e_1^4 e_2^5 e_1^3}
= \left(v^{e_2}\right)^{e_2^{\boldsymbol 6} e_1^4 e_2^5 e_1^3} 
= \left(v \cdot q\right)^{e_2^{\boldsymbol 6} e_1^4 e_2^5 e_1^3}
= \left(v \cdot q^7\right)^{e_1^4 e_2^5 e_1^3}
\\
& 
= \left(v \cdot p^4 q^7\right)^{e_2^5 e_1^3}
= \left(v \cdot q^5 p^4 q^7\right)^{e_1^3}
= v \cdot p^3 q^5 p^4 q^7.
\end{split}
\end{equation}
So from $u^z = v$ with equalities \eqref{EQ kappa_2 kappa_1 first} and \eqref{EQ kappa_2 kappa_1 second} follows \eqref{EQ 3 5 4 7} for \textit{arbitrary} word $w$, that is, the above $\bar{\mathscr{R}}$ is finitely presented.

\medskip
Clearly, $Z_{\!\mathcal X}$ is benign also in a larger group $\mathscr{P}
=
\langle z,r,\,s,u\rangle 
*_{\theta}
\langle p,q,v \rangle$ for the finitely presented overgroup $\bar{\mathscr{R}}$, and for the earlier used finitely generated  $L_{Z_{\mathcal X}}$.
The finitely generated subgroup $\langle u,v \rangle$ trivially is benign in 
$\mathscr{P}$ for the finitely presented $\bar{\mathscr{R}}$, and for its finitely generated subgroup $\langle u,v \rangle$, see Remark~\ref{RE finite generated is benign}.
Hence by Corollary~\ref{CO intersection and join are benign multi-dimensional}\,\eqref{PO 2 CO intersection and join are benign multi-dimensional} the join $\bar Z_{\mathcal X}=\langle Z_{\mathcal X};\; u, v \rangle$ is benign in $\mathscr{P}$ for the finitely presented $\bigast$-construction: 
$$
K_{\bar Z_{\mathcal X}}
=
\big(\bar{\mathscr{R}} *_{L_{Z_{\mathcal X}}}\! h_1\big) *_{\bar{\mathscr{R}}}
\big(\bar{\mathscr{R}} *_{\langle u,v \rangle} h_2\big),
$$
and for its finitely generated subgroup
$
L_{\bar Z_{\mathcal X}}\!
=
\big\langle 
\mathscr{P}^{h_1}\!,\;
\mathscr{P}^{h_2}\!
\big\rangle
$.

Further, the finitely generated subgroup $\langle p,q \rangle$ is benign in 
$\mathscr{P}$ for the finitely presented $\bar{\mathscr{R}}$, and for its finitely generated subgroup $\langle p,q \rangle$.
Hence, if we also prove the equality:
\begin{equation}
\label{EQ important equality here}
\bar Z_{\mathcal X}
\cap 
\langle p,q \rangle
= Q_{\mathcal X}  = 
\big\langle w_f(p,q) \mathrel{|} f \in \mathcal X\, \big\rangle,
\end{equation}
then by Corollary~\ref{CO intersection and join are benign multi-dimensional}\,\eqref{PO 1 CO intersection and join are benign multi-dimensional} the intersection $Q_{\mathcal X}$ is benign in $\mathscr{P}$ for the finitely presented group:
$$
K_{Q_{\mathcal X}} 
=
\big(K_{\bar Z_{\mathcal X}} *_{L_{\bar Z_{\mathcal X}}} f_1\big) *_{\bar{\mathscr{R}}}
\big(\bar{\mathscr{R}} *_{\langle p,q \rangle} f_2\big),
$$
and for its finitely generated subgroup
$
L_{Q_{\mathcal X}}
=
\mathscr{P}^{f_1 f_2}
$.
But since $Q_{\mathcal X}$ entirely is inside $\langle p,q \rangle$, then from 
$\mathscr{P} \cap L_{Q_{\mathcal X}} = Q_{\mathcal X}$ it follows $\langle p,q \rangle \cap L_{Q_{\mathcal X}}\!\! =\, Q_{\mathcal X}$, that is, $Q_{\mathcal X}$ is benign in $\langle p,q \rangle$ for the \textit{same} groups
$K_{Q_{\mathcal X}}$ and $L_{Q_{\mathcal X}}$\! chosen above.

To conclude our proof it remains to argument the equality \eqref{EQ important equality here}.
For every $f\! \in \mathcal X$ and the respective word $w_f(p,q)$ we have:
$$
w_f(p,q)= v^{-1} \cdot  v\cdot w_f(p,q)
= v^{-1} \cdot u^{z^{w_f(r,\,s)}} \!
\in \bar Z_{\mathcal X}.
$$
To see that the left-hand side of \eqref{EQ important equality here} is in $W_{\!\mathcal X}$ it is enough to apply the conjugate collection process of Section~\ref{SU The conjugates collecting process} for $\X=\{u,v \}
$ and for 
$\Y=\{w_f(p,q) \mathrel{|} f \! \in \mathcal X \}$.

\subsection{Writing $K_{Q_{\mathcal X}}$  by generators and defining relations}
\label{SU Writing KQX by generators and defining relations} 

From construction of $\bar{\mathscr{P}}$ in previous section and from representation \eqref{EQ relations KZX} of $K_{Z_{\mathcal X}}$ we have:
\begin{equation}
\label{EQ relations bar P}
\begin{split}
\bar{\mathscr{P}} 
& = \big\langle
\text{$m\! +\! 10$ generators of $K_{Z_{\mathcal X}}$ from \eqref{EQ relations KZX}};\;\;\;  u, p, q, v
\mathrel{\;\;|\;\;}\\
& \hskip11mm 
\text{$n\! +\! 4m\! + \!k\! +\! 38$ relations of $K_{Z_{\mathcal X}}$ from \eqref{EQ relations KZX}}\\
& \hskip11mm 
\text{\textit{infinitely} many relations $u^{z^{w(r,\, s)}}\!\! = v\cdot w(p,q)$}\\
& \hskip11mm 
\text{for \textit{all} words $w(r,\, s)\in \langle r,\, s \rangle$}\;
\big\rangle.
\end{split}
\end{equation}
$\bar{\mathscr{P}}$ has $m\! +\! 10\!+\! 4=m\! +\! 14$ generators and \textit{infinitely} many relations, where the group $K_{\mathcal X}=\langle
\,Z
\;|\;
S
\rangle$ and the values $m, n, k$ are the same as those in Section~\ref{SU Writing KZX by generators and defining relations}. 
Next we reduce those infinitely many relations to \textit{finitely} many relations. Namely, from definition of $\bar{\mathscr{R}}$, $\eta_1$, $\eta_2$, $e_1$, $e_2$ in previous section we get: 
\begin{equation}
\label{EQ relations bar R}
\begin{split}
\bar{\mathscr{R}} 
& = \big\langle
\text{$m\! +\! 10$ generators of $K_{Z_{\mathcal X}}$ from \eqref{EQ relations KZX}};\;\; u, p, q, v;\; e_1, e_2
\mathrel{\;|}\\
& \hskip11mm 
\text{$n\! +\! 4m\! + \!k\! +\! 38$ relations of $K_{Z_{\mathcal X}}$ from \eqref{EQ relations KZX}};\\
& \hskip11mm 
\text{a\, \textit{single}  relation $u^z=v$};\\
& \hskip11mm 
\text{$e_1$ sends 
$z,r,\, s,u; p,q,v$ to $z^r\!,\, r,\, s,u, p,q,v\cdot p$};\\[-3pt]
& \hskip11mm 
\text{$e_2$ sends 
$z,r,\, s,u; p,q,v$ to $z^s\!,\, r,\, s,u, p,q,v\cdot q$}
\big\rangle.
\end{split}
\end{equation}
$\bar{\mathscr{R}}$ has $m\!+\!10\!+\!6=m\!+\!16$ generators and $n\! +\! 4m\! + \!k\! +\! 38\!+\!1\!+\!2\cdot 7
=n\! +\! 4m\! + \!k\! +\! 53$ relations.
Hence, using definition of $K_{\bar Z_{\mathcal X}}$\! in Section~\ref{SU If ZX is benign then Q(p,q) is also benign},  and the $7$-generator group $L_{Z_{\mathcal X}}$ at the end of  Section~\ref{SU Writing KZX by generators and defining relations}
we have:
\begin{equation}
\label{EQ relations KZT+}
\begin{split}
K_{\bar Z_{\mathcal X}}
& = \big\langle
\text{$m\!+\!16$ generators of $\bar{\mathscr{R}}$ from \eqref{EQ relations bar R}};\;  h_1,h_2
\mathrel{\;|\;} \\
& \hskip11mm 
\text{$n\! +\! 4m\! + \!k\! +\! 53$ relations of $\bar{\mathscr{R}}$ from \eqref{EQ relations bar R}};\\
& \hskip11mm 
\text{$h_1$ fixes conjugates of $a,b,c;\;  z, r,\, s, l$ by $w_3 w_4$};\\
& \hskip11mm
\text{$h_2$ fixes 
$u,v$}
\big\rangle.
\end{split}
\end{equation}
$K_{\bar Z_{\mathcal X}}$ has $m\!+\!16\!+\!2=m\!+\!18$ generators and $n\! +\! 4m\! + \!k\! +\! 53\! +\!7\! +\!2=n\! +\! 4m\! + \!k\! +\! 62$ relations, and $
L_{\bar Z_{\mathcal X}}
=
\big\langle 
\mathscr{P}^{h_1}\!,\;
\mathscr{P}^{h_2}\!
\big\rangle 
$ is a $14$-generator because $\mathscr{P}$ is $7$-generator.

Finally, using construction of $K_{Q_{\mathcal X}}$, $L_{Q_{\mathcal X}}$, $\bar{\mathscr{R}}$ at the end of~\ref{SU If ZX is benign then Q(p,q) is also benign}, we arrive to:
\begin{equation}
\label{EQ relations KQX}
\begin{split}
K_{Q_{\mathcal X}}\!\!
& = \big\langle
\text{$m\!+\!18$ generators of $K_{\bar Z_{\mathcal X}}$ from \eqref{EQ relations KZT+}};\;  f_1,f_2
\mathrel{\;\;|\;\;} \\
& \hskip9mm 
\text{$n\! +\! 4m\! + \!k\! +\! 62$ relations of $K_{\bar Z_{\mathcal X}}$ from \eqref{EQ relations KZT+}};\\
& \hskip9mm 
\text{$f_1$ fixes conjugates of $a,b,c;\;  z, r,\, s, l$ by $h_1$ and $h_2$};\\
& \hskip9mm 
\text{$f_2$ fixes 
$p,q$}
\big\rangle.
\end{split}
\end{equation}
Finally, the overgroup
$K_{Q_{\mathcal X}}$ has $m\!+\!18\!+\!2=m\!+\!20$ generators and $n\! +\! 4m\! + \!k\! +\! 62
\!+\! 2 \cdot 7 +2 =
n\! +\! 4m\! + \!k\! +\! 78
$ relations.
The finitely generated subgroup
$L_{Q_{\mathcal X}}$\! of $K_{Q_{\mathcal X}}$\! is just $7$-generator.

\subsection{The Higman Rope Trick}
\label{SU The Higman Rope Trick}


By hypothesis of Theorem~\ref{TH Theorem B} the subset $\mathcal X \subseteq \E$ determines a benign subgroup $A_{\! \mathcal X}$ in $F_3$.
Hence $\mathcal X$ is recursive, and by Theorem 3 in \cite{Higman Subgroups of fP groups} it can be const\-ructed via the operations \eqref{EQ Higman operations} from 
$\Zz$ and
$\S$. We in sections~\ref{SU The proof for the case of Z and S}\,--\,\ref{SU The proof for omega_m} were able to explicitly construct the finitely presented overgroup $K_{\! \mathcal X}$ with its finitely generated subgroup $L_{\! \mathcal X}$ such that 
$F_3\cap L_{\! \mathcal X}
= A_{\! \mathcal X}$ holds.
Those constructions avoided any usage of the letters $x,y$, and so the group $K_{\! \mathcal X}$ outputted by Theorem~\ref{TH Theorem A} at the end of Chapter~\ref{SE Theorem A and its proof steps} does not involve $x,y$, and hence the group $T_{\!\mathcal X}$ of Section~\ref{SU Theorem on embedding of TX} is disjoint from $K_{\! \mathcal X}$.
Further, $T_{\!\mathcal X}$ is disjoint from the group $K_{Q_{\mathcal X}}$\! built in sections \ref{SU If AX is benign then}\,--\,\ref{SU Writing KQX by generators and defining relations} by just adjoining some new letters $z, r, \,s, \ldots , v$ to $K_{\! \mathcal X}$.  
Hence no conflict arises if we use the group $T_{\!\mathcal X}$ in a construction together with $K_{Q_{\mathcal X}}$.  

It is evident that if we in a word $w_f(p,q)$ replace the letters $p,q$ by $x,y$, then we get nothing but the initial word $w_f(x,y)$ in $F_3$. This remark is going to play useful role later.
Denoting for simplicity the normal closure $\langle w_f(x,y) \;|\; f\! \in \mathcal X \rangle^{\langle x,y \rangle}$ mentioned in Section~\ref{SU Theorem on embedding of TX} by $\mathcal R$, we can rewrite 
$T_{\!\mathcal X} = \langle x,y \rangle / \mathcal R$.

\begin{figure}[h]
\includegraphics[width=390px]{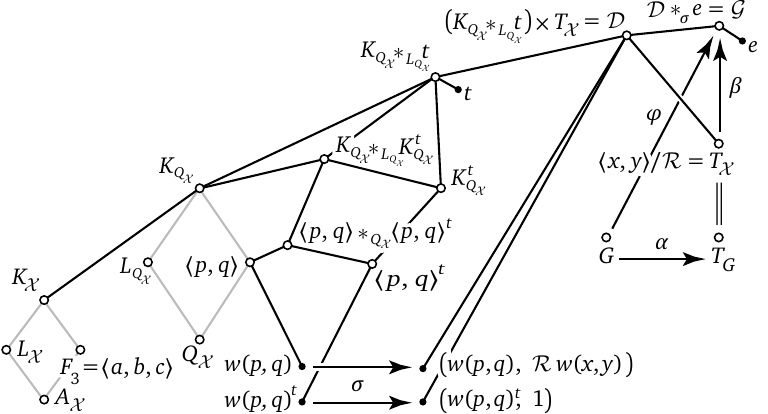}
\caption{Using the Higman Rope Trick.}
\label{Figure_18_Higman_rope_trick}
\end{figure}

By assumption of Theorem~\ref{TH Theorem B} and by Section~\ref{SU If ZX is benign then Q(p,q) is also benign}, the subgroup $Q_{\mathcal X}$ is benign in $\langle p,q \rangle$. 
Fix a new stable letter $t$, and build the finitely presented HNN-extension $K_{Q_{\mathcal X}} \!*_{L_{Q_{\mathcal X}}}\! t$ inside which the subgroup $K_{Q_{\mathcal X}}$\! and its conjugate $K_{Q_{\mathcal X}}^t$\! clearly generate their free product with amalgamation
$
K_{Q_{\mathcal X}} \!*_{L_{Q_{\mathcal X}}}\!
K_{Q_{\mathcal X}}^t
$.
Since $\langle p,q \rangle \cap L_{Q_{\mathcal X}} = Q_{\mathcal X}$, then by Corollary~3.2\,(1) in \cite{Auxiliary free constructions for explicit embeddings}
the subgroup $\langle p,q \rangle$  and its conjugate $\langle p,q \rangle^t$ generate their free product with amalgamation: 
\begin{equation}
\label{EQ small free product with amalgamation}
\langle p,q \rangle *_{Q_{\mathcal X}}
\! \langle p,q \rangle^t\!.
\end{equation}

For the direct product 
$\mathcal D = 
\big( K_{Q_{\mathcal X}} \!*_{L_{Q_{\mathcal X}}}\! t \big) \times T_{\!\mathcal X}$
we have an isomorphism from $\langle p,q\rangle \times \1$ into $\mathcal D$ defined on \textit{every} $w(p,q) \in \langle p,q \rangle$ (\textit{not} necessarily a relation of type $w_f(p,q)$), via:
$$
w(p,q) = \big(w(p,q) ,\;\; 1\big) 
\to 
\big(w(p,q) ,\;\; \mathcal R\, w(x,y) \big),
$$
where $\mathcal R\, v(x,y)$ is an element (coset) in $T_{\!\mathcal X}\! =\! \langle x,y \rangle / \mathcal R$.
We also have the identical injection $\langle p,q \rangle^t$ into $\mathcal D$ given via:
$$
w(p,q)^t = \big(w(p,q)^t ,\;\; 1\big) 
\to 
\big(w(p,q)^t ,\;\; 1\big).
$$
For every word $w(p,q)\in Q_{\mathcal X}$ (in particular, for each of the relations $w(p,q)=w_f(p,q)$), 
we have $w(p,q) = w(p,q)^t$, and the coset
$\mathcal R \, w(x,y)$ is trivial in $\langle x,y \rangle / \mathcal R$. Hence, we have 
$\big(w(p,q) ,\;\; \mathcal R\, w(x,y) \big)=\big(w(p,q)^t ,\; 1\big)$ for such $w(p,q)$, and the above two isomorphisms agree on the amalgamated subgroup $Q_{\mathcal X}$ of \eqref{EQ small free product with amalgamation}.
Hence these two isomorphisms have a common continuation $\sigma$ on the whole \eqref{EQ small free product with amalgamation}, and using it we define our last HNN-extension:
\begin{equation}
\label{EQ definition of G}
\mathcal G  
=
\mathcal D *_\sigma e
=
\Big(\big( K_{Q_{\mathcal X}} \!*_{L_{Q_{\mathcal X}}} \!t \big) \times T_{\!\mathcal X} \Big) *_\sigma e.
\end{equation}
The group $\mathcal G$ clearly contains $T_{\!\mathcal X}$, and for later use we denote that identical embedding via:
\begin{equation}
\label{EQ define beta}
\beta: T_{\!\mathcal X} \to \mathcal G.
\end{equation} 
$\mathcal G$ is finitely generated, and so the desired embedding for Theorem~\ref{TH Theorem B} will be achieved, if we show that $\mathcal G$ can be given by \textit{finitely} many relations.
The relations suggested by \eqref{EQ definition of G} are:
\begin{enumerate}
\item the \textit{finitely} many relations of $K_{Q_{\mathcal X}}$;
\label{ENUM PO1 relations of G}

\item the \textit{finitely} many relations stating that $t$ fixes the \textit{finitely} many generators of $L_{Q_{\mathcal X}}$;
\label{ENUM PO2 relations of G}

\item the \textit{finitely} many relations stating that both generators $x,y$ of $T_{\!\mathcal X}$ commute with $t$ and with the \textit{finitely} many generators of $K_{Q_{\mathcal X}}$;
\label{ENUM PO3 relations of G}

\item the \textit{infinitely} many relations $w_f(x,y)$, $f\!\in \mathcal X$, for $T_{\!\mathcal X}$.
\label{ENUM PO4 relations of G}

\item the \textit{infinitely} many relations defining the action of $e$ on \textit{all} words in $\langle p,q \rangle$ and in $\langle p,q \rangle^t$\!. 
\label{ENUM PO5 relations of G}
\end{enumerate}

\medskip
Since $\sigma$ is an isomorphism, infinitely many relations of point~\eqref{ENUM PO5 relations of G} can be replaced by just \textit{four} relations defining the images of generators: 
$$
\text{$p^e\!=\! p\; \mathcal R  x$, \quad
$q^e\!=\!q\; \mathcal R  y$, \quad
$(p^t)^e\!=\! p^t$\!, \quad
$(q^t)^e\!=\!q^t$\!.}
$$

If we also show that the relations $w_f(x,y)$ of the point~\eqref{ENUM PO4 relations of G} are redundant, then $\mathcal G$ will turn out to be a {\it finitely presented} group.
Indeed, for every $w_f(x,y)$ we have: 
\begin{equation}
\label{EQ two variants for w(p,q)}
\begin{split}
& w_f(p,q)^e = \big( w_f(p,q) ,\;\; \mathcal R\, w_f(x,y) \big),
\\
& \big(w_f(p,q)^t\big)^e = \big( w_f(p,q)^t ,\;\, 1 \big),
\end{split}
\end{equation}
and since $w_f(p,q)$ and $w_f(p,q)^t$ are in $Q_{\mathcal X}$, they are equal. 
Since the left-hand sides of the equalities \eqref{EQ two variants for w(p,q)} turn out to be equal, the right-hand sides also are equal, and $\mathcal R\, w_f(x,y) = 1$ holds. 
Since in $\langle x,y \rangle / \mathcal R$ the identity element is the coset $\mathcal R$, we get that $w_f(x,y)$ is in $\mathcal R$, that is, $w_f(x,y)$ is a relation for the factor group $\langle x,y \rangle / \mathcal R = T_{\! \mathcal X}$, and we deduced this fact from the \textit{finitely} many relations for the points \eqref{ENUM PO1 relations of G}, \eqref{ENUM PO2 relations of G}, \eqref{ENUM PO3 relations of G}, \eqref{ENUM PO5 relations of G} only.
This argument may be easier with a very simple example. Say, for the sequence $f=(3,5,2)\in \mathcal X$ we have 
$w_f(x,y)=x^3 y^5 x^2$, from where $w_f(p,q)=p^3 q^5 p^2$ and:
$$ 
(p^3 q^5 p^2)^e 
=\sigma(p^3 q^5 p^2)
= \big( p^3 q^5 p^2 ,\;\; \mathscr{R} x^3 y^5 x^2 \big),
$$ 
\vskip-5mm
$$
\big( (p^3 q^5 p^2)^t\big)^e 
=\sigma \big( (p^3 q^5 p^2)^t\big) 
= \big( (p^3 q^5 p^2)^t ,\;\; 1 \big). 
$$ 
Since $p^3 q^5 p^2$ and $(p^3 q^5 p^2)^t$ are in $Q_{\mathcal X}$, they are equal, and then  $\mathscr{R} x^3 y^5 x^2=1$, that is, 
$\mathscr{R} x^3 y^5 x^2$ is the identity  element $1=1_{T_{\!\mathcal X}}$\! in  $\langle x,y \rangle / \mathcal R$, and so $x^3 y^5 x^2 \in \mathcal R$ indeed is a relation for the factor group $T_{\!\mathcal X}$.

\medskip
What we applied was a variation of the ``Higman rope trick'' used in \cite{Higman Subgroups of fP groups} and adopted elsewhere. In particular, it is utilized by Valiev in \cite{Valiev}. 
Lindon and Schupp use it in Valiev's interpretation in Section IV\,\!.\,\!7 of \cite{Lyndon Schupp}, see more in the discussion 
\cite{Higman rope trick}.

\medskip
Meanwhile, the proof of Theorem~\ref{TH Theorem B} has been completed.

\subsection{Equality $T_{\!\mathcal X}\!=\!T_{\!G}$, and the final embedding}
\label{SU Equality and the final embedding}

What is the relationship of the above group $T_{\!\mathcal X}$ with the  very similarly denoted $2$-generator group $T_{\!G}$ introduced in Section~\ref{SU The embedding alpha into a 2-generator group}? 

In Section~\ref{SU Using the 2-generator group to get the set X} the subset $\mathcal X$ of $\E$ was built  using the relations on two letters $x,y$ of some $2$-generator recursive group $T_{\!G}$ into which our initial recursive group $G$ was explicitly embedded via $\alpha:G\to T_{\!G}$ in \eqref{EQ embedding alpha}. 

Recalling how in Section~\ref{SU Using the 2-generator group to get the set X} the sequence (function) $f$ was written down from the 
relation
$w(x,y)$ via \eqref{EQ f_i occurs first}, and comparing this with how 
the words $w_f(x,y)$ were produced from $w_f(p,q)$ in Section~\ref{SU Theorem on embedding of TX} to get the normal closure 
$$
\mathcal R = \langle w_f(x,y) \;|\; f\! \in \mathcal X \rangle^{\langle x,y \rangle}
$$ along with the factor group $T_{\!\mathcal X} = \langle x,y \rangle / \mathcal R$, it is very easy to notice that the groups 
$T_{\!\mathcal X}$ and $T_{\!G}$, in fact, \textit{coincide}.

Say, if $w(x,y) = x^3 y^5 x^2$ is some relation of $T_{\!G}$, we ``coded'' it in Section~\ref{SU Using the 2-generator group to get the set X} by the function
$f\!=(3,5,2)$.
Using this $f$ we defined the word $a_f = a^{b_f}=a^{
b_0^3
b_1^5
b_2^2}$
in $F_3$.

Next, in sections \ref{SU If AX is benign then}\,--\,\ref{SU Writing KQX by generators and defining relations} we went from 
$a^{b_f}$ 
to $z^{w_f(r,\,s)}
=\,z^{
r^3
s^5
r^2}$\!\!,\, and then  
to $w_f(p,q)=p^3 q^5 p^2$\,\!.

In Section~\ref{SU The Higman Rope Trick}, just 
replacing $p,q$ by $x,y$,
we obtained the word 
$w_f(x,y)= x^3 y^5 x^2$  \textit{identical} to what we started from.

\smallskip
Eventually, the equality $T_{\!\mathcal X}\!=\!T_{\!G}$ allows us to embed the initial group $G$ into the finitely presented group $\mathcal G$ via the composition:
\begin{equation}
\label{EQ define varphi}
\varphi: G \to \mathcal G.
\end{equation} 
of the embedding $\alpha: G \to  T_G$ from \eqref{EQ embedding alpha} with the embedding $\beta: T_{\!\mathcal X} \to  \mathcal G$ from \eqref{EQ define beta}.

\begin{Remark}
\label{RE why embedding into 2-generator was good}
Coincidence $T_{\!\mathcal X}\!=\!T_{\!G}$ is one of the key points for the sake of which our analog of Higman embedding is by far simpler to build  \textit{for $2$-generator} groups, and it justifies why we first built the embedding $\alpha: G \to  T_{\!G}$ of our initial recursive group $G$ into a specific $2$-generator recursive group $T_{\!G}$ in \eqref{EQ define beta}, and only after that continued the whole process of construction of $\mathcal G$ for $T_{\!G}$.
\end{Remark}

Compare the above remark with Remark~\ref{RE One could try to produce X from Q directly} in Section~\ref{SU Using the 2-generator group to get the set X}, in which we stressed yet another advantage of the embedding $\alpha: G \to  T_{\!G}$, namely, the fact that it allows to get a by far simpler set of sequences $\mathcal X$.

\subsection{Writing $\mathcal G$ by generators and defining relations}
\label{SU Writing mathcal G by generators and defining relations} 

From notation in Section~\ref{SU The Higman Rope Trick} including the construction  \eqref{EQ definition of G} for $\mathcal G$, and the choice 
$$L_{Q_{\mathcal X}}\!\!
=
\mathscr{P}^{f_1 f_2}$$ 
in Section~\ref{SU If ZX is benign then Q(p,q) is also benign} we have:
\begin{equation}
\label{EQ relations of G}
\begin{split}
\mathcal G
& = \big\langle
\text{$m\!+\!20$ generators of $K_{Q_{\mathcal X}}$\! from \eqref{EQ relations KQX}};\;\,  t,\, x,y,\,  e
\mathrel{\;\;|\;\;} \\
& \hskip11mm 
\text{$n\! +\! 4m\! + \!k\! +\! 78$ relations of $K_{Q_{\mathcal X}}$\! from \eqref{EQ relations KQX}};\\
& \hskip11mm 
\text{$t$ fixes conjugates of\, 
$z, r,\, s, u;\, p, q, v$ by $f_1 f_2$}; \\ 
& \hskip11mm \text{$x, y$ commute with $t$}  \\ 
& \hskip41mm \text{and with generators of $K_{Q_{\mathcal X}}$}; \\ 
& \hskip11mm  
\text{$e$ sends $p,\, q,\, p^t\!,\, q^t$ to $px, qy, p^t\!,\, q^t$}
\big\rangle 
\end{split}
\end{equation}
where the values $m, n, k$ are those from Section~\ref{SU Writing KZX by generators and defining relations}.
The final group $\mathcal G$ has 
$$m+20+4
=
m+24$$ 
generators, and 
$$
n+ 4m+k+ 78
+7 
+ 2 \cdot (1+m+20)
+ 4
=
n+ 6m+k+ 131
$$
relations.

\medskip
It is an easy task to substitute into 
\eqref{EQ relations of G}
the ``nested'' generators and relations from \eqref{EQ relations KQX}, then from 
\eqref{EQ relations KZT+}, 
\eqref{EQ relations bar R}, 
\eqref{EQ relations KZX}, 
\eqref{EQ relations KQ1}, 
\eqref{EQ relations KQ} to get the full presentation of \,  $\mathcal G$. 

\smallskip 
Since the main goal of this work is to write $\mathcal G$ explicitly, let us do those routine steps also.
Namely, suppose like above $K_{\mathcal X}=\langle
\,Z
\;|\;
S
\rangle$ 
is the explicitly known finitely presented group produced for the benign subgroup $A_{\mathcal X}$ of $F_3$ at the end of Chapter~\ref{SE Theorem A and its proof steps} after a series of Higman operations \eqref{EQ Higman operations}, and the subgroup $L_{\mathcal X} \le K_{\mathcal X}$ is known by its explicitly given finitely many generators. 

\smallskip 
As it was remarked in Point~\ref{RE abc can be added} by Tietze transformations, we may assume $Z$ contains $a,b,c$. Hence, in the presentation below the generators $a,b,c$ do not have to be listed, as soon as $Z$ already is in the generating set.

\smallskip
Then $\mathcal G$ can be explicitly rewritten as:

\begin{equation}
\label{EQ relations of G FULL}
\begin{split}
\mathcal G
& = \big\langle
\,Z;\;\; z, r,\, s, l; 
\;  v_1, v_2; \;
w_1, w_2; \;
w_3, w_4; \; 
u, p, q, v; \; e_1,e_2;\\
& \hskip21mm h_1,h_2;\; f_1,f_2;\;  t,\, x,y,\,  e
\mathrel{\;\;|\;\;} S;\\
& \hskip11mm 
\text{$l$ sends $z, r,\, s$ to $z,\, s, r$};\\[-2pt]
& \hskip11mm 
\text{$z, r,\, s, l$ commute with generators $Z$};\\
& \hskip11mm 
\text{$v_1$ fixes $az,\; br,\; cl$;}\\
& \hskip11mm 
\text{$v_2$ fixes $z, r, s, l$ and the generators of  $L_{\!\mathcal X}$};\\
& \hskip11mm 
\text{$w_1$ fixes $a,b,c$};\\
& \hskip11mm 
\text{$w_2$ fixes conjugates of $a,b,c;\;  z, r,\, s, l$ by $v_1 \! v_2$}\\
& \hskip11mm 
\text{$w_3$ fixes $z, r, s, l$};\\
& \hskip11mm 
\text{$w_4$ fixes conjugates of $a,b,c;  z, r,\, s, l$ by $w_1$\! and\! $w_2$};\\
& \hskip11mm 
\text{$u^z=v$};\\
& \hskip11mm 
\text{$e_1$ sends 
$z,r,\, s,u; p,q,v$ to $z^r\!,\, r,\, s,u, p,q,v\cdot p$};\\[-3pt]
& \hskip11mm 
\text{$e_2$ sends 
$z,r,\, s,u; p,q,v$ to $z^s\!,\, r,\, s,u, p,q,v\cdot q$};\\
& \hskip11mm 
\text{$h_1$ fixes conjugates of $a,b,c;\;  z, r,\, s, l$ by $w_3 w_4$};\\
& \hskip11mm
\text{$h_2$ fixes 
$u,v$};\\
& \hskip11mm 
\text{$f_1$ fixes conjugates of $a,b,c;\;  z, r,\, s, l$ by $h_1$ and $h_2$};\\
& \hskip11mm 
\text{$f_2$ fixes 
$p,q$};\\
& \hskip11mm 
\text{$t$ fixes conjugates of\, 
$z, r,\, s, u;\, p, q, v$ by $f_1 f_2$}; \\ 
& \hskip11mm \text{$x, y$ commute with $t$ and with generators of $K_{Q_{\mathcal X}}$}; \\ 
& \hskip11mm  
\text{$e$ sends $p,\, q,\, p^t\!,\, q^t$ to $px, qy, p^t\!,\, q^t$}
\big\rangle.
\end{split}
\end{equation} 

An implementation of \eqref{EQ relations of G FULL} is done for the group $\Q$, and the generators and relations of a finitely presented  overgroup $\mathcal Q$ of $\Q$ are written down in Section~9.1 of \cite{On explicit embeddings of Q}.

\subsection{Embedding $G$ into the $2$-generator group $T_{\!\mathcal G}$}
\label{SU Embedding G into the 2-generator group TG}

The group $\mathcal G$ given in \eqref{EQ relations of G FULL} by $m\!+\!24$ generators can be replaced by a just $2$-generator finitely presented overgroup of $G$.
We use the values $m,n,k$ from Section~\ref{SU Writing KZX by generators and defining relations}:
$m$ is the number of generators $Z$ of the group $K_{\!\mathcal X}$ outputted by Theorem~\ref{TH Theorem A},
$n$ is the number of its defining relations in $S$, and $k$ is the number of generators for $L_{\!\mathcal X}$.

We again apply the method of \cite{Embeddings using universal words} outlined in Section~\ref{SU The embedding alpha into a 2-generator group}. 
To stress the similarity of the constructions denote a new free group of rank $2$ by
$\langle \x, \y \rangle$ with bold $\x, \y$, and inside it again consider some ``universal words''. This time we define $m+24$ such words:
$$
a_i(\x, \y)=\y^{(\x \y^{\,i})^{\,2}\, \y^{-1}} \y^{- \x}
\in \langle \x, \y \rangle, \quad i=1,2, \ldots , m\!+\!24.
$$
We respectively map $m+24$ generators $a,b,\ldots,e$ of the group $\mathcal G$ listed in \eqref{EQ relations of G FULL} to the words 
$a_1(\x, \y),\, a_2(\x, \y), \ldots , a_{m+24}(\x, \y)$ in $\langle \x, \y \rangle$.

Next in each of $n\! +\! 6m\! + \!k\! +\! 131$ relations listed in \eqref{EQ relations of G FULL} replace the letters $a,b,\ldots,e$ by the respective words $a_1(\x, \y),\, a_2(\x, \y), \ldots , a_{m+24}(\x, \y)$. For example,   
letting $a,b,c$ be the first three generators of $\mathcal G$, and $w_1$ be its $(m+7)$'th generator (because $|X|=m$), we replace the line ``$w_1$ fixes $a,b,c$''
in \eqref{EQ relations of G FULL} by the following line depending on two letters $\x, \y$ only:
$$
\text{$\y^{(\x \y^{\,(m+7)})^{2} \,\y^{-1}} \y^{- \x}$ fixes the elements $\y^{(\x \y^{\,i})^{\,2} \y^{-1}} \, \y^{- \x}$ for $i=1,2,3$}.
$$
The set $\RR$ of all such new words $a_j(\x, \y)$, 
$j=1,\ldots, n\! +\! 6m\! + \!k\! +\! 131$,
has in $\langle
\x, \y
\rangle$ a normal closure $\langle \,\RR\, \rangle^{\!\langle
\x, \y
\rangle}$\!,\, the factor group by which is a $2$-generator group which we in analogy with Section~\ref{SU The embedding alpha into a 2-generator group}  denote $T_{\!\mathcal G} = \langle
\x, \y
\rangle / \langle \,\RR\, \rangle^{\!\langle
\x, \y
\rangle}
= \langle\, \x,\y \mathrel{|} \RR \,\rangle$.
The map $\gamma$ sending the $i$'th generator of $\mathcal G$ from \eqref{EQ relations of G FULL} to the $i$'th word $a_i(\x,\y)$, and then to the $i$'th coset 
$\langle \,\RR\, \rangle^{\!\langle
\x, \y
\rangle} \, a_i(\x,\y) \in T_{\!\mathcal G}$ can according to Theorem~\ref{TH universal embedding} be continued to an injective embedding $\gamma : \mathcal G \to T_{\!\mathcal G}$. 

\medskip
Adding $\gamma$ to the already constructed embedding $\alpha$ from Section~\ref{SU The embedding alpha into a 2-generator group} and $\beta$ from Section~\ref{SU Equality and the final embedding} we get an embedding $\psi:G\to T_{\!\mathcal G}$ of the initial group $G$ into $T_{\!\mathcal G}$ as the composition:
\begin{equation}
\label{EQ composition psi}
G 
\xrightarrow{\;\; \alpha \;\;}
T_{\!G}
\xrightarrow{\;\; \beta \;\;}
\mathcal G
\xrightarrow{\;\; \gamma \;\;}
T_{\!\mathcal G}.
\end{equation}
The overgroup $T_{\!\mathcal G}$ has just $2$ generators, and it can be defined by $n\! +\! 6m\! + \!k\! +\! 131$  relations:

\begin{Corollary}
\label{CO G can be embedded into 2-generator TG}
In the above notation the composition $\psi:G\to T_{\!\mathcal G}$ of three embeddings $\alpha, \beta, \gamma$ is an explicit embedding of the recursive group $G$ into a $2$-generator group $T_{\!\mathcal G}$.
\end{Corollary}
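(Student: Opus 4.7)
The plan is to verify that each of the three maps $\alpha$, $\beta$, $\gamma$ is already an explicit injective embedding in its own right, and then to argue that their composition inherits both properties. Explicitness for a composition simply amounts to composing the three explicit substitutions on generators, and injectivity of a composition of injective maps is automatic; so no new group‐theoretic work is needed beyond recalling what was proved for each factor.

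First I would invoke Theorem~\ref{TH universal embedding} applied to the recursive presentation $G=\langle a_1,a_2,\ldots\mid R\rangle$: this produces the recursively presented $2$-generator group $T_{\!G}=\langle x,y\mid R'\rangle$ together with the injective embedding $\alpha:G\to T_{\!G}$ sending each $a_i$ to the explicit universal word $a_i(x,y)$ of \eqref{EQ definition of a_i(x,y)}. Next, because $A_{\!\mathcal X}$ is benign in $F$ with explicitly known $K_{\!\mathcal X}$ and $L_{\!\mathcal X}$ (supplied by Theorem~\ref{TH Theorem A}), Theorem~\ref{TH Theorem B} yields the explicit finitely presented group $\mathcal G$ of \eqref{EQ relations of G FULL} together with the identical embedding $\beta:T_{\!\mathcal X}\to\mathcal G$ of \eqref{EQ define beta}. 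Here I use the key observation of Section~\ref{SU Equality and the final embedding} that the set $\mathcal X$ was \emph{extracted} from the relations $R'$ of $T_{\!G}$ via \eqref{EQ f_i occurs first}, so that $T_{\!\mathcal X}$ and $T_{\!G}$ coincide word for word; hence $\beta$ is legitimately an embedding with domain $T_{\!G}$, and the composition $\beta\circ\alpha:G\to\mathcal G$ already realises the explicit embedding $\varphi$ of \eqref{EQ define varphi} into a finitely presented group.

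Third, I would apply Theorem~\ref{TH universal embedding} a \emph{second} time, this time to $\mathcal G$ itself. Since $\mathcal G$ was written down in \eqref{EQ relations of G FULL} with an explicit list of $m+24$ generators and $n+6m+k+131$ relations, Theorem~\ref{TH universal embedding} produces the explicit finitely presented $2$-generator group $T_{\!\mathcal G}=\langle\x,\y\mid\RR\rangle$ together with the injective embedding $\gamma:\mathcal G\to T_{\!\mathcal G}$ which sends the $i$th generator of $\mathcal G$ to the universal word $a_i(\x,\y)$. By construction $T_{\!\mathcal G}$ is $2$-generator and its defining relations are obtained mechanically by substituting the universal words into the relations of $\mathcal G$, so $\gamma$ is explicit.

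Finally, the composition $\psi=\gamma\circ\beta\circ\alpha$ of three injective maps is injective, which is the only substantive point. To make it explicit one simply composes the three substitution rules: an initial generator $a_i$ of $G$ is first rewritten as $a_i(x,y)$ in $T_{\!G}$, this word is then regarded (via $\beta$) as a word in the generators $x,y$ of $\mathcal G$, and finally each occurrence of $x,y$ is replaced by the corresponding universal word in $\x,\y$ coming from $\gamma$. There is essentially no obstacle in the proof itself; the only point demanding care is the identification $T_{\!\mathcal X}=T_{\!G}$, which was already settled in Section~\ref{SU Equality and the final embedding} and which licenses the composability of $\alpha$ with $\beta$. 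Everything else is bookkeeping that has been performed in \eqref{EQ embedding alpha}, \eqref{EQ define beta} and \eqref{EQ composition psi}.
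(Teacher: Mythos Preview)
Your proposal is correct and matches the paper's approach exactly; in fact the paper provides no separate proof for this corollary at all, treating it as an immediate consequence of the constructions of $\alpha$, $\beta$, $\gamma$ already carried out in sections~\ref{SU The embedding alpha into a 2-generator group}, \ref{SU The Higman Rope Trick}--\ref{SU Equality and the final embedding}, and \ref{SU Embedding G into the 2-generator group TG}. Your write-up simply makes explicit the bookkeeping (injectivity of a composition, explicitness via substitution, and the identification $T_{\!\mathcal X}=T_{\!G}$) that the paper leaves implicit.
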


As an application of Corollary~\ref{CO G can be embedded into 2-generator TG}, we embedded the group $\Q$ into a $2$-generator group  $T_{\!\mathcal Q}$ with explicitly listed finitely many relation in Section~9.2 of \cite{On explicit embeddings of Q}.

\vskip24mm

\noindent 
E-mail:
\href{mailto:v.mikaelian@gmail.com}{v.mikaelian@gmail.com}
$\vphantom{b^{b^{b^{b^b}}}}$ \\
\noindent 
Web: 
\href{https://www.researchgate.net/profile/Vahagn-Mikaelian}{researchgate.net/profile/Vahagn-Mikaelian}

\end{document}